\newtheorem{thm}{Theorem}[section]
\newtheorem{dfn}[thm]{Definition}
\newtheorem{lem}[thm]{Lemma}
\newtheorem{cor}[thm]{Corollary}
\newtheorem{rmq}[thm]{Remark}
\newtheorem{prp}[thm]{Proposition}
\renewcommand{\thesection}{\@arabic\c@section}
\def\toclevel@paragraph{5}
\def\toclevel@subparagraph{6}
\begin{document}
\vspace*{1cm}
\thispagestyle{empty}
\begin{center}
{\Large{ A Spatial Stochastic Epidemic Model: \vspace*{0.21cm}\\Law of Large Numbers and Central Limit Theorem }}\vspace*{0.5cm}\\
S. Bowong$^{\ast}$\quad A. Emakoua$^{\dagger}$\quad E. Pardoux$^{\ddagger}$ \vspace*{0.21cm}\\
\today
\end{center}
\vspace*{0.8cm}
\begin{center}
\textbf{\large{Abstract}}
\end{center}
\par We consider an individual-based SIR  stochastic epidemic  model   in continuous space. The evolution of the epidemic involves the rates of infection and cure of individuals. We assume that individuals move randomly on the  two-dimensional torus according to   independent Brownian motions. We define the empirical measures  $\mu^{S,N}$, $\mu^{I,N}$ and  $\mu^{R,N}$ which describe the evolution of the position
of the susceptible, infected and  removed individuals. We prove the  convergence in propbability, as $N\rightarrow \infty$, of the sequence $(\mu^{S,N},\mu^{I,N})$  towards  $(\mu^{S},\mu^{I})$  solution of a system of parabolic PDEs. We show that the sequence  $(U^{N}=\sqrt{N}(\mu^{S,N}-\mu^{S}),V^{N}=\sqrt{N}(\mu^{I,N}-\mu^{I}))$ converges in law, as $N\rightarrow\infty$, towards a Gaussian distribution valued process, solution of a system of linear PDEs with highly singular Gaussian driving processes. In the case where the individuals do not move  we also define and study the law of large numbers and central limit theorem  of the sequence  $(\mu^{S,N},\mu^{I,N})$.\vspace*{0.2cm}\\
\textit{\textbf{Keywords}}: Measure-valued process; Stochastic Epidemic   model; Law of large numbers;  Central limit theorem. 
\section{Introduction}
\hspace*{0.6cm}Recent studies on homogeneous stochastic models of epidemics of Anderson and Britton [\ref{dc}], and Britton and Pardoux [\ref{hc}] show that deterministic models of epidemics are the law of large numbers limits (as the size of the population tends to $\infty$) of stochastic models, while the central limit theorem and moderate and large deviations (see [\ref{hc}] and [\ref{vc}])  give tools to accurately describe the gap between stochastic  and deterministic models. However, the homogeneity hypothesis made in these models is not realistic since it means that when an infected individual infects a susceptible individual, this latter is uniformly chosen in the population, while in  real life an infected individual infects a susceptible person who is geographically close to him. Hence the interest of modelling epidemics for a population distributed in a continuous space with or without displacement of the population. The case without displacement is important for populations of plants. The corresponding deterministic models are beginning to be well studied in the literature, see for example [\ref{cc}]. \vspace*{0.1cm}\\\hspace*{0.6cm}In this paper we study the law of large numbers and the central limit theorem of two stochastic SIR epidemic models for a population distributed on the two dimentional torus. The only difference between the two models is that in the first one  the population is moving and in the second it does not move.
More precisely we consider a population of size N distributed on the  torus ($\mathbb{T}^{2}=\mathbb{R}^{2}/\mathbb{Z}^{2}$) such that at any time each individual is either susceptible(S), infectious(I) or recovered(R). Let S(t), I(t) and R(t) denote the number of individuals in the different states at time t.\newpage
During the epidemic an individual $i$  moves on the torus  according to the processes \\ 
$\{X^{i}_{t}=\varPi(\widetilde{X}^{i}_{t}),\hspace*{0.1cm}t\geq 0\}$, where $\varPi$ is the canonical projection from $\mathbb{R}^{2}$ to $\mathbb{T}^{2}$, $\widetilde{X}^{i}_{t}=X^{i}+\sqrt{2\gamma}B_{t}^{i}$, with $\{ X^{i},1\leq i\leq N \}$  an independent and identically distributed family of random variables, globally independent of  $\{ B^{i},1\leq i\leq N \},$ which in turn is  a family of  independent standard   Brownian motions on the torus ($\gamma$ is a positive constant in the first model and is zero in the second). 
We assume   at time t=0 (for $\gamma\geq 0$), that  the population consists of two classes S(0) and  I(0) described as follows.\\
Let $ A$ be an arbitrary Borel subset  of $ \mathbb{T}^{2}$ and $ 0 <p \le 1 $, each individual $i $ is placed in $\mathbb{T}^{2}$ independently of the others at the position $X^{i}$. If  $X^{i}\in A^{c}$  then the individual $i$   is susceptible and  if $X^{i}\in A $, the individual $i$ is infected with  probability $p$ and susceptible with  probability $1-p$. This situation is modelled by empirical measures $
                \hspace*{3cm} \\
                  	       \hspace*{5cm}\displaystyle  \mu_{0}^{S,N}=\frac{1}{N}\sum_{i=1}^{N}\{1_{A}(X^{i})(1-\xi_{i})+1_{A^{c}}(X^{i})\}\delta_{X^{i}}
                         \\\hspace*{5cm}
                         \displaystyle  \mu_{0}^{I,N}=\frac{1}{N}\sum_{i=1}^{N}1_{A}(X^{i})\xi_{i}\delta_{X^{i}}
                         \\\hspace*{5cm}
                         \displaystyle \mu_{0}^{N}=\mu_{0}^{S,N}+\mu_{0}^{I,N}=\frac{1}{N}\sum_{i=1}^{N}\delta_{X^{i}}, 
                         $\vspace*{0.07cm}\\
where  $\{\xi_{i},1\leq i\leq N \}$ is a mutually independent family of Ber($p$) random variables, globally independent of $\{X^{i},1\leq i\leq N \}$.\vspace*{0.08cm}\\Let K be a function defined  by \vspace*{-0.4cm}
\begin{align*}
   K:\hspace*{0.1cm}& \mathbb{T}^{2}\times \mathbb{T}^{2} \rightarrow \mathbb{R}_{+}\\
     & (x,y)\mapsto k(d_{\mathbb{T}^{2}}^{2}(x,y))
   \end{align*} 
   \vspace*{-0.1cm}
 where $k:$ $\mathbb{R}_{+}\rightarrow \mathbb{R}_{+}$ is a non-increasing function which is non zero only in a neighbourhood of zero in such a way that by considering for any $x,y\in \mathbb{T}^{2}$, $d_{\mathbb{T}^{2}}(x,y)=\underset{a\in \mathbb{Z}^{2}}{\inf}\{\lVert x-y-a\lVert\}$, one has \vspace*{-0.12cm}\[ \tag{1.1}\forall x\in \mathbb{T}^{2},\hspace*{0.03cm} y \in \textrm{support}\{K(x,.)\} \hspace*{0.14cm}\textrm{iff}\hspace*{0.14cm} d_{\mathbb{T}^{2}}(x,y)=\lVert x-y\lVert.\label{p1}\]
Let $ E^{i}_{t}$ be the state of the individual i at time t, $ E^{i}_{t}\in \{S, I, R\}$ and 
 $\beta $ a positive constant. In the homogeneous model [\ref{hc}], the rate of infectious contacts  can be thought of as a product of a rate $c'$ at which each infectious individual has contacts with others, and the probability $p'$ that such a contact results in an infection given that the other person is susceptible, which happens with the probability  $S_{t} / N$,  where N is the total population size, because all the individuals  have the same probability to be in contact with an infectious. However, in our case the rate  $P_{j,i}$ at which an infectious $j$ has a contact with a susceptible $i$ depends upon  the  distance that separates them, so $ P_{j,i}$  is proportional to $ K(X_{t}^{i}, X_{t} ^{j})$, thus \\
 $\hspace*{0.5cm}\bullet$ $P_{j,i}= \frac{\beta K(X_{t}^{i},X_{t}^{j})}{\sum_{l=1}^{N}K(X_{t}^{l},X_{t}^{j})} $\\
   $\hspace*{0.5cm}\bullet$ An infectious individual $j$ has an infectious contact with  susceptible individuals at the \hspace*{0.8cm} rate $  \beta\frac{\sum_{i=1}^{N}K(X_{t}^{i},X_{t}^{j})1_{\{E_{t}^{i}=S\}}}{\sum_{l=1}^{N}K(X_{t}^{l},X_{t}^{j})}. $\vspace*{0.1cm} \\
Hence  infectious individuals  have  infectious contacts with  susceptible individuals  at the rate $\beta \sum_{j=1}^{N}\frac{\sum_{i=1}^{N}K(X_{t}^{i},X_{t}^{j})1_{\{E_{t}^{i}=S\}}}{\sum_{l=1}^{N}K(X_{t}^{l},X_{t}^{j})} 1_{\{E_{t}^{j}=I\}}.$\vspace*{0.2cm}\\
The epidemic evolves according to  the following rules:\\
 $\hspace*{0.5cm} \bullet$ A susceptible $i$ becomes infected at time t at the rate $\beta 1_{\{E_{t}^{i}=S\}}\sum_{j=1}^{N}\frac{K(X_{t}^{i},X_{t}^{j})}{\sum_{l=1}^{N}K(X_{t}^{l},X_{t}^{j})}1_{\{E_{t}^{j}=I\}} $\\
 $\hspace*{0.5cm} \bullet$ Each infected individual cures at rate $\alpha$  independently of the others, of the number of \hspace*{0.8cm} infected and of the respective positions of the individuals.\vspace*{0.1cm}\\
The evolution of the numbers of susceptible, infected and removed individuals is described by the following equations.\\
 $
	       \displaystyle  S(t)=S(0)-P_{inf}\left( \beta \int_{0}^{t}  \sum_{j=1}^{N}\frac{\sum_{i=1}^{N}K(X_{r}^{i},X_{r}^{j})1_{\{E_{r}^{i}=S\}}}{\sum_{l=1}^{N}K(X_{r}^{l},X_{r}^{j})} 1_{\{E_{r}^{j}=I\}} dr\right)
       \\
       \displaystyle  I(t)=I(0)+P_{inf}\left( \beta \int_{0}^{t}  \sum_{j=1}^{N}\frac{\sum_{i=1}^{N}K(X_{r}^{i},X_{r}^{j})1_{\{E_{r}^{i}=S\}}}{\sum_{l=1}^{N}K(X_{r}^{l},X_{r}^{j})} 1_{\{E_{r}^{j}=I\}} dr\right)-P_{cu}\left(\alpha \int_{0}^{t}\sum_{j=1}^{N}1_{\{E_{r}^{j}=I\}} dr\right)
       \\
       \displaystyle R(t)=R(0) + P_{cu}\left(\alpha \int_{0}^{t}\sum_{j=1}^{N}1_{\{E_{r}^{j}=I\}} dr\right)
       $\\
       where $P_{inf}$ and $P_{cu}$ are two independent standard Poisson processes.\vspace*{0.2cm}\\
       We now define the renormalized point processes,
      $ \forall t>0, $ \\\hspace*{5cm}
            $ \displaystyle  \mu_{t}^{S,N}=\frac{1}{N}\sum_{i=1}^{N}1_{\{E_{t}^{i}=S\}}\delta_{X_{t}^{i}}
                \\\hspace*{5cm}
                \displaystyle  \mu_{t}^{I,N}=\frac{1}{N}\sum_{i=1}^{N}1_{\{E_{t}^{i}=I\}}\delta_{X_{t}^{i}}
                \\\hspace*{5cm}
                 \displaystyle  \mu_{t}^{R,N}=\frac{1}{N}\sum_{i=1}^{N}1_{\{E_{t}^{i}=R\}}\delta_{X_{t}^{i}}
                 \\\hspace*{5cm}
                 \displaystyle  \mu_{t}^{N}=\frac{1}{N}\sum_{i=1}^{N}\delta_{X_{t}^{i}}=\mu_{t}^{S,N}+\mu_{t}^{I,N}+\mu_{t}^{R,N}. $\vspace*{0.2cm}\\ \hspace*{0.6cm}We first study the law of large numbers and the central limit theorem of the sequence $(\mu_{0}^{S,N},\mu_{0}^{I,N},\mu_{0}^{N})_{N \geq 1 },$ then we study the law of large numbers and the central limit theorem of the sequences  $(\mu^{N})_{N\geq 1}$ and ($\mu^{S,N},\mu^{I,N})_{N\geq1} $ when $\gamma$ is a positive constant, and finally  when $\gamma$ is zero.\vspace*{0.15cm}\\
\hspace*{0.6cm}The paper is  organized  as follows.
In section 2 we recall some results that will be useful in the sequel. In  sections 3 and 4   we   study the law of large numbers and the central  limit theorem of the sequence $(\mu_{0}^{S,N},\mu_{0}^{I,N},\mu_{0}^{N})_{N \geq 1 }$. In section 5, for $\gamma>0$, we  first establish the evolution equations of the measure-valued process   $\mu^{S,N}$ and $\mu^{I,N} $  then we  show that $\mu^{N}$ converges in probability as $N\rightarrow \infty$ towards the processes $\{\mu_{t}, t\geq 0 \}$ where for each $t\geq 0$, $\mu_{t}$ is the law $ X_{t}^{1}$ and  finally we prove that    $\{(\mu_{t}^{S,N},\mu_{t}^{I,N}) ,t\geq0 \} $ converges in probability as $N\rightarrow \infty $  towards $(\mu^{S},\mu^{I})$ solution of a system of parabolic PDEs. In section 6 we study the convergence of the  sequences  $Z^{N}=\sqrt{N}(\mu^{N}-\mu)$ and ($U^{N}=\sqrt{N}(\mu^{S,N}-\mu)$,$V^{N}=\sqrt{N}(\mu^{I,N}-\mu^{I})$). Finally in section 7 we assume that the individuals do not move ($\gamma = 0$) then we study the  law of large numbers and the central limit theorem of the sequence  $\{(\mu_{t}^{S,N},\mu_{t}^{I,N}),N\geq1, t\geq0 \} $. \vspace*{0.15cm}\\
  \text{\textbf{Notation:}}\vspace*{0.1cm}\\
 \hspace*{0.5cm}-- $\mathcal{M}_{F}(\mathbb{T}^{2})$ denotes the space of finite measures on $\mathbb{T}^{2}$. \\
  \hspace*{0.5cm}--  $C(\mathbb{T}^{2})$ denotes the space of continuous functions on $\mathbb{T}^{2}$. \\
  \hspace*{0.5cm}-- $\forall \mu \in \mathcal{M}_{F}(\mathbb{T}^{2}) $ and $ \varphi \in C(\mathbb{T}^{2})$,  we denote the integral  $\int_{\mathbb{T}^{2}} \varphi(x)\mu(dx)$ by $(\mu,\varphi)$.\vspace*{0.2cm}  \\ \hspace*{0.5cm}-- We define  the Fortet distance on $\mathcal{M}_{F}(\mathbb{T}^{2})$ by $\forall \mu , \nu \in \mathcal{M}_{F}(\mathbb{T}^{2})$\\\hspace*{4cm}
                      $d_{F}(\mu , \nu)=\underset{\underset{\lVert f\lVert_{\infty\leq 1,\parallel f\parallel_{L}\leq 1}}{f\in C(\mathbb{T}^{2})}}{\sup} \mid (\mu,f)-(\nu,f)\mid, $\\\hspace*{0.8cm} where $ \parallel f\parallel_{L}=\underset{x\neq y}{\sup}(\mid f(x)-f(y)\mid)/d_{\mathbb{T}^{2}}(x,y). $\\\hspace*{0.8cm} This distance induces the topology of  weak convergence, in other words the sequence of \\\hspace*{0.8cm} measures  $ \mu_{n}$ converges weakly towards $ \mu $ if and only if  $ \lim\limits_{n\rightarrow \infty}d_{F}(\mu_{n},\mu)= 0$ \\\hspace*{0.5cm}-- In  the following, the letter $C$ will denote a (constant) positive  real number which can \hspace*{0.9cm}change from line to line. \\\hspace*{0.5cm}-- We equip  $\mathcal{M}_{F}(\mathbb{T}^{2})$   with the topology of weak convergence.
\section{Preliminaries \label{pr}}
        \begin{dfn}
              Sobolev spaces  (see [\ref{ac}])\vspace*{0.2cm}\\
               \hspace*{0.2cm}1) Let m $\in \mathbb{N}, p\in \mathbb{R}_{+}$ the Sobolev space $W^{m,p}(\mathbb{T}^{2})$ is defined by:\vspace*{0.1cm}\\ 
                                \hspace*{1cm}$ W^{m,p}(\mathbb{T}^{2})=\{\varphi\in L^{p}(\mathbb{T}^{2}):D^{\eta}\varphi\in L^{p}(\mathbb{T}^{2}),\forall \eta=(\eta_{1},\eta_{2})\in \mathbb{N}^{2}$   such that  $  \lvert \eta\lvert=\eta_{1}+\eta_{2} \leq m\}$\\
                                \hspace*{1cm}where $D^{\eta}\varphi$ is the weak derivative  of the function $\varphi$ with respect to the multi-index $\eta.$ \vspace*{0.1cm}\\
                             \hspace*{0.1cm} 2) Let $s=n+ \sigma$ with $n \in \mathbb{N}$ and $\sigma\in ]0,1[ $,  the Sobolev space $ W^{s,p}(\mathbb{T}^{2}) $ is defined as follows \vspace*{0.1cm}\\
                              \hspace*{1cm} $ W^{s,p}(\mathbb{T}^{2})=\{\varphi \in  W^{m,p}(\mathbb{T}^{2}) / \sum\limits_{\lvert \eta \lvert=n}\int_{\mathbb{T}^{2}\times \mathbb{T}^{2}}\frac{\lvert D^{\eta}(\varphi(x)-D^{\eta}\varphi(x')\lvert^{2}}{(d_{\mathbb{T}^{2}}(x,x'))^{2(1+\sigma)}}dxdx'<\infty \}. $\\
                              \hspace*{0.4cm} Notice that for p=2 and $s\in \mathbb{R}_{+}$, $W^{s,2}(\mathbb{T}^{2})$  is denoted $ H^{s}(\mathbb{T}^{2})$ and is a Hilbert space.
                  \end{dfn}
\begin{prp}(see [\ref{gc}])\\\hspace*{0.1cm} Consider the following sets of functions.
     \begin{align*}
          \mathcal{A}_{1}&=\{f_{n_{1},n_{2}}^{1}(x_{1},x_{2})=2sin(\pi n_{1} x_{1})cos(\pi n_{2} x_{2}),n_{1}>0  ,n_{2}>0\quad even \}\\
              \mathcal{A}_{2}&=\{f_{n_{1},n_{2}}^{2}(x_{1},x_{2})=2sin(\pi n_{1} x_{1})sin(\pi n_{2} x_{2}),n_{1}>0  ,n_{2}>0\quad even \}\\
              \mathcal{A}_{3}&=\{f_{n_{1},n_{2}}^{3}(x_{1},x_{2})=2cos(\pi n_{1} x_{1})cos(\pi n_{2} x_{2}),n_{1}>0 ,n_{2}>0\quad even \}\\
              \mathcal{A}_{4}&=\{f_{n_{1},n_{2}}^{4}(x_{1},x_{2})=2cos(\pi n_{1} x_{1})sin(\pi n_{2} x_{2}),n_{1}>0  ,n_{2}>0\quad even \}\\
              \mathcal{A}_{5}&=\{f_{n_{1},0}^{5}(x_{1},x_{2})=\sqrt{2}cos(\pi n_{1} x_{1}),f_{n_{1},0}^{6}(x_{1},x_{2})=\sqrt{2}sin(\pi n_{1} x_{1}),n_{1}>0  \hspace*{0.1cm} even \}\\
              \mathcal{A}_{6}&=\{f_{0,n_{2}}^{7}(x_{1},x_{2})=\sqrt{2}cos(\pi n_{2} x_{2}),f_{0,n_{2}}^{8}(x_{1},x_{2})=\sqrt{2}sin(\pi n_{2} x_{2}),n_{2}>0 \hspace*{0.1cm}   even \}.
              \end{align*}
                              \hspace*{0.2cm}For any $\gamma >0$,\vspace*{0.1cm}\\
             1)  $\{f^{0}=1$, $ (\mathcal{A}_{i}$, $i\in \{1,2,3,4\})$, $  \mathcal{A}_{5}$, $\mathcal{A}_{6}\} $ are eigenfunctions of the operator $\gamma\bigtriangleup$ on $\mathbb{T}^{2},$ with eigenvalues $\{\lambda_{0}=0$, $ -\lambda_{n_{1},n_{2}}=-\gamma\pi^{2}(n_{1}^{2}+n_{2}^{2})$, $-\lambda_{n_{1}}=-\gamma\pi^{2}n_{1}^{2}$, $-\lambda_{n_{2}}=-\gamma\pi^{2}n_{2}^{2}\}$ respectively, they   form  an orthonormal basis of $\mathbb{L}^{2}(\mathbb{T}^{2})$. \\
                2) $\Big\{ \rho^{0}= 1, \{\rho_{n_{1},n_{2}}^{i,s}= \frac{f_{n_{1},n_{2}}^{i}}{(1+\gamma\pi^{2}(n_{1}^{2}+n_{2}^{2}))^{\frac{s}{2}}} , i\in \{1,2,3,4 \} \},  \{\rho_{n_{1},0}^{i,s}= \frac{f_{n_{1},0}^{i}}{(1+\gamma\pi^{2}n_{1}^{2})^{\frac{s}{2}}}i\in \{5,6 \}\},\\\hspace*{0.6cm}\{\rho_{0,n_{2}}^{i,s}= \frac{f_{0,n_{2}}^{i}}{(1+\gamma\pi^{2}n_{2}^{2})^{\frac{s}{2}}},i\in \{7,8 \}\} \Big\} $ is an orthonormal basis of $H^{s}(\mathbb{T}^{2}).$\label{b2}
       \end{prp}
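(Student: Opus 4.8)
The plan is to prove the two assertions in turn, the first by elementary Fourier analysis and the second by transporting the $H^s$ geometry to that of $L^2$ through the spectral calculus of $I-\gamma\triangle$. For assertion 1, I would first observe that the requirement that $n_1,n_2$ be even is exactly what makes each listed function $1$-periodic in both variables, hence well defined on $\mathbb{T}^2=\mathbb{R}^2/\mathbb{Z}^2$: writing $n_i=2k_i$ turns $\sin(\pi n_i x_i),\cos(\pi n_i x_i)$ into $\sin(2\pi k_i x_i),\cos(2\pi k_i x_i)$. The eigenfunction property is then a one-line computation, since every listed function is a product of one-variable factors each satisfying $g''=-\pi^2 n^2 g$, so that $\triangle f=(\partial_{x_1}^2+\partial_{x_2}^2)f=-\pi^2(n_1^2+n_2^2)f$ and hence $\gamma\triangle f=-\lambda_{n_1,n_2}f$; the one-variable modes $\mathcal A_5,\mathcal A_6$ and the constant $f^0\equiv1$ are handled identically, yielding the eigenvalues $-\lambda_{n_1},-\lambda_{n_2}$ and $\lambda_0=0$.

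Orthonormality I would check directly: by Fubini the $L^2(\mathbb T^2)$ inner product of two listed functions factorizes into one-dimensional integrals, evaluated by the elementary relations $\int_0^1\cos(2\pi kx)\cos(2\pi\ell x)\,dx=\tfrac12\delta_{k\ell}$ together with the analogous sine--sine and sine--cosine relations; the prefactors $2,\sqrt2,1$ are precisely the constants normalizing each function to unit norm. Completeness then follows because, via Euler's formulas, the real linear span of the family coincides with the span of the exponentials $\{e^{2\pi i(k_1x_1+k_2x_2)}:(k_1,k_2)\in\mathbb Z^2\}$ (as $n_i$ runs over the nonnegative even integers, $k_i=n_i/2$ runs over all nonnegative integers), and the completeness of that exponential system in $L^2(\mathbb T^2)$ is the classical theorem of Fourier series; alternatively Stone--Weierstrass gives density of the span in $C(\mathbb T^2)$, hence in $L^2$.

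For assertion 2, I would equip $H^s(\mathbb T^2)$ with the spectral inner product: expanding $u=\sum_k\hat u_k f_k$ in the $L^2$-eigenbasis of assertion 1 (indexing the whole family by $k$, with $-\gamma\triangle f_k=\lambda_k f_k$), set $\langle u,v\rangle_s=\sum_k(1+\lambda_k)^s\hat u_k\overline{\hat v_k}=\langle(I-\gamma\triangle)^{s/2}u,(I-\gamma\triangle)^{s/2}v\rangle_{L^2}$. Since $f_k$ has eigenvalue $1+\lambda_k=1+\gamma\pi^2(n_1^2+n_2^2)$ for $I-\gamma\triangle$, the functions $\rho_k=f_k/(1+\lambda_k)^{s/2}$ have Fourier coefficients $(1+\lambda_k)^{-s/2}\delta_{km}$, whence $\langle\rho_j,\rho_k\rangle_s=\sum_m(1+\lambda_m)^s\frac{\delta_{jm}}{(1+\lambda_j)^{s/2}}\frac{\delta_{km}}{(1+\lambda_k)^{s/2}}=\delta_{jk}$, so the family is orthonormal; and it is complete because $u\mapsto((1+\lambda_k)^{s/2}\hat u_k)_k$ is an isometry of $H^s$ onto $\ell^2$ carrying $\{\rho_k\}$ to the canonical basis.

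The hard part is the single input I would borrow from [\ref{gc}] and [\ref{ac}]: that this spectral norm is equivalent to — indeed the natural choice inducing — the intrinsic norm of Definition 2.1. For integer $s$ this is Parseval applied termwise to the weak derivatives $D^\eta u$, giving $\sum_{|\eta|\le m}\|D^\eta u\|_{L^2}^2\asymp\sum_k(1+\lambda_k)^m|\hat u_k|^2$; for $s=n+\sigma$ one must match the Gagliardo double-integral seminorm of Definition 2.1 to the Fourier multiplier $(n_1^2+n_2^2)^{s}$, a Plancherel computation showing the difference-quotient seminorm equals, up to a finite positive constant, $\sum_k|k|^{2s}|\hat u_k|^2$. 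Once this equivalence is granted — and once one fixes $\langle\cdot,\cdot\rangle_s$ as the inner product on $H^s$, the intrinsic norm being equivalent but not equal to it — both the orthonormality and the completeness above are immediate.
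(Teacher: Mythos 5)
The paper offers no proof of this proposition: it is stated as a cited preliminary (attributed to [\ref{gc}]), so there is nothing internal to compare your argument against. On its own merits your proposal is correct and is the standard argument: the evenness of $n_{1},n_{2}$ guaranteeing well-definedness on $\mathbb{R}^{2}/\mathbb{Z}^{2}$, the one-line eigenvalue computation, orthonormality by Fubini and the elementary one-dimensional trigonometric integrals, and completeness via the classical exponential system. For assertion 2 you correctly isolate the only genuinely non-elementary point, namely that orthonormality of the $\rho^{i,s}_{n_{1},n_{2}}$ only makes sense once $H^{s}(\mathbb{T}^{2})$ is equipped with the spectral inner product $\langle u,v\rangle_{s}=\sum_{k}(1+\lambda_{k})^{s}\hat u_{k}\hat v_{k}$, and that this norm must be identified (up to equivalence, via a Plancherel computation on the Gagliardo seminorm for fractional $s$) with the intrinsic norm of Definition 2.1. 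This is precisely the convention the paper adopts implicitly through the Parseval identity of Proposition 2.3, so your treatment is consistent with, and more explicit than, what the paper provides.
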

   \begin{prp}
        \hspace*{0.1cm}Parseval identity (see the lemma 6.52 [\ref{ac}]) \\
        For any $ s>0$,  $\varphi \in H^{s}(\mathbb{T}^{2})$,  $A \in H^{-s}(\mathbb{T}^{2}) $\\
              $ \lVert A\lVert_{H^{-s}}=\underset{\varphi\neq 0,\varphi\in H^{s}}{\sup}\frac{\lvert (A, \varphi)\lvert}{\lVert\varphi\lVert_{H^{s}}}\\\lVert A\lVert_{H^{-s}}^{2}=\lvert( A, \rho^{0})\lvert^{2}+\sum\limits_{\underset{n_{1}>0,even}{i\in \{5,6\}}}\lvert( A, \rho_{n_{1},0}^{i,s})\lvert^{2}+\sum\limits_{\underset{n_{2}>0,even}{i\in \{7,8\}}}\lvert (A, \rho_{0,n_{2}}^{i,s})\lvert^{2}+\sum\limits_{\underset{n_{1}>0,n_{2}>0,even}{i\in \{1,2,3,4\}}}\lvert( A, \rho_{n_{1},n_{2}}^{i,s})\lvert^{2}\\\hspace*{1.3cm} =
               \sum\limits_{i,n_{1},n_{2}}\lvert( A, \rho_{n_{1},n_{2}}^{i,s})\lvert^{2}  $\vspace*{0.3cm}\\
       $  \lVert \varphi\lVert_{H^{s}}^{2}=(\varphi,f^{0})_{L^{2}}^{2}+\sum\limits_{\underset{n_{1}>0, even}{i\in\{5,6\}}}(1+\gamma\pi^{2}n_{1}^{2})^{s}(\varphi,f_{n_{1},0}^{i})_{L^{2}}^{2}+\sum\limits_{\underset{n_{2}>0, even}{i\in\{7,8\}}}(1+\gamma\pi^{2}n_{2}^{2})^{s}(\varphi,f_{0,n_{2}}^{i})_{L^{2}}^{2}\\\hspace*{1.5cm}+
      \sum\limits_{\underset{n_{1}>0,n_{2}>0, even}{i\in\{1,2,3,4\}}}(1+\gamma\pi^{2}(n_{1}^{2}+n_{2}^{2}))^{s}(\varphi,f_{n_{1},n_{2}}^{i})_{L^{2}}^{2} $\\$\hspace*{1cm}=\sum\limits_{i,n_{1},n_{2}}(1+\gamma\pi^{2}(n_{1}^{2}+n_{2}^{2}))^{s}(\varphi,f_{n_{1},n_{2}}^{i})_{L^{2}}^{2},$\\
                where $ (.,.)_{L^{2}}$ denote the scalar product in $L^{2}(\mathbb{T}^{2})$ and (.,.) is the duality product.\label{b3}
        \end{prp}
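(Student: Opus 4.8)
The plan is to reduce both identities to Parseval's theorem in a Hilbert space, using the two orthonormal bases supplied by Proposition \ref{b2}. The conceptual starting point is to identify the $H^s$-norm with a spectral norm: since each $f_{n_1,n_2}^i$ is an eigenfunction of $\gamma\Delta$ with eigenvalue $-\gamma\pi^2(n_1^2+n_2^2)$, the operator $(1-\gamma\Delta)^{s/2}$ acts diagonally on the basis, multiplying $f_{n_1,n_2}^i$ by $(1+\gamma\pi^2(n_1^2+n_2^2))^{s/2}$, so that $\|\varphi\|_{H^s}=\|(1-\gamma\Delta)^{s/2}\varphi\|_{L^2}$. Establishing that this spectral norm coincides with the norm of Definition 2.1 (the Slobodeckij seminorm for fractional $s$, the weak-derivative norm for integer $s$) is the one genuinely analytic input. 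For integer $s$ it is immediate from the diagonal action of $\Delta$ on the Fourier basis; for fractional $s$ it is the classical equivalence of the Gagliardo seminorm with the spectral seminorm on the torus, which I would cite from [\ref{ac}] rather than reprove.

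Granting this identification, the expansion of $\|\varphi\|_{H^s}^2$ is immediate. Since $\{f^0,f_{n_1,n_2}^i\}$ is an orthonormal basis of $L^2(\mathbb{T}^2)$ by part 1) of Proposition \ref{b2}, Parseval gives $\varphi=\sum_{i,n_1,n_2}(\varphi,f_{n_1,n_2}^i)_{L^2}f_{n_1,n_2}^i$ with convergence in $L^2$. Applying $(1-\gamma\Delta)^{s/2}$ term by term and invoking orthonormality once more yields
\[
\|\varphi\|_{H^s}^2=\|(1-\gamma\Delta)^{s/2}\varphi\|_{L^2}^2=\sum_{i,n_1,n_2}(1+\gamma\pi^2(n_1^2+n_2^2))^s\,(\varphi,f_{n_1,n_2}^i)_{L^2}^2,
\]
which is exactly the asserted formula once the index families $\mathcal{A}_1,\dots,\mathcal{A}_6$ together with $f^0$ are grouped as in the statement.

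For the negative-order norm I would work through the duality pairing. The first displayed formula, $\|A\|_{H^{-s}}=\sup_{\varphi\neq 0}|(A,\varphi)|/\|\varphi\|_{H^s}$, I take as the definition of $H^{-s}$ as the dual of $H^s$. By part 2) of Proposition \ref{b2} the family $\{\rho^0,\rho^{i,s}_{n_1,n_2}\}$ is an orthonormal basis of $H^s$, so any unit vector may be written $\varphi=\sum a^i_{n_1,n_2}\rho^{i,s}_{n_1,n_2}$ with $\sum|a^i_{n_1,n_2}|^2=1$. Then $(A,\varphi)=\sum a^i_{n_1,n_2}(A,\rho^{i,s}_{n_1,n_2})$, and Cauchy--Schwarz gives
\[
|(A,\varphi)|\le\Big(\sum_{i,n_1,n_2}|(A,\rho^{i,s}_{n_1,n_2})|^2\Big)^{1/2},
\]
with equality when $a^i_{n_1,n_2}$ is chosen proportional to $(A,\rho^{i,s}_{n_1,n_2})$. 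Taking the supremum over $\varphi$ produces $\|A\|_{H^{-s}}^2=\sum_{i,n_1,n_2}|(A,\rho^{i,s}_{n_1,n_2})|^2$, the second Parseval identity; equivalently, $A\mapsto\big((A,\rho^{i,s}_{n_1,n_2})\big)_{i,n_1,n_2}$ is the isometry of $H^{-s}$ onto $\ell^2$ dual to the expansion of the preceding paragraph.

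The only real obstacle is the norm-equivalence invoked in the first paragraph; everything else is a mechanical application of Parseval's theorem and the equality case of Cauchy--Schwarz, and the splitting of the sums over the families $\mathcal{A}_1,\dots,\mathcal{A}_6$ is purely bookkeeping.
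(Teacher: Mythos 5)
The paper gives no proof of this proposition: it is stated as a preliminary and attributed to Lemma 6.52 of [1] (Adams), so there is nothing to compare your argument against line by line. Your proof is the standard one and is correct: the expansion of $\lVert\varphi\rVert_{H^{s}}^{2}$ follows from Parseval in $L^{2}$ once $H^{s}$ is equipped with the spectral norm $\lVert(1-\gamma\triangle)^{s/2}\varphi\rVert_{L^{2}}$, and the dual identity follows from expanding a test function in the orthonormal basis $\{\rho^{i,s}_{n_{1},n_{2}}\}$ of Proposition 2.2 and using the equality case of Cauchy--Schwarz (taking finite truncations if the square sum is infinite). The one point you rightly isolate is that the Gagliardo/Slobodeckij norm of Definition 2.1 is only \emph{equivalent} to the spectral norm, not equal to it, so the displayed identities hold as stated only if $H^{s}$ carries the spectral inner product --- which is what Proposition 2.2.2) already presupposes when it declares the $\rho^{i,s}_{n_{1},n_{2}}$ orthonormal in $H^{s}$; citing this identification rather than reproving it is exactly what the paper itself does.
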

\begin{prp}
      Sobolev injection (see [\ref{yc}] page 22)\vspace*{0.1cm}\\
      1) If $s>k+1$ with    k $\in \mathbb{N}$   then $H^{s}(\mathbb{T}^{2}) \subset C^{k}(\mathbb{T}^{2})$  and  $ \forall \varphi \in H^{s}(\mathbb{T}^{2}), $ $\exists $ $ C(s)>0 $ such  that \\\hspace*{5cm}$ \lVert D^{\eta}\varphi \lVert_{\infty}\leq C(s) \lVert \varphi \lVert_{H^{s}}$, $\forall \lvert \eta \lvert\leq $ k. \\
      2) If s>1 then  $H^{s}(\mathbb{T}^{2})$ is a Banach algebra, ie
      $\exists C>0$; $\forall u $, $v \in H^{s}(\mathbb{T}^{2}) $, $uv \in H^{s}(\mathbb{T}^{2})$ and \hspace*{5cm} $\lVert uv \lVert_{H^{s}}\leq C \lVert u \lVert_{H^{s}} \lVert v \lVert_{H^{s}}.$\label{b4}
      \end{prp}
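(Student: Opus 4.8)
\textbf{Proof plan for Proposition \ref{b4}.}
Both parts are classical and rest on the Fourier characterisation of $H^{s}(\mathbb{T}^{2})$. It is convenient to pass to the complex exponential system $\{e_{n}(x)=e^{2i\pi n\cdot x}\}_{n\in\mathbb{Z}^{2}}$, which spans the same space as the real trigonometric system of Proposition \ref{b2}. Writing $\hat\varphi(n)=(\varphi,e_{n})_{L^{2}}$ and $\langle n\rangle=(1+|n|^{2})^{1/2}$, the Parseval identity of Proposition \ref{b3}, combined with the equivalence of the two systems and the fact that $\gamma>0$ is fixed, yields constants $0<c_{1}\le c_{2}$ (depending on $s,\gamma$) such that
\[
c_{1}\sum_{n\in\mathbb{Z}^{2}}\langle n\rangle^{2s}|\hat\varphi(n)|^{2}\le\|\varphi\|_{H^{s}}^{2}\le c_{2}\sum_{n\in\mathbb{Z}^{2}}\langle n\rangle^{2s}|\hat\varphi(n)|^{2}.
\]

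\textbf{Part 1.} Fix a multi-index $\eta$ with $|\eta|\le k$. Differentiating the Fourier series termwise, the coefficients of $D^{\eta}\varphi$ are $(2i\pi n)^{\eta}\hat\varphi(n)$, of modulus at most $C\langle n\rangle^{|\eta|}|\hat\varphi(n)|\le C\langle n\rangle^{k}|\hat\varphi(n)|$. I would control the $\ell^{1}$ norm of these coefficients by Cauchy--Schwarz, factoring $\langle n\rangle^{k}|\hat\varphi(n)|=\langle n\rangle^{k-s}\cdot\bigl(\langle n\rangle^{s}|\hat\varphi(n)|\bigr)$:
\[
\sum_{n}\langle n\rangle^{k}|\hat\varphi(n)|\le\Big(\sum_{n}\langle n\rangle^{2(k-s)}\Big)^{1/2}\Big(\sum_{n}\langle n\rangle^{2s}|\hat\varphi(n)|^{2}\Big)^{1/2}.
\]
The decisive point is that on $\mathbb{Z}^{2}$ the series $\sum_{n}\langle n\rangle^{2(k-s)}$ converges precisely when $2(s-k)>2$, i.e.\ when $s>k+1$ — exactly the hypothesis, and the only place where the dimension enters. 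Absolute summability of the coefficients of $D^{\eta}\varphi$ then forces the corresponding series to converge uniformly, so $D^{\eta}\varphi$ is continuous for every $|\eta|\le k$, giving $\varphi\in C^{k}(\mathbb{T}^{2})$, while the display yields $\|D^{\eta}\varphi\|_{\infty}\le\sum_{n}\langle n\rangle^{k}|\hat\varphi(n)|\le C(s)\|\varphi\|_{H^{s}}$.

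\textbf{Part 2.} The case $k=0$, $s>1$ of Part 1 already provides the two ingredients I need: the embedding $H^{s}\hookrightarrow C^{0}$ and, more precisely, the bound $\|\hat\varphi\|_{\ell^{1}}\le C\|\varphi\|_{H^{s}}$ (the inequality above with $k=0$, convergent since $2s>2$). Since $e_{m}e_{n-m}=e_{n}$, the Fourier coefficients of a product are given by the convolution $\widehat{uv}(n)=\sum_{m}\hat u(m)\hat v(n-m)$. I would then invoke Peetre's inequality $\langle n\rangle^{s}\le C_{s}\bigl(\langle m\rangle^{s}+\langle n-m\rangle^{s}\bigr)$, valid for $s\ge0$, to split
\[
\langle n\rangle^{s}|\widehat{uv}(n)|\le C_{s}\Big(\sum_{m}\langle m\rangle^{s}|\hat u(m)|\,|\hat v(n-m)|+\sum_{m}|\hat u(m)|\,\langle n-m\rangle^{s}|\hat v(n-m)|\Big),
\]
which exhibits each term as a convolution of an $\ell^{2}$ sequence with an $\ell^{1}$ sequence. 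Taking the $\ell^{2}(\mathbb{Z}^{2})$ norm in $n$ and applying Young's inequality $\|f*g\|_{\ell^{2}}\le\|f\|_{\ell^{2}}\|g\|_{\ell^{1}}$ gives
\[
\|uv\|_{H^{s}}\le C_{s}\big(\|u\|_{H^{s}}\|\hat v\|_{\ell^{1}}+\|\hat u\|_{\ell^{1}}\|v\|_{H^{s}}\big),
\]
and the $\ell^{1}$ bounds from Part 1 finish with $\|uv\|_{H^{s}}\le C\|u\|_{H^{s}}\|v\|_{H^{s}}$.

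\textbf{Main obstacle.} The substantive content is the sharp threshold $s>k+1$, which is forced by the convergence of $\sum_{n\in\mathbb{Z}^{2}}\langle n\rangle^{2(k-s)}$; the remaining difficulties are bookkeeping. I expect the delicate points to be verifying the norm equivalence so that Parseval may be used with the clean weight $\langle n\rangle$ despite the $\gamma$-dependent eigenvalues appearing in Proposition \ref{b3}, and, in Part 2, arranging the Peetre splitting so that each summand is exactly an $\ell^{2}*\ell^{1}$ convolution to which Young's inequality applies.
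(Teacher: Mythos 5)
The paper does not prove this proposition at all: it is quoted as a classical fact with a reference to Taylor's book, so there is no in-paper argument to compare yours against. That said, your proposal is a correct and complete version of the standard Fourier-analytic proof on the torus, and it is essentially the argument one finds in the cited references. Part 1 is right: after passing to the exponential system, Cauchy--Schwarz reduces the sup-norm bound on $D^{\eta}\varphi$ to the convergence of $\sum_{n\in\mathbb{Z}^{2}}\langle n\rangle^{2(k-s)}$, which in dimension $2$ holds precisely when $s-k>1$; note that the $k=0$ case of this counting is exactly what the paper itself verifies by hand in Lemma \ref{ap} of the Appendix (convergence of $\sum_{i,n_{1},n_{2}}(\rho^{i,s}_{n_{1},n_{2}}(x))^{2}$ iff $s>1$), so your threshold is consistent with the paper's own computations. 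The norm equivalence you flag is harmless: since $\gamma>0$ is fixed, $\min(1,\gamma\pi^{2})(1+|n|^{2})\le 1+\gamma\pi^{2}|n|^{2}\le\max(1,\gamma\pi^{2})(1+|n|^{2})$, so the weights of Proposition \ref{b3} and the clean weights $\langle n\rangle^{2s}$ give equivalent norms. Part 2 is also correct: Peetre's inequality $\langle n\rangle^{s}\le 2^{s}(\langle m\rangle^{s}+\langle n-m\rangle^{s})$ for $s\ge 0$ splits $\langle n\rangle^{s}|\widehat{uv}(n)|$ into two $\ell^{2}*\ell^{1}$ convolutions, Young's inequality gives $\lVert uv\rVert_{H^{s}}\le C(\lVert u\rVert_{H^{s}}\lVert\hat v\rVert_{\ell^{1}}+\lVert\hat u\rVert_{\ell^{1}}\lVert v\rVert_{H^{s}})$, and the $\ell^{1}$ bound from Part 1 with $k=0$ (which is where $s>1$ is used) closes the estimate. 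I see no gap.
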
 
  \begin{prp}
       Description of the Contraction Semigroup (see [\ref{gc}] and [\ref{oc}]). Let $\gamma>0$,\vspace*{0.1cm}\\
      1) The operator $  \gamma\bigtriangleup $ is  selfadjoint, unbounded on $ L^{2}(\mathbb{T}^{2})$ and it is   the infinitesimal generator of  \hspace*{0.4cm} a semigroup  $ \Upsilon(t)=e^{\gamma t\bigtriangleup} $, furthemore  $\forall \varphi \in L^{2}(\mathbb{T}^{2}),$\vspace*{-0.3cm}
        \[\tag{2.1}\Upsilon(t)\varphi =\underset{i,n_{1},n_{2}}{\sum} exp(-\lambda_{n_{1},n_{2}}t)(\varphi
      ,f_{n_{1},n_{2}}^{i})_{L^{2}}f_{n_{1},n_{2}}^{i}.\label{pp}\vspace*{-0.2cm}\]2)
       $\forall s<0$ (resp. s>0) $\Upsilon(t)$ and $\gamma\bigtriangleup $ have extension (resp. restriction ) to $H^{s}(\mathbb{T}^{2})$ such that $\Upsilon(t)$ is a  contraction semigroup, stronly-continuous, and bounded on $H^{s}(\mathbb{T}^{2})$ with  $\lvert \Upsilon(t) \lvert_{\mathcal{L}(H^{s}(\mathbb{T}^{2}))}\leq 1 $, where  $\mathcal{L}(H^{s}(\mathbb{T}^{2})) $ is the space of linear continuous   operator on $H^{s}(\mathbb{T}^{2})$.\label{b5}
      \end{prp}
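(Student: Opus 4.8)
The plan is to exploit the complete orthonormal system of eigenfunctions of $\gamma\bigtriangleup$ provided by Proposition \ref{b2} together with the Parseval identities of Proposition \ref{b3}, so that $\Upsilon(t)$ and $\gamma\bigtriangleup$ both become \emph{diagonal} operators in this basis and every assertion reduces to an elementary estimate on the scalar multipliers $e^{-\lambda_{n_1,n_2}t}$. Throughout I would write $1+\lambda_{n_1,n_2}=1+\gamma\pi^2(n_1^2+n_2^2)$ for the weights appearing in the $H^s$-norm, and recall that $0\le e^{-\lambda_{n_1,n_2}t}\le 1$ for $t\ge 0$.

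For part 1) I would take the right-hand side of (\ref{pp}) as the \emph{definition} of $\Upsilon(t)$ and check it has the stated properties. Since $\sum_{i,n_1,n_2}\lvert(\varphi,f_{n_1,n_2}^{i})_{L^2}\rvert^2=\lVert\varphi\rVert_{L^2}^2<\infty$ by Proposition \ref{b2} and the multipliers are bounded by $1$, the series converges in $L^2(\mathbb{T}^2)$ and $\lVert\Upsilon(t)\varphi\rVert_{L^2}\le\lVert\varphi\rVert_{L^2}$. The semigroup identities $\Upsilon(0)=\mathrm{Id}$ and $\Upsilon(t)\Upsilon(r)=\Upsilon(t+r)$ follow termwise from $e^{-\lambda r}e^{-\lambda t}=e^{-\lambda(r+t)}$, while selfadjointness of each $\Upsilon(t)$, and hence of its generator, is immediate because the multipliers are real and the $f_{n_1,n_2}^{i}$ form an orthonormal basis; unboundedness of the generator is clear from $-\lambda_{n_1,n_2}\to-\infty$. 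To identify the generator with $\gamma\bigtriangleup$, I would compute, for $\varphi$ with $\sum_{i,n_1,n_2}\lambda_{n_1,n_2}^2\lvert(\varphi,f_{n_1,n_2}^{i})_{L^2}\rvert^2<\infty$ (that is, $\varphi\in H^2$), the $L^2$-limit of $t^{-1}(\Upsilon(t)\varphi-\varphi)$ term by term: since $t^{-1}(e^{-\lambda t}-1)\to-\lambda$ and $\lvert t^{-1}(e^{-\lambda t}-1)\rvert\le\lambda$ yields an $\ell^2$-domination, the limit equals $\sum_{i,n_1,n_2}-\lambda_{n_1,n_2}(\varphi,f_{n_1,n_2}^{i})_{L^2}f_{n_1,n_2}^{i}=\gamma\bigtriangleup\varphi$, which is exactly the spectral representation of $\gamma\bigtriangleup$. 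Strong continuity at $t=0$ follows from dominated convergence applied to $\lVert\Upsilon(t)\varphi-\varphi\rVert_{L^2}^2=\sum_{i,n_1,n_2}(e^{-\lambda_{n_1,n_2}t}-1)^2\lvert(\varphi,f_{n_1,n_2}^{i})_{L^2}\rvert^2$.

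For part 2) the key observation is that the same diagonalisation is orthogonal for the $H^s$-structure. When $s>0$, the Parseval identity of Proposition \ref{b3} gives
\[
\lVert\Upsilon(t)\varphi\rVert_{H^s}^2=\sum_{i,n_1,n_2}(1+\gamma\pi^2(n_1^2+n_2^2))^{s}\,e^{-2\lambda_{n_1,n_2}t}\,\lvert(\varphi,f_{n_1,n_2}^{i})_{L^2}\rvert^2\le\lVert\varphi\rVert_{H^s}^2,
\]
so $\Upsilon(t)$ restricts to a contraction on $H^s$, with strong continuity obtained as before by dominated convergence (the summand being dominated by $(1+\gamma\pi^2(n_1^2+n_2^2))^{s}\lvert(\varphi,f_{n_1,n_2}^{i})_{L^2}\rvert^2$). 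For a negative index, say $H^{-s}$ with $s>0$, I would extend $\Upsilon(t)$ by duality, setting $(\Upsilon(t)A,\varphi):=(A,\Upsilon(t)\varphi)$ for $A\in H^{-s}(\mathbb{T}^2)$ and $\varphi\in H^{s}(\mathbb{T}^2)$. Because the $\rho_{n_1,n_2}^{i,s}$ are scalar multiples of the $f_{n_1,n_2}^{i}$ one has $\Upsilon(t)\rho_{n_1,n_2}^{i,s}=e^{-\lambda_{n_1,n_2}t}\rho_{n_1,n_2}^{i,s}$, whence the $H^{-s}$-Parseval identity of Proposition \ref{b3} gives
\[
\lVert\Upsilon(t)A\rVert_{H^{-s}}^2=\sum_{i,n_1,n_2}e^{-2\lambda_{n_1,n_2}t}\,\lvert(A,\rho_{n_1,n_2}^{i,s})\rvert^2\le\lVert A\rVert_{H^{-s}}^2,
\]
again a contraction, and strong continuity follows by the identical dominated-convergence argument. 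The operator-norm bound $\lvert\Upsilon(t)\rvert_{\mathcal{L}(H^s(\mathbb{T}^2))}\le1$ is read off directly from these estimates for every admissible $s$.

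Most of the computation is mechanical once the diagonalisation is in place; the genuinely delicate points are the identification of the infinitesimal generator with $\gamma\bigtriangleup$ on its correct domain $H^2(\mathbb{T}^2)$, and the verification that the duality-defined extension to the negative-order spaces is consistent with the $L^2$-action on the dense subspace of finite trigonometric sums. I would settle the latter by checking that the two prescriptions agree on the basis $\{\rho_{n_1,n_2}^{i,s}\}$ and invoking density, after which the contraction and continuity estimates above apply unchanged.
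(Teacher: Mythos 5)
Your argument is correct and complete. Note, however, that the paper offers no proof of this proposition at all: it is stated as a preliminary with citations to Blount and Kotelenez, so there is nothing internal to compare against. Your spectral diagonalisation — defining $\Upsilon(t)$ by the eigenfunction series, reading off the contraction and semigroup properties from the multipliers $e^{-\lambda_{n_1,n_2}t}\le 1$, identifying the generator on $H^{2}$ by dominated convergence of the difference quotients, and extending to $H^{-s}$ by duality with the consistency check on the basis $\{\rho^{i,s}_{n_1,n_2}\}$ — is exactly the standard route the cited references take, and it supplies a valid self-contained substitute for the citation.
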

      \begin{lem}(see [\ref{oc}])
       \hspace*{0.2cm}$\forall s>0$, $\forall \varphi \in H^{s}(\mathbb{T}^{2})$, resp. $\forall \varphi \in L^{\infty}(\mathbb{T}^{2})$,  from (\ref{pp}), we have \vspace*{0.12cm}\\\hspace*{4cm} $\lVert \Upsilon(t)\varphi \lVert_{H^{s}}\leq \lVert \varphi \lVert_{H^{s}}$ and $\lVert \Upsilon(t)\varphi \lVert_{\infty}\leq \lVert \varphi \lVert_{\infty}.$ \label{b6}
      \end{lem}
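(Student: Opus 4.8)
The plan is to treat the two inequalities separately, since only the first follows transparently from the spectral representation (\ref{pp}), whereas the second rests on the positivity (probabilistic nature) of the semigroup, which the Fourier sum does not display directly.

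For the $H^s$ estimate I would argue entirely at the level of Fourier coefficients. By (\ref{pp}) the coordinates of $\Upsilon(t)\varphi$ in the orthonormal basis $\{f^i_{n_1,n_2}\}$ of $L^2(\mathbb{T}^2)$ are $e^{-\lambda_{n_1,n_2}t}(\varphi,f^i_{n_1,n_2})_{L^2}$. Inserting these into the Parseval identity of Proposition \ref{b3} gives
\[
\lVert\Upsilon(t)\varphi\lVert_{H^s}^2=\sum_{i,n_1,n_2}\bigl(1+\gamma\pi^2(n_1^2+n_2^2)\bigr)^s\,e^{-2\lambda_{n_1,n_2}t}\,(\varphi,f^i_{n_1,n_2})_{L^2}^2.
\]
Since every eigenvalue satisfies $\lambda_{n_1,n_2}=\gamma\pi^2(n_1^2+n_2^2)\ge 0$, we have $e^{-2\lambda_{n_1,n_2}t}\le 1$ for all $t\ge 0$, so each summand is dominated termwise by the corresponding term of $\lVert\varphi\lVert_{H^s}^2$. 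Summing yields $\lVert\Upsilon(t)\varphi\lVert_{H^s}^2\le\lVert\varphi\lVert_{H^s}^2$. This step is completely routine.

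For the $L^\infty$ estimate my plan is to exploit that $\Upsilon(t)=e^{\gamma t\Delta}$ is exactly the transition semigroup of the torus diffusion driving the model, so it admits a kernel representation $\Upsilon(t)\varphi(x)=\int_{\mathbb{T}^2}p_t(x,y)\,\varphi(y)\,dy$ with $p_t\ge 0$. The normalization $\int_{\mathbb{T}^2}p_t(x,y)\,dy=1$ follows from $\Upsilon(t)\mathbf 1=\mathbf 1$, which is immediate from (\ref{pp}) because the constant $f^0=\mathbf 1$ is the eigenfunction with eigenvalue $\lambda_0=0$. Then for a.e.\ $x$,
\[
|\Upsilon(t)\varphi(x)|\le\int_{\mathbb{T}^2}p_t(x,y)\,|\varphi(y)|\,dy\le\lVert\varphi\lVert_\infty\int_{\mathbb{T}^2}p_t(x,y)\,dy=\lVert\varphi\lVert_\infty,
\]
and taking the supremum over $x$ closes the argument. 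Equivalently one may use the probabilistic representation $\Upsilon(t)\varphi(x)=\mathbb{E}[\varphi(X^x_t)]$, where $X^x$ is the torus Brownian motion started at $x$, and bound $|\mathbb{E}[\varphi(X^x_t)]|\le\mathbb{E}[|\varphi(X^x_t)|]\le\lVert\varphi\lVert_\infty$.

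The only genuine obstacle is justifying the kernel positivity and normalization for the $L^\infty$ bound: strictly from the Fourier series (\ref{pp}) this is not manifest, so I would either quote the standard fact that the heat kernel on $\mathbb{T}^2$ is a non-negative probability density, or derive $\Upsilon(t)\mathbf 1=\mathbf 1$ together with positivity from the maximum principle for $\partial_t u=\gamma\Delta u$. Once positivity is available, both inequalities are one-line contractions, and I expect the cleanest writeup to obtain the $H^s$ bound via Parseval and the $L^\infty$ bound via the probabilistic representation already built into the model's setup.
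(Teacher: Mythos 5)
Your argument is correct on both counts: the $H^{s}$ contraction follows exactly as you say from the Parseval identity of Proposition \ref{b3} together with $e^{-2\lambda_{n_{1},n_{2}}t}\leq 1$, and the $L^{\infty}$ bound from the non-negativity and normalization of the heat kernel on $\mathbb{T}^{2}$. The paper itself gives no proof of this lemma --- it is quoted from [\ref{oc}] --- but the ingredients you invoke for the sup-norm bound (the kernel representation $\Upsilon(t)\phi(x)=\int_{\mathbb{T}^{2}}P_{t}(x,y)\phi(y)dy$, the resulting monotonicity of $\Upsilon(t)$, and $\Upsilon(t)C=C$ read off from (\ref{pp})) are precisely the ones the authors use later in the proof of Lemma \ref{int1}, so your writeup is fully consistent with the paper's own treatment of the semigroup.
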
  
      \begin{lem}(Lemma 2.5 of [\ref{ic}])\\
      Suppose that $(\mathbb{D},d) $ is a separable metric space and let $(\hat{P}_{n})_{n}$ denote a sequence of random probability measures on $(\mathbb{D},\mathcal{D})$ defined on a probability space $(\Omega, \mathcal{A},\mathbb{P})$. Then \\
      \hspace*{6cm} $\displaystyle \int_{\mathbb{D}}f(x)d\hat{P}_{n}(x)\xrightarrow{\mathbb{P}}\int_{\mathbb{D}}f(x)dP(x),$\\ for any $f$ bounded and  Lipschitz if and only if $d_{F}(\hat{P}_{n},P)\xrightarrow{\mathbb{P}}0.$ \label{b7}     
      \end{lem}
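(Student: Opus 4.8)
The statement is the ``convergence in probability'' version of the fact, recalled in the Notation of the paper, that the Fortet distance $d_F$ metrizes weak convergence on the space of probability measures over a separable metric space. Combined with the classical fact that for probability measures weak convergence is equivalent to $(\mu_n,f)\to(\mu,f)$ for every bounded Lipschitz $f$, the deterministic equivalence is immediate; the whole content of the lemma lies in passing from convergence in probability of each scalar functional $\mu\mapsto(\mu,f)$ to convergence in probability of the supremum $d_F$. I would prove the two implications separately.

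The implication $d_F(\hat P_n,P)\xrightarrow{\mathbb P}0\Rightarrow(\hat P_n,f)\xrightarrow{\mathbb P}(P,f)$ is a direct pathwise estimate. Given a bounded Lipschitz $f$, set $c=\max(\lVert f\rVert_\infty,\lVert f\rVert_L)$; then $f/c$ lies in the unit ball defining $d_F$, so that
\[ \lvert(\hat P_n,f)-(P,f)\rvert\le c\,d_F(\hat P_n,P), \]
and the right-hand side tends to $0$ in probability, whence so does the left.

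For the converse I would combine a countable determining family with the subsequence characterization of convergence in probability. Using separability of $\mathbb{D}$, fix a dense sequence $\{x_j\}$ and consider the countable family of bounded Lipschitz functions $g_{j,m}(x)=(1-m\,d(x,x_j))^+$ together with the constant $1$; this family is convergence determining for probability measures. By hypothesis $(\hat P_n,g_{j,m})\xrightarrow{\mathbb P}(P,g_{j,m})$ for every $j,m$. Given an arbitrary subsequence, a diagonal extraction over the countably many indices $(j,m)$ produces a further subsequence $(n_\ell)$ and a full-probability event $\Omega_0$ on which $(\hat P_{n_\ell},g_{j,m})\to(P,g_{j,m})$ simultaneously for all $j,m$. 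For each fixed $\omega\in\Omega_0$ the deterministic measures $\hat P_{n_\ell}(\omega)$ thus converge to $P(\omega)$ against the whole determining family, hence weakly, and since $d_F$ metrizes weak convergence we obtain $d_F(\hat P_{n_\ell}(\omega),P(\omega))\to 0$. Therefore $d_F(\hat P_{n_\ell},P)\to 0$ almost surely; as every subsequence admits such a further subsequence, the subsequence criterion yields $d_F(\hat P_n,P)\xrightarrow{\mathbb P}0$. (The routing through a subsequence has the advantage of covering a possibly random limit $P$; alternatively one may build the equivalent metric $\beta(\mu,\nu)=\sum_k 2^{-k}(\lvert(\mu,f_k)-(\nu,f_k)\rvert\wedge 1)$ and transfer convergence in probability from $\beta$ to $d_F$ through topological equivalence.)

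The main obstacle is the single step of reducing the uncountable family of test functions to a countable convergence-determining family of bounded Lipschitz functions, and verifying that such a family genuinely detects weak convergence of probability measures, ruling out any undetected escape of mass. Separability of $\mathbb{D}$ is exactly what makes this reduction available, and it is what enables the diagonal extraction above. On the spaces actually used in the paper (the compact torus $\mathbb{T}^2$ and the relevant Polish path spaces) the point is classical, and the bumps $g_{j,m}$ do the job because one centred at a point charged by the limit measure detects any local deficiency of mass there.
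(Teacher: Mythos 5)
The paper offers no proof of this lemma to compare against: it is quoted verbatim, with attribution, as Lemma 2.5 of the cited note of B\"ucher and Kojadinovic, and is used as a black box. So your argument can only be judged on its own terms, and on those terms it is essentially correct and is the standard route: the forward implication by the pathwise bound $\lvert(\hat P_{n},f)-(P,f)\rvert\le\max(\lVert f\rVert_{\infty},\lVert f\rVert_{L})\,d_{F}(\hat P_{n},P)$, and the converse by combining a countable convergence-determining class of bounded Lipschitz functions with a diagonal extraction and the subsequence characterization of convergence in probability, then invoking the classical fact that $d_{F}$ metrizes weak convergence of probability measures on a separable metric space.

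Two technical points deserve to be made explicit rather than waved at. First, the family $\{1\}\cup\{g_{j,m}\}$ as you define it is not by itself convergence determining: to get the portmanteau inequality $\liminf_{\ell}\hat P_{n_{\ell}}(\omega)(U)\ge P(\omega)(U)$ for open $U$ you must approximate $1_{U}$ from below by \emph{finite maxima} of truncations of these bumps, and convergence against a single $g_{j,m}$ does not give convergence against their maxima. The fix is simply to enlarge the class to all finite maxima of functions $q\,g_{j,m}\wedge 1$ with rational $q$, which is still countable and consists of bounded Lipschitz functions, so your diagonal extraction goes through unchanged; but the closure under finite lattice operations should be stated. Second, for the conclusion $d_{F}(\hat P_{n},P)\xrightarrow{\mathbb{P}}0$ even to be well posed one needs $d_{F}(\hat P_{n},P)$ to be a random variable, i.e.\ the supremum over the uncountable unit ball of bounded Lipschitz functions must be measurable; separability again rescues you, since that supremum is unchanged when restricted to a suitable countable subfamily. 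With these two additions your proof is complete.
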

      Let  $T>0$. Let  $F$ be  a separable and reflexive Banach space included (with density and continuous injection) in a Hilbert space H. We identify H with its dual. So $F\subset H\subset F'$, where $F'$ is the dual of $F$.  Let $(A(t,.))_{\{t\in ]0,T[\}}$ be a family of linear  operators  from $F$ to $F'$, such that:\vspace*{0.25cm}\\
            \hspace*{1cm}(1) $\theta\longrightarrow (A(t,u+\theta v),w)$ is continuous from $\mathbb{R}$ to $\mathbb{R}$, $
            \forall u,v,w \in F $\vspace*{0.17cm}\\
            \hspace*{1cm}(2) $\exists$  $ \delta>0, \lVert A(t,u)\lVert_{F'}\leq \delta\lVert u\lVert_{F}, \hspace*{0.1cm}\forall u \in F$ \vspace*{0.18cm} \\
            \hspace*{1cm}(3) $\exists$  $ \sigma_{1}>0,$ $\sigma_{2}\in \mathbb{R},$ $(A(t,u),u)+\sigma_{2}\lVert u\lVert_{H}^{2}\geq \sigma_{1}\lVert u\lVert ^{2}_{F}$, $\hspace*{0.1cm} \forall u \in F$\vspace*{0.16cm}\\
           \hspace*{1cm}(4) $\forall u\in F,$ $t\longrightarrow A(t,u)$ est Lebesgue-mesurable with values in $F'$,\vspace*{0.1cm}\\
            where $(.,.)$ is a duality product between $F'$ and $F$.
            \begin{prp} (see Theorem 1.1 page 81 in [\ref{tc}])\\
            Let $(\Omega,\mathcal{F}_{0},\mathbb{P})$ be a probability space, let  $(A(t,.))_{\{t\in ]0,T[\}}$ be a family of  operators    from $F$ to $F'$ which  satisfy (1), (2), (3) and (4).  For $u(0)\in L^{2}(\Omega,\mathcal{F}_{0},\mathbb{P},H),$  $f=f_{1}+f_{2}$ with\vspace*{0.13cm} \\\hspace*{2.5cm} $-$ $ f_{1}\in L^{2}(\Omega,L^{1}(0,T,H))$, non-anticipative, \vspace*{0.12cm}\\\hspace*{2.5cm}  $-$ $f_{2} \in  L^{2}(\Omega\times ]0,T[, F),$ non-anticipative, \\ and  $(M_{t})_{0\leq t \leq T}$ a continuous square-integrable  martingale with values in H, the equation \vspace*{0.1cm}\\\hspace*{2.6cm} $du(t)+A(u(t))dt=f(t)dt+dM(t),$ $t\in [0,T]$ with $u(0)=u_{0},$\vspace*{0.15cm}\\ admits a unique solution  $u\in  L^{2}(\Omega\times ]0,T[, F )\cap L^{2}(\Omega, C([0,T],H)).\label{b8} 
            $\vspace*{-0.17cm}\end{prp}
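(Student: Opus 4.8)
The plan is to establish Proposition \ref{b8} by the variational (energy) method for stochastic evolution equations in the Gelfand triple $F\subset H\subset F'$, following the Galerkin--compactness scheme of Lions--Pardoux and Krylov--Rozovskii. The decisive structural feature is that each $A(t,\cdot)$ is \emph{linear}: unlike in the general monotone theory one never needs Minty's monotonicity trick to identify the weak limit of $A(t,u_n)$, because a bounded linear operator is weak-to-weak continuous. Assumptions (1)--(4) play their usual roles: (2) makes $A(t,\cdot)$ map $F$ boundedly into $F'$, (3) is the G\aa{}rding-type coercivity producing the basic energy estimate, (4) gives measurability in time, and (1) supplies the (here automatic) hemicontinuity.

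First I would set up a Galerkin scheme. Take a countable total family $(e_k)_{k\ge 1}$ in $F$ (for instance the orthonormal basis of $H^{s}(\mathbb{T}^2)$ of Proposition \ref{b2}, which spans a dense subspace of $F$), put $F_n=\mathrm{span}(e_1,\dots,e_n)$ with projection $P_n$, and solve on $F_n$ the resulting finite-dimensional linear It\^o SDE with measurable, linearly growing coefficients driven by the continuous martingale $P_nM$; standard SDE theory gives a unique $F_n$-valued solution $u_n$. Applying the finite-dimensional It\^o formula to $\lVert u_n(t)\rVert_H^2$ yields
\[ \lVert u_n(t)\rVert_H^2 = \lVert u_n(0)\rVert_H^2 - 2\int_0^t (A(s,u_n(s)),u_n(s))\,ds + 2\int_0^t (f(s),u_n(s))\,ds + 2\int_0^t (u_n(s),dM(s))_H + \langle P_nM\rangle_t . \]
Dominating $-(A(s,u_n),u_n)\le -\sigma_1\lVert u_n\rVert_F^2+\sigma_2\lVert u_n\rVert_H^2$ by (3), using Young's inequality to absorb the $f_2$ contribution into the coercive term, the Burkholder--Davis--Gundy inequality on the stochastic integral, and Gronwall's lemma to close the $f_1$ and martingale-bracket contributions, I expect the uniform bounds $\sup_n\mathbb{E}\big[\sup_{t\le T}\lVert u_n(t)\rVert_H^2\big]<\infty$ and $\sup_n\mathbb{E}\int_0^T\lVert u_n(t)\rVert_F^2\,dt<\infty$.

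By reflexivity of $F$, hence of $L^2(\Omega\times(0,T),F)$, these bounds let me extract a subsequence with $u_n\rightharpoonup u$ weakly in $L^2(\Omega\times(0,T),F)$ and weak-$\ast$ in $L^2(\Omega,L^\infty(0,T,H))$, while (2) gives $A(\cdot,u_n)\rightharpoonup\chi$ weakly in $L^2(\Omega\times(0,T),F')$. Linearity forces $\chi=A(\cdot,u)$, so passing to the limit in the tested, time-integrated variational identity identifies $u$ as a solution. Uniqueness is then immediate: the difference $w$ of two solutions solves the same equation with $w(0)=0$ and zero right-hand side, and It\^o's formula for $\lVert w\rVert_H^2$ together with (3) and Gronwall forces $w\equiv 0$.

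I expect two steps to carry the real weight. The first is the infinite-dimensional It\^o formula for $\lVert u\rVert_H^2$ in the triple $F\subset H\subset F'$: one must show that the limit $u$, a priori only in $L^2((0,T),F)$ with time derivative in $L^2((0,T),F')$ plus a martingale part, admits a version in $C([0,T],H)$ satisfying the energy identity; this stochastic Lions-type regularity lemma is exactly what upgrades the weak-$\ast$ $L^\infty(H)$ bound to genuine $H$-continuity. The second delicate point is the passage to the limit in the martingale term $\int_0^\cdot(u_n,dM)_H$ under only weak convergence of $u_n$: since stochastic integration is not weakly continuous, I would test against fixed elements of $F_m$ (letting $n\to\infty$ first, then $m\to\infty$), where the integrand converges strongly enough, exploiting that $M$ is a fixed square-integrable $H$-valued martingale independent of the approximation index. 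Once these two lemmas are in place, the remaining estimates are routine applications of (2), (3), Burkholder--Davis--Gundy and Gronwall.
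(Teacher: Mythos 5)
The paper offers no proof of this proposition: it is quoted directly from Theorem 1.1, page 81 of the th\`ese [\ref{tc}], where it is established by exactly the variational Galerkin argument you outline (coercivity (3) for the a priori energy estimate, the It\^o formula for $\lVert u\rVert_{H}^{2}$ in the triple $F\subset H\subset F'$ to obtain the $C([0,T],H)$ version, and Gronwall for uniqueness). Your sketch therefore reproduces the proof of the cited source, with the correct additional observation that linearity of $A(t,\cdot)$ renders hemicontinuity (1) and Minty's monotonicity trick superfluous for identifying the weak limit of $A(\cdot,u_{n})$.
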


 \section{Law of Large Numbers  of   Initial Measures \label{se8} }
 Recall that  
  $\mu_{0}^{S,N}=\frac{1}{N}\sum_{i=1}^{N}\{1_{A}(X^{i})(1-\xi_{i})+1_{A^{c}}(X^{i})\}\delta_{X^{i}}$,  
  $\mu_{0}^{I,N}=\frac{1}{N}\sum_{i=1}^{N}1_{A}(X^{i})\xi_{i}\delta_{X^{i}}$ and \\ $\mu_{0}^{N}=\mu_{0}^{S,N}+\mu_{0}^{I,N}=\frac{1}{N}\sum_{i=1}^{N}\delta_{X^{i}} $   where $\{X^{i},1\leq i\leq N \}$ is  an independent an identically distributed family of random variables, globally independent of $\{\xi_{i},1\leq i\leq N \}$, which in turn is a mutually independent family of Ber($p$).\vspace*{0.1cm}\\The following is assumed to be hold throughout this paper.
   \vspace*{0.17cm}\\\textbf{Assumption (H0):} The law $\nu$  of $X^{1}$ is abosuletly continuous with respect to the Lebesgue measure and its density $g$ satisfies:\\
   \hspace*{2cm} there exists $\delta_{1}>0$, $\delta_{2}>0$ such that $\delta_{1}\leq g(x)\leq \delta_{2},$ $\forall x \in \mathbb{T}^{2}.$
 \begin{thm}
The sequence $(\mu_{0}^{S,N},\mu_{0}^{I,N},\mu_{0}^{N})_{N\geq 1}$ converges a.s towards  $(\mu_{0}^{S},\mu_{0}^{I},\mu_{0})$\\ in $(\mathcal{M}_{F}(\mathbb{T}^{2}))^{3},$ where
$\mu_{0}^{S}(dx)=\{(1-p)1_{A}(x)+1_{A^{c}}(x)\}\nu(dx),$ $ \mu_{0}^{I}(dx)=p1_{A}(x)\nu(dx)$ and $ \mu_{0}(dx)=\nu(dx)$.\label{kl}
 \end{thm}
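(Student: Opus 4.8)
The plan is to reduce the weak convergence in $(\mathcal{M}_{F}(\mathbb{T}^{2}))^{3}$ to a scalar strong law of large numbers applied to a countable family of test functions, and then to upgrade to convergence for every continuous test function by an equicontinuity/uniform-mass argument. First I would fix $f\in C(\mathbb{T}^{2})$ and write the three pairings as empirical averages,
\[
(\mu_{0}^{N},f)=\frac{1}{N}\sum_{i=1}^{N}f(X^{i}),\qquad
(\mu_{0}^{I,N},f)=\frac{1}{N}\sum_{i=1}^{N}1_{A}(X^{i})\xi_{i}f(X^{i}),
\]
\[
(\mu_{0}^{S,N},f)=\frac{1}{N}\sum_{i=1}^{N}\{1_{A}(X^{i})(1-\xi_{i})+1_{A^{c}}(X^{i})\}f(X^{i}).
\]
Since the pairs $(X^{i},\xi_{i})$ are i.i.d.\ and $f$ is bounded on the compact torus, each of these is an average of i.i.d.\ bounded random variables, so the SLLN gives a.s.\ convergence to the corresponding expectation. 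Using the independence of $\xi_{1}$ and $X^{1}$ together with $\mathbb{E}[\xi_{1}]=p$, these expectations are exactly $(\mu_{0},f)$, $(\mu_{0}^{I},f)$ and $(\mu_{0}^{S},f)$, which identifies the three limits.

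The only genuine obstacle is that weak convergence requires the convergence to hold simultaneously for all $f\in C(\mathbb{T}^{2})$ on a single almost sure event, whereas the SLLN a priori furnishes, for each $f$, an a.s.\ event depending on $f$. To handle this exchange of ``for all $f$'' and ``almost surely'', I would use that $\mathbb{T}^{2}$ is a compact metric space, so $C(\mathbb{T}^{2})$ is separable: fix a countable dense subset $\{f_{k}\}_{k\ge 1}$ and let $\Omega_{0}$ be the intersection over $k$ of the a.s.\ events on which the three averages built from $f_{k}$ converge; $\Omega_{0}$ is still a.s.\ as a countable intersection.

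To pass from $\{f_{k}\}$ to an arbitrary $f\in C(\mathbb{T}^{2})$, I would exploit that all three sequences have total mass at most $1$ (indeed $\mu_{0}^{N}$ has mass exactly $1$). Choosing $f_{k}$ with $\|f-f_{k}\|_{\infty}<\varepsilon$ and splitting
\[
|(\mu_{0}^{N},f)-(\mu_{0},f)|\le |(\mu_{0}^{N},f-f_{k})|+|(\mu_{0}^{N},f_{k})-(\mu_{0},f_{k})|+|(\mu_{0},f_{k}-f)|,
\]
the two outer terms are each at most $\varepsilon$ by the mass bound, while the middle term vanishes on $\Omega_{0}$; hence $\limsup_{N}|(\mu_{0}^{N},f)-(\mu_{0},f)|\le 2\varepsilon$ for every $\varepsilon>0$, giving convergence. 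The same $\varepsilon/3$ estimate applies verbatim to $\mu_{0}^{S,N}$ and $\mu_{0}^{I,N}$.

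On the single almost sure event $\Omega_{0}$ the three components therefore converge weakly to the finite measures $\mu_{0}^{S}$, $\mu_{0}^{I}$, $\mu_{0}$ (finite since their masses are at most $1$), which is exactly a.s.\ convergence in $(\mathcal{M}_{F}(\mathbb{T}^{2}))^{3}$. Equivalently, one may phrase the last step through the Fortet distance $d_{F}$: the unit ball $\{\|f\|_{\infty}\le 1,\ \|f\|_{L}\le 1\}$ is equicontinuous, hence relatively compact in $C(\mathbb{T}^{2})$ by the Arzel\`a--Ascoli theorem, so pointwise-in-$f$ convergence on the dense family is automatically uniform over this ball. I expect no difficulty beyond this ``for all $f$, almost surely'' exchange; the identification of the three limiting expectations and the SLLN itself are routine, and Assumption (H0) is not needed for this statement.
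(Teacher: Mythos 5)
Your proposal is correct and follows essentially the same route as the paper: reduce to the pairings $(\mu_{0}^{S,N},\varphi),(\mu_{0}^{I,N},\psi),(\mu_{0}^{N},\phi)$, apply the strong law of large numbers to these i.i.d.\ bounded averages, and identify the limiting expectations. The only difference is that you spell out the standard separability/$3\varepsilon$ argument needed to get a single almost sure event for all test functions (which the paper leaves implicit), and you correctly observe that Assumption (H0) plays no role here.
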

 \begin{proof} 
  All we need is to prove that for any  $\varphi,\psi,\phi\in C(\mathbb{T}^{2})$  the sequence \\$ \left((\mu_{0}^{S,N},\varphi),(\mu_{0}^{I,N},\psi),(\mu_{0}^{N},\phi)\right)_{N\geq 1}$ converges a.s towards  $\left((\mu_{0}^{S},\varphi),(\mu_{0}^{I},\psi),(\mu_{0},\phi)\right).$\\
 Let $\varphi,\psi,\phi\in C(\mathbb{T}_{2}),$ we have\\
 \hspace*{4cm}$(\mu_{0}^{S,N},\varphi)=\frac{1}{N}\sum_{i=1}^{N}[1_{A}(X^{i})(1-\xi_{i})+1_{A^{c}}(X^{i})]\varphi({X^{i}})$\\
 \hspace*{4cm}$(\mu_{0}^{I,N},\psi)=\frac{1}{N}\sum_{i=1}^{N}1_{A}(X^{i})\xi_{i}\psi({X^{i}})$\vspace*{0.1cm}\\
 \hspace*{4.2cm}$(\mu_{0}^{N},\phi)=\frac{1}{N}\sum_{i=1}^{N}\phi({X^{i}}).$\vspace*{0.12cm}\\
 Furthemore, according to the law of large numbers\vspace*{0.1cm}\\
 $(\mu_{0}^{S,N},\varphi)\xrightarrow{a.s}\mathbb{E}((1_{A}(X^{1})(1-\xi_{1})+1_{A^{c}}(X^{1}))\varphi({X^{1}}))=\displaystyle\int_{\mathbb{T}^{2}}\{(1-p)1_{A}(x)+1_{A^{c}}(x)\}\varphi (x)\nu(dx),$\\
  $(\mu_{0}^{I,N},\psi)\xrightarrow{a.s}\mathbb{E}(1_{A}(X^{1})\xi_{1}\psi({X^{1}}))=p\displaystyle\int_{A}\psi (x)\nu(dx),$\\
$(\mu_{0}^{N},\phi)\xrightarrow{a.s}\mathbb{E}(\phi({X^{1}}))=\displaystyle\int_{\mathbb{T}^{2}}\phi (x)\nu(dx).$\vspace*{0.1cm}\\
 Thus\vspace*{0.1cm}\\
 $\left((\mu_{0}^{S,N},\varphi),(\mu_{0}^{I,N},\psi),(\mu_{0}^{N},\phi)\right)\xrightarrow{a.s}\Big((1-p)\int_{A}\varphi(x)\nu(dx)+\int_{A^{c}}\varphi(x)\nu(dx),\\\hspace*{10cm}p\int_{A}\psi(x)\nu(dx),\hspace*{0.2cm}\int_{\mathbb{T}^{2}}\phi(x)\nu(dx)\Big). $
  \end{proof}
 \section{Central Limit Theorem of Initial Measures \label{se10}}
 We define $ U_{0}^{N}=\sqrt{N}(\mu_{0}^{S,N}-\mu_{0}^{S})$ , $V_{0}^{N}=\sqrt{N}(\mu_{0}^{I,N}-\mu_{0}^{I}))$  and $Z_{0}^{N}=\sqrt{N}(\mu_{0}^{N}-\mu_{0})$. \\ In this section we  study the  convergence of the sequence  $ (U_{0}^{N},V_{0}^{N},Z_{0}^{N}) $ in $( H^{-s}(\mathbb{T}^{2}))^{3},\\ $  as $ N\rightarrow \infty $,   with s>1.
 \begin{prp}
 For any s>1, there exists   $C_{1},C_{2},C_{3}>0 $ such that \vspace*{0.2cm}\\\hspace*{1cm} $\underset{N\geq 1}{\sup}\mathbb{E}(\lVert Z_{0}^{N} \lVert_{H^{-s}}^{2})<C_{1}; \quad  \underset{N\geq 1}{\sup}\mathbb{E}(\lVert U_{0}^{N} \lVert_{H^{-s}}^{2})<C_{2} $ and $\underset{N\geq 1}{\sup}\mathbb{E}(\lVert V_{0}^{N} \lVert_{H^{-s}}^{2})<C_{3}. $\label{uo}
 \end{prp}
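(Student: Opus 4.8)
The plan is to reduce the three $H^{-s}$ norms to sums of squared Fourier-type coefficients via the Parseval identity of Proposition~\ref{b3}, and then to control each coefficient in $L^2(\Omega)$ using the i.i.d.\ structure of the family $\{(X^i,\xi_i)\}$. I treat $Z_0^N$ in detail; the arguments for $U_0^N$ and $V_0^N$ differ only by a bounded weight. For a fixed $f\in H^s(\mathbb{T}^2)$, since $\mu_0^N=\frac1N\sum_{i=1}^N\delta_{X^i}$ and $\mu_0=\nu$, one has
\[
(Z_0^N,f)=\frac{1}{\sqrt N}\sum_{i=1}^N\bigl(f(X^i)-\mathbb{E}(f(X^1))\bigr),
\]
a normalized sum of centered i.i.d.\ terms. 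The cross terms vanish in expectation, so
\[
\mathbb{E}\bigl((Z_0^N,f)^2\bigr)=\mathrm{Var}\bigl(f(X^1)\bigr)\le \int_{\mathbb{T}^2}f^2\,d\nu\le \delta_2\lVert f\rVert_{L^2}^2,
\]
where the last inequality is Assumption (H0). Crucially this bound is uniform in $N$.

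Next I would take $f=\rho_{n_1,n_2}^{i,s}$. Because the $f_{n_1,n_2}^i$ are $L^2$-orthonormal (Proposition~\ref{b2}), $\lVert\rho_{n_1,n_2}^{i,s}\rVert_{L^2}^2=(1+\gamma\pi^2(n_1^2+n_2^2))^{-s}$, and summing over the whole basis through Parseval (Proposition~\ref{b3}) together with Tonelli's theorem gives
\[
\sup_{N\ge1}\mathbb{E}\bigl(\lVert Z_0^N\rVert_{H^{-s}}^2\bigr)\le \delta_2\sum_{i,n_1,n_2}\frac{1}{(1+\gamma\pi^2(n_1^2+n_2^2))^{s}}=:C_1.
\]
For $U_0^N$ and $V_0^N$ the only modification is that $f(X^i)$ is replaced by $\Phi_i f(X^i)$ with $\Phi_i=1_A(X^i)(1-\xi_i)+1_{A^c}(X^i)\in\{0,1\}$, respectively $1_A(X^i)\xi_i\in\{0,1\}$. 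Using the independence of $\xi_1$ and $X^1$ one checks that $\mathbb{E}(\Phi_1 f(X^1))=(\mu_0^S,f)$, so the centered i.i.d.\ structure persists, and since the weight lies in $[0,1]$ the variance bound is unchanged; this yields identical constants $C_2,C_3$.

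The heart of the argument is the finiteness of the deterministic series $\sum_{n_1,n_2}(1+\gamma\pi^2(n_1^2+n_2^2))^{-s}$. Its dominant part is the genuine two-dimensional sum (over $i\in\{1,2,3,4\}$), which by comparison with $\int_{\mathbb{R}^2}(1+\gamma\pi^2\lvert x\rvert^2)^{-s}\,dx$ converges if and only if $2s>2$, that is $s>1$. This is exactly the hypothesis of the Proposition, and it is the only place where the restriction on $s$ is used: the dimension $2$ of the torus fixes the critical exponent at $d/2=1$.
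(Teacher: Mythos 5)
Your proof is correct and follows essentially the same route as the paper: reduce $\lVert\cdot\rVert_{H^{-s}}^2$ to the sum of squared coefficients against the basis $(\rho^{i,s}_{n_1,n_2})$ via Parseval, use the i.i.d.\ structure so that the expectation of each squared coefficient is a single-term variance bounded by a second moment, and conclude from the convergence for $s>1$ of the series $\sum(1+\gamma\pi^2(n_1^2+n_2^2))^{-s}$. The only cosmetic difference is that the paper bounds $\sum_{i,n_1,n_2}(\rho^{i,s}_{n_1,n_2}(x))^2$ pointwise (its Lemma \ref{ap}) before integrating against $g\le\delta_2$, whereas you integrate each $(\rho^{i,s}_{n_1,n_2})^2$ first; by Tonelli these are the same computation.
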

 \begin{proof}
We only prove that  $\underset{N\geq 1}{\sup}\mathbb{E}(\lVert V_{0}^{N} \lVert_{H^{-s}}^{2})<C_{3}$. The other estimates follow by a similar argument. Since $1_{A}(X_{j})\xi_{j}\delta_{X_{j}}$ are i.i.d with law $\mu_{0}^{I}$, from assumption (H0) and Lemma \ref{ap} in the Appendix below,
  we have  \vspace*{0.05cm}\\\hspace*{1.5cm} $\mathbb{E}(\lVert V_{0}^{N} \lVert_{H^{-s}}^{2} )=\mathbb{E}(\sum\limits_{i,n_{1},n_{2}}(V_{0}^{N},\rho^{i,s}_{n_{1},n_{2}})^{2})\\\hspace*{3.8cm}=N\sum\limits_{i,n_{1},n_{2}}\mathbb{E}\left(\left((\mu_{0}^{I,N},\rho^{i,s}_{n_{1},n_{2}})-(\mu_{0}^{I},\rho^{i,s}_{n_{1},n_{2}})\right)^{2}\right)\\\hspace*{3.8cm}=\frac{1}{N}\sum\limits_{i,n_{1},n_{2}}\mathbb{E}\left(\left[\sum\limits_{j=1}^{N}[1_{A}(X_{j})\xi_{j}\rho^{i,s}_{n_{1},n_{2}}(X_{j})-(\mu_{0}^{I},\rho^{i,s}_{n_{1},n_{2}})]\right]^{2}\right)\\\hspace*{3.8cm}=\frac{1}{N}\sum\limits_{i,n_{1},n_{2}}\sum\limits_{j=1}^{N}\mathbb{E}\left(\left[1_{A}(X_{j})\xi_{j}\rho^{i,s}_{n_{1},n_{2}}(X_{j})-(\mu_{0}^{I},\rho^{i,s}_{n_{1},n_{2}})\right]^{2}\right)\\\hspace*{3.8cm}\leq\frac{1}{N}\sum\limits_{i,n_{1},n_{2}}\sum\limits_{j=1}^{N}\mathbb{E}\left([1_{A}(X_{j})\xi_{j}\rho^{i,s}_{n_{1},n_{2}}(X_{j})]^{2}\right)\\\hspace*{3.8cm}\leq p\displaystyle\int_{A}\sum\limits_{i,n_{1},n_{2}}(\rho^{i,s}_{n_{1},n_{2}})^{2}(x)g(x)dx\leq p\delta_{2} C $ \quad  if s>1.  
  \end{proof}Let us give now the main result of this section.
 \begin{thm}  For any s>1, the sequence 
  $(U_{0}^{N},V_{0}^{N},Z_{0}^{N})_{N\geq 1} $ converges in law  in $(H^{-s}(\mathbb{T}^{2}))^{3}$ towards    $(U_{0},V_{0},Z_{0})$ where $\forall \varphi,\psi,\phi \in H^{s}(\mathbb{T}^{2})$, $\left((U_{0},\varphi),(V_{0},\psi),(Z_{0},\phi)\right)$ is a Gaussian vector which satisfies:\vspace*{0.03cm}\\
 $\displaystyle(U_{0},\varphi)=W_{1}[\varphi\sqrt{g}\{(1-p)1_{A}+1_{A^{c}}\}]-(1-p)W_{1}(\sqrt{g})\int_{A}\varphi(x)g(x)dx-W_{1}(\sqrt{g})\int_{A^{c}}\varphi(x)g(x)dx$\vspace*{-0.4cm}\[\hspace*{-9.5cm}\tag{4.1}+W_{2}(1_{A}\varphi\sqrt{(p-p^{2})g}),\label{u1}\]\vspace*{-0.5cm}
   \[\hspace*{-3.8cm}\tag{4.2}\displaystyle(V_{0},\psi)=pW_{1}(1_{A}\psi\sqrt{g})-pW_{1}(\sqrt{g})\int_{A}\psi(x)g(x)dx-W_{2}(1_{A}\psi\sqrt{(p-p^{2})g}),\label{u2}\]\vspace*{-0.34cm}
    \[\hspace*{-8cm}\tag{4.3}\displaystyle(Z_{0},\phi)=W_{1}(\phi\sqrt{g})-W_{1}(\sqrt{g})\left(\int_{\mathbb{T}^{2}}\phi(x)g(x)dx\right),\label{u3}\] 
   where   $W_{1},W_{2}$ are mutually independent  two dimentional white noises.\label{lo} 
 \end{thm}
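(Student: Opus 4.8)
The plan is to prove convergence of the finite-dimensional distributions and then tightness in $(H^{-s}(\mathbb{T}^2))^3$. I first rewrite the three fields as normalised sums of independent centred summands: putting, for $1\le i\le N$,
\[\mathcal{U}_i=\{1_A(X^i)(1-\xi_i)+1_{A^c}(X^i)\}\delta_{X^i}-\mu_0^S,\quad \mathcal{V}_i=1_A(X^i)\xi_i\delta_{X^i}-\mu_0^I,\quad \mathcal{Z}_i=\delta_{X^i}-\mu_0,\]
one has $U_0^N=N^{-1/2}\sum_{i=1}^N\mathcal{U}_i$, and likewise for $V_0^N,Z_0^N$, the triples $(\mathcal{U}_i,\mathcal{V}_i,\mathcal{Z}_i)_{1\le i\le N}$ being i.i.d.\ and centred. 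Since $s>1$, the Sobolev injection of Proposition \ref{b4} yields $H^s(\mathbb{T}^2)\subset C(\mathbb{T}^2)$, so each $\delta_{X^i}$ is a bounded linear functional on $H^s(\mathbb{T}^2)$, i.e.\ an element of $H^{-s}(\mathbb{T}^2)$ whose norm is bounded uniformly in $X^i$; thus $\mathcal{U}_i,\mathcal{V}_i,\mathcal{Z}_i$ are genuine $H^{-s}$-valued random variables, with finite second moment by Proposition \ref{uo}.

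The key is to separate the two independent sources of randomness, the position $X^i$ and the mark $\xi_i$. Conditioning on $X^i$ splits $\mathcal{V}_i=P_i+B_i$, where $P_i=p\,1_A(X^i)\delta_{X^i}-\mu_0^I$ depends on $X^i$ alone and $B_i=1_A(X^i)(\xi_i-p)\delta_{X^i}$ satisfies $\mathbb{E}(B_i\mid X^i)=0$; the Bernoulli part of $\mathcal{U}_i$ is exactly $-B_i$, which accounts for the opposite signs of the $W_2$-terms in (\ref{u1}) and (\ref{u2}), while $\mathcal{Z}_i$ has no Bernoulli part. Because $\mathbb{E}(B_i\mid X^i)=0$, the position and Bernoulli parts are orthogonal in $L^2$, so the limit splits into two independent Gaussian fields. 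For any finite family of test functions I then apply the Cram\'er--Wold device and the classical CLT to the real i.i.d.\ sums $N^{-1/2}\sum_i(\mathcal{U}_i,\varphi)$, $N^{-1/2}\sum_i(\mathcal{V}_i,\psi)$ and $N^{-1/2}\sum_i(\mathcal{Z}_i,\phi)$, obtaining joint Gaussian limits whose covariance is read off from the second moments of $(\mathcal{U}_1,\mathcal{V}_1,\mathcal{Z}_1)$. A direct computation, using $1_A1_{A^c}=0$ and $(1-\xi_i)\xi_i=0$, shows that the position covariance $\int_{\mathbb{T}^2}ab\,g-\int ag\int bg$ of two functions $a,b$ of $X^1$ is reproduced by the map $a\mapsto W_1(a\sqrt{g})-W_1(\sqrt{g})\int ag$ (recall $\int g=1$), and that the Bernoulli covariance $(p-p^2)\int_A\varphi\psi\,g$ is reproduced by $W_2(1_A\varphi\sqrt{(p-p^2)g})$. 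Matching these with the stated coefficients identifies all finite-dimensional marginals with the law of $(U_0,V_0,Z_0)$ in (\ref{u1})--(\ref{u3}); in particular the cancellation of the $W_1$- and $W_2$-terms reproduces the identity $U_0+V_0=Z_0$ forced by $\mu_0^{S,N}+\mu_0^{I,N}=\mu_0^N$.

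To pass from finite-dimensional convergence to convergence in law in the Hilbert space, I establish tightness. Fixing $s'$ with $1<s'<s$, Proposition \ref{uo} applied at the exponent $s'$ gives $\sup_{N}\mathbb{E}(\|U_0^N\|_{H^{-s'}}^2+\|V_0^N\|_{H^{-s'}}^2+\|Z_0^N\|_{H^{-s'}}^2)<\infty$; since the injection $H^{-s'}(\mathbb{T}^2)\hookrightarrow H^{-s}(\mathbb{T}^2)$ is compact for $s'<s$, Markov's inequality shows that the balls $\{\|\cdot\|_{H^{-s'}}\le R\}$, which are relatively compact in $H^{-s}$, carry mass at least $1-C/R^2$ uniformly in $N$, whence tightness of $(U_0^N,V_0^N,Z_0^N)_N$ in $(H^{-s}(\mathbb{T}^2))^3$. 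Tightness together with the identified finite-dimensional limits yields the claimed convergence in law. (Equivalently, one may conclude directly from the central limit theorem for i.i.d.\ sums in the separable Hilbert space $(H^{-s})^3$, whose only hypothesis, a finite second moment of the summand, is Proposition \ref{uo}.) I expect the genuine difficulty to lie in this upgrade to the infinite-dimensional topology — it is exactly where the slightly stronger bound at $s'>1$ and the compactness of $H^{-s'}\hookrightarrow H^{-s}$ are needed — while the remaining care is the covariance bookkeeping, in particular the anti-correlation of the Bernoulli contributions to $U_0$ and $V_0$.
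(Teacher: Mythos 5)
Your proposal is correct and follows essentially the same route as the paper: tightness from the uniform bound $\sup_N\mathbb{E}\lVert\cdot\lVert_{H^{-s'}}^2<\infty$ of Proposition \ref{uo} combined with the compact embedding $H^{-s'}\hookrightarrow H^{-s}$, and identification of the limit via the scalar CLT applied to linear combinations of $(U_0^N,\varphi),(V_0^N,\psi),(Z_0^N,\phi)$. Your explicit splitting of each summand into a position part and a centred Bernoulli part $B_i$ (with the Bernoulli contribution to $U_0^N$ equal to $-B_i$) is just a tidier way of organizing the variance/covariance computations that the paper carries out directly, and correctly accounts for the opposite signs of the $W_2$-terms in (\ref{u1}) and (\ref{u2}).
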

 \subsection{\textit{Proof of Theorem \ref{lo}} }
 We first prove the tightness of the sequence $(U_{0}^{N},V_{0}^{N},Z_{0}^{N})_{N\geq 1}$,  then  identify the limit.
 \subsubsection{Tightness of $(U_{0}^{N},V_{0}^{N},Z_{0}^{N})_{N\geq 1}$} 
 \begin{prp}
 For any s>1, the sequences $(U_{0}^{N})_{N\geq 1}$; $(V_{0}^{N})_{N\geq 1}$ and  $(Z_{0}^{N})_{N\geq 1}$ are tight in $ H^{-s}.$ \label{uoo}
 \end{prp}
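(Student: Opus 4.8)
The plan is to deduce tightness in $H^{-s}$ from the uniform second-moment bound of Proposition \ref{uo}, combined with the fact that balls of a strictly more regular Sobolev space are compact subsets of $H^{-s}$. The single idea is that in a separable Hilbert space a family of random elements which is uniformly bounded in $L^{2}$-mean in a finer space is automatically tight, because the closed balls of that finer space are relatively compact in the coarser one.

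Concretely, since $s>1$ I would fix an intermediate exponent $s'$ with $1<s'<s$. Because Proposition \ref{uo} holds for \emph{every} exponent strictly larger than $1$, applying it with $s'$ in place of $s$ produces finite constants with
\[
\sup_{N\geq 1}\mathbb{E}\bigl(\lVert U_{0}^{N}\rVert_{H^{-s'}}^{2}\bigr)<\infty,\qquad
\sup_{N\geq 1}\mathbb{E}\bigl(\lVert V_{0}^{N}\rVert_{H^{-s'}}^{2}\bigr)<\infty,\qquad
\sup_{N\geq 1}\mathbb{E}\bigl(\lVert Z_{0}^{N}\rVert_{H^{-s'}}^{2}\bigr)<\infty.
\]
Next I would invoke the compact Sobolev embedding on the torus: since $-s'>-s$, the injection $H^{-s'}(\mathbb{T}^{2})\hookrightarrow H^{-s}(\mathbb{T}^{2})$ is compact (Rellich's theorem; in the orthonormal basis of Proposition \ref{b2} this embedding is diagonal with singular values $(1+\gamma\pi^{2}(n_{1}^{2}+n_{2}^{2}))^{(s'-s)/2}\to 0$). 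Hence for each $R>0$ the closed ball $\overline{B}_{R}=\{A\in H^{-s'}:\lVert A\rVert_{H^{-s'}}\leq R\}$ is a relatively compact subset of $H^{-s}$.

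I would then conclude by Chebyshev's inequality. For the family $(V_{0}^{N})$, say,
\[
\sup_{N\geq 1}\mathbb{P}\bigl(\lVert V_{0}^{N}\rVert_{H^{-s'}}>R\bigr)\leq \frac{1}{R^{2}}\sup_{N\geq 1}\mathbb{E}\bigl(\lVert V_{0}^{N}\rVert_{H^{-s'}}^{2}\bigr)\xrightarrow[R\to\infty]{}0,
\]
so that, given $\varepsilon>0$, choosing $R$ large enough makes the compact set $K_{\varepsilon}:=\overline{B}_{R}$ (closure taken in $H^{-s}$) satisfy $\inf_{N}\mathbb{P}\bigl(V_{0}^{N}\in K_{\varepsilon}\bigr)\geq 1-\varepsilon$. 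This is exactly tightness of $(V_{0}^{N})_{N\geq 1}$ in $H^{-s}$, and the same argument applies verbatim to $(U_{0}^{N})_{N\geq 1}$ and $(Z_{0}^{N})_{N\geq 1}$.

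The argument is short precisely because Proposition \ref{uo} already carries the analytic weight. The only point requiring care — and the place where the hypothesis $s>1$ is genuinely used twice — is the choice of the strictly intermediate exponent $s'$: one needs $s'>1$ for Proposition \ref{uo} to apply (this is where $\sum_{i,n_{1},n_{2}}(\rho^{i,s'}_{n_{1},n_{2}})^{2}$ is summable, equivalently $H^{s'}(\mathbb{T}^{2})\hookrightarrow C(\mathbb{T}^{2})$), and one needs $s'<s$ for the embedding $H^{-s'}\hookrightarrow H^{-s}$ to be \emph{compact} rather than merely continuous. Thus the interval $(1,s)$ must be non-empty, which is exactly the standing assumption $s>1$.
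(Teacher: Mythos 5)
Your proof is correct and follows essentially the same route as the paper: a uniform second-moment bound in $H^{-s'}$ for some $1<s'<s$ (from Proposition \ref{uo}), the compactness of the embedding $H^{-s'}\hookrightarrow H^{-s}$, and Chebyshev's inequality. Your remark that Proposition \ref{uo} must be invoked at the intermediate exponent $s'$ is a small but welcome precision over the paper's write-up, which reuses the constant $C_{1}$ without comment.
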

 \begin{proof}
 The tightness of $(U_{0}^{N})_{N\geq 1}$ in $H^{-s}$ follows readily from the fact that \vspace*{0.12cm}\\\hspace*{6cm}$\underset{N\geq 1}{\sup}\mathbb{E}(\lVert U_{0}^{N}\lVert_{H^{-s}}^{2}) \leq C_{1}$. \\ Indeed, since $\forall \hspace*{0.1cm} 1<s'<s$, the embedding $H^{-s'}(\mathbb{T}^{2})\hookrightarrow H^{-s}(\mathbb{T}^{2})$ is compact (see Theorem 1.69 page 47 of [\ref{ec}]) then  $B_{H^{-s'}}=\{ \mu \in H^{-s'} ; \lVert \mu \lVert_{H^{-s'}}\leq R\}$ is a compact subset of $H^{-s}$. \vspace*{0.1cm}\\Thus  $\mathbb{P}( U_{0}^{N}\notin B_{H^{-s'}})=\mathbb{P}(\lVert U_{0}^{N}\lVert_{H^{-s'}}> R)\vspace*{0.1cm}\\\hspace*{3.8cm}\leq \frac{1}{R^{2}}\mathbb{E}(\lVert U_{0}^{N}\lVert_{H^{-s'}}^{2})\leq \frac{C_{1}}{R^{2}}$.\vspace*{0.13cm}\\So by  choosing R large enough we get the result.\vspace*{0.12cm}\\
 The tightness of $ (V_{0}^{N})_{N\geq 1}$ and $ (Z_{0}^{N})_{N\geq 1}$ are obtained  by similar arguments.
     \end{proof}
       From Proposition \ref{uoo} we deduce that the   sequence $(U_{0}^{N},V_{0}^{N},Z_{0}^{N})_{N \geq 1} $  is  tight in $(H^{-s})^{3}$, thus  by Prokhorov's theorem  there exists  a subsequence still denoted   $(U_{0}^{N},V_{0}^{N},Z_{0}^{N})_{N\geq 1} $  which converges in law   towards $(U_{0},V_{0},Z_{0})$ in $(H^{-s})^{3}$.
  \subsubsection{Gaussian caracter and expressions of ($(U_{0},\varphi)$,$ (V_{0},\psi),(Z_{0},\phi)$)  using white noises}
  Let $\varphi, \psi,\phi  \in H^{s}$, let us first compute the values of Var($(U_{0},\varphi)$);  Var($(V_{0},\psi)$);  Var($(Z_{0},\phi)$);  Cov($(U_{0},\varphi),(V_{0},\psi)$);  Cov($(U_{0},\varphi),(Z_{0},\phi)$); and  Cov($(V_{0},\psi),(Z_{0},\phi)$).
\vspace*{0.1cm}\\$-$ Computation of Var($(U_{0},\varphi)$). We have\vspace*{0.2cm}\\   $(U_{0}^{N},\varphi)=\sqrt{N}[(\mu_{0}^{S,N},\varphi)-(\mu_{0}^{S},\varphi)]\\
 \hspace*{1.4cm}=\sqrt{N}[\frac{1}{N}\sum\limits_{i=1}^{N}\{1_{A}(X^{i})(1-\xi_{i})+1_{A^{c}}(X^{1})\}\varphi({X^{i}})-(1-p)\int_{A}\varphi(x)dx-\int_{A^{c}}\varphi(x)dx],$\vspace*{0.15cm}\\
 and \\Var$[\{1_{A}(X^{1})(1-\xi_{1})+1_{A^{c}}(X^{1})\}\varphi({X_{1}})]$=\vspace*{0.15cm}\\
   \hspace*{0.14cm}=Var$[1_{A}(X^{1})(1-\xi_{1})\varphi({X^{1}})]+$Var$(1_{A^{c}}(X^{1})\varphi({X^{1}}))+$2Cov$[1_{A}(X^{1})(1-\xi_{1})\varphi({X^{1}}),1_{A^{c}}(X^{1})\varphi({X^{1}})]$\vspace*{0.16cm}\\
   \hspace*{0.1cm}=Var$[1_{A}(X^{1})(1-\xi_{1})\varphi({X^{1}})]+$Var$(1_{A^{c}}(X^{1})\varphi({X^{1}}))-2\mathbb{E}[1_{A}(X^{1})(1-\xi_{1})\varphi({X^{1}})]\mathbb{E}(1_{A^{c}}(X^{1})\varphi({X^{1}}))$\vspace*{0.15cm}\\\\
   \hspace*{0.1cm}=$\displaystyle(1-p)\int_{A}\varphi^{2}(x)g(x)dx-(1-p)^{2}\Big(\int_{A}\varphi(x)g(x)dx\Big)^{2}+\int_{A^{c}}\varphi^{2}(x)g(x)dx-\Big(\int_{A^{c}}\varphi(x)g(x)dx\Big)^{2}\\\hspace*{0.2cm}-\displaystyle2(1-p)\int_{A}\varphi(x)g(x)dx\int_{A^{c}}\varphi(x)g(x)dx$\vspace*{0.03cm}\\
   \hspace*{0.1cm}= $\alpha_{p}.$\\ So according to the central limit theorem \vspace*{-0.3cm}
    \[\tag{4.4}( U_{0}^{N},\varphi)\xrightarrow{L}(U_{0},\varphi) \hspace*{0.13cm}\textrm{where}\hspace*{0.13cm} (U_{0},\varphi)\rightsquigarrow \mathcal{N}(0,\alpha_{p}).\label{a1}\]
    $-$ Computation of Var($(V_{0},\psi)$). One has   \vspace*{0.12cm}\\ $\hspace*{1.5cm}(V_{0}^{N},\psi)=\sqrt{N}((\mu_{0}^{I,N},\psi)-(\mu_{0}^{I},\psi))$  
    and  \\\hspace*{1.5cm}Var$[1_{A}(X^{1})\xi_{1}\psi({X^{1}})]=\displaystyle p\int_{A}\psi^{2}(x)g(x)dx-p^{2}\Big(\int_{A}\psi(x)g(x)dx\Big)^{2}=\beta_{p}.$\\
     So according to the central limit theorem\vspace*{-0.2cm}
      \[\tag{4.5}(V_{0}^{N},\psi)\xrightarrow{L}(V_{0},\psi) \hspace*{0.13cm}\textrm{where}\hspace*{0.13cm}  (V_{0},\psi)\rightsquigarrow \mathcal{N}(0,\beta_{p}). \label{a2}\] 
      $-$ Computation of Var($(Z_{0},\phi)$). 
      According to central limit theorem \vspace*{-0.2cm}\[\tag{4.6}(Z_{0}^{N},\phi)\xrightarrow{L}(Z_{0},\phi)\hspace*{0.13cm}\textrm{where}\hspace*{0.13cm} (Z_{0},\phi)\rightsquigarrow N\Big(0,\sigma^{2}=\displaystyle\int_{\mathbb{T}_{2}}\phi^{2}(x)g(x)dx-\Big(\int_{\mathbb{T}^{2}}\phi(x)g(x)dx\Big)^{2}\Big).\label{a3}\]
      $-$ Computation of Cov($(U_{0},\varphi),(V_{0},\psi)$), Cov($(U_{0},\varphi),(Z_{0},\phi)$) and  Cov($(V_{0},\psi),(Z_{0},\phi)$).  \vspace*{0.25cm}\\ As 
       Cov($(U_{0},\varphi),(V_{0},\psi ))=\frac{1}{2}$[Var($(U_{0},\varphi)+(V_{0},\psi )$)-Var($(U_{0},\varphi))$-Var($(V_{0},\psi )$)], \vspace*{0.12cm}\\so we  need to compute Var($(U_{0},\varphi)+(V_{0},\psi )$), Var($(U_{0},\varphi)+(Z_{0},\phi )$) and Var($(V_{0},\psi)+(Z_{0},\phi )$).\\ We have\\
 $(U_{0}^{N},\varphi)+(V_{0}^{N},\psi )+(Z_{0}^{N},\phi )=\sqrt{N}\Big\{ \frac{1}{N}\sum\limits_{i=1}^{N}\Big[\{1_{A}(X^{i})(1-\xi_{i})+1_{A^{c}}(X^{i})\}\varphi({X^{i}})+1_{A}(X^{i})\xi_{i}\psi({X^{i}})\\\hspace*{1.5cm}+\phi(X^{i})\Big]-(1-p)\int_{A}\varphi(x)g(x)dx-\int_{A^{c}}\varphi(x)g(x)dx-p\int_{A}\psi(x)g(x)dx-\int_{\mathbb{T}^{2}}\phi(x)g(x)dx\Big\},$  \\ and \\
  Var[$\{1_{A}(X^{1})(1-\xi_{1})+1_{A^{c}}(X^{1})\}\varphi({X^{1}})+1_{A}(X^{1})\xi_{1}\psi({X^{1}})+\phi(X^{1})$]\vspace*{0.1cm} \\\hspace*{2cm}= $\alpha_{p}+ \beta_{p}+\sigma^{2}$+2Cov[$\{1_{A}(X^{1})(1-\xi_{1})+1_{A^{c}}(X^{1})\}\varphi({X^{1}}), 1_{A}(X^{1})\xi_{1}\psi({X^{1}})$]\vspace*{0.1cm}\\\hspace*{2.3cm}+ 2Cov[$\{1_{A}(X^{1})(1-\xi_{1})+1_{A^{c}}(X^{1})\}\varphi({X^{1}}), \phi(X^{1})$]+ 2Cov[$ 1_{A}(X^{1})\xi_{1}\psi({X^{1}}),\phi(X^{1})$].\vspace*{0.2cm}\\
  Furthemore since $ \{1_{A}(X^{1})(1-\xi_{1})+1_{A^{c}}(X^{1})\}\varphi({X^{1}}) 1_{A}(X^{1})\xi_{1}\psi({X^{1}})$)=0 a.s, \vspace*{0.1cm}\\Cov[$\{1_{A}(X^{1})(1-\xi_{1})+1_{A^{c}}(X^{1})\}\varphi({X^{1}}), 1_{A}(X^{1})\xi_{1}\psi({X^{1}})$]\vspace*{0.05cm}\\\hspace*{3cm}= $-\mathbb{E}[\{1_{A}(X^{1})(1-\xi_{1})+1_{A^{c}}(X^{1})\}\varphi({X^{1}})]\mathbb{E}(1_{A}(X^{1})\xi_{1}\psi({X^{1}}))$\vspace*{0.05cm}\\\hspace*{2.98cm}= $\displaystyle- p(1-p)\int_{A}\varphi(x)g(x)dx\int_{A}\psi(x)g(x)dx-p\int_{A^{c}}\varphi(x)g(x)dx\int_{A}\psi(x)g(x)dx$\\\hspace*{3cm}= $ \gamma_{p}.$ \hspace*{0.2cm} \\On the other hand:\vspace*{0.06cm}\\
 Cov[$\{1_{A}(X^{1})(1-\xi_{1})+1_{A^{c}}(X^{1})\}\varphi({X^{1}}), \phi(X^{1})$] $ \\\hspace*{2cm}=\displaystyle(1-p)\int_{A}\varphi(x)\phi(x)g(x)dx+(p-1)\int_{A}\varphi(x)g(x)dx\int_{\mathbb{T}^{2}}\phi(x)g(x)dx\\\hspace*{2.3cm}+\int_{A^{c}}\varphi(x)\phi(x)g(x)dx-\int_{A^{c}}\varphi(x)g(x)dx\int_{\mathbb{T}^{2}}\phi(x)g(x)dx$ 
   \\\hspace*{2cm}= $\eta_{p},$ \vspace*{0.07cm} \\ and  \\
    Cov[$ 1_{A}(X^{1})\xi_{1}\psi({X^{1}}),\phi(X^{1})$]$ = \displaystyle p\int_{A}\psi(x)\phi(x)g(x)dx -p\int_{A}\psi(x)g(x)dx\int_{\mathbb{T}^{2}}\phi(x)g(x)dx
    =\lambda_{p}.$ \vspace*{0.2cm}\\
  Thus according to the central limit  theorem \vspace*{-0.3cm}
\[\tag{4.7}(U_{0}^{N},\varphi)+(V_{0}^{N},\psi )+(Z_{0}^{N},\phi )\rightsquigarrow \mathcal{N}(0,\alpha_{p}+ \beta_{p}+\sigma^{2}+2(\gamma_{p}+\lambda_{p}+\eta_{p})).\label{a4}\]
 Taking $\phi\equiv 0$; $\psi\equiv 0$ and  $\varphi\equiv 0$  respectively in (\ref{a4}), we obtain\vspace*{0.08cm}\\
 \hspace*{4.1cm}Var($(U_{0},\varphi)+(V_{0},\psi )$) = $\alpha_{p}+ \beta_{p} +2\gamma_{p},$\vspace*{0.08cm} \\
 \hspace*{4cm} Var($(U_{0},\varphi)+(Z_{0},\phi )$) =$\alpha_{p}+\sigma^{2}+2\eta_{p},$\vspace*{0.08cm}\\
   \hspace*{4cm} Var($(V_{0},\psi)+(Z_{0},\phi )$) =$\beta_{p}+\sigma^{2}+2\lambda_{p}.$ \\ So we deduce that\vspace*{0.2cm}\\\hspace*{0.8cm}
 Cov($(U_{0},\varphi),(V_{0},\psi)$)=$\gamma_{p}$; Cov($(U_{0},\varphi),(Z_{0},\phi))=\eta_{p}$ and  Cov($(V_{0},\psi),(Z_{0},\phi))=\lambda_{p}.$\hspace*{0.4cm}(4.8)\vspace*{0.18cm}\\
   Hence from (\ref{a1}), (\ref{a2}), (\ref{a3}), (\ref{a4}) and (4.8), we  conclude that for any  $ \varphi$,$\psi$, $\phi$ $\in H^{s}$,  \\ $((U_{0},\varphi),(V_{0},\psi),(Z_{0},\phi))$ is a Gaussian vector with the same law as the vector given by (\ref{u1}), (\ref{u2}) and (\ref{u3}).
 \subsubsection{Conclusion}   
The sequence $(U_{0}^{N},V_{0}^{N},Z_{0}^{N})_{N\geq 1} $  being  tight  so by Prokhorov's theorem  there exists a subsequence still  denoted   $(U_{0}^{N},V_{0}^{N},Z_{0}^{N})_{N\geq 1} $ which converge in law in $(H^{-s}(\mathbb{T}^{2}))^{3}$  towards $(U_{0},V_{0},Z_{0})$. On the other hand  $\forall \varphi,\psi, \phi \in H^{s}$ $((U_{0},\varphi),(V_{0},\psi),(Z_{0},\phi))\rightsquigarrow \mathcal{N} \left((0,0,0),\begin{pmatrix} 
\alpha_{p} & \gamma_{p}&  \eta_{p} \\
\gamma_{p} & \beta_{p}& \lambda_{p} \\
\eta_{p} & \lambda_{p}& \sigma^{2}
\end{pmatrix}\right),$ \\ hence  we conclude that the whole sequence  $(U_{0}^{N},V_{0}^{N},Z_{0}^{N})_{N\geq 1} $ converge in law in $(H^{-s})^{3}$  towards $(U_{0},V_{0},Z_{0}) \vspace*{-0.2cm}$.
\section{Law of Large Numbers}
The aim of this section is to study the convergence  of $
(\mu^{S,N},\mu^{I,N})$  under  Assumption (H1) below, and the convergence of $\mu^{N}$ as $ N\rightarrow \infty. $ \vspace*{0.2cm}\\
To this end   we are going to:\vspace*{0.1cm}\\
$\hspace*{0.5cm} \bullet$ Write the system of evolution equations of $( \mu^{S,N},\mu^{I,N}).$\vspace*{0.1cm}\\
$\hspace*{0.5cm} \bullet$ Study the convergence of  $\{\mu_{t}^{N},t\geq 0\}$ in $C(\mathbb{R_{+}},\mathcal{M}_{F}(\mathbb{T}^{2})).$  \vspace*{0.2cm} \\
$ \hspace*{0.5cm}\bullet$ Study the tightness of $  (\mu^{S,N},\mu^{I,N})_{ N \geq 1}$ in Skorokhod's space $(D(\mathbb{R_{+}},\mathcal{M}_{F}(\mathbb{T}^{2})))^{2}. $\vspace*{0.2cm}\\
$ \hspace*{0.5cm}\bullet$  Show that all limit points $\mu^{S}$ and $\mu^{I }$ of $ (\mu^{S,N})_{N\geq 1}$ and $(\mu^{I,N})_{N\geq 1}$ are absolutely continuous \hspace*{0.7cm} with  respect to the Lebesgue measure with density $f_{S}$ and $f_{I}$ bounded by $\delta_{2}$ ($\delta_{2}$ is defined \\\hspace*{0.8cm}in section \ref{se8}).\vspace*{0.2cm} \\
$ \hspace*{0.5cm} \bullet$ Show that the system of   PDEs verified by the pair  $(f_{S}, f_{I})$  admits a unique   solution in \\\hspace*{0.7cm} $\Lambda=\{(f_{1},f_{2})/0\leq f_{i}\leq \delta_{2}, i\in \{1,2\}\}.$\vspace*{0.2cm} \\The following is assumed to hold throughout section 5.\vspace*{0.15cm}\\
\textbf{Assumption (H1):
\hspace*{0.4cm} k is Lipschitz, with the Lipschitz constant $C_{k}.$} 
\subsection{ System of evolution equations  of  $ \{ (\mu_{t}^{S,N},\mu_{t}^{I,N}) $, $ t\geq 0  \}$ \label{se5}}
\subsubsection{Evolution equation  of $\{ \mu_{t}^{S,N},t\geq0\}$} 
Let   $ \{ M^{i}, 1\leq i \leq N \} $  be a family of mutualy  independant  standard (i.e with mean measure the Lebesgue measure ) Poisson Random  Measures (in short PRMs)  on $ \mathbb{R}^{2}_{+} $  which are  globally independent of $ \{ {X_{t}^{i}, 0\leq t, 1\leq i \leq N}\} $. We note by  $ \{\overline{M}^{i},  1\leq i \leq N \} $  the family of compensated  PRMs.  We recall that $\varPi$ is the canonical projection from $\mathbb{R}^{2}$ to $\mathbb{T}^{2}, \widetilde{X}_{t}^{i}=X^{i}+\sqrt{2\gamma}B^{i}_{t}$ and $X_{t}^{i}=\varPi(\widetilde{X}_{t}^{i})$. Now 
Let $\varphi \in C^{2}(\mathbb{T}^{2}),$ if we let $\widetilde{\varphi}=\varphi\circ\varPi,$
         according to the Itô formula, we have \\\hspace*{2cm} $\widetilde{\varphi}(\widetilde{X}_{t}^{i})=\widetilde{\varphi}(X^{i})+\displaystyle\sqrt{2\gamma}\int_{0}^{t}\bigtriangledown\widetilde{\varphi}(\widetilde{X}_{r}^{i})dB_{r}^{i}+\gamma\int_{0}^{t}\bigtriangleup\widetilde{\varphi}(\widetilde{X}_{r}^{i})dr, $\\hence \hspace*{1cm} $\varphi(X_{t}^{i})=\varphi(X^{i})+\displaystyle\sqrt{2\gamma}\int_{0}^{t}\bigtriangledown\varphi(X_{r}^{i})dB_{r}^{i}+\gamma\int_{0}^{t}\bigtriangleup\varphi(X_{r}^{i})dr.$ \\Thus $\{1_{\{E_{t}^{i}=S\}}\varphi(X_{t}^{i}),t\geq0 \} $ is a jump process satisfying \\
         $1_{\{E_{t}^{i}=S\}}\varphi({X_{t}^{i}})=\displaystyle 1_{\{E_{0}^{i}=S\}}\varphi({X^{i}})+\sqrt{2\gamma}\int_{0}^{t}1_{\{E_{r}^{i}=S\}}\bigtriangledown\varphi({X_{r}^{i}})dB_{r}^{i}+\gamma \int_{0}^{t}1_{\{E_{r}^{i}=S\}}\bigtriangleup\varphi(X_{r}^{i})dr\\\hspace*{2.3cm}-
         \int_{0}^{t}\int_{0}^{\infty}1_{\{u\leq\beta\sum_{j=1}^{N}\frac{K(X_{r}^{i},X_{r}^{j})}{\sum\limits_{l=1}^{N}K(X_{r}^{l},X_{r}^{j})}1_{\{E_{r}^{j}=I\}}\} } 1_{\{E_{r^{-}}^{i}=S\}}\varphi({X_{r}^{i}})M^{i}(du,dr).$\\
        Taking the sum over i and multiplying by $\frac{1}{N}$ we obtain \\
         $\displaystyle\frac{1}{N}\sum_{i=1}^{N}1_{\{E_{t}^{i}=S\}}\varphi({X_{t}^{i}})=\displaystyle\frac{1}{N}\sum_{i=1}^{N}1_{\{E_{0}^{i}=S\}}\varphi({X^{i}})+\frac{\sqrt{2\gamma}}{N}\sum_{i=1}^{N}\int_{0}^{t}1_{\{E_{r}^{i}=S\}}\bigtriangledown\varphi({X_{r}^{i}})dB_{r}^{i}\\\hspace*{4cm}+\frac{\gamma}{N}\sum_{i=1}^{N}\int_{0}^{t}1_{\{E_{r}^{i}=S\}}\bigtriangleup\varphi(X_{r}^{i})dr\\\hspace*{4cm}-\frac{1}{N}\sum_{i=1}^{N}\int_{0}^{t}\int_{0}^{\infty}1_{\{u\leq \beta\sum_{j=1}^{N}\frac{K(X_{r}^{i},X_{r}^{j})}{\sum\limits_{l=1}^{N}K(X_{r}^{l},X_{r}^{j})}1_{\{E_{r}^{j}=I\}}\}} 1_{\{E_{r^{-}}^{i}=S\}}\varphi({X_{r}^{i}})\overline{M}^{i}(du,dr)\\\hspace*{4cm}-\frac{1}{N}\sum_{i=1}^{N}\int_{0}^{t} \frac{\beta}{N}\sum_{j=1}^{N}\frac{K(X_{r}^{i},X_{r}^{j})}{\frac{1}{N}\sum\limits_{l=1}^{N}K(X_{r}^{l},X_{r}^{j})}1_{\{E_{r}^{j}=I\}} 1_{\{E_{r}^{i}=S\}}\varphi({X_{r}^{i}})dr.$\\
      So \vspace*{-0.3cm} \[\tag{5.1}
         \displaystyle(\mu_{t}^{S,N},\varphi)=(\mu_{0}^{S,N},\varphi)+\gamma\int_{0}^{t}(\mu_{r}^{S,N},\bigtriangleup \varphi) dr - \beta \int_{0}^{t}\left(\mu_{r}^{S,N}, \varphi (\mu_{r}^{I,N},\frac{K}{(\mu^{N}_{r},K)})\right) dr + M_{t}^{N,\varphi},\label{e1}\]
       
        where \\ \vspace*{0.3cm}
        $\left(\mu_{r}^{S,N}, \varphi ( \mu_{r}^{I,N},\frac{K}{(\mu^{N}_{r},K)})\right)=\displaystyle\int_{\mathbb{T}_{2}} \varphi(x) \int_{\mathbb{T}^{2}} \frac{K(x,y)}{\int_{\mathbb{T}_{2}}K(y,z)\mu_{r}^{N}(dz)} \mu_{r}^{I,N}(dy) \mu_{r}^{S,N}(dx) $\\and \\
        $ M_{t}^{N,\varphi}= - \displaystyle\frac{1}{N} \sum_{i=1}^{N} \int_{0}^{t} \int_{0}^{\infty}1_{\{E_{r^{-}}^{i}=S\}}\varphi(X_{r}^{i})1_{\{u\leq \beta \sum\limits_{j=1}^{N}\frac{K(X_{r}^{i},X_{r}^{j})}{\sum\limits_{l=1}^{N}K(X_{r}^{l},X_{r}^{j}))} 1_{\{E_{r}^{j}=I\}}\}} \overline{M}^{i}(dr,du)  \\\hspace*{1.5cm}+\frac{\sqrt{2\gamma}}{N} \sum\limits_{i=1}^{N} \int_{0}^{t}1_{\{E_{r}^{i}=S\}}\bigtriangledown\varphi(X_{r}^{i})dB_{r}^{i}.$ 
         \subsubsection{Evolution equation of  $ \{\mu_{t}^{I,N}, t\geq0 \}$}
         Let $ \{ Q^{i}, 1\leq i \leq N \} $ be a family of  mutually independent  standard Poisson Random  Measures (abreviated below as PRMs) on  $ \mathbb{R}^{2}_{+} $ which are  globally independent of $ \{ {X_{t}^{i}, 0\leq t , 1\leq i \leq N}\} $ and  $ \{ M^{i}, 1\leq i \leq N \} $. We note by   $ \{\overline{Q}^{i},  1\leq i \leq N \} $  the family of compensated PRMs.\\
         Let $\varphi \in C^{2}(\mathbb{T}^{2}),$ 
          $\{1_{\{E_{t}^{i}=I\}}\varphi(X_{t}^{i}),t\geq0 \} $ is a jump process satisfying \\
         $1_{\{E_{t}^{i}=I\}}\varphi({X_{t}^{i}})=\displaystyle 1_{\{E_{0}^{i}=I\}}\varphi({X^{i}})+\sqrt{2\gamma}\int_{0}^{t}1_{\{E_{r}^{i}=I\}}\bigtriangledown\varphi({X_{r}^{i}})dB_{r}^{i}+\gamma\int_{0}^{t}1_{\{E_{r}^{i}=I\}}\bigtriangleup\varphi(X_{r}^{i})dr\\\hspace*{2.3cm}+
         \int_{0}^{t}\int_{0}^{\infty}1_{\{u\leq \beta\sum_{j=1}^{N}\frac{K(X_{r}^{i},X_{r}^{j})}{\sum_{l=1}^{N}K(X_{r}^{l},X_{r}^{j})}1_{\{E_{r}^{j}=I\}}} 1_{\{E_{r^{-}}^{i}=S\}}\varphi({X_{r}^{i}})M^{i}(du,dr)\\\hspace*{2.3cm}-
                  \int_{0}^{t}\int_{0}^{\alpha} 1_{\{E_{r^{-}}^{i}=I\}}\varphi({X_{r}^{i}})Q^{i}(du,dr).$\vspace*{0.17cm}\\ 
          Summing over i  and multiplying by $\frac{1}{N}$ we obtain \\
         $\frac{1}{N}\sum_{i=1}^{N}1_{\{E_{t}^{i}=I\}}\varphi({X_{t}^{i}})=
         \displaystyle\frac{1}{N}\sum_{i=1}^{N}1_{\{E_{0}^{i}=I\}}\varphi({X^{i}})+\frac{\sqrt{2\gamma}}{N}\sum_{i=1}^{N}\int_{0}^{t}1_{\{E_{r}^{i}=I\}}\bigtriangledown\varphi({X_{r}^{i}})dB_{r}^{i}\\\hspace*{4.1cm}+\frac{\gamma}{N}\sum_{i=1}^{N}\int_{0}^{t}1_{\{E_{r}^{i}=I\}}\bigtriangleup\varphi(X_{r}^{i})dr\\\hspace*{4.1cm}+\frac{1}{N}\sum\limits_{i=1}^{N}\int_{0}^{t}\int_{0}^{\infty}1_{\{u\leq \beta\sum_{j=1}^{N}\frac{K(X_{r}^{i},X_{r}^{j})}{\sum_{l=1}^{N}K(X_{r}^{l},X_{r}^{j})}1_{\{E_{r}^{j}=I\}}} 1_{\{E_{r^{-}}^{i}=S\}}\varphi({X_{r}^{i}})\overline{M}^{i}(du,dr)\\\hspace*{4.1cm}+\frac{1}{N}\sum_{i=1}^{N}\int_{0}^{t}\int_{0}^{\infty}1_{\{u\leq \beta\sum_{j=1}^{N}\frac{K(X_{r}^{i},X_{r}^{j})}{\sum_{l=1}^{N}K(X_{r}^{l},X_{r}^{j})}1_{\{E_{r}^{j}=I\}}} 1_{\{E_{r}^{i}=S\}}\varphi({X_{r}^{i}})drdu\\\hspace*{2.1cm}- \frac{1}{N}\sum_{i=1}^{N}\int_{0}^{t}\int_{0}^{\alpha} 1_{\{E_{r^{-}}^{i}=I\}}\varphi({X_{r}^{i}})\overline{Q}^{i}(du,dr)-  \frac{\alpha}{N}\sum_{i=1}^{N}\int_{0}^{t} 1_{\{E_{r}^{i}=I\}}\varphi({X_{r}^{i}})dr.$\\
         So\\
         $
         (\mu_{t}^{I,N} \varphi)=\displaystyle (\mu_{0}^{I,N},\varphi) + \gamma\int_{0}^{t}(\mu_{r}^{I,N},\bigtriangleup \varphi) dr+ \beta\int_{0}^{t}(\mu_{r}^{S,N}, \varphi (\mu_{r}^{I,N},\frac{K}{(\mu^{N}_{r},K)})) dr-\alpha \int_{0}^{t}(\mu_{r}^{I,N},\varphi)dr$ \vspace*{-0.3cm}\[\hspace*{-12cm}\tag{5.2} +L_{t}^{N,\varphi},\label{e2}
         \vspace*{-0.3cm} \]
         where \\
          $ L_{t}^{N,\varphi} = \displaystyle\frac{1}{N} \sum_{i=1}^{N} \int_{0}^{t} \int_{0}^{\infty}1_{\{E_{r^{-}}^{i}=S\}}\varphi(X_{r}^{i})1_{\{u\leq \beta \sum_{j=1}^{N}\frac{K(X_{r}^{i},X_{r}^{j})}{\sum\limits_{l=1}^{N}K(X_{r}^{l},X_{r}^{j})} 1_{\{E_{r}^{j}=I\}}\}} \overline{M}^{i}(dr,du))  \\ \hspace*{1cm}+\frac{\sqrt{2\gamma}}{N} \sum_{i=1}^{N} \int_{0}^{t}1_{\{E_{r}^{i}=I\}}\bigtriangledown\varphi(X_{r}^{i})dB_{r}^{i} $ $   -\displaystyle \frac{1}{N} \sum_{i=1}^{N} \int_{0}^{t} \int_{0}^{\alpha}1_{\{E_{r^{-}}^{i}=I\}}\varphi(X_{r}^{i})\overline{Q}^{i}(dr,du). $
   \subsection{ Convergence of $ \{\mu_{t}^{N},t\geq 0 \}_{N\geq 1} \hspace*{0.1cm} $ in $ \hspace*{0.1cm} C(\mathbb{R}_{+},\mathcal{M}_{F}(\mathbb{T}^{2})) $ }
    Recall that  we equip $\mathcal{M}_{F}(\mathbb{T}^{2})$ with the topology of weak convergence and the space of continuous functions from $\mathbb{R}_{+}$ to $\mathcal{M}_{F}(\mathbb{T}^{2}),$ denoted  $C(\mathbb{R}_{+},\mathcal{M}_{F}(\mathbb{T}^{2}))$ with the uniform topology.\\
         It follows from  the Itô  formula that the  processes $ \{\mu_{t}^{N},t\geq 0\}$ satisfies \\
                  $ (\mu_{t}^{N},  \varphi)=\displaystyle (\mu_{0}^{N},\varphi) + \gamma\int_{0}^{t}(\mu_{r}^{N},\bigtriangleup \varphi) dr + \mathcal{H}_{t}^{N,\varphi}$, with $\mathcal{H}_{t}^{N,\varphi}=\displaystyle\frac{\sqrt{2\gamma}}{N} \sum_{i=1}^{N} \int_{0}^{t}\bigtriangledown\varphi(X_{r}^{i})dB_{r}^{i}. $
         \begin{prp}
               The sequence $\{\mu_{t}^{N},t\geq 0,N\geq 1\}$ converges in probability \\ in $C(\mathbb{R}_{+},\mathcal{M}_{F}(\mathbb{T}^{2}))$ towards $\{\mu_{t},t\geq 0\},$  where for each $t\geq 0$,  $\mu_{t}$ is the law of $X_{t}^{1}$ \\ and for any $ \varphi \in C^{2}(\mathbb{T}^{2})$, $t\geq0$, $ \displaystyle (\mu_{t},\varphi)=(\mu_{0},\varphi) + \gamma\int_{0}^{t}(\mu_{r},\bigtriangleup \varphi) dr. \label{ee} $
               \end{prp}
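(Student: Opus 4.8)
The plan is to treat $\mu^{N}$ as the empirical measure of the $N$ independent torus-valued diffusions $X^{i}$, so that the statement is an infinite-dimensional law of large numbers, and to identify the limit through the evolution equation displayed just above together with a duality (uniqueness) argument. First I would record that the total mass of $\mu_{t}^{N}$ is identically $1$, so each $\mu_{t}^{N}$ lives in the set $\mathcal{M}_{1}(\mathbb{T}^{2})$ of probability measures on $\mathbb{T}^{2}$, which is compact for the weak topology because $\mathbb{T}^{2}$ is compact. This makes the fixed-time compactness automatic and reduces the whole problem to controlling the time regularity of the paths.

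For the tightness of $(\mu^{N})_{N\geq 1}$ in $C(\mathbb{R}_{+},\mathcal{M}_{F}(\mathbb{T}^{2}))$, I would test against a countable dense family of functions $\varphi \in C^{2}(\mathbb{T}^{2})$ and prove tightness of the real-valued processes $((\mu_{t}^{N},\varphi))_{t\geq 0}$. From the evolution equation $(\mu_{t}^{N},\varphi)=(\mu_{0}^{N},\varphi)+\gamma\int_{0}^{t}(\mu_{r}^{N},\bigtriangleup\varphi)\,dr+\mathcal{H}_{t}^{N,\varphi}$ the drift is Lipschitz in $t$ uniformly in $N$, with constant $\gamma\lVert\bigtriangleup\varphi\rVert_{\infty}$ (using $\lvert(\mu_{r}^{N},\bigtriangleup\varphi)\rvert\leq\lVert\bigtriangleup\varphi\rVert_{\infty}$). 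The martingale term is handled by computing its bracket, $\langle\mathcal{H}^{N,\varphi}\rangle_{t}=\frac{2\gamma}{N^{2}}\sum_{i=1}^{N}\int_{0}^{t}\lvert\bigtriangledown\varphi(X_{r}^{i})\rvert^{2}\,dr\leq\frac{2\gamma t\lVert\bigtriangledown\varphi\rVert_{\infty}^{2}}{N}$; by Doob's inequality $\mathbb{E}(\sup_{t\leq T}\lvert\mathcal{H}_{t}^{N,\varphi}\rvert^{2})\leq C_{T}/N\to 0$, which gives both tightness and the vanishing of the martingale in the limit. Combining the uniform Lipschitz drift with the Aldous--Rebolledo control of the martingale increments yields tightness of each scalar process, hence (because the state space is the compact $\mathcal{M}_{1}(\mathbb{T}^{2})$ and the test functions are dense) tightness of $(\mu^{N})$ in $C(\mathbb{R}_{+},\mathcal{M}_{F}(\mathbb{T}^{2}))$; Prokhorov's theorem then provides convergent subsequences.

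Next I would identify every limit point. Passing to the limit along a convergent subsequence (via Skorokhod representation) in the evolution equation, and using $\mathcal{H}^{N,\varphi}\to 0$ together with $\mu_{0}^{N}\to\nu=\mu_{0}$ from Theorem \ref{kl}, any limit $\mu$ satisfies, for all $\varphi\in C^{2}(\mathbb{T}^{2})$ and $t\geq 0$, the weak heat equation $(\mu_{t},\varphi)=(\mu_{0},\varphi)+\gamma\int_{0}^{t}(\mu_{r},\bigtriangleup\varphi)\,dr$ with $\mu_{0}=\nu$. The key step is uniqueness for this equation: for fixed $T$ and target $\psi$ I would set $\varphi(r,\cdot)=\Upsilon(T-r)\psi$, which solves the backward equation $\partial_{r}\varphi+\gamma\bigtriangleup\varphi=0$, so that $(\mu_{T},\psi)=(\mu_{0},\Upsilon(T)\psi)$ is completely determined by $\mu_{0}$; here I use the semigroup $\Upsilon$ of Proposition \ref{b5}. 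This pins down the unique deterministic solution $\mu_{t}=\Upsilon(t)^{\ast}\nu$, which is exactly the law of $X_{t}^{1}$ (taking expectations in the It\^o formula for $\varphi(X_{t}^{1})$ reproduces the same equation).

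Finally, since all subsequential limits coincide with this single deterministic $\mu$, the whole sequence converges in law to $\mu$, and convergence in law to a deterministic limit is equivalent to convergence in probability; translated through Lemma \ref{b7} on the Fortet distance this is the asserted convergence in probability in $C(\mathbb{R}_{+},\mathcal{M}_{F}(\mathbb{T}^{2}))$. The main obstacle is the uniqueness of the limiting weak heat equation, which the duality argument with $\Upsilon$ resolves cleanly; the tightness, while technical, is comparatively routine because the total mass is conserved and the driving martingale is of order $N^{-1/2}$.
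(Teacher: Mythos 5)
Your proof is correct, but the identification step takes a genuinely different (and heavier) route than the paper's. The paper exploits the fact that $\mu^{N}_{t}=\frac{1}{N}\sum_{i}\delta_{X_{t}^{i}}$ is the empirical measure of i.i.d.\ diffusions whose motion is completely decoupled from the epidemic: by Roelly's criterion it suffices to combine tightness of the scalar processes $(\mu^{N}_{\cdot},\varphi)$ (obtained, as you do, from the uniformly Lipschitz drift and the $O(1/N)$ martingale bracket) with convergence of the finite-dimensional distributions, and the latter is an immediate consequence of the strong law of large numbers at each fixed time, which identifies the limit as the law of $X_{t}^{1}$ with no PDE analysis at all; the evolution equation for $\mu$ is then read off afterwards from It\^o's formula. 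You instead pass to the limit in the evolution equation along a subsequence and identify the limit by proving uniqueness of the weak heat equation through the duality test function $\Upsilon(T-r)\psi$. That argument is valid (modulo the routine extension of the weak formulation to time-dependent test functions) and is the kind of machinery genuinely needed for the interacting pair $(\mu^{S,N},\mu^{I,N})$ in Theorem \ref{eeee}, where no i.i.d.\ structure is available; but for $\mu^{N}$ itself it is more work than necessary, and the paper's LLN shortcut is both shorter and gives a.s.\ convergence of the finite-dimensional marginals rather than just convergence in law. Your final upgrade from convergence in law to a deterministic limit to convergence in probability matches the paper's logic via Lemma \ref{b7}.
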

               \begin{proof}
              We refer to  Theorem 2.2 and Remark page 58 of  Roelly [\ref{uc}]. Let $\Pi$ be a dense subset of $C(\mathbb{T}^{2})$. In order to  prove that $(\mu^{N})_{N\geq 1}$ converges in probability in $C(\mathbb{R}_{+},\mathcal{M}_{F}(\mathbb{T}^{2}))$ towards $\{\mu_{t},t\geq 0\}$ it is enough to prove that:\vspace*{0.12cm}\\
               1-\hspace*{0.1cm}$\forall \varphi \in \Pi$, \hspace*{0.001cm} $\{(\mu_{t}^{N},\varphi),t\geq 0\}_{N\geq 1} $ is tight in $C(\mathbb{R}_{+},\mathbb{R}_{+})$\vspace*{0.2cm}\\
                2- \hspace*{0.1cm}For any $m\geq 1$,  any $(t_{1},t_{2}........,t_{m})\in \mathbb{R}_{+}^{m}$,  and  any $(\varphi_{1},\varphi_{2},.........,\varphi_{m})\in (\Pi)^{n} $ the sequence  $ \left((\mu_{t_{1}}^{N},\varphi_{1}),...,(\mu_{t_{m}}^{N},\varphi_{m})\right)$ converges in probability in  $ \mathbb{R}^{m}  $ towards  $ \left((\mu_{t_{1}},\varphi_{1}),....,(\mu_{t_{m}},\varphi_{m})\right)$\vspace*{0.2cm}\\
               Proof of 1. We choose $\Pi=C^{2}(\mathbb{T}^{2})$. Let $ \varphi \in C^{2}(\mathbb{T}^{2}) $, due to Proposition 37 of Pardoux [\ref{wc}], Corollary page 179 of Billingsleg [\ref{fc}]  a sufficient condition for the sequence  $(\mu^{N},\varphi)_{N\geq 1} $ to be  tight in $C(\mathbb{R}_{+},\mathbb{R}_{+})$ is that both \vspace*{0.1cm} \\
                 $\hspace*{1cm}\bullet \hspace*{0.2cm} (\mu_{0}^{N},\varphi) $ is tight in $\mathbb{R},$\vspace*{0.1cm}\\
                 $ \hspace*{1cm}\bullet \hspace*{0.2cm} \forall T>0 , \underset{0\leq t\leq T}{\sup} \left( \mid(\mu_{t}^{N},\bigtriangleup \varphi)\mid+ \frac{1}{N}(\mu_{t}^{N},(\bigtriangledown \varphi)^{2})\right) $ is tight in $\mathbb{R}.$ \vspace*{0.1cm}\\
                Since for all  $N\geq0$, $t\geq0$, $\mu_{t}^{N}$ is a probability measure, these two points follow readily from the fact that $\varphi $, $\bigtriangleup \varphi$ and $(\bigtriangledown \varphi)^{2}$ are bounded on $\mathbb{T}^{2}$.\vspace*{0.2cm}\\
               Proof of 2.  According to the law of large numbers, $\forall t\geq 0$, $ (\mu_{t}^{N},\varphi)  \xrightarrow{a.s}\mathbb{E}(\varphi(X_{t}^{1}))$ \\so $\left((\mu_{t_{1}}^{N},\varphi_{1}),..,(\mu_{t_{m}}^{N},\varphi_{m})\right)\xrightarrow{a.s}\left( \mathbb{E}(\varphi_{1}(X_{t}^{1})),... ,\mathbb{E}(\varphi_{m}(X_{t}^{1}))\right)=\left((\mu_{t_{1}},\varphi_{1}),..,(\mu_{t_{m}},\varphi_{m})\right).$ \vspace*{0.08cm}\\Finally the fact that $(\mu_{t}, t\geq 0)$ solves the PDE appearing in the statement follows readily from the Itô formula.
               \end{proof}
               \begin{lem}
               For any $t\geq 0$, the measure $\mu_{t}$ is absolutely continuous with respect to the Lebesgue measure and its density $f(t,.)$  verifies $\delta_{1}\leq f(t,x)\leq \delta_{2}$, $\forall x\in \mathbb{T}^{2},$ where $\delta_{1}$ and $\delta_{2}$ are defined in section \ref{se8}. \label{int1}\vspace*{-0.15cm}
               \end{lem}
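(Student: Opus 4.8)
The plan is to identify the density of $\mu_t$ explicitly through the heat semigroup $\Upsilon(t)=e^{\gamma t\bigtriangleup}$ of Proposition \ref{b5}, and then to read off the two-sided bound from the positivity and the conservativeness of $\Upsilon(t)$ together with Assumption (H0). First I would use the description of $\mu_t$ given by Proposition \ref{ee}: $\mu_t$ is the law of $X_t^1=\varPi(X^1+\sqrt{2\gamma}B_t^1)$, where $X^1\sim\nu=g\,dx$ is independent of the Brownian motion $B^1$. For any $\varphi\in C(\mathbb{T}^2)$, conditioning on $X^1$ and using the probabilistic representation of the semigroup, namely $(\Upsilon(t)\varphi)(x)=\mathbb{E}[\varphi(\varPi(x+\sqrt{2\gamma}B_t^1))]$, one obtains
\[(\mu_t,\varphi)=\int_{\mathbb{T}^2}(\Upsilon(t)\varphi)(x)g(x)\,dx.\]
Since $\Upsilon(t)$ is selfadjoint on $L^2(\mathbb{T}^2)$ by Proposition \ref{b5}, the right-hand side equals $\int_{\mathbb{T}^2}\varphi(x)(\Upsilon(t)g)(x)\,dx$, which shows that $\mu_t$ is absolutely continuous with respect to the Lebesgue measure, with density $f(t,\cdot)=\Upsilon(t)g$. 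Equivalently, one checks that $t\mapsto\Upsilon(t)g$ solves in the weak sense the heat equation satisfied by $\mu_t$ in Proposition \ref{ee}, with the same initial datum $g$, and concludes by uniqueness.

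It remains to derive the bounds. The same Brownian representation shows that $\Upsilon(t)$ is positivity preserving, i.e. $\psi\geq 0\Rightarrow\Upsilon(t)\psi\geq 0$, and conservative: since the constant function $f^0=1$ is the eigenfunction associated with the eigenvalue $\lambda_0=0$, formula (\ref{pp}) gives $\Upsilon(t)f^0=f^0$, that is $\Upsilon(t)1=1$. By Assumption (H0) the functions $g-\delta_1$ and $\delta_2-g$ are nonnegative, so applying $\Upsilon(t)$ yields
\[\Upsilon(t)g-\delta_1=\Upsilon(t)(g-\delta_1)\geq 0,\qquad \delta_2-\Upsilon(t)g=\Upsilon(t)(\delta_2-g)\geq 0,\]
whence $\delta_1\leq f(t,x)\leq\delta_2$ for every $x\in\mathbb{T}^2$. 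The case $t=0$ is exactly Assumption (H0), since then $f(0,\cdot)=g$.

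There is no deep obstacle here; the only points requiring care are the justification that $\mu_t$ admits $\Upsilon(t)g$ as density, i.e. matching the probabilistic description of $\mu_t$ coming from Proposition \ref{ee} with the analytic semigroup $\Upsilon(t)$, and the verification that $\Upsilon(t)$ is simultaneously positivity preserving and conservative. Both rest on the Brownian representation $(\Upsilon(t)\varphi)(x)=\mathbb{E}[\varphi(\varPi(x+\sqrt{2\gamma}B_t^1))]$, which I would state cleanly before reading off the two-sided bound.
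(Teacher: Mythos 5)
Your proof is correct and follows essentially the same route as the paper: identify $f(t,\cdot)=\Upsilon(t)g$, then transport the bounds $\delta_1\leq g\leq\delta_2$ through the semigroup using its positivity preservation and the fact that $\Upsilon(t)$ fixes constants. The only cosmetic difference is that you justify positivity via the Brownian representation $(\Upsilon(t)\varphi)(x)=\mathbb{E}[\varphi(\varPi(x+\sqrt{2\gamma}B_t^1))]$, whereas the paper invokes the heat kernel $P_t(x,y)\geq 0$; these are the same fact.
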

               \begin{proof}
              Given that $\displaystyle \mu_{t}=\mu_{0} + \gamma\int_{0}^{t}\bigtriangleup\mu_{r}dr,$  $\mu_{t}=\Upsilon(t)\mu_{0}$.\\ Thus as from (\ref{pp}) in Proposition \ref{b5}, for any measurable subset of $\mathbb{T}^{2}$, with zero Lebesgue measure $\Upsilon(t)1_{A}\equiv 0$, the absolute continuity of   $\mu_{t}$ with respect to the Lebesgue measure follows from the fact that $\mu_{t}=\Upsilon(t)\mu_{0}$. Furthemore we notice that the law $\mu_{t}$ of $X_{t}^{1}$ is absolute continuous with respect to the Lebesgue measure, this being true whether the law $\mu_{0}$ of $X^{1}$ has or not  this property. \\$-$ Let us now  show  that $\forall x\in \mathbb{T}^{2},$ $\delta_{1}\leq f(t,x)\leq \delta_{2}.$\\We first recall that $g$ is the density of the law $\mu_{0}$ of $X^{1}$.\\ Let  $P_{t}$ be   the heat kernel  on the two dimentional torus. As the solution of the heat equation  with the initial condition $\phi$, is the function defined on $\mathbb{T}^{2}$ by $\Upsilon(t)\phi(x)=\int_{\mathbb{T}^{2}}P_{t}(x,y)\phi(y)dy,$  $\Upsilon(t)$ is non decreasing in the sence that $\forall \varphi,\psi \in L^{2}(\mathbb{T}^{2})$ such that $\varphi\leq\psi$, $\Upsilon(t)\varphi\leq\Upsilon (t)\psi$. So since for any $C\in \mathbb{R}$, $\Upsilon(t)C=C$ (which follows from (\ref{pp})) and  $f(t,.)=\Upsilon(t)g$,  the result follows from the facts that $\delta_{1}\leq g\leq \delta_{2}$ and $\Upsilon(t)$ is non decreasing.\vspace*{-0.15cm}
               \end{proof}
               \subsection{Tighness and Convergence of   $  (\mu^{S,N},\mu^{I,N})_{N \geq 1 } $  in \small{ $(D(\mathbb{R_{+}},\mathcal{M}_{F}(\mathbb{T}^{2})))^{2}$ }}
              Recall that  we equip $\mathcal{M}_{F}(\mathbb{T}^{2})$ with the topology of weak convergence and the Skorokhod space of $c\grave{a}dl\grave{a}g$ functions from $\mathbb{R}_{+}$ to $\mathcal{M}_{F}(\mathbb{T}^{2}),$ denoted  $D(\mathbb{R_{+}},\mathcal{M}_{F}(\mathbb{T}^{2}))$ with the Skorokhod topology.\\
              We first note that:
                               \begin{displaymath} \left\{ \begin{array}{lcl}
                                 	       \displaystyle  (\mu_{t}^{S,N},1_{\mathbb{T}^{2}})=\frac{1}{N}\sum_{i=1}^{N}1_{\{E_{t}^{i}=S\}}\leq1,
                                        \\
                                        \displaystyle  (\mu_{t}^{I,N},1_{\mathbb{T}^{2}})=\frac{1}{N}\sum_{i=1}^{N}1_{\{E_{t}^{i}=I\}}\leq1,
                               \end{array} \right.
                               \end{displaymath} 
                              and therefore,
                      $ \forall \varphi \in C(\mathbb{T}^{2})$
                      \begin{displaymath}  \left\{ \begin{array}{lcl}
                                 	       \displaystyle \lvert ( \mu_{t}^{S,N},\varphi)\lvert\leq\lVert \varphi \lVert_{\infty},
                                        \\
                                        \displaystyle \lvert ( \mu_{t}^{I,N},\varphi)\lvert\leq\lVert \varphi \lVert_{\infty}.
                               \end{array} \right.
                               \end{displaymath} 
                   
                      \begin{lem}
                      Let $\mathcal{H}^{1}=\{(\mu,\nu,\rho)\in (\mathcal{M}(\mathbb{T}^{2}))^{3}/(\nu,1_{\mathbb{T}^{2}})\leq 1;\hspace*{0.1cm} (\mu,\varphi)\leq (\rho,\varphi), \forall \varphi \in C(\mathbb{T}^{2};\mathbb{R}_{+}) \}$\\
                      For all $(\mu,\nu,\rho)\in\mathcal{H}^{1}$, $ \varphi \in C(\mathbb{T}^{2}),$     we have\\\\\hspace*{5cm}
                      $  \displaystyle\Big \lvert \left(\mu, \varphi \left(\nu,\frac{K}{(\rho,K)}\right)\right)\Big \lvert \leq \lVert \varphi\lVert_{\infty}. $ 
                      \label{e3}
                      \end{lem}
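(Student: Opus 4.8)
The plan is to pull the factor $\varphi(x)$ out in absolute value and reduce everything to a mass estimate. Following the notation recalled just after (\ref{e1}), introduce the nonnegative function
\[
g(x)=\left(\nu,\frac{K(x,\cdot)}{(\rho,K(\cdot,\cdot))}\right)=\int_{\mathbb{T}^{2}}\frac{K(x,y)}{(\rho,K(y,\cdot))}\,\nu(dy),\qquad (\rho,K(y,\cdot))=\int_{\mathbb{T}^{2}}K(y,z)\,\rho(dz),
\]
so that $\left(\mu,\varphi\left(\nu,\tfrac{K}{(\rho,K)}\right)\right)=\int_{\mathbb{T}^{2}}\varphi(x)\,g(x)\,\mu(dx)$. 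Since $K\geq 0$, $\nu$ is a nonnegative measure and the denominator is positive, $g$ is well defined and nonnegative, hence
\[
\left|\left(\mu,\varphi\left(\nu,\frac{K}{(\rho,K)}\right)\right)\right|\leq \lVert\varphi\rVert_{\infty}\int_{\mathbb{T}^{2}}g(x)\,\mu(dx).
\]
It therefore suffices to prove that $\int_{\mathbb{T}^{2}}g\,d\mu\leq 1$.

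The two defining properties of $\mathcal{H}^{1}$ enter here. First, the hypothesis $(\mu,\psi)\leq(\rho,\psi)$ for every $\psi\in C(\mathbb{T}^{2};\mathbb{R}_{+})$ means $\mu\leq\rho$ as measures; applied to the nonnegative function $g$ it gives $\int_{\mathbb{T}^{2}}g\,d\mu\leq\int_{\mathbb{T}^{2}}g\,d\rho$. The crux of the argument is then the exact evaluation of $\int g\,d\rho$. Using Tonelli's theorem (all integrands are nonnegative) together with the symmetry $K(x,y)=k(d_{\mathbb{T}^{2}}^{2}(x,y))=K(y,x)$, I obtain
\[
\int_{\mathbb{T}^{2}}g(x)\,\rho(dx)=\int_{\mathbb{T}^{2}}\frac{1}{(\rho,K(y,\cdot))}\left(\int_{\mathbb{T}^{2}}K(x,y)\,\rho(dx)\right)\nu(dy)=\int_{\mathbb{T}^{2}}\frac{(\rho,K(y,\cdot))}{(\rho,K(y,\cdot))}\,\nu(dy)=(\nu,1_{\mathbb{T}^{2}}),
\]
where the middle equality uses that $\int_{\mathbb{T}^{2}}K(x,y)\,\rho(dx)=\int_{\mathbb{T}^{2}}K(y,z)\,\rho(dz)=(\rho,K(y,\cdot))$ after relabelling the dummy variable. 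Combining this with the second defining property $(\nu,1_{\mathbb{T}^{2}})\leq 1$ yields $\int g\,d\mu\leq\int g\,d\rho=(\nu,1_{\mathbb{T}^{2}})\leq 1$, which finishes the proof.

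The only delicate point is the cancellation in the display above: it is precisely the symmetry of $K$ (inherited from the symmetry of $d_{\mathbb{T}^{2}}$) that makes the integral against $\rho$ in the numerator match the normalising denominator, collapsing the double integral to the total mass of $\nu$. I expect this symmetry-plus-Tonelli step to be the substantive one; the remaining ingredients (dominating $\varphi$ by its sup norm, the order relation $\mu\leq\rho$, and well-definedness of $g$ via positivity of $(\rho,K(y,\cdot))$) are routine.
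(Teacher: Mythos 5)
Your proof is correct and uses essentially the same ingredients as the paper's: pull out $\lVert\varphi\rVert_{\infty}$, swap the order of integration via Tonelli and the symmetry $K(x,y)=K(y,x)$, invoke $\mu\leq\rho$, and finish with $(\nu,1_{\mathbb{T}^{2}})\leq 1$. The only (cosmetic) difference is that the paper applies $\mu\leq\rho$ pointwise in $y$ to the continuous function $x\mapsto K(x,y)$ -- giving $\int K(x,y)\mu(dx)\big/\int K(x',y)\rho(dx')\leq 1$ directly from the definition of $\mathcal{H}^{1}$ -- whereas you apply it to the function $g$, which strictly speaking requires extending the order relation from continuous nonnegative test functions to general nonnegative Borel functions; this is routine but worth a word.
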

                      \begin{proof}
                  $\displaystyle\Big \lvert \left(\mu, \varphi \left(\nu,\frac{K}{(\rho,K)}\right)\right)\Big \lvert=\Big \lvert \int_{\mathbb{T}^{2}} \varphi(x) \int_{\mathbb{T}^{2}} \frac{K(x,y)}{\int_{\mathbb{T}^{2}}K(x',y)\rho(dx')} \nu(dy) \mu(dx)  \Big \lvert $ \\\hspace*{5.3cm} $\leq\displaystyle\lVert \varphi\lVert_{\infty} \Big \lvert \int_{\mathbb{T}^{2}}   \frac{\int_{\mathbb{T}_{2}}K(x,y)\mu(dx)}{\int_{\mathbb{T}^{2}}K(x',y)\rho(dx')} \nu(dy)\Big \lvert$\\\hspace*{5.3cm} $\leq \lVert \varphi \lVert_{\infty},$ \vspace*{0.1cm} \\ 
                  where we have exploited the symetry of K: $ K(x,y)=K(y,x)$ for the first inequality and the facts that $\displaystyle\frac{\int_{\mathbb{T}^{2}}K(x,y)\mu(dx)}{\int_{\mathbb{T}^{2}}K(x',y)\rho(dx')}\leq 1$ and $(\nu,1_{\mathbb{T}^{2}})\leq 1$ for the last inequality.
                       \end{proof}
                      We can now establish the wished tightness.
                      \begin{prp}
                                Both sequences  $ (\mu^{S,N})_{N\geq 1}$ and $(\mu^{I,N})_{N \geq 1}$ are   tight in $  D(\mathbb{R_{+}},\mathcal{M}_{F}(\mathbb{T}^{2})).\label{eee}$
                                \end{prp}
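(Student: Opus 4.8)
The plan is to invoke the tightness criterion of Roelly [\ref{uc}], exactly as was done for $\mu^N$ in Proposition \ref{ee}. Since $\mathbb{T}^2$ is compact, the compact-containment part of that criterion is automatic, so it suffices to prove that for every $\varphi$ in the dense subset $C^2(\mathbb{T}^2)$ of $C(\mathbb{T}^2)$ the real-valued processes $\{(\mu_t^{S,N},\varphi),t\geq 0\}_{N\geq 1}$ and $\{(\mu_t^{I,N},\varphi),t\geq 0\}_{N\geq 1}$ are tight in $D(\mathbb{R}_+,\mathbb{R})$. For this I would use the semimartingale decompositions (\ref{e1}) and (\ref{e2}) together with the Aldous--Rebolledo criterion, which requires (a) tightness of the one-dimensional marginals, (b) an Aldous condition on the finite variation part, and (c) an Aldous condition on the predictable quadratic variation of the martingale part. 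Writing $(\mu_t^{S,N},\varphi)=(\mu_0^{S,N},\varphi)+A_t^{S,N,\varphi}+M_t^{N,\varphi}$ with
\[ A_t^{S,N,\varphi}=\gamma\int_0^t(\mu_r^{S,N},\bigtriangleup\varphi)\,dr-\beta\int_0^t\left(\mu_r^{S,N},\varphi\left(\mu_r^{I,N},\tfrac{K}{(\mu_r^N,K)}\right)\right)dr, \]
point (a) is immediate, since $|(\mu_t^{S,N},\varphi)|\leq\|\varphi\|_\infty$ uniformly in $N$ and $t$.

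For the finite variation part I would exploit Lemma \ref{e3}. The triple $(\mu_r^{S,N},\mu_r^{I,N},\mu_r^N)$ belongs to $\mathcal{H}^1$: indeed $(\mu_r^{I,N},1_{\mathbb{T}^2})\leq 1$, and $\mu_r^{S,N}\leq\mu_r^N$ because $\mu_r^N=\mu_r^{S,N}+\mu_r^{I,N}+\mu_r^{R,N}$ is a sum of nonnegative measures. Hence $|(\mu_r^{S,N},\varphi(\mu_r^{I,N},\frac{K}{(\mu_r^N,K)}))|\leq\|\varphi\|_\infty$, and combined with $|(\mu_r^{S,N},\bigtriangleup\varphi)|\leq\|\bigtriangleup\varphi\|_\infty$ this gives $|A_t^{S,N,\varphi}-A_s^{S,N,\varphi}|\leq(\gamma\|\bigtriangleup\varphi\|_\infty+\beta\|\varphi\|_\infty)(t-s)$ for $0\leq s\leq t$. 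The finite variation parts are therefore uniformly Lipschitz in time, so (b) holds trivially; for $\mu^{I,N}$ the same argument applies, with the additional cure term $\alpha\int_0^t(\mu_r^{I,N},\varphi)\,dr$ bounded using $|(\mu_r^{I,N},\varphi)|\leq\|\varphi\|_\infty$.

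The decisive step is the control of the martingale part. Its predictable quadratic variation splits into a jump contribution from the $\overline{M}^i$ and a Brownian contribution. For the jump part, the normalized kernel satisfies
\[ \sum_{i=1}^N 1_{\{E_r^i=S\}}\frac{K(X_r^i,X_r^j)}{\sum_{l=1}^N K(X_r^l,X_r^j)}\leq 1 \qquad \text{for each } j, \]
so after exchanging the sums over $i$ and $j$ and using $\sum_j 1_{\{E_r^j=I\}}\leq N$, the $\frac{1}{N^2}$ prefactor in (\ref{e1}) yields $\langle M^{\mathrm{jump}}\rangle_t\leq\beta\|\varphi\|_\infty^2 t/N$; similarly the Brownian part gives $\langle M^{\mathrm{BM}}\rangle_t\leq 2\gamma\|\bigtriangledown\varphi\|_\infty^2 t/N$. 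Thus $\mathbb{E}(\langle M^{N,\varphi}\rangle_t)\leq Ct/N\to 0$, so the martingale parts vanish in $L^2$ and condition (c) is satisfied a fortiori (the increment over any interval of length $\theta$ is bounded by $C\theta/N$). The analogous computation handles $L^{N,\varphi}$ in (\ref{e2}).

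Collecting (a), (b), (c), the Aldous--Rebolledo criterion yields tightness of $(\mu_\cdot^{S,N},\varphi)$ and $(\mu_\cdot^{I,N},\varphi)$ in $D(\mathbb{R}_+,\mathbb{R})$ for every $\varphi\in C^2(\mathbb{T}^2)$, and Roelly's criterion then gives the claimed tightness of $(\mu^{S,N})_{N\geq 1}$ and $(\mu^{I,N})_{N\geq 1}$ in $D(\mathbb{R}_+,\mathcal{M}_F(\mathbb{T}^2))$. I expect the main obstacle to be the quadratic variation estimate: one must simultaneously use the $1/N^2$ normalization and the fact that $K(\cdot,X^j)/\sum_l K(X^l,X^j)$ sums to at most one over the susceptibles to see that the martingale terms are $O(1/N)$; once this is in hand, everything else reduces to the uniform boundedness already furnished by Lemma \ref{e3}.
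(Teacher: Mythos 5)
Your proposal is correct and follows essentially the same route as the paper: reduce to the scalar processes $(\mu^{S,N}_\cdot,\varphi)$ via Roelly's criterion, use the semimartingale decomposition from (\ref{e1})--(\ref{e2}), bound the drift by Lemma \ref{e3} and the predictable quadratic variation by $Ct/N$. The paper packages the final step as Proposition 37 of [\ref{wc}] (uniform bounds on the drift and quadratic-variation densities $\omega^{N,\varphi}$, $\varpi^{N,\varphi}$) rather than citing Aldous--Rebolledo by name, but the estimates invoked are identical.
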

         
\begin{proof}
     - Let us  prove that  $ ( \mu^{S,N})_{N \geq 1} $ is tight in $D(\mathbb{R_{+}},\mathcal{M}_{F}(\mathbb{T}^{2})).$\\
      As already stated in the proof of Proposition \ref{ee}, it suffices to prove that\vspace*{0.16cm}\\ $\hspace*{3cm} \forall \varphi \in C^{2}(\mathbb{T}^{2})$, $ ((\mu^{S,N}_{t},\varphi), t\geq 0)_{N \geq 1} $ is tight in  $ D(\mathbb{R_{+}} ,\mathbb{R}) $.\\
       Let $\varphi \in C^{2}(\mathbb{T}^{2}) $,  we have \\ $ (\mu_{t}^{S,N},\varphi)=\displaystyle(\mu_{0}^{S,N},\varphi) + \gamma\int_{0}^{t}(\mu_{r}^{S,N},\bigtriangleup \varphi)dr - \beta \int_{0}^{t}\left(\mu_{r}^{S,N}, \varphi (\mu_{r}^{I,N},\frac{K}{(\mu^{N}_{r},K)})\right) dr + M_{t}^{N,\varphi}$ \\
       \hspace*{1.7cm}= $\displaystyle(\mu_{0}^{S,N},\varphi) + \int_{0}^{t} \gamma(\mu_{r}^{S,N},\bigtriangleup \varphi) -\beta \left(\mu_{r}^{S,N}, \varphi (\mu_{r}^{I,N},\frac{K}{(\mu^{N}_{r},K)})\right) dr + M_{t}^{N,\varphi}.$ \\ 
       We notice that $ \{ (\mu_{t}^{S,N},\varphi), t \geq 0 \} $ is a semi-martingale since $ M^{N,\varphi}$ is a square integrable  martingale. Indeed, $ M^{N,\varphi}$ is a local martingale as the  sum of local martingales, and  from Lemma \ref{e3} we deduce that\vspace*{0.12cm}  \\
       $ <M^{N,\varphi}>_{t} $ =$ \displaystyle\frac{\beta}{N} \int_{0}^{t}\left(\mu_{r}^{S,N}, \varphi^{2} (\mu_{r}^{I,N},\frac{K}{(\mu^{N}_{r},K)})\right)dr+ \frac{2\gamma}{N} \int_{0}^{t} (\mu_{r}^{S,N}, (\bigtriangledown \varphi )^{2})dr $\vspace*{0.12cm} \\ 
       \hspace*{2.2cm}$\leq\displaystyle  \frac{\beta \Arrowvert \varphi^{2}\Arrowvert_{\infty}t}{N}  + \frac{2\gamma t}{N}   \Arrowvert (\bigtriangledown \varphi )^{2} \Arrowvert_{\infty}$.  \\
        Hence  $\mathbb{E}(  \mid M^{N,\varphi}_{t} \mid^{2} ) = \mathbb{E}( <M^{N,\varphi}>_{t} ) <\infty.$ \vspace*{0.2cm}\\ 
  Consequently  \\\hspace*{1.5cm} $ (\mu_{t}^{S,N},\varphi)=\displaystyle(\mu_{0}^{S,N},\varphi) +\int_{0}^{t} \omega_{r}^{N,\varphi} dr+  M^{N,\varphi}_{t} $ with  $  < M^{N,\varphi}>_{t} =\displaystyle\int_{0}^{t} \varpi_{r}^{N,\varphi}dr,  $\\ 
      and\\\hspace*{1.5cm}
       $    \omega_{r}^{N,\varphi}=  \gamma (\mu_{r}^{S,N},\bigtriangleup \varphi) - \beta \left(\mu_{r}^{S,N}, \varphi (\mu_{r}^{I,N}\frac{K}{(\mu^{N}_{r},K)})\right), $\\\hspace*{1.5cm}  $ \varpi_{r}^{N,\varphi}=\frac{\beta}{N} \left(\mu_{r}^{S,N},\varphi^{2}(\mu_{r}^{I,N},\frac{K}{(\mu^{N}_{r},K)}) \right) + \frac{1}{N}  (\mu_{r}^{S,N} ,(\bigtriangledown \varphi)^{2} ). $ \vspace*{0.12cm}\\ Furthemore  $\omega^{N,\varphi}$ and  $ \varpi^{N,\varphi}$ are progressively measurable since they are adapted and right continuous, so according to Proposition 37 of [\ref{wc}] a sufficient condition for   $((\mu_{t}^{S,N}, \varphi ))_{N\geq 1}$  to be tight in $ D(\mathbb{R}_{+},\mathbb{R})$  is that both:
       \begin{itemize}
       \item $ \{ (\mu_{0}^{S,N}, \varphi ) , N \geq 1  \}    $ is tight in  $ \mathbb{R}, $
       \item $ \forall T\geq 0, \underset{0\leq t\leq T}{\sup}(\mid \omega_{t}^{N,\varphi} \mid + \varpi_{t}^{N,\varphi} ) $ is tight in $ \mathbb{R}. $
       \end{itemize} 
       These follow readily from the facts that:\vspace*{0.1cm}\\\hspace*{0.5cm} $-$ 
       $ \lvert (\mu_{0}^{S,N}, \varphi )\lvert \leq \Arrowvert \varphi \Arrowvert_{\infty}$.\\\hspace*{0.5cm} $-$ From Lemma \ref{e3},
       $ \mid \omega_{t}^{N,\varphi} \mid \leq \gamma\Arrowvert \bigtriangleup \varphi \Arrowvert_{\infty}+\beta\Arrowvert \varphi\Arrowvert_{\infty}$  and
      $ \varpi_{t}^{N,\varphi}\leq  \frac{\beta\Arrowvert \varphi^{2}\Arrowvert_{\infty}}{N}  + \frac{1}{N}   \Arrowvert (\bigtriangledown \varphi )^{2} \Arrowvert_{\infty}. $ \vspace*{0.2cm}\\
      The same arguments yields the tightness of   $ \{ \mu_{t}^{I,N}, t\geq 0 , N \geq 1 \} $ in $ D(\mathbb{R_{+}},\mathcal{M}_{F}(\mathbb{T}^{2})). $
      \end{proof}
                               
            \begin{prp}
            All limit points $ (\mu^{S},\mu^{I})$ of the sequence  $ (\mu^{S,N},\mu^{I,N})_{N \geq 1 }$ are  elements of $(C(\mathbb{R}_{+},\mathcal{M}_{F}(\mathbb{T}^{2})))^{2}.$
            \end{prp}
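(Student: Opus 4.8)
The plan is to realise any limit point as the law of a subsequential weak limit — which exists by the tightness of Proposition~\ref{eee} together with Prokhorov's theorem — and then to show that this limit charges only the continuous paths, by proving that the jumps of the prelimit processes vanish uniformly. The tool is the classical criterion (Jacod--Shiryaev, Ethier--Kurtz): if a sequence $Y^{N}$ converges in law towards $Y$ in the Skorokhod space $D(\mathbb{R}_{+},\mathcal{M}_{F}(\mathbb{T}^{2}))$ and if, for every $T>0$, $\sup_{0\le t\le T} d_{F}(Y^{N}_{t},Y^{N}_{t^{-}})\xrightarrow{\mathbb{P}} 0$, then $Y$ has almost surely continuous sample paths. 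So the whole matter reduces to controlling the maximal jump of $\mu^{S,N}$ and of $\mu^{I,N}$ in the Fortet distance.

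First I would describe the jumps. A jump of either empirical measure occurs exactly when one individual changes state: when some individual $i$ becomes infected, $\mu^{S,N}$ loses the atom $\tfrac{1}{N}\delta_{X^{i}_{t}}$ and $\mu^{I,N}$ gains it, while when $i$ recovers $\mu^{I,N}$ loses $\tfrac{1}{N}\delta_{X^{i}_{t}}$. Since the driving Poisson random measures $M^{i},Q^{i}$ are mutually independent and the total jump rate is finite on any bounded interval, almost surely no two of these events occur simultaneously, so at each jump time the relevant measure changes by exactly $\pm\tfrac{1}{N}\delta_{x}$ for a single point $x\in\mathbb{T}^{2}$. Using the constraint $\lVert f\lVert_{\infty}\le 1$ in the definition of the Fortet distance, for any finite measure $\nu$ one has
\[
d_{F}\Big(\nu\pm\tfrac{1}{N}\delta_{x},\,\nu\Big)=\sup_{\lVert f\lVert_{\infty}\le 1,\ \lVert f\lVert_{L}\le 1}\tfrac{1}{N}\lvert f(x)\rvert\le \tfrac{1}{N}.
\]
Hence, deterministically, $\sup_{t\ge 0} d_{F}(\mu^{S,N}_{t},\mu^{S,N}_{t^{-}})\le \tfrac{1}{N}$ and $\sup_{t\ge 0} d_{F}(\mu^{I,N}_{t},\mu^{I,N}_{t^{-}})\le \tfrac{1}{N}$. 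Applying the criterion coordinatewise, every subsequential weak limit $\mu^{S}$ (resp. $\mu^{I}$) has almost surely continuous paths; since the total mass of each prelimit stays bounded by $1$, the limits indeed take values in $\mathcal{M}_{F}(\mathbb{T}^{2})$, so $(\mu^{S},\mu^{I})\in (C(\mathbb{R}_{+},\mathcal{M}_{F}(\mathbb{T}^{2})))^{2}$.

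The only genuinely delicate point — and what I expect to be the conceptual obstacle — is that the maximal-jump functional $y\mapsto\sup_{t\le T} d_{F}(y_{t},y_{t^{-}})$ is not continuous (only lower semicontinuous) on the Skorokhod space, so one cannot naively pass to the limit in the bound $1/N$ to conclude that the limit has no jumps. This is precisely what the quoted criterion circumvents: because the bound $1/N\to 0$ holds \emph{uniformly and deterministically}, its hypothesis is trivial to verify, and the verification that no two jumps coincide (needed so that each jump really has size $1/N$ rather than a multiple of it) is the only measurability/simultaneity detail requiring care.
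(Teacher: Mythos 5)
Your proof is correct and follows essentially the same route as the paper: both arguments invoke the Jacod--Shiryaev criterion (convergence in law in the Skorokhod space plus vanishing maximal jump implies a.s. continuity of the limit) and both rest on the observation that, since no two state changes occur simultaneously, each jump of $\mu^{S,N}$ or $\mu^{I,N}$ has size exactly $\tfrac{1}{N}\delta_{x}$, giving the deterministic bound $\tfrac{1}{N}$ on the maximal jump. The only (harmless) difference is that you apply the criterion directly to the measure-valued processes in the Fortet metric, whereas the paper applies it to the real-valued projections $(\mu^{S,N}_{t},\varphi)$ for $\varphi\in C(\mathbb{T}^{2})$ and obtains the bound $\lVert\varphi\lVert_{\infty}/N$.
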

                \begin{proof}  
             Let us prove that  $\{\mu_{t}^{S}, t\geq 0\}$  is continuous. It is enough to prove that  $\forall \varphi\in C(\mathbb{T}^{2})$, the processes  $\{(\mu_{t}^{S},\varphi), t\geq 0\}$ is continuous. However  according  to Proposition 3.26 page 315 in [\ref{lc}], a sufficient condition for  $\{(\mu_{t}^{S},\varphi), t\geq 0\}$ to be continuous is that: \vspace*{0.15cm}\\\hspace*{3cm}
             $\forall T>0,\forall \varepsilon >0$ $\lim\limits_{N\rightarrow \infty}\mathbb{P}(\underset{0\leq t\leq T}{\sup} \lvert(\mu_{t}^{S,N},\varphi)-(\mu_{t^{-}}^{S,N},\varphi)\lvert>\varepsilon)=0$\vspace*{0.1cm} \\ Let T>0, $\varepsilon>0,$ since the infection of two individuals can not occur at the same time, we have: \hspace*{0.9cm} $\lvert(\mu_{t}^{S,N},\varphi)-(\mu_{t^{-}}^{S,N},\varphi)\lvert\leq\frac{1}{N}\sum\limits_{i=1}^{N}\lvert\varphi(X^{i}_{t})\lvert\lvert 1_{\{E_{t}^{i}=S\}}-1_{\{E_{t^{-}}^{i}=S\}}\lvert\\\hspace*{4.8cm}\leq \frac{\lVert \varphi \lVert_{\infty}}{N}\sum\limits_{i=1}^{N}\lvert1_{\{E_{t}^{i}=S\}}-1_{\{E_{t^{-}}^{i}=S\}}\lvert\leq\frac{\lVert \varphi \lVert_{\infty}}{N} .$ \\ So for any $\varepsilon>0$, $\lim\limits_{N\rightarrow \infty}\mathbb{P}(\underset{0\leq t\leq T}{\sup}\lvert(\mu_{t}^{S,N},\varphi)-(\mu_{t^{-}}^{S,N},\varphi)\lvert>\varepsilon)=0.$\vspace*{0.2cm}\\By a similar argument  we obtain the continuity of  $\{\mu_{t}^{I}, t\geq 0\}.$
              \end{proof}
              Let us  now state the main result of this section. 
     
      \begin{thm}
               The sequence $ (\mu^{S,N},\mu^{I,N})_{ N \geq 1 }$ converges in probability in $ (D(\mathbb{R}_{+},\mathcal{M}_{F}(\mathbb{T}^{2})))^{2} $ to\vspace*{0.1cm}\\ $ (\mu^{S},\mu^{I})$ $\in (C(\mathbb{R}_{+},\mathcal{M}_{F}(\mathbb{T}^{2})))^{2}$ where  $ \forall \varphi \in C^{2}(\mathbb{T}^{2})$,  $\{((\mu_{t}^{S},\varphi),(\mu_{t}^{I},\varphi)), t\geq 0 \}$ satisfies\vspace*{-0.13cm}
                         \[\tag{5.3}\hspace*{-3.64cm}\displaystyle   (\mu_{t}^{S},\varphi)=(\mu_{0}^{S},\varphi) +\displaystyle  \gamma\int_{0}^{t}(\mu_{r}^{S},\bigtriangleup \varphi) dr - \beta\int_{0}^{t}\left(\mu_{r}^{S}, \varphi (\mu_{r}^{I},\frac{K}{(\mu_{r},K)})\right) dr\label{e4}\]\vspace*{-0.25cm}
                          \[\tag{5.4}\displaystyle  \displaystyle (\mu_{t}^{I},\varphi)=(\mu_{0}^{I},\varphi) + \gamma\int_{0}^{t}(\mu_{r}^{I},\bigtriangleup \varphi) dr +\beta \int_{0}^{t}\left(\mu_{r}^{S}, \varphi (\mu_{r}^{I},\frac{K}{(\mu_{r},K)})\right) dr -\alpha\int_{0}^{t}(\mu_{r}^{I},\varphi)dr\label{eec}\vspace*{-0.3cm}\]
                                                       \label{eeee}\vspace*{-0.15cm}
                              \end{thm}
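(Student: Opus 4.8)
The plan is to exploit the tightness from Proposition \ref{eee}, identify every limit point as a solution of (5.3)--(5.4), and then upgrade subsequential convergence in law to convergence in probability by proving uniqueness of this limiting system. Since $(\mu^{S,N})_{N\ge1}$ and $(\mu^{I,N})_{N\ge1}$ are tight and, by Proposition \ref{ee}, $\mu^N\to\mu$ in probability in $C(\mathbb{R}_+,\mathcal{M}_F(\mathbb{T}^2))$, the triple $(\mu^{S,N},\mu^{I,N},\mu^N)_{N\ge1}$ is tight in $(D(\mathbb{R}_+,\mathcal{M}_F(\mathbb{T}^2)))^3$, and each limit point takes the form $(\mu^S,\mu^I,\mu)$ with $\mu$ the deterministic limit of Proposition \ref{ee}. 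By Prokhorov and a Skorokhod representation I would fix a convergent subsequence and argue on a probability space carrying almost sure convergence.

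The identification proceeds by passing to the limit in (5.1)--(5.2) tested against $\varphi\in C^2(\mathbb{T}^2)$. The initial terms converge by Theorem \ref{kl}; the linear drifts $\gamma\int_0^t(\mu_r^{S,N},\bigtriangleup\varphi)\,dr$ and $-\alpha\int_0^t(\mu_r^{I,N},\varphi)\,dr$ converge by weak continuity, the uniform bound $|(\mu_r^{\cdot,N},\psi)|\le\|\psi\|_\infty$, and bounded convergence. The martingale terms vanish: the bracket bounds from the proof of Proposition \ref{eee} give $\mathbb{E}(|M_t^{N,\varphi}|^2)=\mathbb{E}(\langle M^{N,\varphi}\rangle_t)\le Ct/N\to0$, and likewise for $L^{N,\varphi}$.

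The delicate term is $\beta\int_0^t(\mu_r^{S,N},\varphi(\mu_r^{I,N},\frac{K}{(\mu_r^N,K)}))\,dr$, where $\mu_r^N$ sits in a denominator. The two crucial inputs are Lemma \ref{int1} and Assumption (H1). By Lemma \ref{int1} the limiting density of $\mu_r$ is bounded below by $\delta_1$, and since $K(x,y)=k(d_{\mathbb{T}^2}^2(x,y))$ is translation invariant one gets $(\mu_r,K)(y)\ge\delta_1\int_{\mathbb{T}^2}k(d_{\mathbb{T}^2}^2(x,y))\,dx=:\delta_1\bar K>0$ uniformly in $y$. Since $k$ is Lipschitz, the family $\{K(\cdot,y)\}_{y}$ is equicontinuous, so weak convergence of $\mu_r^N$ upgrades to uniform-in-$y$ convergence of $(\mu_r^N,K)(y)\to(\mu_r,K)(y)$; together with the uniform lower bound this gives uniform convergence of the ratio $K(\cdot,y)/(\mu_r^N,K)(y)$. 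Replacing one measure at a time and using the mass bound $(\mu_r^{I,N},1)\le1$, the interaction functional is continuous at the limit and the term converges to $\beta\int_0^t(\mu_r^S,\varphi(\mu_r^I,\frac{K}{(\mu_r,K)}))\,dr$. Hence every limit point solves (5.3)--(5.4).

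To pin down the limit I would first show any limit point is absolutely continuous with densities $f_S,f_I\in\Lambda$ (bounded by $\delta_2$), arguing by comparison with the semigroup $\Upsilon(t)$ as in Lemma \ref{int1}, and then prove uniqueness of (5.3)--(5.4) in $\Lambda$. In density form the interaction reads $\beta f_S(x)\int_{\mathbb{T}^2}K(x,y)f_I(y)/(\mu_r,K)(y)\,dy$, which is Lipschitz in $(f_S,f_I)$ on $\Lambda$ because of the lower bound $\delta_1\bar K$ on the denominator together with the bound $f_I\le\delta_2$; a Gronwall estimate on the differences of two solutions then yields uniqueness. A unique deterministic limit forces all subsequential limits to coincide, so the whole tight sequence converges in law, and convergence in law to a deterministic limit is convergence in probability. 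I expect the uniform-in-$y$ control of the denominator in the nonlinear term and the Gronwall uniqueness argument in $\Lambda$ to be the two main obstacles.
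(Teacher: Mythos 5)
Your proposal follows essentially the same route as the paper: tightness plus Prokhorov, identification of every limit point as a solution of (5.3)--(5.4) by splitting the nonlinear term around the deterministic denominator $(\mu_r,K)$ (bounded below via Lemma \ref{int1} and controlled uniformly in $y$ through the Lipschitz property of $k$, i.e.\ the Fortet-distance estimate of Lemma \ref{e6}), vanishing of the martingale brackets in $L^2$, absolute continuity of the limit densities in $\Lambda$, and a Gronwall uniqueness argument yielding a deterministic limit and hence convergence in probability. The only step you leave implicit is the continuity of the limit points (needed both for the assertion $(\mu^{S},\mu^{I})\in (C(\mathbb{R}_{+},\mathcal{M}_{F}(\mathbb{T}^{2})))^{2}$ and for identifying fixed-time marginals under the Skorokhod topology), which the paper handles separately by bounding the maximal jump of $(\mu^{S,N}_{\cdot},\varphi)$ by $\lVert \varphi\rVert_{\infty}/N$.
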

                           
                             \subsubsection{\textit{Proof of Theorem \ref{eeee}} \label{int2}}
  By   Proposition \ref{eee}, both  sequence $(\mu^{S,N})_{N\geq 1}$ and $(\mu^{I,N})_{N\geq 1}$ are tight in  $D(\mathbb{R_{+}},\mathcal{M}_{F}(\mathbb{T}^{2}))$,  so the sequence $(\mu^{S,N},\mu^{I,N})_{N\geq 1}$  is tight in $(D(\mathbb{R_{+}},\mathcal{M}_{F}(\mathbb{T}^{2})))^{2}$. Thus  according to Prokhorov's theorem there exists a  subsequence of $(\mu^{S,N},\mu^{I,N})_{N\geq 1}$ still denoted $(\mu^{S,N},\mu^{I,N})_{N\geq 1}$ which converges in law towards $(\mu^{S},\mu^{I})$. Hence to complete the proof of Theorm \ref{eeee} it remains to:\\
   \hspace*{1cm} $-$ Find the  system of PDEs satisfied by $\{(\mu_{t}^{S},\mu_{t}^{I}), t\geq 0\}.$ \\
   \hspace*{1cm} $-$ Show that $\forall t\geq 0$, the measure $\mu_{t}^{S}$ and $\mu_{t}^{I}$ are absolutely continuous with respect to \hspace*{1.5cm} the Lebesgue measure with densities  $f_{S}(t)$ and $f_{I}(t)$ bounded by $\delta_{2}.$\\
   \hspace*{1cm} $-$ Show that the system verifies by $(f_{S}(t),f_{I}(t))$ admits a unique solution on the set \\\hspace*{1.5cm}  $\Lambda=\{(f_{1},f_{2})/ 0\leq f_{i}\leq \delta_{2}, i\in \{1,2\}  \}$.\vspace*{0.1cm}\\
         We first prove  the  following Lemmas, which will be useful to establish the system of PDEs satisfied by $\{(\mu_{t}^{S},\mu_{t}^{I}), t\geq 0\}.$
                      \begin{lem}
          Under the assumption (H1), the function  K is Lipschitz on $\mathbb{T}^{2}\times \mathbb{T}^{2}$,  with the Lipschitz constant $2\sqrt{2}C_{k}.$
          \label{e5}
                          \end{lem}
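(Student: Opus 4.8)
The plan is to reduce everything to the Lipschitz property of $k$ granted by (H1) and to a control of the squared torus distance. Writing $d=d_{\mathbb{T}^2}$ for brevity, recall that $K(x,y)=k(d^2(x,y))$. Since (H1) gives $|k(u)-k(v)|\le C_k|u-v|$, the first step is
\[
|K(x,y)-K(x',y')|\le C_k\,\bigl|d^2(x,y)-d^2(x',y')\bigr|,
\]
so that it suffices to prove the map $(x,y)\mapsto d^2(x,y)$ is Lipschitz on $\mathbb{T}^2\times\mathbb{T}^2$ with constant $2\sqrt 2$.

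The heart of the argument is the estimate of $|d^2(x,y)-d^2(x',y')|$. I would factor it as $|d(x,y)-d(x',y')|\,\bigl(d(x,y)+d(x',y')\bigr)$ and bound the two factors separately. For the first factor, the key observation is that $d$ is a genuine metric on $\mathbb{T}^2$ (it is inherited from the Euclidean metric through the projection $\varPi$, the infimum over $\mathbb{Z}^2$ preserving the triangle inequality); hence $(x,y)\mapsto d(x,y)$ is $1$-Lipschitz in each of its arguments, and two applications of the reverse triangle inequality give
\[
|d(x,y)-d(x',y')|\le d(x,x')+d(y,y').
\]
For the second factor I would use that the torus has finite diameter, bounding each distance by the diameter $\sqrt 2$ of the fundamental cell, so that $d(x,y)+d(x',y')\le 2\sqrt 2$. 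Combining the two estimates yields
\[
\bigl|d^2(x,y)-d^2(x',y')\bigr|\le 2\sqrt 2\,\bigl(d(x,x')+d(y,y')\bigr),
\]
and therefore $|K(x,y)-K(x',y')|\le 2\sqrt 2\,C_k\,\bigl(d(x,x')+d(y,y')\bigr)$, which is the claimed Lipschitz bound relative to the natural product distance on $\mathbb{T}^2\times\mathbb{T}^2$.

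The only genuinely delicate point is the reverse triangle inequality on the torus: one must check that the infimum over $\mathbb{Z}^2$ in the definition of $d$ does yield a bona fide metric, so that $|d(x,y)-d(x',y)|\le d(x,x')$ and $|d(x',y)-d(x',y')|\le d(y,y')$ both hold. Once this is granted, the remaining manipulations are elementary, the main care being purely notational, namely keeping track of the numerical constant through the diameter bound and the chosen convention for the product distance, which together account for the factor $2\sqrt 2$. I expect no substantive obstacle beyond verifying that triangle inequality.
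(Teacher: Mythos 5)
Your proposal is correct and follows essentially the same route as the paper: both use the Lipschitz property of $k$ to reduce to $\lvert d^{2}(x,y)-d^{2}(x',y')\rvert$, factor this as $\lvert d(x,y)-d(x',y')\rvert\,(d(x,y)+d(x',y'))$, bound the first factor by $d(x,x')+d(y,y')$ via the triangle inequality for $d_{\mathbb{T}^{2}}$, and bound the second factor by $2\sqrt{2}$ using $\sqrt{2}$ as the maximal distance on the torus. The only point you flag as delicate, the reverse triangle inequality for the infimum-over-$\mathbb{Z}^{2}$ distance, is exactly the step the paper also verifies explicitly.
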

                                                              \begin{proof}
                                                               Let $x,x',y,y' \in \mathbb{T}^{2},$ one has\\
                                                              $ \mid K(x,y)-K(x',y')\mid =\mid k(d_{\mathbb{T}^{2}}^{2}(x,y))-k(d_{\mathbb{T}^{2}}^{2}(x',y'))\mid \\\hspace*{3.6cm}\leq  C_{k}\lvert d_{\mathbb{T}^{2}}(x,y)-d_{\mathbb{T}^{2}}(x',y')\lvert(d_{\mathbb{T}^{2}}(x,y)+d_{\mathbb{T}^{2}}(x',y')).$
                                                          \vspace*{0.1cm}\\
       Furthemore $\lvert d_{\mathbb{T}^{2}}(x,y)-d_{\mathbb{T}^{2}}(x',y')\lvert\leq d_{\mathbb{T}^{2}}(x,x')+d_{\mathbb{T}^{2}}(y,y'). $\vspace*{0.12cm}\\Indeed, $\lvert d_{\mathbb{T}^{2}}(x,y)-d_{\mathbb{T}^{2}}(x',y') \lvert= d_{\mathbb{T}^{2}}(x,y)-d_{\mathbb{T}^{2}}(x',y')$ or $ d_{\mathbb{T}^{2}}(x',y')-d_{\mathbb{T}^{2}}(x,y)$  and \\$ d_{\mathbb{T}^{2}}(x,y)-d_{\mathbb{T}^{2}}(x',y')\leq  d_{\mathbb{T}^{2}}(x,x')+d_{\mathbb{T}^{2}}(x',y)-d_{\mathbb{T}^{2}}(x',y')\leq d_{\mathbb{T}^{2}}(x,x')+d_{\mathbb{T}^{2}}(y,y')$.\vspace*{0.1cm} \\Thus since $\sqrt{2}$ is the maximal distance between two points in $\mathbb{T}^{2}$, we conclude from the above results that\vspace*{-0.16cm}\[\tag{5.5}\textrm{For} \hspace*{0.1cm}\textrm{any} \hspace*{0.1cm} x,x',y,y'\in \mathbb{T}^{2},\hspace*{0.1cm}\mid K(x,y)-K(x',y')\mid \leq  2\sqrt{2}C_{k}(d_{\mathbb{T}^{2}}(x,x')+d_{\mathbb{T}^{2}}(y,y')).\vspace*{-0.35cm}\label{rrr}\]
                                                              \end{proof} 
                                                              \begin{lem} 
                                                                                                                                                                                         For all $\mu$, $\nu$ $\in \mathcal{M}(\mathbb{T}^{2})$, we have \\\hspace*{4cm}
                                                                                                                                                                                         $\underset{y}{\sup}\left\lvert \displaystyle \int_{\mathbb{T}^{2}}K(x',y)(\mu(dx')-\nu(dx'))\right\lvert\leq (\lVert k \lVert_{\infty}+2\sqrt{2}C_{k})d_{F}(\mu,\nu).$
            \label{e6}  
               \end{lem}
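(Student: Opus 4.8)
The plan is to fix $y\in\mathbb{T}^2$, regard the map $x'\mapsto K(x',y)$ as a single test function tested against the finite signed measure $\mu-\nu$, and bound the resulting integral directly through the definition of the Fortet distance after a suitable rescaling. Taking the supremum over $y$ at the very end is harmless because the bound produced will be uniform in $y$.

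First I would record the two relevant norms of this test function. Writing $g_y(\cdot)=K(\cdot,y)$, the identity $K(x',y)=k(d_{\mathbb{T}^2}^2(x',y))$ together with $k\geq 0$ gives $\lVert g_y\rVert_\infty\leq\lVert k\rVert_\infty$. By Lemma \ref{e5}, $K$ is Lipschitz on $\mathbb{T}^2\times\mathbb{T}^2$ with constant $2\sqrt{2}C_k$; freezing the second variable yields $\lvert g_y(x')-g_y(x'')\rvert\leq 2\sqrt{2}C_k\, d_{\mathbb{T}^2}(x',x'')$, that is $\lVert g_y\rVert_L\leq 2\sqrt{2}C_k$. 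In particular $g_y\in C(\mathbb{T}^2)$, so it is an admissible integrand for the Fortet functional.

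Next I would normalize. Put $L=\max(\lVert k\rVert_\infty,\,2\sqrt{2}C_k)$, so that the rescaled function $g_y/L$ satisfies simultaneously the two constraints $\lVert g_y/L\rVert_\infty\leq 1$ and $\lVert g_y/L\rVert_L\leq 1$ that enter the supremum defining $d_F$. Hence $\lvert(\mu,g_y/L)-(\nu,g_y/L)\rvert\leq d_F(\mu,\nu)$, and multiplying through by $L$,
\[ \left\lvert\int_{\mathbb{T}^2}K(x',y)\bigl(\mu(dx')-\nu(dx')\bigr)\right\rvert\leq L\,d_F(\mu,\nu)\leq\bigl(\lVert k\rVert_\infty+2\sqrt{2}C_k\bigr)d_F(\mu,\nu), \]
where the last step uses $\max(a,b)\leq a+b$ for nonnegative reals. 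Since the right-hand side does not depend on $y$, taking the supremum over $y\in\mathbb{T}^2$ gives exactly the asserted inequality.

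The argument is essentially routine once Lemma \ref{e5} is in hand; the one point deserving attention is that the Fortet distance controls $\lVert f\rVert_\infty$ and $\lVert f\rVert_L$ at the same time, which forces one to normalize by the maximum of the two bounds rather than by either separately. Passing from this maximum to the sum $\lVert k\rVert_\infty+2\sqrt{2}C_k$ is a harmless loosening that yields the clean constant stated in the lemma, so I expect no real obstacle in carrying the plan out.
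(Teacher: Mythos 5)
Your argument is correct and follows essentially the same route as the paper: both proofs test the function $K(\cdot,y)$, suitably normalized, against $\mu-\nu$ and invoke the definition of $d_F$, using Lemma \ref{e5} for the Lipschitz bound. The only cosmetic difference is that the paper normalizes directly by the sum $\lVert k\rVert_\infty+2\sqrt{2}C_k$ whereas you normalize by the maximum and then loosen to the sum.
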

                \begin{proof}
                 Since from  (\ref{rrr}), for any $y\in \mathbb{T}^{2},$ the function $K(.,y)$ is Lipschitz uniformly in $y$ with the Lipschitz constant $2\sqrt{2}C_{k}$ ,   we have \\$\left\lvert \displaystyle \int_{\mathbb{T}^{2}}K(x',y)(\mu-\nu)(dz)\right\lvert=(\lVert k\lVert_{\infty} +2\sqrt{2}C_{k})\left\lvert \displaystyle \int_{\mathbb{T}^{2}}\frac{K(x',y)}{(\lVert K(.,y) \lVert_{\infty} +2\sqrt{2}C_{k})}(\mu-\nu)(dx')\right\lvert\\\hspace*{4.4cm}\leq (\lVert k\lVert_{\infty} +2\sqrt{2}C_{k})d_{F}(\mu,\nu)$.\\Hence the result.                                                                                                                           \end{proof}
                                                              \begin{lem}
                                                              The following map is continuous.\vspace*{0.2cm}\\
                                                              $ \hspace*{3cm}
                                                                 G:(\mathcal{M}_{F}(\mathbb{T}^{2}),d_{F} )\times (\mathcal{M}_{F}(\mathbb{T}^{2}),d_{F})
                                                                  \rightarrow (\mathcal{M}_{F}(\mathbb{T}^{2}\times \mathbb{T}^{2}),d_{F})\\\hspace*{8cm}
                                                                    (\mu,\nu) \longmapsto \mu\otimes \nu$ \\
                                                                  
                                                                 where  $\forall \phi\in C(\mathbb{T}^{2}\times\mathbb{T}^{2})$,  $(\mu \otimes \nu,\phi)=\displaystyle\int_{\mathbb{T}^{2}\times\mathbb{T}^{2}}\phi(x,y)\nu(dy)\mu(dx)$
                                                                 \label{e7}
                                                              \end{lem}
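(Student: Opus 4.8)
The plan is to establish a single quantitative estimate that makes the continuity transparent: for all $\mu,\mu',\nu,\nu'\in\mathcal{M}_{F}(\mathbb{T}^{2})$,
\[
d_{F}(\mu\otimes\nu,\mu'\otimes\nu')\leq (\nu,1_{\mathbb{T}^{2}})\,d_{F}(\mu,\mu')+(\mu',1_{\mathbb{T}^{2}})\,d_{F}(\nu,\nu').
\]
Since the total mass $\rho\mapsto(\rho,1_{\mathbb{T}^{2}})$ is continuous for the weak topology (hence locally bounded), this inequality shows that $G$ is locally Lipschitz, and in particular continuous. Throughout I equip $\mathbb{T}^{2}\times\mathbb{T}^{2}$ with the sum metric $(x,y),(x',y')\mapsto d_{\mathbb{T}^{2}}(x,x')+d_{\mathbb{T}^{2}}(y,y')$, so that any $\phi$ with joint Lipschitz seminorm $\le 1$ automatically has both partial maps $\phi(\cdot,y)$ and $\phi(x,\cdot)$ equal to $1$-Lipschitz functions; recall that $d_{F}$ induces exactly the weak topology, so this is the relevant notion of continuity.

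First I would fix $\phi\in C(\mathbb{T}^{2}\times\mathbb{T}^{2})$ with $\lVert\phi\rVert_{\infty}\leq 1$ and $\lVert\phi\rVert_{L}\leq 1$, and telescope through the intermediate measure $\mu'\otimes\nu$:
\[
(\mu\otimes\nu,\phi)-(\mu'\otimes\nu',\phi)=\big[(\mu\otimes\nu,\phi)-(\mu'\otimes\nu,\phi)\big]+\big[(\mu'\otimes\nu,\phi)-(\mu'\otimes\nu',\phi)\big].
\]
For the first bracket, rewriting it as $\int_{\mathbb{T}^{2}}\big(\int_{\mathbb{T}^{2}}\phi(x,y)(\mu-\mu')(dx)\big)\nu(dy)$, I use that for each fixed $y$ the function $x\mapsto\phi(x,y)$ has sup-norm and Lipschitz seminorm both $\le 1$, so by the very definition of $d_{F}$ the inner integral is at most $d_{F}(\mu,\mu')$ in absolute value; integrating against the nonnegative measure $\nu$ bounds the first bracket by $(\nu,1_{\mathbb{T}^{2}})\,d_{F}(\mu,\mu')$.

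For the second bracket I set $h(y)=\int_{\mathbb{T}^{2}}\phi(x,y)\mu'(dx)$ and verify $\lVert h\rVert_{\infty}\leq(\mu',1_{\mathbb{T}^{2}})$ together with $\lVert h\rVert_{L}\leq(\mu',1_{\mathbb{T}^{2}})$, the latter coming from the partial Lipschitz property of $\phi$ in its second variable; dividing $h$ by $(\mu',1_{\mathbb{T}^{2}})$ and invoking $d_{F}$ again bounds the second bracket by $(\mu',1_{\mathbb{T}^{2}})\,d_{F}(\nu,\nu')$. Summing the two estimates and taking the supremum over admissible $\phi$ yields the displayed inequality, whence $G$ is continuous. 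The only point requiring genuine care — the main, though minor, obstacle — is the passage from the joint bounds on $\phi$ to the partial bounds on $\phi(\cdot,y)$ and on $h$, which is precisely where the choice of product metric on $\mathbb{T}^{2}\times\mathbb{T}^{2}$ is used; the remaining manipulations are routine.
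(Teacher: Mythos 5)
Your proof is correct and follows essentially the same route as the paper: you telescope $\mu\otimes\nu-\mu'\otimes\nu'=(\mu-\mu')\otimes\nu+\mu'\otimes(\nu-\nu')$, bound each term by the corresponding Fortet distance times a total mass, and use that the partial maps $\phi(\cdot,y)$ and $\phi(x,\cdot)$ inherit the sup-norm and Lipschitz bounds of $\phi$ — exactly the paper's argument, stated with slightly more explicit constants.
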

                                                              
                                                              \begin{proof}
        Let $(\mu,\nu) ,(\mu^{1},\nu^{1})\in (\mathcal{M}_{F}(\mathbb{T}^{2}))^{2}$; $\phi $  a Lipschitz function on $\mathbb{T}^{2}\times\mathbb{T}^{2}$, such that \\ $ \lVert \phi\lVert_{\infty}\leq 1 \hspace*{0.2cm}and \hspace*{0.2cm} \lVert \phi\lVert_{L}\leq 1.$ We have\\
        $\Big \lvert \displaystyle\int_{\mathbb{T}^{2}\times \mathbb{T}^{2}} \phi (x,y)(\mu \otimes \nu-\mu^{1} \otimes \nu^{1})(dx,dy)\Big \lvert\leq \\\hspace*{3cm}\leq\int_{\mathbb{T}^{2}}\Big \lvert \int_{\mathbb{T}^{2}}\phi (x,y) (\mu-\mu^{1})(dx)\Big \lvert\nu(dy)+ \int_{\mathbb{T}^{2}}\Big \lvert \int_{\mathbb{T}^{2}}\phi (x,y) (\nu-\nu^{1})(dy)\Big \lvert\mu^{1}(dx)\\\hspace*{3cm}\leq \nu(\mathbb{T}^{2})\underset{y}{\sup}\Big \lvert \int_{\mathbb{T}^{2}}\phi (x,y) (\mu-\mu^{1})(dx)\Big \lvert+ \mu^{1}(\mathbb{T}^{2})\underset{x}{\sup}\Big \lvert \int_{\mathbb{T}^{2}}\phi (x,y) (\nu-\nu^{1})(dy)\Big \lvert\\\hspace*{3cm} \leq C$  $(d_{F}(\mu, \mu^{1})+ d_{F}(\nu, \nu^{1}))$, \vspace*{0.1cm}\\since $\underset{y}{\sup}\lVert \phi(.,y)\lVert_{L}\vee \underset{x}{\sup}\lVert \phi(x,.)\lVert_{L}\leq \lVert \phi\lVert_{L}\leq 1.$
                                                              \end{proof}
                                                             We can now establish the system of equations satisfied by $(\mu^{S},\mu^{I}).$
      \begin{prp}
      The processes   $ (\mu^{S}, \mu^{I}) $  satisfies the equations (\ref{e4}) and (\ref{eec})
      \end{prp}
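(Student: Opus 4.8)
The plan is to pass to the limit in the evolution equations (\ref{e1}) and (\ref{e2}) along the convergent subsequence. By the tightness of Proposition \ref{eee} together with the convergence of $(\mu^N)_{N\geq 1}$ from Proposition \ref{ee}, the triple $(\mu^{S,N},\mu^{I,N},\mu^N)_{N\geq 1}$ is tight in $(D(\mathbb{R}_+,\mathcal{M}_F(\mathbb{T}^2)))^3$; extracting a subsequence and invoking Skorokhod's representation theorem, I would work on a probability space on which $(\mu^{S,N},\mu^{I,N},\mu^N)\to(\mu^S,\mu^I,\mu)$ almost surely, where by Proposition \ref{ee} the $\mu$-marginal is the deterministic law-flow with $\mu=\mu^S+\mu^I+\mu^R$ and, by Lemma \ref{int1}, $\mu_r$ has a density $f(r,\cdot)$ with $\delta_1\leq f\leq\delta_2$. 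Since all limit points lie in $(C(\mathbb{R}_+,\mathcal{M}_F(\mathbb{T}^2)))^2$, the evaluation maps are continuous at the limit and it suffices to pass to the limit term by term in (\ref{e1}) and (\ref{e2}) for each fixed $\varphi\in C^2(\mathbb{T}^2)$ and $t\geq 0$.

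The martingale and linear terms are handled directly. For the martingale $M^{N,\varphi}$, the bracket computation carried out in the proof of Proposition \ref{eee} gives $\mathbb{E}(|M_t^{N,\varphi}|^2)=\mathbb{E}(\langle M^{N,\varphi}\rangle_t)\leq C(\varphi)t/N$, so that $M_t^{N,\varphi}\to 0$ in $L^2$ (and, via Doob's maximal inequality, uniformly on $[0,t]$); the same applies to $L_t^{N,\varphi}$. The diffusive terms $\gamma\int_0^t(\mu_r^{S,N},\triangle\varphi)dr$, $\gamma\int_0^t(\mu_r^{I,N},\triangle\varphi)dr$ and the cure term $\alpha\int_0^t(\mu_r^{I,N},\varphi)dr$ are time-integrals of bounded weakly continuous functionals, so their convergence follows from the almost sure path convergence together with dominated convergence (each integrand being bounded by $\|\triangle\varphi\|_\infty$, resp. $\|\varphi\|_\infty$).

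The heart of the matter is the nonlinear infection term $\beta\int_0^t(\mu_r^{S,N},\varphi(\mu_r^{I,N},\frac{K}{(\mu^N_r,K)}))dr$. Here I would prove that the map $(\mu,\nu,\rho)\mapsto(\mu,\varphi(\nu,\frac{K}{(\rho,K)}))$ is $d_F$-continuous on $\{(\mu,\nu,\rho):\inf_y(\rho,K)(y)\geq c\}$ for every $c>0$. Writing the difference of two such functionals as a telescoping sum in the three arguments, the dependence on $\mu$ and $\nu$ is controlled by Lemmas \ref{e5} and \ref{e7} (Lipschitz continuity of $K$ and continuity of the tensor product map), while the dependence on the denominator is controlled by the identity
\[\frac{K(x,y)}{(\rho,K)(y)}-\frac{K(x,y)}{(\rho',K)(y)}=K(x,y)\frac{(\rho'-\rho,K)(y)}{(\rho,K)(y)(\rho',K)(y)},\]
combined with Lemma \ref{e6}, which bounds $\sup_y|(\rho-\rho',K)(y)|$ by $(\|k\|_\infty+2\sqrt{2}C_k)\,d_F(\rho,\rho')$, and the lower bounds $c$ on the denominators. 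The crucial input is the uniform lower bound on the denominator: in the limit, $(\mu_r,K)(y)=\int K(y,z)f(r,z)dz\geq\delta_1\inf_y\int K(y,z)dz=:c_0>0$ by Lemma \ref{int1} and the translation invariance and positivity of $K$ near the diagonal, and since $\mu^N_r\to\mu_r$ uniformly in $r\in[0,t]$ for $d_F$ (using Lemma \ref{e6} once more), one has $\inf_{r\leq t,\,y}(\mu^N_r,K)(y)\geq c_0/2$ for $N$ large. Passing to the limit in the infection term then follows, and collecting all contributions yields (\ref{e4}) and (\ref{eec}).

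The main obstacle I anticipate is precisely this control of the denominator $(\mu^N_r,K)(y)$: one must keep it bounded away from zero uniformly in both $y\in\mathbb{T}^2$ and $r\in[0,t]$, which is what renders the quotient $K/(\rho,K)$ a Lipschitz, hence $d_F$-continuous, functional of $\rho$. This rests on Lemma \ref{int1}, guaranteeing that the limiting total mass has density bounded below by $\delta_1$, together with the uniform-in-$y$ estimate of Lemma \ref{e6}; without the lower density bound the functional would fail to be continuous where the denominator degenerates, so securing that bound is the key step of the argument.
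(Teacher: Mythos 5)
Your proposal is correct, and its skeleton (pass to the limit term by term in (\ref{e1}) and (\ref{e2}); kill the martingales in $L^{2}$ via the bracket bound; handle the linear terms by weak convergence plus dominated convergence) coincides with the paper's. Where you genuinely diverge is in the treatment of the nonlinear infection term, which is the only delicate step. The paper does \emph{not} prove $d_{F}$-continuity of the full trilinear functional on a set where the denominator is bounded below; instead it writes
$\big(\mu_{r}^{I,N},\tfrac{(\mu_{r}^{S,N},\varphi K)}{(\mu_{r}^{N},K)}\big)=\big(\mu_{r}^{I,N},\tfrac{(\mu_{r}^{S,N},\varphi K)}{(\mu_{r},K)}\big)-\big(\mu_{r}^{I,N},\tfrac{(\mu_{r}^{S,N},\varphi K)}{(\mu_{r}^{N},K)(\mu_{r},K)}(\mu_{r}^{N}-\mu_{r},K)\big)$,
exploits the fact that $\big|\tfrac{(\mu_{r}^{S,N},\varphi K)}{(\mu_{r}^{N},K)}\big|\leq\lVert\varphi\rVert_{\infty}$ automatically because $\mu_{r}^{S,N}\leq\mu_{r}^{N}$ (Lemma \ref{e3}), and so only ever needs the \emph{deterministic} lower bound on $(\mu_{r},K)$ from Lemma \ref{int1}; the error term is then $O(d_{F}(\mu_{r}^{N},\mu_{r}))$ by Lemma \ref{e6} and vanishes in $L^{1}$, while the main term converges by Lemma \ref{e7} since its kernel is a fixed continuous bounded function. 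Your route instead requires a lower bound on the \emph{empirical} denominator $(\mu_{r}^{N},K)(y)$ uniformly in $y$ and $r\leq t$; this does hold for $N$ large (a.s. after a Skorokhod coupling) by combining Lemma \ref{e6} with the uniform convergence of $\mu^{N}$ to $\mu$ from Proposition \ref{ee} — note the uniformity in $r$ comes from that proposition, not from Lemma \ref{e6} as you write — so your argument closes, at the cost of one extra step and an appeal to Skorokhod's representation theorem (legitimate here, since $D(\mathbb{R}_{+},\mathcal{M}_{F}(\mathbb{T}^{2}))$ is Polish). What your approach buys is a cleaner conceptual statement (continuity of the map $(\mu,\nu,\rho)\mapsto(\mu,\varphi(\nu,K/(\rho,K)))$ on the good set, hence convergence by the continuous mapping theorem); what the paper's decomposition buys is that it never has to control the empirical denominator away from zero off the support of $\mu_{r}^{I,N}$, where it could in principle be as small as $k(0)/N$.
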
    
      \begin{proof}
 We  prove this Proposition by taking the limit in the equations (\ref{e1}) and (\ref{e2}).\vspace*{0.1cm} \\                        
                                 1- Let us  prove that\\
                                 $
                                            \hspace*{2.5cm}\displaystyle \int_{0}^{t}\left(\mu_{r}^{S,N}, \varphi (\mu_{r}^{I,N},\frac{K}{(\mu_{r}^{N},K)})\right)dr  \xrightarrow{L} \int_{0}^{t}\left(\mu_{r}^{S}, \varphi ( \mu_{r}^{I},\frac{K}{(\mu_{r},K)})\right)dr. \hspace*{2cm}
                                             $\vspace*{0.1cm}\\
  One has\vspace*{0.12cm}\\  $\displaystyle\left(\mu_{r}^{S,N}, \varphi (\mu_{r}^{I,N},\frac{K}{(\mu_{r}^{N},K)})\right)=\left( \mu_{r}^{I,N},\frac{(\mu_{r}^{S,N},\varphi K)}{(\mu_{r}^{N},K)}\right)\\\hspace*{4.68cm}=-\displaystyle\left( \mu_{r}^{I,N},\frac{(\mu_{r}^{S,N},\varphi K)}{(\mu_{r}^{N},K)(\mu_{r},K)}(\mu_{r}^{N}-\mu_{r},K)\right)+\left( \mu_{r}^{I,N},\frac{(\mu_{r}^{S,N},\varphi K)}{(\mu_{r},K)}\right).$ \\ Moreover: \vspace*{0.16cm}\\1-1. Since from Lemma \ref{int1}, $f(t,.)$ is lower bounded by a positive constant and $\forall y\in \mathbb{T}^{2}$, $\displaystyle \int_{\mathbb{T}^{2}} K(x,y)dx$ is a positive constant   independent of y, \\\hspace*{4cm} $\exists \hspace*{0.1cm}
  C>0$ such that $\forall y\in \mathbb{T}^{2}$, $\displaystyle \int_{\mathbb{T}^{2}} K(x,y)f(t,x)dx\geq C.$\\   On the other hand, since from Proposition \ref{ee}  for any $\varphi\in C(\mathbb{T}^{2}),$   $(\mu_{r}^{N},\varphi)\xrightarrow{P}(\mu_{r},\varphi),$\\  $d_{F}(\mu_{r}^{N},\mu_{r})\xrightarrow{P}0$  (see Lemma \ref{b7}). Thus as  $d_{F}(\mu_{r}^{N},\mu_{r})\leq 2$,  so    from Lemmas \ref{e3} and \ref{e6}  from the Lebesgue dominated convergence theorem, we have \vspace*{0.1cm} \\
   $\mathbb{E} \left( \left\lvert \displaystyle \int_{0}^{t} \left( \mu_{r}^{I,N},\frac{(\mu_{r}^{S,N},\varphi K)}{(\mu_{r}^{N},K)(\mu_{r},K)}(\mu_{r}^{N}-\mu_{r},K)\right)dr\right\lvert \right)  $ \\\hspace*{2cm}$\leq \displaystyle\frac{1}{C}\mathbb{E} \left(  \displaystyle  \int_{0}^{t} \int_{\mathbb{T}^{2}}\left\lvert\frac{\int_{\mathbb{T}^{2}}\varphi(x)K(x,y)\mu_{r}^{S,N}(dx)}{\int_{\mathbb{T}^{2}} K(x',y)\mu_{r}^{N}(dx')}\right\lvert \left\lvert \int_{\mathbb{T}^{2}}K(x',y)(\mu_{r}^{N}-\mu_{r})(dx')\right\lvert \mu_{r}^{I,N}(dy) dr \right) \\\hspace*{1.9cm}\leq C\lVert \varphi \lVert_{\infty}(\lVert k \lVert_{\infty}+2\sqrt{2}C_{k}) \displaystyle  \int_{0}^{t} \mathbb{E} (d_{F}(\mu_{r}^{N},\mu_{r}))dr\xrightarrow{N\rightarrow \infty}0 $ \vspace*{0.2cm}\\1-2. We have $\displaystyle\left( \mu_{r}^{I,N},\frac{(\mu_{r}^{S,N},\varphi K)}{(\mu_{r},K)}\right) =\displaystyle\int_{\mathbb{T}^{2}\times\mathbb{T}^{2}}\frac{\varphi(x)K(x,y)}{\int_{\mathbb{T}^{2}}K(x',y)\mu_{r}(dx')}\mu_{r}^{I,N}(dy)\mu_{r}^{S,N}(dx).$\\ Moreover since  $\displaystyle \int_{\mathbb{T}^{2}}K(x',y)\mu_{r}(dx')$ is lower bounded by a positive constant independent of $ y\in \mathbb{T}^{2}$ and  from Lemma \ref{e5}, the map $y\in \mathbb{T}^{2}\mapsto \displaystyle \int_{\mathbb{T}^{2}}K(x',y)\mu_{r}(dx')$ is continuous,  from Lemma \ref{e5} again,  the map $(x,y)\in \mathbb{T}^{2}\times \mathbb{T}^{2}\mapsto \displaystyle \frac{\varphi(x)K(x,y)}{\int_{\mathbb{T}^{2}}K(x',y)\mu_{r}(dx')}$ is continuous and bounded on $\mathbb{T}^{2}\times \mathbb{T}^{2}$. Thus from Lemma \ref{e7}, we deduce that \vspace*{0.23cm}\\$\hspace*{1cm}\displaystyle\int_{\mathbb{T}^{2}\times\mathbb{T}^{2}}\frac{\varphi(x)K(x,y)}{\int_{\mathbb{T}^{2}}K(x',y)\mu_{r}(dx')}\mu_{r}^{I,N}(dy)\mu_{r}^{S,N}(dx)\xrightarrow{L}\displaystyle\int_{\mathbb{T}^{2}\times\mathbb{T}^{2}}\frac{\varphi(x)K(x,y)}{\int_{\mathbb{T}^{2}}K(x',y)\mu_{r}(dx')}\mu_{r}^{I}(dy)\mu_{r}^{S}(dx)$.\vspace*{0.2cm}\\
  2- Since $ \bigtriangleup \varphi$ and $ \varphi $ are continuous and bounded, \vspace*{0.1cm}\\\hspace*{2cm}
  $
 \displaystyle\int_{0}^{t}(\mu_{r}^{S,N},\bigtriangleup \varphi) dr \xrightarrow{L} \int_{0}^{t}(\mu_{r}^{S},\bigtriangleup \varphi)dr \hspace*{0.1cm} and \hspace*{0.1cm} \int_{0}^{t}(\mu_{r}^{I,N}, \varphi) dr \xrightarrow{L} \int_{0}^{t}(\mu_{r}^{I}, \varphi)dr.$\vspace*{0.13cm}\\
                                                      3 - Let us prove that,
$
                                                           M_{t}^{N,\varphi}\xrightarrow{P} 0 $ and  $ L_{t}^{N,\varphi}\xrightarrow{P}0.
                                                          $\vspace*{0.11cm}\\
                                                           From Lemma \ref{e3}, we have \vspace*{0.1cm}\\
                                                          $\hspace*{1.03cm}\mathbb{E}( \mid M^{N,\varphi}_{t} \mid^{2} ) $= $\displaystyle\mathbb{E}( < M^{N,\varphi}>_{t} ) \\\hspace*{3.1cm}  $ $= \frac{\beta}{N} \displaystyle\int_{0}^{t}  \mathbb{E}\left( \left(\mu_{r}^{S,N}, \varphi^{2} (\mu_{r}^{I,N},\frac{K}{(\mu^{N}_{r},K)})\right) \right) dr+ \frac{1}{N} \displaystyle\int_{0}^{t} $E( $ (\mu_{r}^{S,N}, (\bigtriangledown \varphi )^{2}) )dr\\\hspace*{3.1cm}\leq t \frac{\beta}{N} \lVert \varphi^{2}\lVert_{\infty}+\frac{t}{N}\lVert (\bigtriangledown \varphi )^{2} \lVert_{\infty}\xrightarrow{N\rightarrow\infty}0.$
                                                           \vspace*{0.15cm}\\
                                                             $  M^{N,\varphi}_{t} $ converges to 0 in  $ L^ {2} $, so also in probability. A similar argument yields the fact that $  L^{N,\varphi}_{t} $ converges in probability to 0.\\
                                                             Thus from the results 1-, 2-, 3-, and from the convergence of the initial measures obtained in Theorem \ref{kl}   we conclude  that  $ (\mu^{S},\mu^{I})  $ satisfies the equations (\ref{e4})  and (\ref{eec}).
                                                          \end{proof}  

                                            \begin{prp}
                                            $\forall t\geq 0 $, the measures  $\mu_{t}^{S}$ and $\mu_{t}^{I}$ are absolutely continuous with respect to the Lebesgue measure and their densities $ f_{S}(t,.)$ and $f_{I}(t,.)$ are bounded by $\delta_{2}$.\label{e8}
                                            \end{prp}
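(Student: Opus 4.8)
The plan is to control $\mu^{S}_{t}$ and $\mu^{I}_{t}$ by domination from the total measure $\mu_{t}$, whose density is already known to be bounded by $\delta_{2}$ by Lemma \ref{int1}. The starting point is the pathwise identity $\mu^{N}_{t}=\mu^{S,N}_{t}+\mu^{I,N}_{t}+\mu^{R,N}_{t}$, in which the three empirical measures are non-negative; hence for every non-negative $\varphi\in C(\mathbb{T}^{2})$ one has $(\mu^{S,N}_{t}+\mu^{I,N}_{t},\varphi)\le(\mu^{N}_{t},\varphi)$ almost surely, and in particular $\mu^{S,N}_{t}\le\mu^{N}_{t}$ and $\mu^{I,N}_{t}\le\mu^{N}_{t}$ in the sense of measures.

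The next step is to transfer this domination to the limit. Along the subsequence selected after Proposition \ref{eee}, $(\mu^{S,N},\mu^{I,N})$ converges in law to $(\mu^{S},\mu^{I})$ in $(D(\mathbb{R}_{+},\mathcal{M}_{F}(\mathbb{T}^{2})))^{2}$, while $\mu^{N}\to\mu$ in probability by Proposition \ref{ee}; since this last limit is deterministic, the triple $(\mu^{S,N},\mu^{I,N},\mu^{N})$ converges jointly in law to $(\mu^{S},\mu^{I},\mu)$. The limiting paths being continuous, the evaluation $w\mapsto w_{t}$ is continuous at every continuous path, so for each fixed $t$ the continuous mapping theorem gives $(\mu^{S,N}_{t},\mu^{I,N}_{t},\mu^{N}_{t})\to(\mu^{S}_{t},\mu^{I}_{t},\mu_{t})$ in law. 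For a fixed non-negative $\varphi\in C(\mathbb{T}^{2})$ the set $\{(a,b,c):(a+b,\varphi)\le(c,\varphi)\}$ is closed for the weak topology, so the Portmanteau theorem together with the almost sure inequality above yields $(\mu^{S}_{t}+\mu^{I}_{t},\varphi)\le(\mu_{t},\varphi)$ almost surely; ranging $\varphi$ over a countable dense subset of $C(\mathbb{T}^{2};\mathbb{R}_{+})$ promotes this to $\mu^{S}_{t}+\mu^{I}_{t}\le\mu_{t}$ almost surely.

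It then remains only to read off the conclusion. By Lemma \ref{int1}, $\mu_{t}$ is absolutely continuous with respect to Lebesgue measure with density $f(t,\cdot)\le\delta_{2}$; since $\mu^{S}_{t}$ and $\mu^{I}_{t}$ are non-negative measures dominated by $\mu_{t}$, they are themselves absolutely continuous, with densities $f_{S}(t,\cdot)$ and $f_{I}(t,\cdot)$ satisfying $0\le f_{S}(t,\cdot)+f_{I}(t,\cdot)\le f(t,\cdot)\le\delta_{2}$, whence each of $f_{S}(t,\cdot)$ and $f_{I}(t,\cdot)$ is bounded by $\delta_{2}$. I expect the only delicate point to be the fixed-time passage to the limit: weak convergence in the Skorokhod space does not by itself control one-dimensional marginals, and it is precisely the continuity of the limiting process---established in the preceding propositions---that makes evaluation at $t$ a continuity point and legitimises the argument.
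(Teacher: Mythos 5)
Your proof is correct and follows essentially the same route as the paper: both arguments start from the pathwise inequality $\mu_{t}^{S,N}+\mu_{t}^{I,N}\leq\mu_{t}^{N}$ (equivalently, the non-negativity of $\mu_{t}^{R,N}=\mu_{t}^{N}-\mu_{t}^{S,N}-\mu_{t}^{I,N}$), pass it to the limit to get $\mu_{t}^{S}+\mu_{t}^{I}\leq\mu_{t}$, and then invoke Lemma \ref{int1} to conclude absolute continuity with densities bounded by $\delta_{2}$. Your write-up is in fact more explicit than the paper's about the delicate points (joint convergence of the triple via the deterministic limit of $\mu^{N}$, evaluation at a fixed $t$ justified by continuity of the limit paths, and Portmanteau on the closed set of dominated triples), but the underlying idea is identical.
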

                                            \begin{proof}
                                           For any $t\geq 0$,  we have 
       $\mu_{t}^{R,N}=\frac{1}{N} \sum_{i=1}^{N}1_{\{E_{t}^{i}=R\}}\delta_{X_{t}^{i}} =\mu_{t}^{N}-\mu_{t}^{S,N}-\mu_{t}^{I,N}. $  \vspace*{0.1cm}\\ Furthemore: \\\hspace*{0.5cm}- $\mu_{t}^{S,N}+\mu_{t}^{I,N}$ weakly converges towards $\mu_{t}^{S}+\mu_{t}^{I}$ since $(\mu_{t}^{S,N},\mu_{t}^{I,N})$ weakly converges towards \hspace*{0.5cm} $(\mu_{t}^{S},\mu_{t}^{I})$ and  the map $(\mu,\nu)\in(\mathcal{M}_{F}(\mathbb{T}_{2}),d_{F})^{2}\mapsto\mu+\nu $ is continuous,\\
    \hspace*{0.5cm}- $\mu_{t}^{N}$ converges in probability towards the measure  $\mu_{t}$ which is deterministic ($\mu_{t}=\Upsilon(t)\nu$),\vspace*{0.1cm}\\
                                                                                      thus   
       $\mu_{t}^{R,N}=\mu_{t}^{N}-\mu_{t}^{S,N}-\mu_{t}^{I,N}$ weakly converges towards $\mu_{t}-\mu_{t}^{S}-\mu_{t}^{I}.$\vspace*{0.12cm}\\Hence as the measures   $\mu_{t}^{S}$, $\mu_{t}^{I}$ and $\mu_{t}-\mu_{t}^{S}-\mu_{t}^{I}$  are  non negative (since  $\mu_{t}^{S,N}$, $\mu_{t}^{I,N}$ and $\mu_{t}^{R,N}$ are non negative),    we can  conclude that\\\hspace*{0.5cm}- $ \mu_{t}^{S} $ and  $ \mu_{t}^{I} $ are absolutely continuous with respect to  the Lebesgue measure since $ \mu_{t} $ has \\\hspace*{0.5cm} this property;\\\hspace*{0.5cm}- their densities satisfy  $ f_{S}(t,.)+ f_{I}(t,.) \leq f(t,.) \leq \delta_{2}$. 
                                            \end{proof}
 
     \begin{prp}
     The  pair of densities 
     $ (f_{S}(t,.),f_{I}(t,.) )$  of the pair of measures $(\mu_{t}^{S},\mu_{t}^{I})$ satisfies 
                
         \vspace*{-0.37cm}\[\tag{5.6} \hspace*{-3cm}\displaystyle  f_{S}(t) =\Upsilon(t)f_{S}(0)-\beta \int_{0}^{t}\Upsilon(t-r)\Big[ f_{S}(r) \int_{\mathbb{T}^{2}}\frac{K(.,y) }{\int_{\mathbb{T}^{2}}K(x',y)f(r,x')dx'}f_{I}(r,y)dy\Big]dr,\label{e9}\] $\displaystyle   f_{I}(t) =\Upsilon(t)f_{I}(0)+ \beta\int_{0}^{t}\Upsilon(t-r)\Big[ f_{S}(r) \int_{\mathbb{T}^{2}}\frac{K(.,y) }{\int_{\mathbb{T}^{2}}K(x',y)f(r,x')dx'}f_{I}(r,y)dy\Big]dr$ \vspace*{-0.28cm}\[\tag{5.7}\hspace*{-10cm}-\alpha \int_{0}^{t}\Upsilon(t-r)f_{I}(r)dr.\label{e10}\]
        Moreover the system formed by the equations (\ref{e9}) and (\ref{e10}) admits a unique solution on  the set $ \Lambda = \{(f_{1},f_{2}) /  0\leq f_{i} \leq \delta_{2} , i \in \{1,2\} \}. $ 
     \end{prp}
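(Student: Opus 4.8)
The plan is to treat the two assertions separately: first deriving the mild (Duhamel) formulation (\ref{e9})--(\ref{e10}) from the weak equations (\ref{e4})--(\ref{eec}), and then establishing uniqueness on $\Lambda$ by a Gronwall/contraction argument. Existence of a solution lying in $\Lambda$ is essentially free once the first part is done, since Proposition \ref{e8} already guarantees that the limit densities are non-negative and bounded by $\delta_2$.

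For the first part I would exploit that $\Upsilon(t)=e^{\gamma t\Delta}$ is the self-adjoint semigroup generated by $\gamma\Delta$ (Proposition \ref{b5}). Fix $t>0$ and $\psi\in C^2(\mathbb{T}^2)$, and for $s\in[0,t]$ introduce the time-dependent test function $\varphi_s=\Upsilon(t-s)\psi$, which lies in $C^2(\mathbb{T}^2)$ by the smoothing property of the heat semigroup and satisfies $\partial_s\varphi_s=-\gamma\Delta\varphi_s$. Inserting this moving test function into (\ref{e4}) --- rigorously via a time-discretization of $[0,t]$, freezing $\varphi$ on each subinterval and using the semigroup property before passing to the limit --- the diffusion contribution $\gamma(\mu_s^S,\Delta\varphi_s)$ cancels exactly against the term $(\mu_s^S,\partial_s\varphi_s)$ produced by the motion of the test function. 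What survives is
\[
(\mu_t^S,\psi)=(\mu_0^S,\Upsilon(t)\psi)-\beta\int_0^t\left(\mu_r^S,\,\Upsilon(t-r)\psi\,(\mu_r^I,\tfrac{K}{(\mu_r,K)})\right)dr.
\]
Using the self-adjointness of $\Upsilon(t-r)$ on $L^2(\mathbb{T}^2)$ to transfer it onto $f_S(0)$ and onto the source term $g_S(r)=f_S(r)\int_{\mathbb{T}^2}\frac{K(\cdot,y)}{\int_{\mathbb{T}^2}K(x',y)f(r,x')dx'}f_I(r,y)dy$, and since $\psi$ is arbitrary, this is exactly (\ref{e9}); the identical computation starting from (\ref{eec}) carries the extra linear cure term $-\alpha\int_0^t\Upsilon(t-r)f_I(r)dr$ and yields (\ref{e10}).

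For uniqueness I would set up a Gronwall estimate in $L^\infty(\mathbb{T}^2)$, the natural space since $\Lambda$ is defined by sup-norm bounds. The key preliminary observation is that the denominator $D(r,y)=\int_{\mathbb{T}^2}K(x',y)f(r,x')dx'$ involves only the known, deterministic total density $f(r,\cdot)=\Upsilon(r)g$, and is bounded below uniformly: by Lemma \ref{int1} one has $f(r,\cdot)\ge\delta_1$, while $\int_{\mathbb{T}^2}K(x',y)dx'$ is a positive constant independent of $y$ by translation invariance on the torus, so $D(r,y)\ge C_0>0$. Consequently the nonlinear operator $N(f_S,f_I)(x)=f_S(x)\int_{\mathbb{T}^2}\frac{K(x,y)}{D(r,y)}f_I(y)dy$ is bilinear with denominator bounded away from zero, hence Lipschitz on the bounded set $\Lambda$: using $\|K\|_\infty=\|k\|_\infty$ and $0\le f_i\le\delta_2$,
\[
\|N(f_S^1,f_I^1)-N(f_S^2,f_I^2)\|_\infty\le C\left(\|f_S^1-f_S^2\|_\infty+\|f_I^1-f_I^2\|_\infty\right),
\]
with $C=C(\delta_2,\|k\|_\infty,C_0)$.

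Given two solutions $(f_S^1,f_I^1)$ and $(f_S^2,f_I^2)$ in $\Lambda$, I would subtract the mild equations, apply the contraction bound $\|\Upsilon(t-r)\varphi\|_\infty\le\|\varphi\|_\infty$ of Lemma \ref{b6}, and combine it with the Lipschitz estimate above together with the linearity of the cure term. Setting $\Psi(t)=\|f_S^1(t)-f_S^2(t)\|_\infty+\|f_I^1(t)-f_I^2(t)\|_\infty$, this produces $\Psi(t)\le C'\int_0^t\Psi(r)dr$ for a constant $C'$ depending on $\alpha,\beta,C$; since $\Psi(0)=0$, Gronwall's lemma forces $\Psi\equiv 0$, giving uniqueness on $\Lambda$. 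I expect the main obstacle to be the first part: rigorously justifying the insertion of the moving test function $\Upsilon(t-s)\psi$ into the weak formulation, i.e.\ controlling the discretization limit in the presence of the nonlinear source, rather than the uniqueness estimate, which is routine once the lower bound on $D$ is in hand.
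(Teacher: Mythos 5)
Your proof is correct and follows essentially the same route as the paper: the mild formulation is obtained by the standard Duhamel argument with the moving test function $\Upsilon(t-s)\psi$ (a step the paper merely asserts as ``easy to deduce''), and uniqueness on $\Lambda$ is the same sup-norm Gronwall estimate, resting on $\lVert\Upsilon(t)\varphi\rVert_{\infty}\leq\lVert\varphi\rVert_{\infty}$ and the uniform lower bound on the denominator $\int_{\mathbb{T}^{2}}K(x',y)f(r,x')dx'\geq \delta_{1}\int_{\mathbb{T}^{2}}K(x',y)dx'>0$. The only cosmetic difference is in the treatment of $f_{I}^{1}-f_{I}^{2}$: the paper adds the two mild equations so that the nonlinear terms cancel, leaving $f_{I}^{2}(t)-f_{I}^{1}(t)=-(f_{S}^{2}(t)-f_{S}^{1}(t))-\alpha\int_{0}^{t}\Upsilon(t-r)(f_{I}^{2}(r)-f_{I}^{1}(r))dr$, whereas you estimate the $f_{I}$ equation directly by reusing the Lipschitz bound on the bilinear term; both yield the same Gronwall inequality and the same conclusion.
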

     \begin{proof}
    Recall that $\forall y\in \mathbb{T}^{2}$, $(\mu_{r},K(.,y))=\int_{\mathbb{T}^{2}}K(x',y)f(r,x')dx'\geq C $. From the equations (\ref{e4}) and (\ref{eec}), it is easy to deduce that $ (f_{S}(t,.),f_{I}(t,.) )$ satisfies the equations (\ref{e9}) and (\ref{e10}).\\ 
   Let $ ( f_{S}^{1}(t),f_{I}^{1}(t) ) ,( f_{S}^{2}(t),f_{I}^{2}(t) ) \in \Lambda $ be two solutions of the  system formed by equations (5.6) and (5.7) with the same initial value.  Noticing that for any  $ \varphi \in L^{2}( \mathbb{T}^{2}),$ $\lVert\Upsilon(t)\varphi\lVert_{\infty}\leq \lVert \varphi \lVert_{\infty}$ (see Lemma \ref{b6})
    and $\Big \lVert \displaystyle \int_{\mathbb{T}^{2}} K(\cdot,y)\varphi(y)dy\Big\lVert_{\infty}\leq C \lVert \varphi \lVert_{\infty}$, we have \\
  $ \Arrowvert f_{S}^{2}(t)-f_{S}^{1}(t) \Arrowvert _{\infty} \leq  \displaystyle\beta  \int_{0}^{t} \Big\Arrowvert \Upsilon(t-r)\Big[(f_{S}^{2}(r)-f_{S}^{1}(r))\int_{\mathbb{T}^{2}} \frac{K(.,y) }{\int_{\mathbb{T}^{2}}K(x',y)f(r,x')dx'}f_{I}^{1}(r,y)dy\Big]\Big\Arrowvert_{\infty}dr \\\hspace*{2.5cm}+\beta \displaystyle \int_{0}^{t}\Big\Arrowvert \Upsilon(t-r)\Big[f_{S}^{2}(t)\int_{\mathbb{T}^{2}}\frac{K(.,y) }{\int_{\mathbb{T}^{2}}K(x',y)f(r,x')dx'}\left (f_{I}^{2}(r,y)-f_{I}^{1}(r,y)\right)dy\Big]\Big\Arrowvert_{\infty}dr  \\
      \hspace*{2.18cm}\leq \displaystyle \frac{\beta}{C}   \int_{0}^{t} \Arrowvert f_{S}^{2}(r)-f_{S}^{1}(r)\Arrowvert_{\infty} \Big\Arrowvert\int_{\mathbb{T}^{2}}K(.,y)f_{I}^{1}(r,y)dy\Big\Arrowvert_{\infty}dr \\\hspace*{2.4cm}+\displaystyle\frac{\beta}{C}  \displaystyle \int_{0}^{t}\Arrowvert f_{S}^{2}(r)\Arrowvert_{\infty}\Big\Arrowvert\int_{\mathbb{T}^{2}}K(.,y)\left (f_{I}^{2}(r,y)-f_{I}^{1}(r,y)\right)dy\Big\Arrowvert_{\infty}dr  \\\hspace*{2.18cm}\leq  \displaystyle\beta C \delta_{2} \displaystyle  \displaystyle \int_{0}^{t} \Big\{\Arrowvert f_{S}^{2}(r)-f_{S}^{1}(r)\Arrowvert_{\infty}  +\Arrowvert f_{I}^{2}(r)-f_{I}^{1}(r)\Arrowvert_{\infty}\Big\}dr. \hspace*{4.2cm} (5.8)$
                                                        \\ Furthemore as 
                                                         $\displaystyle f_{I}^{2}(t)-f_{I}^{1}(t)=-\left(f_{S}^{2}(t)-f_{S}^{1}(t)\right)-\alpha \int_{0}^{t}\Upsilon(t-r)(f_{I}^{2}(r)-f_{I}^{1}(r))dr,$  \\ then
                                                               $ \Arrowvert f_{I}^{2}(t)-f_{I}^{1}(t) \Arrowvert _{\infty} \leq \Arrowvert f_{S}^{2}(t)-f_{S}^{1}(t) \Arrowvert _{\infty}  +\displaystyle\alpha \displaystyle \int_{0}^{t}\Arrowvert f_{I}^{2}(r)-f_{I}^{1}(r)\Arrowvert_{\infty}dr. \hspace*{3.95cm} (5.9)$
                                                    \\\\  
                                                    Hence summing $ (5.8)$ and $ (5.9) $ and  applying   Gronwall's lemma,  we obtain
                                                    $  f_{s}^{1}(t)= f_{s}^{2}(t)  $ and  $ f_{I}^{1}(t)= f_{I}^{2}(t). $
                                                    \end{proof}
                                                     We can now finish the proof of Theorem \ref{eeee}.\vspace*{0.1cm}\\ Since $(\mu^{S,N},\mu^{I,N})_{N\geq 1}$ is tight in  $(D(\mathbb{R}_{+},\mathcal{M}_{F}(\mathbb{T}^{2})))^{2}$, and all   converging subsequences of the sequence $(\mu^{S,N},\mu^{I,N})_{N\geq 1}$ weakly converge  to the same limit $(\mu^{S},\mu^{I})$, the sequence $(\mu^{S,N},\mu^{I,N})_{N\geq 1}$  weakly converge in $(D(\mathbb{R}_{+},\mathcal{M}_{F}(\mathbb{T}^{2})))^{2}$ towards  $(\mu^{S},\mu^{I})$; furthemore  $(\mu^{S},\mu^{I})$ is deterministic, so we have  convergence in probability.\vspace*{-0.2cm}
                                                    
           \section{Central Limit Theorem}
         In this section we will study the convergence  of 
                      $(U^{N}=\sqrt{N}(\mu^{S,N}-\mu^{S}),V^{N}=\sqrt{N}(\mu^{I,N}-\mu^{I}))$ under the assuption (H2) below  and the convergence of $ Z^{N}=\sqrt{N}(\mu^{N}-\mu)  $ as $N\rightarrow \infty $.\\
               Note that the trajectories of these processes belong to  $ (D(\mathbb{R}_{+},\mathcal{E}(\mathbb{T}^{2})))^{2} $ and $C(\mathbb{R}_{+},\mathcal{E}(\mathbb{T}^{2})) $  respectively,   
         where $ \mathcal{E}(\mathbb{T}^{2}) $ is the space of signed measures on the torus, which can  be seen as the dual of $ C(\mathbb{T}^{2})  $. However, since the limit processes may be less regular than their approximations  we will first:\vspace*{0.2cm}\\
                   \hspace*{0.2cm} $\bullet$ Establish the equations verified by the process $ Z^{N} $  and by the pair $ (U^{N},V^{N}) .$\vspace*{0.1cm} \\
                   \hspace*{0.2cm} $ \bullet $ Fix the space in which the convergence results will be established.\vspace*{0.2cm}\\
   Then we will study the convergence of the above sequences.\vspace*{0.2cm} \\The following is assumed to  hold throughout section 6.
  \vspace*{0.1cm}\\ \textbf{Assumption (H2):   $k\in C^{3}(\mathbb{R}_{+}).$} 
  \begin{rmq}
  Let $x\in \mathbb{T}^{2},$ if we let $  A(x)=support\{K(x,.)\}$, from $(\ref{p1})$ and under $(H2)$, we have\\
  $-$ $\forall \eta \in \mathbb{N}^{2}$, $\lvert \eta \lvert \leq 2$ the map $y\in A(x)\mapsto D^{\eta}K(x,.)$ is Lipschitz and bounded with the Lipschitz constant independent of $x.$ \\$-$ $\forall \eta \in \mathbb{N}^{2}$, $\lvert \eta \lvert \leq 3$ the map $y\in A(x)\mapsto D^{\eta}K(x,.)$ is continuous and bounded by $C\underset{0\leq \lvert\eta\lvert\leq 3}{\max}\lVert k^{(\lvert \eta\lvert)}\lVert_{\infty}.$ \\ Indeep, this  two points follow from the facts that:  \vspace*{0.1cm}\\ $-$ If \hspace*{0.3cm}$\mathcal{C}_{1}=\{2k'(\lVert x-y\lVert^{2}),4k''(\lVert x-y\lVert^{2}), 8k^{(3)}(\lVert x-y\lVert^{2})\}$ and \\\hspace*{0.5cm}$\mathcal{C}_{2}=\{(x_{1}-y_{1})^{n}(x_{2}-y_{2})^{m},(n,m)\in\{0,1,2,3\}^{2}\},$ 
                         $\forall \lvert \eta\lvert\leq 3$, $D^{\eta}K(x,y)=D^{\eta}k(\lVert x-y\lVert^{2})$  is\\\hspace*{0.4cm} written as the sum of the products of elements of  $\mathcal{C}_{1}$ and  $\mathcal{C}_{2}$;\\ $-$
                       $\forall \lvert \eta \lvert\leq 2$, $\lvert k^{(\lvert \eta \lvert)}\lvert$ is locally Lipschitz in $\mathbb{R}_{+}$ and $\forall \lvert \eta \lvert\leq 3$, $\lvert k^{(\lvert \eta \lvert)}\lvert$ is bounded in $\mathbb{R}_{+};$ \\ $-$ $\sqrt{2}$ is the maximal distance between two points of the torus.  \label{f2f}
  \end{rmq}
 
                 \begin{lem} Under the assumption (H2),  we have 
                    $ \underset{x}{\sup}\Arrowvert K(x,.) \Arrowvert_{H^{3}}<\infty.$\label{ff3}
                 \end{lem}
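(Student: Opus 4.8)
The plan is to reduce the $H^3$ bound to a uniform $L^\infty$ bound on the derivatives $D^\eta K(x,\cdot)$, $|\eta|\le 3$, exploiting that $\mathbb{T}^2$ has finite (unit) Lebesgue measure. For integer order, the norm given by the Parseval identity of Proposition \ref{b3} is equivalent to $\|\varphi\|_{H^3}^2\asymp\sum_{|\eta|\le 3}\|D^\eta\varphi\|_{L^2}^2$, so it suffices to show that each $y\mapsto D^\eta K(x,y)$ lies in $L^2(\mathbb{T}^2)$ with a norm bounded independently of $x$; and since $\|D^\eta K(x,\cdot)\|_{L^2}\le\|D^\eta K(x,\cdot)\|_\infty$ on the unit-measure torus, a uniform sup-norm bound on the derivatives is enough.

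First I would check that $K(x,\cdot)$ is genuinely of class $C^3$ on $\mathbb{T}^2$, so that its weak and classical derivatives coincide and no singular boundary terms appear in the computation of $\|K(x,\cdot)\|_{H^3}$. By property (\ref{p1}), the support of $K(x,\cdot)$ is contained in the region where $d_{\mathbb{T}^2}(x,y)=\|x-y\|$; there $K(x,y)=k(\|x-y\|^2)$ is the composition of the $C^3$ function $k$ with the smooth polynomial $y\mapsto\|x-y\|^2$, hence $C^3$ (note that we compose with the \emph{squared} distance, so no non-smoothness arises at $y=x$). Because $k\in C^3(\mathbb{R}_+)$ is non-zero only in a neighbourhood of $0$, its values and derivatives up to order $3$ vanish at the edge of its support, so $K(x,\cdot)$ and all $D^\eta K(x,\cdot)$, $|\eta|\le 3$, extend continuously by $0$ across the boundary of $\mathrm{support}\{K(x,\cdot)\}$; thus $K(x,\cdot)\in C^3(\mathbb{T}^2)$ and the chain-rule expansion is valid everywhere.

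Next I would invoke Remark \ref{f2f}. There the derivatives $D^\eta K(x,y)=D^\eta k(\|x-y\|^2)$ are written as finite sums of products of factors from $\mathcal{C}_1=\{2k'(\|x-y\|^2),4k''(\|x-y\|^2),8k^{(3)}(\|x-y\|^2)\}$ and monomials from $\mathcal{C}_2$ in $(x-y)$ of degree $\le 3$. Since the maximal distance on $\mathbb{T}^2$ is $\sqrt 2$, every monomial factor is bounded by a universal constant, while under (H2) each $\|k^{(j)}\|_\infty<\infty$. Consequently, uniformly in $x$,
\[
\|D^\eta K(x,\cdot)\|_\infty\le C\max_{0\le j\le 3}\|k^{(j)}\|_\infty,\qquad |\eta|\le 3,
\]
which is exactly the content of Remark \ref{f2f}. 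Summing over $|\eta|\le 3$ then gives $\|K(x,\cdot)\|_{H^3}^2\le C'\big(\max_{0\le j\le 3}\|k^{(j)}\|_\infty\big)^2<\infty$ with $C'$ independent of $x$, and taking the supremum over $x\in\mathbb{T}^2$ yields the claim.

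The main obstacle is the regularity justification of the second step, not the final estimate: one must be sure that $K(x,\cdot)$ contributes no singular term to its weak third derivatives, either at the cut locus of the torus distance (ruled out by (\ref{p1}), which keeps the support inside the Euclidean-distance region) or at the boundary of the support (ruled out by the vanishing of $k$ and its first three derivatives there, a consequence of $k\in C^3(\mathbb{R}_+)$ being supported near $0$). Once $K(x,\cdot)\in C^3(\mathbb{T}^2)$ with the uniform bounds of Remark \ref{f2f} is established, the passage to the $H^3$ norm is routine.
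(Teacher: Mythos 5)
Your proposal is correct and follows essentially the same route as the paper: write $\lVert K(x,\cdot)\lVert_{H^{3}}^{2}$ as $\sum_{\lvert\eta\lvert\le 3}\int_{A(x)}\lvert D^{\eta}K(x,y)\lvert^{2}dy$ and bound each integrand uniformly in $x$ via Remark \ref{f2f} together with (H2) and the finite measure of the torus. The only difference is that you spell out the $C^{3}$-regularity of $K(x,\cdot)$ across the cut locus and the edge of the support, a point the paper's one-line proof leaves implicit; this is a welcome clarification rather than a change of method.
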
          
                 \begin{proof}
 From (\ref{p1}), if we let $\forall x\in \mathbb{T}^{2},$  $A(x)$=support$\{K(x,.)\}$, we will have\vspace*{0.2cm}\\
                 $\hspace*{1.5cm} \lVert K(x,\cdot)\lVert_{H^{3}}
                 = \sum\limits_{\lvert \eta\lvert\leq 3}\displaystyle\int_{A(x)}\lvert D^{\eta}k(\lVert y-x\lVert^{2})\lvert^{2}dy$\\$\hspace*{3.6cm}\leq C,$\vspace*{0.2cm}\\where the first inequality follows from Remark \ref{f2f}.\vspace*{-0.17cm}
                    \end{proof}
    \subsection{Evolution equations of  $Z^{N} $ and of the  pair $(U^{N},V^{N})$ \label{se6}}
                                                                     \subsubsection{Evolution equation of $ Z^{N}=\sqrt{N}(\mu^{N}-\mu) $ }
                                                                       Let $ \varphi \in C^{2}(\mathbb{T}^{2}),$ we have \\
      $(\mu_{t}^{N},\varphi)=\displaystyle(\mu_{0}^{N},\varphi)+\gamma\int_{0}^{t}(\mu_{r}^{N},\triangle\varphi)dr + \mathcal{H}_{t}^{N,\varphi},$ where $\mathcal{H}_{t}^{N,\varphi}=\displaystyle\frac{\sqrt{2\gamma}}{N}\sum_{i=1}^{N}\int_{0}^{t} \nabla \varphi(X_{r}^{i})dX_{r}^{i},$ \\   $\displaystyle (\mu_{t},\varphi)=(\mu_{0},\varphi)+\gamma\int_{0}^{t}(\mu_{r},\triangle\varphi)dr, $\vspace*{-0.4cm} \[\tag{6.1}\hspace*{-2.8cm}\textrm{hence} \quad
      (Z_{t}^{N},\varphi)=\displaystyle(Z_{0}^{N},\varphi)+\gamma\int_{0}^{t}(Z_{r}^{N},\triangle\varphi)dr+ \widetilde{\mathcal{H}_{t}}^{N,\varphi},\hspace*{0.1cm}  \textrm{where} \hspace*{0.1cm}  \widetilde{\mathcal{H}_{t}}^{N,\varphi}=\sqrt{N}\mathcal{H}_{t}^{N,\varphi}.\label{d1}\vspace*{-0.35cm}\] 
                        \subsubsection{System of evolution equations of the pair  $ (U^{N},V^{N})$ } 
                      Let $ \varphi \in C^{2}(\mathbb{T}^{2})$,  we have\\$\hspace*{1cm}(\mu_{t}^{S,N},\varphi)=\displaystyle(\mu_{0}^{S,N},\varphi) + \gamma\int_{0}^{t}(\mu_{r}^{S,N},\bigtriangleup \varphi) dr - \beta \int_{0}^{t}\left(\mu_{r}^{S,N}, \varphi( \mu_{r}^{I,N},\frac{K}{(\mu^{N}_{r},K)})\right) dr+ M_{t}^{N,\varphi},$ \vspace*{0.1cm}\\
     $\hspace*{1cm}(\mu_{t}^{S},\varphi)=\displaystyle(\mu_{0}^{S},\varphi) + \gamma\int_{0}^{t}(\mu_{r}^{S},\bigtriangleup \varphi) dr - \beta \int_{0}^{t}\left(\mu_{r}^{S}, \varphi (\mu_{r}^{I},\frac{K}{(\mu_{r},K)}) \right) dr. $\vspace*{0.1cm}\\
     Note first that $\left(\mu_{r}^{S,N}, \varphi (\mu_{r}^{I,N},\frac{K}{(\mu^{N}_{r},K)})\right) = \displaystyle
      \int_{\mathbb{T}^{2}} \varphi(x)\int_{\mathbb{T}^{2}} \frac{K(x,y)}{\int_{\mathbb{T}^{2}} K(x',y)\mu^{N}(dx')}\mu_{r}^{I,N}(dy)\mu_{r}^{S,N}(dx) \vspace*{0.1cm}\\\hspace*{6.82cm}=\displaystyle
     \int_{\mathbb{T}^{2}}  \frac{\int_{\mathbb{T}^{2}}\varphi(x)K(x,y)\mu_{r}^{S,N}(dx)}{\int_{\mathbb{T}^{2}} K(x',y)\mu^{N}(dx')}\mu_{r}^{I,N}(dy)\vspace*{0.1cm}\\
     \hspace*{6.82cm}=\left( \mu_{r}^{I,N}, \frac{(\mu_{r}^{S,N},\varphi K)}{(\mu^{N}_{r},K)} \right).$ \\
                                                          Thus\\
      $ (U_{t}^{N},\varphi)=\displaystyle(U_{0}^{N},\varphi) + \gamma\int_{0}^{t}(U_{r}^{N},\bigtriangleup \varphi) dr - \beta \int_{0}^{t}\left( \sqrt{N}\mu_{r}^{I,N}, \frac{(\mu_{r}^{S,N},\varphi K)}{(\mu^{N}_{r},K)} \right) dr  \\ \hspace*{2cm}+ \beta \int_{0}^{t}\left( \sqrt{N}\mu_{r}^{I}, \frac{(\mu_{r}^{S},\varphi K)}{(\mu_{r},K)}\right) dr + \sqrt{N}M_{t}^{N,\varphi}$ \\
      \hspace*{1.5cm}$= \displaystyle (U_{0}^{N},\varphi) + \gamma\int_{0}^{t}(U_{r}^{N},\bigtriangleup \varphi) dr + \beta \int_{0}^{t}\left( \mu_{r}^{I,N}, \frac{(\mu_{r}^{S,N},\varphi K)}{(\mu^{N}_{r},K)(\mu_{r},K)}(Z_{r}^{N},K) \right)dr \vspace*{0.1cm}\\\hspace*{1.6cm}-\beta \int_{0}^{t}\left( \mu_{r}^{I,N}, \frac{(U_{r}^{N},\varphi K)}{(\mu_{r},K)} \right)dr - \beta \int_{0}^{t}\left( V_{r}^{N}, \frac{(\mu_{r}^{S},\varphi K)}{(\mu_{r},K)} \right)dr + \sqrt{N}M_{t}^{N,\varphi}. $ \\
      Hence if we let $\widetilde{M}_{t}^{N,\varphi}=\sqrt{N}M_{t}^{N,\varphi},$ one has \vspace*{0.08cm}\\        
  $ \displaystyle (U_{t}^{N},\varphi)=\displaystyle  (U_{0}^{N},\varphi) + \gamma\int_{0}^{t}(U_{r}^{N},\bigtriangleup \varphi) dr +\beta \int_{0}^{t}\left(Z_{r}^{N} , G_{r}^{S,I,N}\varphi \right)dr-\beta\int_{0}^{t}\left(U_{r}^{N} ,G_{r}^{I,N}\varphi \right)dr$\vspace*{-0.3cm}\[\tag{6.2}\hspace*{-8cm}- \beta\int_{0}^{t}\left( V_{r}^{N}, G_{r}^{S}\varphi \right)dr + \widetilde{M}_{t}^{N,\varphi},\label{d2}\]
                                                    and also \\
      $ \displaystyle (V_{t}^{N},\varphi)=(V_{0}^{N},\varphi) + \gamma\int_{0}^{t}(V_{r}^{N},\bigtriangleup \varphi) dr - \beta \int_{0}^{t}\left(Z_{r}^{N} ,  G_{r}^{S,I,N}\varphi \right)dr +\beta\int_{0}^{t}\left(U_{r}^{N} ,G_{r}^{I,N}\varphi \right)dr$\vspace*{-0.3cm}\[\tag{6.3}\hspace*{-5cm} +\beta\int_{0}^{t}\left( V_{r}^{N}, G_{r}^{S}\varphi \right)dr -\alpha \int_{0}^{t}(V_{r}^{N},\varphi)dr +\widetilde{L}_{t}^{N,\varphi},\label{d3}\] where $\forall x,y,x' \in \mathbb{T}^{2}$,\\\hspace*{1cm} $G_{r}^{S,I,N}\varphi(x')=\displaystyle\left(\mu_{r}^{I,N}, K(x',.)\frac{(\mu_{r}^{S,N},\varphi K)}{(\mu^{N}_{r},K)(\mu_{r},K)}\right)\vspace*{0.1cm}\\\hspace*{3.18cm}=\displaystyle\int_{\mathbb{T}^{2}}K(x',y)\frac{\int_{\mathbb{T}^{2}}\varphi(x)K(x,y)\mu_{r}^{S,N}(dx)}{\int_{\mathbb{T}^{2}} K(y',y)\mu_{r}^{N}(dy')\int_{\mathbb{T}^{2}} K(y',y)\mu_{r}(dy')}\mu_{r}^{I,N}(dy),$\vspace*{0.17cm}\\ \hspace*{1cm} $G_{r}^{I,N}\varphi(x)= \displaystyle\varphi(x)\Big(\mu_{r}^{I,N}, \frac{K(x,.)}{(\mu_{r},K)}\Big)=\displaystyle\varphi(x) \int_{\mathbb{T}^{2}}\frac{K(x,y)}{\int_{\mathbb{T}^{2}} K(y',y)\mu_{r}(dy')}\mu_{r}^{I,N}(dy),$  \vspace*{0.15cm} \\\hspace*{1cm} $G_{r}^{S}\varphi(y)=\displaystyle \frac{(\mu_{r}^{S}, \varphi K(.,y))}{(\mu_{r},K(.,y))}=\displaystyle\frac{\int_{\mathbb{T}^{2}}\varphi(x) K(x,y)\mu_{r}^{S}(dx)}{\int_{\mathbb{T}^{2}} K(y',y)\mu_{r}(dy')}.$
        \subsection{The  space of convergence of the sequences $Z^{N}$ and $(U^{N},V^{N})$ \label{ss1} }
        We first recall that for any $s>0$, the family  $(\rho^{i,s}_{n_{1},n_{2}})_{i,n_{1},n_{2}}$ (as defined in Proposition 2.2) is an orthonormal basis of $H^{s}(\mathbb{T}^{2})$.
        \begin{prp}
     Every limit point $W^{1}$ of the seguence $(\widetilde{M}^{N})_{N\geq 1}$ satisfies\vspace*{0.1cm}\\
      \hspace*{3.3cm} $
            \forall t\geq 0,\hspace*{0.2cm} \mathbb{E}(\lVert W^{1}_{t}\lVert_{H^{-s}}^{2})<\infty\quad$ iff   $ s>2$.\label{add}
        \end{prp}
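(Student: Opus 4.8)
The plan is to identify the covariance functional of an arbitrary limit point $W^1$ and then read off its $H^{-s}$–regularity via the Parseval identity of Proposition \ref{b3}. Recall that $\widetilde{M}^{N,\varphi}_t=\sqrt{N}M^{N,\varphi}_t$, so that, using the bracket computed in Proposition \ref{eee},
\[
\langle \widetilde{M}^{N,\varphi}\rangle_t = \beta\int_0^t\Big(\mu_r^{S,N},\varphi^2\big(\mu_r^{I,N},\tfrac{K}{(\mu^N_r,K)}\big)\Big)dr + 2\gamma\int_0^t(\mu_r^{S,N},(\nabla\varphi)^2)dr.
\]
By Lemma \ref{e3} and the fact that $\mu_r^{S,N}$ is a sub-probability measure, this bracket is bounded by $(\beta\lVert\varphi^2\rVert_\infty+2\gamma\lVert(\nabla\varphi)^2\rVert_\infty)t$ uniformly in $N$; since moreover the jumps of $\widetilde{M}^{N,\varphi}$ are $O(N^{-1/2})$, a standard fourth–moment estimate (Burkholder–Davis–Gundy) yields $\sup_N\mathbb{E}((\widetilde{M}^{N,\varphi}_t)^4)<\infty$, so the family $((\widetilde{M}^{N,\varphi}_t)^2)_N$ is uniformly integrable.

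First I would pass to the limit in the second moment. As $\widetilde{M}^{N,\varphi}_t\Rightarrow(W^1_t,\varphi)$ in law along the subsequence defining the limit point, uniform integrability upgrades this to convergence of second moments, so $\mathbb{E}((W^1_t,\varphi)^2)=\lim_N\mathbb{E}(\langle\widetilde{M}^{N,\varphi}\rangle_t)$. Using the law of large numbers (Proposition \ref{ee} and Theorem \ref{eeee}) and the convergence of the nonlinear integrand already established in the proof of Theorem \ref{eeee}, together with bounded convergence, the right-hand side converges to
\[
Q_t(\varphi):=\beta\int_0^t\Big(\mu_r^S,\varphi^2\big(\mu_r^I,\tfrac{K}{(\mu_r,K)}\big)\Big)dr + 2\gamma\int_0^t(\mu_r^S,(\nabla\varphi)^2)dr,
\]
so that $\mathbb{E}((W^1_t,\varphi)^2)=Q_t(\varphi)$ for every $\varphi\in H^s$.

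Next I would apply Parseval: by Proposition \ref{b3}, $\mathbb{E}(\lVert W^1_t\rVert_{H^{-s}}^2)=\sum_{i,n_1,n_2}Q_t(\rho^{i,s}_{n_1,n_2})$, and the sum splits along the two pieces of $Q_t$. For the first piece, grouping the four types $i\in\{1,2,3,4\}$ at a fixed frequency and using the pointwise identity $\sum_{i=1}^4(f^i_{n_1,n_2})^2\equiv 4$ gives $\sum_{i=1}^4(\rho^{i,s}_{n_1,n_2})^2=4(1+\gamma\pi^2(n_1^2+n_2^2))^{-s}$; since the inner bracket against $\mu^I$ is bounded (its denominator is bounded below by the lower bound on $f$ from Lemma \ref{int1}) and $f_S\le\delta_2$, this contributes a series converging precisely when $s>1$. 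For the second, diffusive, piece the key is the pointwise identity $\sum_{i=1}^4|\nabla f^i_{n_1,n_2}(x)|^2\equiv 4\pi^2(n_1^2+n_2^2)$, constant in $x$, which gives
\[
\sum_{i=1}^4(\mu_r^S,|\nabla\rho^{i,s}_{n_1,n_2}|^2)=\frac{4\pi^2(n_1^2+n_2^2)}{(1+\gamma\pi^2(n_1^2+n_2^2))^s}\,\mu_r^S(\mathbb{T}^2).
\]
Summing over $(n_1,n_2)$ this two-dimensional series behaves like $\sum|n|^{2-2s}$ and hence converges if and only if $s>2$ (the one-dimensional families $\mathcal{A}_5,\mathcal{A}_6$ and the constant mode give series converging already for $s>3/2$, so they do not affect the threshold, and $\gamma>0$ is essential here).

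The sufficiency direction ($s>2$ implies finiteness for all $t$) is then immediate. For necessity I would use that $Q_t\ge 0$ and that the diffusive identity above is an exact equality: truncating to $|n|\le M$ gives $\mathbb{E}(\lVert W^1_t\rVert_{H^{-s}}^2)\ge 8\gamma\pi^2\big(\int_0^t\mu_r^S(\mathbb{T}^2)dr\big)\sum_{|n|\le M}(n_1^2+n_2^2)(1+\gamma\pi^2(n_1^2+n_2^2))^{-s}$; choosing any $t>0$ with $\int_0^t\mu_r^S(\mathbb{T}^2)dr>0$ (true for small $t$, the initial susceptible mass being positive) and letting $M\to\infty$ forces the left side to be infinite whenever $s\le 2$, so the property ``finite for all $t$'' fails. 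The main obstacle is the first step, namely establishing $\mathbb{E}((W^1_t,\varphi)^2)=Q_t(\varphi)$, i.e. interchanging the weak limit with the second moment; this rests on the uniform $L^4$ control of the martingales and on the law-of-large-numbers convergence of the bracket's integrand. Once this equality is secured, the series analysis is routine, and the constant-in-$x$ identities make the threshold $s=2$ transparent.
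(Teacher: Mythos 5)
Your proposal is correct and follows essentially the same route as the paper: identify the limiting quadratic variation $Q_t(\varphi)$, expand $\mathbb{E}(\lVert W^1_t\rVert_{H^{-s}}^2)$ over the orthonormal basis $(\rho^{i,s}_{n_1,n_2})$ of Proposition \ref{b3}, and reduce the question to the series criteria of Lemma \ref{ap}, which is exactly where the thresholds $s>1$ for the jump part and $s>2$ for the diffusive part come from. You are in fact more complete than the paper on two points: the paper takes the identification $\mathbb{E}((W^1_t,\varphi)^2)=Q_t(\varphi)$ essentially for granted (the martingale/Gaussian structure of the limit is only justified later, in Proposition \ref{f4}), whereas you justify the passage to the limit of second moments by uniform integrability; and the paper only carries out the upper bound for $s>2$, whereas your lower bound via the $x$-independence of $\sum_{i}\lvert\bigtriangledown f^{i}_{n_1,n_2}\rvert^{2}$ together with the positivity of the susceptible mass actually establishes the ``only if'' half of the claimed equivalence.
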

        
        \begin{proof}

  We have \\ $\widetilde{M}_{t}^{N,\varphi}= \displaystyle- \frac{1}{\sqrt{N}} \sum_{i=1}^{N} \int_{0}^{t} \int_{0}^{\infty}1_{\{E_{r^{-}}^{i}=S\}}\varphi(X_{r}^{i})1_{\{u\leq \frac{\beta}{N} \sum_{j=1}^{N}\frac{K(X_{r}^{i},X_{r}^{j})}{(\mu_{r}^{N},K(.,X_{r}^{i}))} 1_{\{E_{r}^{j}=I\}}\}} \overline{M}^{i}(dr,du)  \\\hspace*{2cm}+\sqrt{\frac{2\gamma_{S}}{N}} \displaystyle\sum_{i=1}^{N} \int_{0}^{t}1_{\{E_{r}^{i}=S\}}\bigtriangledown\varphi(X_{r}^{i})dB_{r}^{i}.$\\
 $ <\widetilde{M}^{N,\varphi}>_{t}=\displaystyle\beta \int_{0}^{t}\left(\mu_{r}^{S,N}, \varphi^{2} (\mu_{r}^{I,N},\frac{K}{(\mu^{N}_{r},K)})\right)dr+ 2\gamma \int_{0}^{t} (\mu_{r}^{S,N}, (\bigtriangledown \varphi )^{2})dr, $\\\\
                                                          and   it follows from  Theorem \ref{eeee} that \vspace*{0.1cm}\\
  \hspace*{2cm}$<\widetilde{M}^{N,\varphi}>_{t}\xrightarrow{P}\displaystyle\int_{0}^{t}\left\{\beta\left(\mu_{r}^{S},\varphi^{2}(\mu_{r}^{I},\frac{K}{(\mu_{r},K)})\right)+2\gamma(\mu_{r}^{S},(\bigtriangledown\varphi)^{2})\right\}dr,$\\
                                                              furthemore  
   $\displaystyle\int_{0}^{t}\left\{\beta\left(\mu_{r}^{S},\varphi^{2}(\mu_{r}^{I},\frac{K}{(\mu_{r},K)})\right)+2\gamma(\mu_{r}^{S},(\bigtriangledown\varphi)^{2})\right\}dr$ being the quadratic variation of a Gaussian martingale of the form $(W^{1},\varphi)$,
   our aim  is to find the smallest value of s for which $\mathbb{E}(\lVert W_{t}^{1}\lVert_{H^{-s}}^{2})<\infty$. We have\\ 
    $\mathbb{E}(\lVert W_{t}^{1}\lVert_{H^{-s}}^{2})=\mathbb{E}(\sum\limits_{i,n_{1},n_{2}}\lvert (W_{t}^{1},\rho_{n_{1},n_{2}}^{i,s})\lvert^{2})$
    =$\sum\limits_{i,n_{1},n_{2}}\mathbb{E}( <(W^{1},\rho_{n_{1},n_{2}}^{i,s})>_{t}). $\\However   as $\displaystyle\int_{\mathbb{T}^{2}}\frac{\int_{\mathbb{T}^{2}}K(x,y)\mu_{r}^{S}(dx)}{\int_{\mathbb{T}^{2}}K(x',y)\mu_{r}(dx')}\mu_{r}^{I}(dy)\leq1$,  then from Lemma \ref{ap} in the Appendix below, iff s>2,  we will have \\
    $\sum\limits_{i,n_{1},n_{2}}<W^{1},\rho_{n_{1},n_{2}}^{i}>_{t}=\displaystyle\sum\limits_{i,n_{1},n_{2}}\int_{0}^{t}\beta\left(\mu_{r}^{S},(\rho_{n_{1},n_{2}}^{i,s})^{2}
    (\mu_{r}^{I},\frac{K}{(\mu_{r},K)})\right)+2\gamma(\mu_{r}^{S},(\bigtriangledown\rho_{n_{1},n_{2}}^{i,s})^{2})dr$\\
    $\hspace*{3.8cm}\leq\displaystyle\int_{0}^{t}\Big\{\beta\int_{\mathbb{T}^{2}}\int_{\mathbb{T}^{2}}\sum\limits_{i,n_{1},n_{2}}(\rho_{n_{1},n_{2}}^{i,s})^{2}(x)\frac{K(x,y)}{\int_{\mathbb{T}^{2}}K(y,x')\mu_{r}(dx')}\mu_{r}^{I}(dy)\mu_{r}^{S}(dx)\\\hspace*{9cm}+2\gamma\int_{\mathbb{T}^{2}}\sum\limits_{i,n_{1},n_{2}}(\bigtriangledown\rho_{n_{1},n_{2}}^{i,s}(x))^{2}\mu_{r}^{S}(dx)\Big\}dr$ \\$\hspace*{3.8cm}\leq\displaystyle\beta C\int_{0}^{t}\int_{\mathbb{T}^{2}}\frac{\int_{\mathbb{T}^{2}}K(x,y)\mu_{r}^{S}(dx)}{\int_{\mathbb{T}^{2}}K(y,x')\mu_{r}(dx')}\mu_{r}^{I}(dy)dr+2\gamma C\int_{0}^{t}\int_{\mathbb{T}^{2}}\mu_{r}^{S}(dx)dr$\\$\hspace*{3.8cm}\leq Ct(\beta +2\gamma).$\vspace*{0.2cm}\\
        The result follows.        
    \end{proof} 
     By  Doob's inequality and by calculations similar to those done above we obtain the following result.
                                                                       \begin{cor}
               $\forall T>0$, $s>2$, $ \exists\hspace*{0.1cm} C_{1}(T)>0,C_{2}(T)>0,C_{3}(T)>0$   such that: $\vspace*{0.12cm}\\\hspace*{5.18cm}\underset{N\geq 1}{\sup}\mathbb{E}(\underset{0\leq t \leq T}{\sup}\lVert \widetilde{\mathcal{H}}_{t}^{N} \lVert_{H^{-s}}^{2} )\leq C_{1}(T)$,\\\hspace*{5cm}  $\underset{N\geq 1}{\sup}\mathbb{E}(\underset{0\leq t \leq T}{\sup}\lVert \widetilde{M}_{t}^{N} \lVert_{H^{-s}}^{2} )\leq C_{2}(T), $ \\\hspace*{5cm} $\underset{N\geq 1}{\sup}\mathbb{E}(\underset{0\leq t \leq T}{\sup}\lVert \widetilde{L}_{t}^{N} \lVert_{H^{-s}}^{2})\leq C_{3}(T).$\label{d4}
                 \end{cor}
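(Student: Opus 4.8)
The plan is to reduce each of the three $H^{-s}$ bounds to a summable family of real-valued martingale estimates, combining the Parseval identity of Proposition \ref{b3} with Doob's $L^2$ maximal inequality. Fix $s>2$ and recall that $(\rho^{i,s}_{n_1,n_2})_{i,n_1,n_2}$ is an orthonormal basis of $H^s(\mathbb{T}^2)$, so that
$$\lVert \widetilde{M}_t^N\rVert_{H^{-s}}^2=\sum_{i,n_1,n_2}\big(\widetilde{M}_t^{N,\rho^{i,s}_{n_1,n_2}}\big)^2.$$
For each basis element $\rho=\rho^{i,s}_{n_1,n_2}$ the process $t\mapsto\widetilde{M}_t^{N,\rho}$ is a square-integrable real martingale, so using $\sup_t\sum\le\sum\sup_t$ and monotone convergence to exchange the nonnegative sum with the expectation,
$$\mathbb{E}\Big(\sup_{0\le t\le T}\lVert \widetilde{M}_t^N\rVert_{H^{-s}}^2\Big)\le\sum_{i,n_1,n_2}\mathbb{E}\Big(\sup_{0\le t\le T}\big(\widetilde{M}_t^{N,\rho}\big)^2\Big)\le 4\sum_{i,n_1,n_2}\mathbb{E}\big(\langle\widetilde{M}^{N,\rho}\rangle_T\big),$$
the last step being Doob's inequality. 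The whole difficulty is thereby transferred to showing that the sum of the mean quadratic variations is finite, uniformly in $N$.

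For this I use the quadratic variation computed in the proof of Proposition \ref{add}, namely
$$\langle\widetilde{M}^{N,\rho}\rangle_t=\beta\int_0^t\Big(\mu_r^{S,N},\rho^2\big(\mu_r^{I,N},\tfrac{K}{(\mu_r^N,K)}\big)\Big)dr+2\gamma\int_0^t\big(\mu_r^{S,N},(\nabla\rho)^2\big)dr.$$
Summing over $i,n_1,n_2$, exchanging sum and integral by Tonelli, and bounding the kernel ratio by $\int_{\mathbb{T}^2}K(x,y)\mu_r^{S,N}(dx)/\int_{\mathbb{T}^2}K(x',y)\mu_r^N(dx')\le1$ (since $\mu_r^{S,N}\le\mu_r^N$) together with $(\mu_r^{I,N},1)\le1$ and $(\mu_r^{S,N},1)\le1$, one reduces the estimate to the two pointwise sums $\sum_{i,n_1,n_2}(\rho^{i,s}_{n_1,n_2})^2(x)$ and $\sum_{i,n_1,n_2}|\nabla\rho^{i,s}_{n_1,n_2}(x)|^2$. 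By Lemma \ref{ap} these are bounded uniformly in $x$ precisely when $s>1$, respectively $s>2$; this is exactly where the threshold enters. One then obtains $\sum_{i,n_1,n_2}\mathbb{E}(\langle\widetilde{M}^{N,\rho}\rangle_T)\le C(\beta+2\gamma)T$ with $C$ independent of $N$, giving $C_2(T)$.

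The estimates for $\widetilde{\mathcal{H}}^N$ and $\widetilde{L}^N$ follow the same scheme. The process $\widetilde{\mathcal{H}}^{N,\varphi}$ is a continuous martingale with $\langle\widetilde{\mathcal{H}}^{N,\varphi}\rangle_t=2\gamma\int_0^t(\mu_r^N,(\nabla\varphi)^2)dr$, so only the gradient sum (hence $s>2$) is needed and $\mu_r^N$ is a probability measure; this yields $C_1(T)$. For $\widetilde{L}^{N,\varphi}$ the quadratic variation carries in addition the cure term $\alpha\int_0^t(\mu_r^{I,N},\varphi^2)dr$, whose contribution after summation is controlled by $\alpha T\sup_x\sum_{i,n_1,n_2}(\rho^{i,s}_{n_1,n_2})^2(x)<\infty$ for $s>1$, while the gradient term again forces $s>2$; this yields $C_3(T)$. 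The main obstacle is precisely the control of the derivative contribution: differentiating $\rho^{i,s}_{n_1,n_2}$ multiplies it by a factor of order $(n_1^2+n_2^2)^{1/2}$, costing one power of $s$, so summability of $\sum_{i,n_1,n_2}|\nabla\rho^{i,s}_{n_1,n_2}(x)|^2$ over the two-dimensional frequency lattice requires $s>2$ rather than the $s>1$ that sufficed for the initial measures in Proposition \ref{uo}.
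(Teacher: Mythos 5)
Your proof is correct and follows exactly the route the paper intends: the paper's own justification is the one-line remark that the corollary follows ``by Doob's inequality and by calculations similar to those done above,'' i.e.\ the Parseval expansion of the $H^{-s}$ norm over the basis $(\rho^{i,s}_{n_1,n_2})$, the quadratic variation computation from Proposition \ref{add}, and the summability criteria of Lemma \ref{ap}. You have simply filled in the details (Doob componentwise, Tonelli, the bound of the kernel ratio by $1$) that the authors leave implicit.
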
      
                 $\textbf{In the rest of this section we arbitrarily choose 2< s <3, }$ and we prove that 
  the sequences  $(Z^{N})_{N\geq 1}$ and $(U^{N},V^{N})_{N\geq 1}$    converge in law in $ C(\mathbb{R}_{+},H^{-s}) $ and  in   $(D(\mathbb{R}_{+},H^{-s}))^{2}$ respectively, where we have equipped $C(\mathbb{R}_{+},H^{-s}) $ with the uniform topology and $D(\mathbb{R}_{+},H^{-s})$  with the Skorokhod topology.\vspace*{-0.15cm}
           \subsection{Tighness and Convergence of $(Z^{N})_{N\geq 1}$}
             Recall that the  sequence $(Z^{N})_{N\geq 1}$ satisfies (6.1). We first give an estimate for the norm of the fluctuations process $Z^{N}$ which is not uniform in $N$.
              \begin{lem}
                                            For all $N\geq 1,$  $ Z^{N}\in C(\mathbb{R}_{+},H^{-s}).$ \label{d5}
                                            \end{lem}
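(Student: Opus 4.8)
The plan is to exploit the fact that, $N$ being fixed, $Z_t^N$ is an explicit finite signed measure. Writing $Z_t^N=\sqrt N(\mu_t^N-\mu_t)=\frac{1}{\sqrt N}\sum_{i=1}^N(\delta_{X_t^i}-\mu_t)$, everything reduces to two facts: that a Dirac mass belongs to $H^{-s}$ and depends continuously (indeed in a Lipschitz way) on its location, and that the deterministic curve $t\mapsto\mu_t$ is continuous in $H^{-s}$. The a.s. continuity of the paths $t\mapsto X_t^i$ (they are $\mathbb{T}^{2}$-valued Brownian motions, hence have continuous trajectories) will then transfer to continuity of $t\mapsto\delta_{X_t^i}$ and, after summation, of $t\mapsto Z_t^N$.

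First I would establish membership together with a (non-uniform in $N$) bound. For $\varphi\in H^s$ with $s>1$, the Sobolev injection (Proposition \ref{b4}, part 1 with $k=0$) gives $|\varphi(x)|\le\lVert\varphi\rVert_{\infty}\le C(s)\lVert\varphi\rVert_{H^s}$, so by the duality formula of Proposition \ref{b3}, $\lVert\delta_x\rVert_{H^{-s}}=\sup_{\varphi\ne0}|\varphi(x)|/\lVert\varphi\rVert_{H^s}\le C(s)$ uniformly in $x$. Since $\nu$ has a bounded density it lies in $L^2(\mathbb{T}^{2})\hookrightarrow H^{-s}(\mathbb{T}^{2})$, and $\mu_t=\Upsilon(t)\nu$ satisfies $\lVert\mu_t\rVert_{H^{-s}}\le\lVert\nu\rVert_{H^{-s}}$ by the contraction property on $H^{-s}$ (Proposition \ref{b5}, part 2). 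Hence $\lVert Z_t^N\rVert_{H^{-s}}\le\frac{1}{\sqrt N}\sum_{i=1}^N(\lVert\delta_{X_t^i}\rVert_{H^{-s}}+\lVert\mu_t\rVert_{H^{-s}})\le\sqrt N\,C$, which shows $Z_t^N\in H^{-s}$ for every $t$ and gives the announced estimate that blows up with $N$.

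The continuity is where the choice $s>2$ is genuinely used. For $s>2$ the Sobolev injection with $k=1$ yields $\lVert\nabla\varphi\rVert_{\infty}\le C(s)\lVert\varphi\rVert_{H^s}$, so for any $x,x'\in\mathbb{T}^{2}$ one has $|\varphi(x)-\varphi(x')|\le\lVert\nabla\varphi\rVert_{\infty}\,d_{\mathbb{T}^{2}}(x,x')\le C(s)\lVert\varphi\rVert_{H^s}\,d_{\mathbb{T}^{2}}(x,x')$; taking the supremum over the unit ball of $H^s$ gives the Lipschitz bound $\lVert\delta_x-\delta_{x'}\rVert_{H^{-s}}\le C(s)\,d_{\mathbb{T}^{2}}(x,x')$. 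Combined with the a.s. continuity of $t\mapsto X_t^i$ this makes each $t\mapsto\delta_{X_t^i}$ continuous in $H^{-s}$; the deterministic term is handled by the strong continuity of the semigroup $\Upsilon$ on $H^{-s}$ (Proposition \ref{b5}, part 2), which gives continuity of $t\mapsto\mu_t=\Upsilon(t)\nu$. Summing the $N$ contributions then yields $Z^N\in C(\mathbb{R}_{+},H^{-s})$ almost surely.

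I expect the main obstacle to be the continuity step rather than the membership: one must verify that the location map $x\mapsto\delta_x$ is genuinely (Lipschitz) continuous into $H^{-s}$, which fails for $s\le2$ and is precisely the reason the range $2<s<3$ was fixed beforehand. The $C^1$ Sobolev embedding on the two-dimensional torus is exactly what makes this step work.
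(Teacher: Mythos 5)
Your proof is correct, and its core estimate is the one the paper uses: both arguments bound $\lvert(Z_t^N,\varphi)\rvert$ by $C\sqrt{N}\,\lVert\varphi\rVert_{H^{s}}$ via the Sobolev embedding $H^{s}(\mathbb{T}^{2})\hookrightarrow C(\mathbb{T}^{2})$, whether one writes $Z_t^N$ as a sum of Dirac masses (as you do) or tests it directly against $\varphi$ (as the paper does). Where you go beyond the paper is the continuity in $t$: the paper's proof stops at the bound $\mathbb{E}(\sup_{t\geq 0}\lVert Z_t^N\rVert_{H^{-s}}^{2})\leq 4CN$ and leaves the path continuity implicit, whereas you supply it explicitly through the Lipschitz estimate $\lVert\delta_x-\delta_{x'}\rVert_{H^{-s}}\leq C(s)\,d_{\mathbb{T}^{2}}(x,x')$ (which is exactly where $s>2$ and the $C^{1}$ embedding enter) together with the a.s.\ continuity of the Brownian paths and the strong continuity of $\Upsilon(t)$ on $H^{-s}$. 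This makes your write-up slightly more complete on the ``$C(\mathbb{R}_{+},\cdot)$'' part of the claim, at the cost of a small amount of extra machinery that the paper's one-line duality computation avoids.
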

                                            \begin{proof} Since s>2,  $H^{s}(\mathbb{T}_{2})\hookrightarrow C(\mathbb{T}_{2})$ (see Proposition \ref{b4}). Thus\\
                                           \hspace*{2.55cm} $\lvert (Z^{N}_{t},\varphi)\lvert=\sqrt{N}\lvert\frac{1}{N}\sum\limits_{i=1}^{N}\varphi(X^{i}_{t})-(\mu_{t},\varphi)\lvert\\\hspace*{4.2cm}\leq\sqrt{N}(\frac{1}{N}(\sum\limits_{i=1}^{N}\lvert\varphi(X^{i}_{t})\lvert+\lVert \varphi \lVert_{\infty})\\\hspace*{4.2cm}\leq 2\sqrt{N}\lVert \varphi \lVert_{\infty}\vspace*{0.1cm}\\\hspace*{4.2cm}\leq 2C\sqrt{N}\lVert \varphi \lVert_{H^{s}}.$ \\
                                           This inequality combined with  $\lVert Z^{N}_{t}\lVert_{H^{-s}}=\underset{\varphi\neq 0, \varphi\in H^{s}}{\sup}\frac{\lvert (Z^{N}_{t},\varphi)\lvert}{\lVert \varphi \lVert_{H^{s}}}$, yields  $ \displaystyle  \mathbb{E}(\underset{ t\geq 0}{\sup}\Arrowvert Z_{t}^{N}\Arrowvert_{H^{-s}}^{2})\leq 4CN.$   
                                            \end{proof}
                                          The main result of this subsection is the next Theorem.
                                                                       \begin{thm}
       The sequence  $\{Z^{N},N\geq 1\}$  converges in law  in $ C(\mathbb{R}_{+},H^{-s})$
              towards \\$\{Z_{t},t\geq 0 \} \in C(\mathbb{R}_{+},H^{-s})$,  where $ \forall  t \geq 0,$ \\
                  $ Z_{t}=\displaystyle Z_{0}+\gamma\int_{0}^{t} \bigtriangleup Z_{r}dr+\widetilde{\mathcal{H}}_{t}$ and  $\forall \varphi \in H^{s},  (\widetilde{\mathcal{H}},\varphi)$ is   a centered Gaussian martingale whose predictable quadratic variation is given by $\displaystyle <(\widetilde{\mathcal{H}},\varphi)>_{t}=2\gamma \int_{0}^{t}\left( \mu_{r},(\bigtriangledown\varphi)^{2}\right) dr.$\label{d6}
                                                                                         \end{thm}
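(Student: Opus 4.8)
The plan is to follow the standard three-step programme for measure-valued fluctuation limits, carried out in the fixed Hilbert space $H^{-s}$ with $2<s<3$: establish tightness of $(Z^N)_{N\ge1}$ in $C(\mathbb R_+,H^{-s})$, identify every limit point through the linear equation (6.1), and then show that this equation determines the limit law uniquely, so that the whole sequence converges. Throughout I would use the decomposition $(Z_t^N,\varphi)=(Z_0^N,\varphi)+\gamma\int_0^t(Z_r^N,\triangle\varphi)\,dr+\widetilde{\mathcal H}_t^{N,\varphi}$, together with the fact that, $\mu^N$ having no position jumps, $\widetilde{\mathcal H}^{N,\varphi}$ is a \emph{continuous} martingale with bracket $\langle\widetilde{\mathcal H}^{N,\varphi}\rangle_t=2\gamma\int_0^t(\mu_r^N,(\bigtriangledown\varphi)^2)\,dr$.

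For the tightness I would first obtain compact containment: fixing $s'\in(1,s)$, the embedding $H^{-s'}\hookrightarrow H^{-s}$ is compact, and arguing exactly as in Proposition \ref{uo} (Parseval's identity of Proposition \ref{b3} combined with Lemma \ref{ap}, which applies since $s'>1$) gives $\sup_N\sup_{t\le T}\mathbb E\|Z_t^N\|_{H^{-s'}}^2\le\delta_2 C<\infty$; this confines each $Z_t^N$ to a compact subset of $H^{-s}$ with high probability, uniformly in $N$. The oscillations are then controlled by the Aldous--Rebolledo criterion, projecting (6.1) onto the orthonormal basis $(\rho^{i,s}_{n_1,n_2})$ of Proposition \ref{b2}, for which $\triangle\rho^{i,s}_{n_1,n_2}$ is a scalar multiple of $\rho^{i,s}_{n_1,n_2}$, so that the finite-variation part reduces to $(Z_r^N,\rho^{i,s}_{n_1,n_2})$ and is uniformly integrable, while the martingale bracket is handled by Corollary \ref{d4} (here the constraint $s>2$ is essential, since the gradient costs one degree of regularity). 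A Hilbert-space tightness criterion of Métivier type upgrades these scalar statements to tightness of the $H^{-s}$-valued sequence; as the $Z^N$ are already continuous (Lemma \ref{d5}), this takes place directly in $C(\mathbb R_+,H^{-s})$.

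Given tightness, by Prokhorov's theorem I extract a subsequence converging in law to some $Z\in C(\mathbb R_+,H^{-s})$ and pass to the limit in (6.1). The drift passes because $(Z_r^N,\triangle\varphi)\to(Z_r,\triangle\varphi)$, and the law-of-large-numbers Proposition \ref{ee} yields $\langle\widetilde{\mathcal H}^{N,\varphi}\rangle_t\xrightarrow{P}2\gamma\int_0^t(\mu_r,(\bigtriangledown\varphi)^2)\,dr$. Since the $\widetilde{\mathcal H}^{N,\varphi}$ are continuous martingales with negligible jumps and converging brackets, the martingale central limit theorem (Rebolledo's criterion) shows that the limit $(\widetilde{\mathcal H},\varphi)$ is a continuous martingale with exactly this deterministic bracket; a continuous martingale with deterministic quadratic variation being Gaussian, $(\widetilde{\mathcal H},\varphi)$ is the announced centered Gaussian martingale, and $Z$ solves $Z_t=Z_0+\gamma\int_0^t\triangle Z_r\,dr+\widetilde{\mathcal H}_t$ with $Z_0$ distributed as in Theorem \ref{lo}.

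It remains to prove uniqueness of the limit law. Rewriting the equation in mild form $Z_t=\Upsilon(t)Z_0+\int_0^t\Upsilon(t-r)\,d\widetilde{\mathcal H}_r$, which is meaningful in $H^{-s}$ because $\Upsilon$ is a contraction there (Proposition \ref{b5}), this is a linear Ornstein--Uhlenbeck equation whose initial datum and driving Gaussian martingale have prescribed laws; by Proposition \ref{b8} (applied with $A=-\gamma\triangle$ on the Gelfand triple $H^{1-s}\subset H^{-s}\subset H^{-1-s}$) it has a unique solution. Hence all subsequential limits coincide in law, and the full sequence $(Z^N)_{N\ge1}$ converges in law in $C(\mathbb R_+,H^{-s})$ towards $Z$. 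I expect the tightness step to be the main obstacle — specifically, producing the $N$-uniform estimates in the intermediate space $H^{-s'}$ and matching them to the Aldous--Rebolledo modulus in the infinite-dimensional setting — whereas, once tightness is secured, the identification of the Gaussian limit is a routine application of the martingale central limit theorem.
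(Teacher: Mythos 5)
Your overall architecture (tightness, identification of limit points through the equation, uniqueness of the limit law) coincides with the paper's, and several individual steps are sound or even slightly cleaner: the compact containment bound $\sup_N\sup_{t\le T}\mathbb{E}\lVert Z^N_t\rVert^2_{H^{-s'}}<\infty$ for $s'>1$ does follow from the i.i.d.\ structure of $(X^i_t)$ exactly as in Proposition \ref{uo} (the paper instead derives its moment bound, Proposition \ref{d11}, from the stochastic convolution estimate of Proposition \ref{d10}); and your identification and uniqueness steps (deterministic bracket implies Gaussian; the mild form exhibits $Z$ as an explicit functional of $(Z_0,\widetilde{\mathcal{H}})$, whether or not one also invokes Proposition \ref{b8}) are essentially what the paper does in Propositions \ref{d9} and the final proposition of the subsection.

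The genuine gap is in the oscillation control (T2). The criterion you invoke (Proposition \ref{d7}, of Joffe--M\'etivier type) requires the Aldous condition for the \emph{full $H^{-s}$-norm} of the increment $Z^N_{\tau^N+\theta}-Z^N_{\tau^N}$; it is not a device for upgrading coordinate-wise Aldous conditions to the Hilbert-space level. If you project the strong form (6.1) onto the basis, the drift contribution to the coordinate $(i,n_1,n_2)$ is $-\lambda_{n_1,n_2}\int_{\tau^N}^{\tau^N+\theta}(Z^N_r,\rho^{i,s}_{n_1,n_2})\,dr$: the ``scalar multiple'' is the eigenvalue $\lambda_{n_1,n_2}$, which is unbounded over the basis. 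Summing the squares to recover the $H^{-s}$-norm of the increment produces a term of order $\theta\int\lVert Z^N_r\rVert^2_{H^{2-s}}\,dr$, i.e.\ you would need a uniform moment bound in $H^{-(s-2)}$ with $s-2<1$, and no such bound exists: by the computation of Lemma \ref{ap}, $\mathbb{E}\lVert Z^N_t\rVert^2_{H^{-\sigma}}=+\infty$ whenever $\sigma\le 1$. This is precisely why the paper abandons the strong form for (T2) and writes $Z^N_{\tau^N+\theta}-Z^N_{\tau^N}=(\Upsilon(\theta)-I_d)Z^N_{\tau^N}+\int_{\tau^N}^{\tau^N+\theta}\Upsilon(\tau^N+\theta-r)\,d\widetilde{\mathcal{H}}^N_r$: the stochastic convolution is $O(\theta)$ in mean square by Proposition \ref{d10}, while $(\Upsilon(\theta)-I_d)Z^N_{\tau^N}$ is made small by a spectral splitting --- on finitely many low modes $(e^{-\theta\lambda_{\kappa_1,\kappa_2}}-1)^2$ is uniformly small, and the high-mode tail is paid for by a factor $\lambda_{m_1,m_2}^{-\sigma}$ against the uniform bound in $H^{-s+\sigma}$ with $0<\sigma<s-2$ (estimates (6.8)--(6.9)). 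You do use the mild form, but only for uniqueness, not where it is indispensable; as written, the tightness step you yourself single out as the main obstacle does not go through.
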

             Before we prove this Theorem we first state a condition of Aldous-Rebolledo type for the tightness of a sequence of H-valued c$\grave{a}$dl$\grave{a}$g processes,  where H is a Hilbert space (see  definition 2.2.1, Corollary page 16, and the particular case 2.1.5 of [\ref{mc}]).
                                                                                          \begin{prp}
            Let H be a separable Hilbert space, $ (\vartheta^{n})_{n} $ a sequence of H-valued c$\grave{a}$dl$\grave{a}$g processes, their laws ($\widetilde{P}^{n}$)  form a tight sequence in $D(\mathbb{R}_{+},H)$  if \\
         $(T_{1}) \quad  $ for each t in a   dense subset $\mathbb{T}$ of $\mathbb{R}_{+}$, the sequence $(\vartheta_{t}^{n})_{n}$ is  tight in $H$\\ $
              (T_{2}) \quad \forall T>0, \forall \varepsilon_{1},\varepsilon_{2}>0 ,\exists\delta>0, n_{0}\geq 1$ such that for any stopping times $\tau^{n} \leq T \\\hspace*{4.5cm}\underset{\underset{\theta\leq\delta}{n\geq n_{0}}}{\sup} $ $\mathbb{P}(\lVert \vartheta^{n}_{(\tau^{n}+\theta) }-\vartheta^{n}_{\tau^{n}}\lVert_{H}>\varepsilon_{1})\leq\varepsilon_{2}. $   \label{d7}
                                                                                           \end{prp}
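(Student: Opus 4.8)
The statement is precisely the Aldous--Rebolledo tightness criterion specialised to a separable Hilbert space $H$, and the plan is to deduce it from the general theory of relative compactness in Skorokhod space combined with Aldous's oscillation lemma. First I would observe that, $H$ being separable and complete, it is Polish, so that Prohorov's theorem applies and tightness of the laws $(\widetilde{P}^{n})_{n}$ in $D(\mathbb{R}_{+},H)$ is equivalent to their relative compactness. By the standard characterisation of relatively compact families in the Skorokhod topology (see [\ref{mc}]), this reduces to two ingredients: a \emph{compact containment} condition, namely that for each $t$ in a dense set the laws of $\vartheta^{n}_{t}$ are supported, up to arbitrarily small mass, in a fixed compact of $H$; and a uniform control of the Skorokhod modulus of continuity $w'_{T}(\vartheta^{n},\delta)$ as $\delta\rightarrow 0$.

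The first ingredient is furnished \emph{verbatim} by $(T_{1})$: tightness of the marginals $(\vartheta^{n}_{t})_{n}$ at each $t$ in the dense set $\mathbb{T}$ yields, through Prohorov's theorem applied in $H$ itself, the required compact containment, it being enough to control a dense set of time points since the finite-dimensional projections determine the Skorokhod topology. The core of the argument is therefore to show that $(T_{2})$ implies $\lim_{\delta\rightarrow 0}\limsup_{n}\mathbb{P}(w'_{T}(\vartheta^{n},\delta)>\varepsilon)=0$ for every $\varepsilon,T>0$. This is exactly Aldous's criterion: the stopping-time increment bound $(T_{2})$ controls the probability of a large oscillation on a short interval, and through an optional-sampling argument one bounds the expected number of $\varepsilon$-oscillations of the trajectory on $[0,T]$, which in turn bounds $w'_{T}$.

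I expect the main obstacle to be precisely this last implication, i.e. passing from the stopping-time formulation $(T_{2})$ --- phrased for \emph{arbitrary} stopping times $\tau^{n}\leq T$ and deterministic increments $\theta\leq\delta$ --- to a genuinely pathwise estimate on the modulus $w'_{T}$. The delicate step is Aldous's lemma, proved by contradiction: were the modulus not controlled, one could extract a subsequence along which the paths produce, at suitable stopping times, increments exceeding $\varepsilon$ within time $\delta\rightarrow 0$, in violation of $(T_{2})$. Since every step here is purely metric and uses only that $(H,\lVert\cdot\rVert_{H})$ is a separable metric space, the general result of [\ref{mc}] (its Corollary p.\,16 and the special case 2.1.5) applies without change; I would accordingly invoke that reference for the full combinatorial estimate, observing merely that $(T_{1})$ and $(T_{2})$ are exactly its two hypotheses.
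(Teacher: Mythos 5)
The paper offers no proof of this proposition: it is quoted directly from Joffe--M\'etivier [\ref{mc}] (Definition 2.2.1, the Corollary on p.~16 and the particular case 2.1.5), and your argument ends by invoking exactly that same reference. Your outline --- Prohorov's theorem in the Polish space $H$, compact containment from $(T_{1})$ at a dense set of times, and Aldous's stopping-time lemma converting $(T_{2})$ into control of the Skorokhod modulus $w'_{T}$ --- is a correct description of the standard proof underlying the cited result, so there is nothing to compare beyond noting that both you and the authors ultimately defer to [\ref{mc}] for the combinatorial core.
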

     Note that if $ (\vartheta^{n})_{n} $ is a sequence of H-valued continuous processes, then a way  to show that $ (\vartheta^{n})_{n} $ is tight is  to prove that   (T1) and  (T2) are satisfied.\\
             Let us now prove  the following  results  which are useful for the proof of Theorem \ref{d6}.
                                 \begin{prp}
          The sequence  $ \widetilde{\mathcal{H}}^{N} $ converges in law in $C(\mathbb{R}_{+},H^{-s})$ towards $\widetilde{\mathcal{H}}$ where $\forall \varphi\in H^{s}$, $(\widetilde{\mathcal{H}},\varphi)$ is a centered, continuous, Gaussian martingale having the same law as \\$(\widetilde{\mathcal{H}}_{t},\varphi)=\displaystyle\int_{0}^{t}\int_{\mathbb{T}^{2}}\frac{\partial \varphi}{\partial x_{1}}(x)\sqrt{2\gamma 
                                  f_{S}(r,x)}\mathcal{W}_{2}(dr,dx)+\int_{0}^{t}\int_{\mathbb{T}^{2}}\frac{\partial \varphi}{\partial x_{2}}(x)\sqrt{2\gamma f_{S}(r,x)}\mathcal{W}_{3}(dr,dx)\\\hspace*{1.5cm}+\int_{0}^{t}\int_{\mathbb{T}^{2}}\frac{\partial \varphi}{\partial x_{1}}(x)\sqrt{2\gamma f_{I}(r,x)}\mathcal{W}_{4}(dr,dx)+\int_{0}^{t}\int_{\mathbb{T}^{2}}\frac{\partial \varphi}{\partial x_{2}}(x)\sqrt{2\gamma f_{I}(r,x)}\mathcal{W}_{5}(dr,dx)\\\hspace*{1.5cm}+\int_{0}^{t}\int_{\mathbb{T}^{2}}\frac{\partial \varphi}{\partial x_{1}}(x)\sqrt{2\gamma (f(r,x)-f_{S}(r,x)-f_{I}(r,x))}\mathcal{W}_{7}(dr,dx)$\vspace*{-0.2cm}\[\tag{6.4}\hspace*{-4cm}+\int_{0}^{t}\int_{\mathbb{T}_{2}}\frac{\partial \varphi}{\partial x_{2}}(x)\sqrt{2\gamma (f(r,x)-f_{S}(r,x)-f_{I}(r,x))}\mathcal{W}_{8}(dr,dx),\label{d8}\vspace*{-0.15cm}\] where $\mathcal{W}_{1},\mathcal{W}_{2},\mathcal{W}_{3},\mathcal{W}_{4},\mathcal{W}_{5},\mathcal{W}_{6},\mathcal{W}_{7},\mathcal{W}_{8}$ are independent spatio-temporal white noises. \label{d9}
                                   \end{prp}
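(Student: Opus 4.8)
The plan is to establish tightness of $(\widetilde{\mathcal{H}}^{N})_{N\geq 1}$ in $C(\mathbb{R}_{+},H^{-s})$ through the Aldous--Rebolledo criterion of Proposition \ref{d7}, then to identify every subsequential limit as a continuous centred Gaussian martingale carrying the prescribed covariance, and finally to realise that Gaussian process by the white-noise formula (6.4), concluding by uniqueness of the limit law. I first record the martingale structure: from (6.1) and the definition of $\mathcal{H}^{N,\varphi}$, for each $\varphi\in C^{2}(\mathbb{T}^{2})$ the process $\widetilde{\mathcal{H}}^{N,\varphi}_{t}=\tfrac{\sqrt{2\gamma}}{\sqrt{N}}\sum_{i=1}^{N}\int_{0}^{t}\bigtriangledown\varphi(X^{i}_{r})\,dB^{i}_{r}$ is a continuous square-integrable martingale with $<\widetilde{\mathcal{H}}^{N,\varphi}>_{t}=2\gamma\int_{0}^{t}(\mu^{N}_{r},(\bigtriangledown\varphi)^{2})\,dr$. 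Since $(\mu^{N}_{r},1_{\mathbb{T}^{2}})=1$ and $\mu^{N}_{r}\xrightarrow{P}\mu_{r}$ by Proposition \ref{ee}, these bracket processes converge in probability to $2\gamma\int_{0}^{t}(\mu_{r},(\bigtriangledown\varphi)^{2})\,dr$, which is exactly the announced quadratic variation of $(\widetilde{\mathcal{H}},\varphi)$.

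For tightness, the marginal condition $(T_{1})$ follows from Corollary \ref{d4}: choosing $s'$ with $2<s'<s$, the uniform bound $\sup_{N}\mathbb{E}(\sup_{t\leq T}\lVert\widetilde{\mathcal{H}}^{N}_{t}\rVert_{H^{-s'}}^{2})<\infty$ combined with the compact embedding $H^{-s'}\hookrightarrow H^{-s}$ and Markov's inequality makes $(\widetilde{\mathcal{H}}^{N}_{t})_{N}$ tight in $H^{-s}$ for each fixed $t$. For the stopping-time condition $(T_{2})$, I would use the Parseval identity of Proposition \ref{b3} and optional sampling to write, for stopping times $\tau^{N}\leq T$ and $\theta>0$,
\[
\mathbb{E}\big(\lVert\widetilde{\mathcal{H}}^{N}_{\tau^{N}+\theta}-\widetilde{\mathcal{H}}^{N}_{\tau^{N}}\rVert_{H^{-s}}^{2}\big)=2\gamma\sum_{i,n_{1},n_{2}}\mathbb{E}\Big(\int_{\tau^{N}}^{\tau^{N}+\theta}(\mu^{N}_{r},(\bigtriangledown\rho^{i,s}_{n_{1},n_{2}})^{2})\,dr\Big).
\]
Interchanging the sum and the integral and invoking Lemma \ref{ap}, which bounds $\sum_{i,n_{1},n_{2}}(\bigtriangledown\rho^{i,s}_{n_{1},n_{2}}(x))^{2}$ uniformly in $x$ when $s>2$ (as already used in Proposition \ref{add}), this is at most $2\gamma C\,\mathbb{E}(\int_{\tau^{N}}^{\tau^{N}+\theta}(\mu^{N}_{r},1_{\mathbb{T}^{2}})\,dr)=2\gamma C\theta$, which tends to $0$ uniformly in $N$ and $\tau^{N}$; Markov's inequality then gives $(T_{2})$. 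As $\widetilde{\mathcal{H}}^{N}$ has continuous paths, tightness in $C(\mathbb{R}_{+},H^{-s})$ follows.

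By Prokhorov's theorem I extract a subsequence along which $\widetilde{\mathcal{H}}^{N}$ converges in law to some $\widetilde{\mathcal{H}}$ in $C(\mathbb{R}_{+},H^{-s})$. For fixed $\varphi\in H^{s}$ the evaluation functional is continuous, so $(\widetilde{\mathcal{H}}^{N},\varphi)\xrightarrow{L}(\widetilde{\mathcal{H}},\varphi)$; the uniform $L^{2}$-bounds of Corollary \ref{d4} furnish the uniform integrability needed to pass both the martingale property and the convergence of brackets to the limit. Hence $(\widetilde{\mathcal{H}},\varphi)$ is a continuous martingale with the \emph{deterministic} quadratic variation $2\gamma\int_{0}^{t}(\mu_{r},(\bigtriangledown\varphi)^{2})\,dr$, and a continuous martingale with deterministic bracket is Gaussian (a deterministic time-change of Brownian motion). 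By polarisation its covariance is $\mathbb{E}[(\widetilde{\mathcal{H}}_{t},\varphi)(\widetilde{\mathcal{H}}_{t},\psi)]=2\gamma\int_{0}^{t}(\mu_{r},\bigtriangledown\varphi\cdot\bigtriangledown\psi)\,dr$, which determines the law of the $H^{-s}$-valued Gaussian process completely.

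Finally I would check that (6.4) realises precisely this covariance: writing $f=f_{S}+f_{I}+(f-f_{S}-f_{I})$ and using the independence of $\mathcal{W}_{2},\dots,\mathcal{W}_{8}$ with the It\^o isometry for spatio-temporal white noise, one gets $\mathbb{E}[(\widetilde{\mathcal{H}}_{t},\varphi)(\widetilde{\mathcal{H}}_{t},\psi)]=2\gamma\int_{0}^{t}\int_{\mathbb{T}^{2}}(\bigtriangledown\varphi\cdot\bigtriangledown\psi)(x)\,f(r,x)\,dx\,dr$, matching the bracket found above; thus the process defined by (6.4) has the same law as $\widetilde{\mathcal{H}}$, and since every subsequential limit shares this law the whole sequence converges. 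I expect the genuine difficulty to lie in the identification step rather than in tightness: one must justify that the martingale property and, crucially, the convergence of the predictable brackets survive the weak passage to the limit, so that $<(\widetilde{\mathcal{H}},\varphi)>$ is exactly the stated deterministic object; this rests on the uniform $L^{2}$-control of Corollary \ref{d4} and a uniform-integrability argument. Once that is secured, Gaussianity is automatic from the determinism of the bracket and the white-noise representation reduces to the routine covariance computation above.
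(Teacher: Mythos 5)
Your proposal is correct and follows essentially the same route as the paper: tightness via the Aldous--Rebolledo criterion using the uniform bracket estimate $\sum_{i,n_1,n_2}(\bigtriangledown\rho^{i,s}_{n_1,n_2})^{2}\leq C$ for $s>2$, passage of the martingale property and of the (deterministic) predictable bracket to the limit by uniform integrability, Gaussianity from the determinism of the bracket, and identification of the law by matching covariances with the white-noise formula. The only point where the paper does more than you is in the representation step: it splits $\widetilde{\mathcal{H}}^{N}$ into its $S$, $I$ and $R$ contributions and computes the cross-brackets with $\widetilde{M}^{N}$ and $\widetilde{L}^{N}$, which is what motivates writing (6.4) with the \emph{same} white noises $\mathcal{W}_{2},\dots,\mathcal{W}_{5}$ that appear in $W^{1}$ and $W^{2}$; for the marginal law asserted in the statement, your direct computation using $f=f_{S}+f_{I}+(f-f_{S}-f_{I})$ suffices.
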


                                                                  \begin{proof}
                               We first establish the tightness of the sequence $ \widetilde{\mathcal{H}}^{N} $, then show that all converging subsequences have the same limit  which we shall  identify. However,  Theorem 2.15 of [\ref{sc}] restricted the  tightness criterion    of right-continuous martingales  (Theorem 2.3.2 of [\ref{mc}])  to that of continuous martingales, thus to avoid repetition    we obtain the tightness of $ \widetilde{\mathcal{H}}^{N} $  by adapting  the proof of  Proposition \ref{f1} below. So by Prokhorov's  theorem there exists a subsequence of $ \widetilde{\mathcal{H}}^{N} $ still denoted $ \widetilde{\mathcal{H}}^{N} $ which converge in law toward $\widetilde{\mathcal{H}} $.  By adapting  the proof of  Lemma \ref{f3} below,  we show that $\forall \varphi\in H^{s}$, the processes $(\widetilde{\mathcal{H}},\varphi) $ is a centered, continuous  martingale. On the other hand $\forall \varphi\in H^{s}$, \\\hspace*{2cm}
                           $<\widetilde{\mathcal{H}}^{N,\varphi}>_{t}=\displaystyle2\gamma\int_{0}^{t}(\mu_{r}^{N},(\bigtriangledown\varphi)^{2})dr\xrightarrow{P}\displaystyle2\gamma\int_{0}^{t}(\mu_{r},(\bigtriangledown\varphi)^{2})dr=<(\widetilde{\mathcal{H}},\varphi)>_{t},$\\thus the quadractic variation  $<(\widetilde{\mathcal{H}},\varphi)>_{t}$ being deterministic, $(\widetilde{\mathcal{H}},\varphi)$ is a centered, continuous, Gaussian martingale.\vspace*{0.2cm}\\$-$  Expression of $(\widetilde{\mathcal{H}},\varphi)$ using the white noises.\vspace*{0.2cm}\\We have $\widetilde{\mathcal{H}_{t}}^{N,\varphi}=\displaystyle\sqrt{\frac{2\gamma}{N}} \sum_{i=1}^{N} \int_{0}^{t}\bigtriangledown\varphi(X_{r}^{i})dX_{r}^{i}\\\hspace*{2.72cm}=\displaystyle\sqrt{\frac{2\gamma}{N}} \sum_{i=1}^{N} \int_{0}^{t}1_{\{E_{r}^{i}=S\}}\bigtriangledown\varphi(X_{r}^{i})dX_{r}^{i}+\displaystyle\sqrt{\frac{2\gamma}{N}} \sum_{i=1}^{N} \int_{0}^{t}1_{\{E_{r}^{i}=I\}}\bigtriangledown\varphi(X_{r}^{i})dX_{r}^{i}\\\hspace*{3cm}+\displaystyle\sqrt{\frac{2\gamma}{N}} \sum_{i=1}^{N} \int_{0}^{t}1_{\{E_{r}^{i}=R\}}\bigtriangledown\varphi(X_{r}^{i})dX_{r}^{i}.$\\Furthemore \\ $\hspace*{1cm}<\widetilde{M}^{N,\phi},\widetilde{\mathcal{H}}^{N,\varphi}>_{t}=\displaystyle2\gamma\int_{0}^{t}(\mu_{r}^{S,N},(\bigtriangledown \varphi)(\bigtriangledown \phi))dr\xrightarrow{P}\displaystyle2\gamma\int_{0}^{t}(\mu_{r}^{S},(\bigtriangledown  \varphi)(\bigtriangledown\phi))dr,$ \\\hspace*{1cm}$<\widetilde{L}^{N,\psi},\widetilde{\mathcal{H}}^{N,\varphi}>_{t}=\displaystyle2\gamma\int_{0}^{t}(\mu_{r}^{I,N},(\bigtriangledown\varphi)(\bigtriangledown \phi))dr\xrightarrow{P}\displaystyle2\gamma\int_{0}^{t}(\mu_{r}^{I},(\bigtriangledown \psi)(\bigtriangledown \phi))dr,$\\\hspace*{1cm}  $<\widetilde{\mathcal{H}}^{N,\varphi}>_{t}=\displaystyle2\gamma\int_{0}^{t}(\mu_{r}^{N},(\bigtriangledown\phi)^{2})dr\xrightarrow{P}2\gamma\int_{0}^{t}(\mu_{r},(\bigtriangledown\phi)^{2})dr.$\\ Thus since $f(r,.)-f_{S}(r,.)-f_{I}(r,.)\geq 0$, for any $t\geq 0$, $(\widetilde{\mathcal{H}}_{t},\varphi)$ satisfies (\ref{d8}).
                              \end{proof} 
                              \begin{prp}
                               There exists $ C>0$, such that for any stopping times $\overline{\tau}<\infty$ a.s and $\theta>0$, \vspace*{0.2cm}\\$\hspace*{4.6cm} \mathbb{E}\left(\Big\lVert \displaystyle\int_{\overline{\tau}}^{\overline{\tau}+\theta}\Upsilon(\overline{\tau}+\theta-r)d\widetilde{\mathcal{H}}_{r}^{N} \Big\lVert_{H^{-s}}^{2}\right)\leq C\theta.$\label{d10}
                              \end{prp}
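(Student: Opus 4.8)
The plan is to pass to the orthonormal basis $(\rho_{n_1,n_2}^{i,s})_{i,n_1,n_2}$ of $H^s(\mathbb{T}^2)$ from Proposition \ref{b2}, compute the $H^{-s}$ norm of the stochastic convolution through the Parseval identity of Proposition \ref{b3}, and then estimate each modal contribution by an It\^o isometry. Writing $\rho=\rho_{n_1,n_2}^{i,s}$, I would first record that since $\gamma\triangle$ is self-adjoint (Proposition \ref{b5}) and the $f_{n_1,n_2}^i$ are its eigenfunctions, each $\rho$ is an eigenvector of $\Upsilon(t)$ with $\Upsilon(t)\rho=e^{-\lambda_{n_1,n_2}t}\rho$. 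Pairing the convolution against $\rho$ and using the duality of $\Upsilon(t)$ on $H^{-s}$ and $H^{s}$ yields
\[\Big(\int_{\overline{\tau}}^{\overline{\tau}+\theta}\Upsilon(\overline{\tau}+\theta-r)\,d\widetilde{\mathcal{H}}_r^N,\rho\Big)=\int_{\overline{\tau}}^{\overline{\tau}+\theta}e^{-\lambda_{n_1,n_2}(\overline{\tau}+\theta-r)}\,d\widetilde{\mathcal{H}}_r^{N,\rho}.\]
By Parseval and Tonelli (all summands being nonnegative), the left-hand side of the claim then equals $\sum_{i,n_1,n_2}\mathbb{E}\big[\big(\int_{\overline{\tau}}^{\overline{\tau}+\theta}e^{-\lambda_{n_1,n_2}(\overline{\tau}+\theta-r)}\,d\widetilde{\mathcal{H}}_r^{N,\rho}\big)^2\big]$.

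Next I would evaluate each second moment. The integrand $\Phi_r=e^{-\lambda_{n_1,n_2}(\overline{\tau}+\theta-r)}1_{[\overline{\tau},\overline{\tau}+\theta]}(r)$ is bounded by $1$ and progressively measurable because $\overline{\tau}$ is a stopping time, and it is square-integrable against $d\langle\widetilde{\mathcal{H}}^{N,\rho}\rangle\otimes d\mathbb{P}$. The It\^o isometry applied to the continuous martingale $\widetilde{\mathcal{H}}^{N,\rho}=\sqrt{2\gamma/N}\sum_i\int_0^\cdot\nabla\rho(X_r^i)\,dB_r^i$, whose bracket is $\langle\widetilde{\mathcal{H}}^{N,\rho}\rangle_t=2\gamma\int_0^t(\mu_r^N,(\nabla\rho)^2)\,dr$, then gives
\[\mathbb{E}\Big[\Big(\int_{\overline{\tau}}^{\overline{\tau}+\theta}e^{-\lambda_{n_1,n_2}(\overline{\tau}+\theta-r)}\,d\widetilde{\mathcal{H}}_r^{N,\rho}\Big)^2\Big]=2\gamma\,\mathbb{E}\Big[\int_{\overline{\tau}}^{\overline{\tau}+\theta}e^{-2\lambda_{n_1,n_2}(\overline{\tau}+\theta-r)}(\mu_r^N,(\nabla\rho)^2)\,dr\Big].\]
Since $\mu_r^N$ is a probability measure, $(\mu_r^N,(\nabla\rho)^2)\leq\lVert\nabla\rho\rVert_\infty^2$, and with the change of variable $u=\overline{\tau}+\theta-r$ one has $\int_0^\theta e^{-2\lambda_{n_1,n_2}u}\,du\leq\theta$, so each modal term is at most $2\gamma\theta\lVert\nabla\rho_{n_1,n_2}^{i,s}\rVert_\infty^2$.

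It then remains to sum over the modes. Because $\rho_{n_1,n_2}^{i,s}=f_{n_1,n_2}^i/(1+\gamma\pi^2(n_1^2+n_2^2))^{s/2}$ and the trigonometric functions satisfy the pointwise bound $\lVert\nabla f_{n_1,n_2}^i\rVert_\infty^2\leq 4\pi^2(n_1^2+n_2^2)$, I obtain $\lVert\nabla\rho_{n_1,n_2}^{i,s}\rVert_\infty^2\leq C(1+\gamma\pi^2(n_1^2+n_2^2))^{1-s}$; summing this over $n_1,n_2$ converges precisely when $s>2$, which holds by the standing choice $2<s<3$. Hence $\sum_{i,n_1,n_2}\lVert\nabla\rho_{n_1,n_2}^{i,s}\rVert_\infty^2=:C'<\infty$ and the whole expression is bounded by $2\gamma C'\theta=C\theta$, as claimed.

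The step I expect to require the most care is the It\^o isometry for the stochastic convolution evaluated at a random time: one must verify that $\Phi$ is genuinely progressively measurable and square-integrable, so that the isometry and the interchange of the infinite sum with expectation and stochastic integration are legitimate; this is where $\overline{\tau}<\infty$ a.s. and the finiteness of the per-mode brackets enter. The other delicate point is that the decisive summability of $\sum_{i,n_1,n_2}\lVert\nabla\rho_{n_1,n_2}^{i,s}\rVert_\infty^2$ must be extracted from the direct pointwise gradient estimate on the $f_{n_1,n_2}^i$ rather than from the Sobolev injection of Proposition \ref{b4}, since the latter would only close for $s>3$ and would thus miss the very regime $2<s<3$ in which we work.
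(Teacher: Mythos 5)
Your proof is correct and follows essentially the same route as the paper: Parseval reduction to the modes $\rho^{i,s}_{n_1,n_2}$, the eigenvector relation $\Upsilon(t)\rho=e^{-\lambda_{n_1,n_2}t}\rho$, the It\^o isometry per mode, the crude bound $e^{-2\lambda_{n_1,n_2}u}\le 1$, and the summability of the gradients for $s>2$ (which is exactly the content of the paper's Lemma in the Appendix, proved there by the same direct trigonometric estimate you use). The only cosmetic difference is that you bound each modal term by $\lVert\nabla\rho^{i,s}_{n_1,n_2}\rVert_\infty^2$ before summing, whereas the paper interchanges the sum with the integral against $\mu^N_{r+\overline{\tau}}$ and then uses the uniform pointwise bound on $\sum_{i,n_1,n_2}(\nabla\rho^{i,s}_{n_1,n_2}(x))^2$; both close for the same range of $s$.
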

                              \begin{proof}
                              We first recall that $(f^{i}_{n_{1},n_{2}})_{i,n_{1},n_{2}}$ (as defined is Proposition \ref{b2}) is a family of eigenfonctions of the operator $\gamma\triangle$ associated to the family of eigenvalues $(-\lambda_{n_{1},n_{2}})_{n_{1},n_{2}}$.\\From (\ref{pp}) in Proposition \ref{b5} we see that $\bigtriangledown\Upsilon(t)f^{i}_{n_{1},n_{2}}=e^{-t\lambda_{n_{1},n_{2}}}\bigtriangledown f^{i}_{n_{1},n_{2}}$,
so by noticing that for any $\varphi \in H^{s+1} \subset C^{2}(\mathbb{T}^{2}),$\\\hspace*{1.7cm} $\displaystyle\int_{\overline{\tau}}^{\overline{\tau}+\theta}(\Upsilon(\overline{\tau}+\theta-r)\varphi, d\widetilde{\mathcal{H}}_{r}^{N})=\displaystyle\sqrt{\frac{2\gamma}{N}}\sum_{i=1}^{N}\int_{\overline{\tau}}^{\overline{\tau}+\theta}\bigtriangledown\Upsilon(\overline{\tau}+\theta-r) \varphi(X_{r}^{i})dB_{r}^{i},$\\ one has\\ $\mathbb{E}\left(\Big\lVert \displaystyle\int_{\overline{\tau}}^{\overline{\tau}+\theta}\Upsilon(\overline{\tau}+\theta-r)d\widetilde{\mathcal{H}}_{r}^{N} \Big\lVert_{H^{-s}}^{2}\right) =
                                \\\hspace*{2.5cm}=\sum\limits_{i,n_{1},n_{2}}\mathbb{E}\left( \left(\displaystyle\int_{\overline{\tau}}^{\overline{\tau}+\theta}\Upsilon(\overline{\tau}+\theta-r)\rho^{i,s}_{n_{1},n_{2}},d\widetilde{\mathcal{H}}_{r}^{N} \right)^{2}\right)\\\hspace*{2.5cm}=\sum\limits_{i,n_{1},n_{2}}\displaystyle\frac{2\gamma}{N}\sum_{j=1}^{N}\mathbb{E}\left( \left(\displaystyle\int_{0}^{\theta}\bigtriangledown\Upsilon(\theta-r)\rho^{i,s}_{n_{1},n_{2}}(X_{r+\tau}^{j})dB_{r+\overline{\tau}}^{j} \right)^{2}\right)
                              \\\hspace*{2.5cm}=2\gamma\sum\limits_{i,n_{1},n_{2}}\mathbb{E}\left( \displaystyle\int_{0}^{\theta}(\mu_{r+\overline{\tau}}^{N},(1+\gamma\pi^{2}(n_{1}^{2}+n_{2}^{2}))^{-s}(\bigtriangledown\Upsilon(\theta-r)f^{i}_{n_{1},n_{2}})^{2})dr \right)\\\hspace*{2.5cm}=2\gamma\sum\limits_{i,n_{1},n_{2}}\mathbb{E}\left( \displaystyle\int_{0}^{\theta}(\mu_{r+\overline{\tau}}^{N},(1+\gamma\pi^{2}(n_{1}^{2}+n_{2}^{2}))^{-s}e^{-2(\theta-r)\lambda_{n_{1},n_{2}}}(\bigtriangledown f^{i}_{n_{1},n_{2}})^{2})dr \right)\\\hspace*{2.5cm}\leq2\gamma\mathbb{E}\left( \displaystyle\int_{0}^{\theta}\int_{\mathbb{T}^{2}}\sum\limits_{i,n_{1},n_{2}}(\bigtriangledown \rho^{i,s}_{n_{1},n_{2}}(x))^{2}\mu_{r+\overline{\tau}}^{N}(dx)dr \right)\\\hspace*{2.5cm}\leq 2\gamma C\mathbb{E}\Big(\displaystyle\int_{0}^{\theta}\int_{\mathbb{T}^{2}}\mu_{r+\overline{\tau}}^{N}(dx)dr\Big)\\\hspace*{2.5cm}\leq 2\gamma C\theta,$
                              \vspace*{0.2cm}\\ 
                              where the second inequality follows from Lemma \ref{ap} below  and the last one follows from the fact that $\displaystyle \int_{\mathbb{T}^{2}}\mu_{r+\overline{\tau}}^{N}(dx)\leq 1$.
                              \end{proof}
                                \begin{prp}
                                For all T>0,  \\\hspace*{4.5cm}  $\underset{N\geq 1}{\sup}\hspace*{0.07cm}\underset{0 \leq t\leq T}{\sup}\mathbb{E}(\Arrowvert Z_{t}^{N}\Arrowvert_{H^{-s}}^{2})<\infty.\label{d11}  $ 
                                \end{prp}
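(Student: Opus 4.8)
The plan is to recast the evolution equation (\ref{d1}) for $Z^{N}$ into its mild (variation-of-constants) form and then control the two resulting contributions separately. Concretely, I would test (\ref{d1}) against the time-dependent function $r\mapsto\Upsilon(t-r)\varphi$ and use that $\frac{d}{dr}\Upsilon(t-r)\varphi=-\gamma\bigtriangleup\Upsilon(t-r)\varphi$ exactly cancels the drift $\gamma\int_{0}^{t}(Z_{r}^{N},\bigtriangleup\varphi)\,dr$, so that for every $\varphi\in H^{s}$ one gets
\[(Z_{t}^{N},\varphi)=(Z_{0}^{N},\Upsilon(t)\varphi)+\int_{0}^{t}\left(\Upsilon(t-r)\varphi,d\widetilde{\mathcal{H}}_{r}^{N}\right).\]
Since the generator $\gamma\bigtriangleup$ is selfadjoint and $\Upsilon(t)$ extends to a contraction on $H^{-s}$ (Proposition \ref{b5}), this reads, at the level of $H^{-s}$-valued objects,
\[Z_{t}^{N}=\Upsilon(t)Z_{0}^{N}+\int_{0}^{t}\Upsilon(t-r)\,d\widetilde{\mathcal{H}}_{r}^{N}.\]

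Next I would estimate each piece. For the initial term, the contraction property (part 2 of Proposition \ref{b5}) gives $\lVert\Upsilon(t)Z_{0}^{N}\rVert_{H^{-s}}\leq\lVert Z_{0}^{N}\rVert_{H^{-s}}$, and since $s>1$ the uniform initial moment bound of Proposition \ref{uo} yields $\sup_{N\geq1}\sup_{t\geq0}\mathbb{E}(\lVert\Upsilon(t)Z_{0}^{N}\rVert_{H^{-s}}^{2})\leq C_{1}$. For the stochastic convolution term I would simply apply Proposition \ref{d10} with the (deterministic) stopping time $\overline{\tau}=0$ and $\theta=t$, which gives $\mathbb{E}\big(\lVert\int_{0}^{t}\Upsilon(t-r)\,d\widetilde{\mathcal{H}}_{r}^{N}\rVert_{H^{-s}}^{2}\big)\leq Ct\leq CT$, uniformly in $N$. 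Combining the two via $(a+b)^{2}\leq2a^{2}+2b^{2}$ then produces
\[\sup_{N\geq1}\,\sup_{0\leq t\leq T}\mathbb{E}(\lVert Z_{t}^{N}\rVert_{H^{-s}}^{2})\leq 2C_{1}+2CT<\infty,\]
which is the claim. Note that the choice $2<s<3$ comfortably satisfies the hypothesis $s>1$ of Proposition \ref{uo}.

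The only genuinely delicate point is the rigorous justification of the mild formula, that is, the legitimacy of testing the weak equation (\ref{d1}) against the time-dependent test function $\Upsilon(t-\cdot)\varphi$ and of interpreting the resulting stochastic term as a well-defined $H^{-s}$-valued stochastic convolution. I would handle this exactly as in the proof of Proposition \ref{d10}: expand the $H^{-s}$-norm over the orthonormal basis $(\rho^{i,s}_{n_{1},n_{2}})$ of $H^{s}$, use that $\bigtriangledown\Upsilon(\theta-r)f^{i}_{n_{1},n_{2}}=e^{-(\theta-r)\lambda_{n_{1},n_{2}}}\bigtriangledown f^{i}_{n_{1},n_{2}}$, and control the summed series via Lemma \ref{ap}, the summability of $\sum_{i,n_{1},n_{2}}(\bigtriangledown\rho^{i,s}_{n_{1},n_{2}})^{2}$ being precisely what forces $s>2$. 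Thus no estimate beyond Propositions \ref{uo} and \ref{d10} is needed; the whole content of the statement is the passage to the mild form, after which the bound is immediate and, crucially, uniform in $N$ and in $t\in[0,T]$.
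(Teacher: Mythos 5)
Your argument is correct and follows essentially the same route as the paper: pass to the mild form $Z_{t}^{N}=\Upsilon(t)Z_{0}^{N}+\int_{0}^{t}\Upsilon(t-r)\,d\widetilde{\mathcal{H}}_{r}^{N}$, bound the first term by the contraction property of $\Upsilon(t)$ on $H^{-s}$ together with Proposition \ref{uo}, and bound the stochastic convolution by Proposition \ref{d10} applied with $\overline{\tau}=0$ and $\theta=t$. Your additional remarks on justifying the mild formulation via the basis expansion are consistent with how the paper treats this point in Proposition \ref{d10}.
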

                                \begin{proof}  Recall that the semigroup $ \Upsilon(t)$ generated by $ \gamma \bigtriangleup  $ satisfies $ \lvert \Upsilon(t) \lvert_{
                                                  \mathcal{L}(H^{-s})}\leq 1 $,   where $ \lvert . \lvert_{
                                                                                       \mathcal{L}(H^{-s})} $ denotes the  operator norm on $H^{-s}$.\\From equation (\ref{d1}), we have 
                                $Z_{t}^{N}=\displaystyle\Upsilon(t)Z_{0}^{N}+\int_{0}^{t}\Upsilon(t-r)d\widetilde{\mathcal{H}}_{r}^{N}.$ \\ Thus $\underset{0 \leq t\leq T}{\sup}\mathbb{E}(\Arrowvert Z_{t}^{N}\Arrowvert_{H^{-s}}^{2})\leq 2\mathbb{E}(\Arrowvert Z_{0}^{N}\Arrowvert_{H^{-s}}^{2})+2\underset{0 \leq t\leq T}{\sup}\mathbb{E}\Big(\Big\Arrowvert \displaystyle\int_{0}^{t}\Upsilon(t-r)d\widetilde{\mathcal{H}}_{r}^{N}\Big\Arrowvert_{H^{-s}}^{2}\Big).$ 
     \\Furthemore from Proposition \ref{d10} we deduce that $\underset{0 \leq t\leq T}{\sup}\mathbb{E}\Big(\Big\Arrowvert \displaystyle\int_{0}^{t}\Upsilon(t-r)d\widetilde{\mathcal{H}}_{r}^{N}\Big\Arrowvert_{H^{-s}}^{2}\Big)\leq CT$.\\
 Combined with Proposition \ref{uo} in section 4, this show that  $\underset{N\geq1}{\sup}\hspace*{0.07cm}\underset{0 \leq t\leq T}{\sup}\mathbb{E}(\Arrowvert Z_{t}^{N}\Arrowvert_{H^{-s}}^{2})<\infty.$
    \end{proof}
          \subsubsection{\textit{Proof of Theorem \ref{d6}}}
         We  first  prove that  $ Z^{N}$ is tight in $C(\mathbb{R}_{+},H^{-s})$ then we show that all converging subsequences have the same limit.
                                                       \begin{prp}

                          The sequence  $ Z^{N}$  is tight in $ C(\mathbb{R}_{+},H^{-s}).$ \label{ff4}
                                                          \end{prp}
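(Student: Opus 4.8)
The plan is to invoke the Aldous--Rebolledo criterion of Proposition \ref{d7}: since $Z^N\in C(\mathbb{R}_+,H^{-s})$ by Lemma \ref{d5}, it suffices to verify $(T_1)$ and $(T_2)$ with $H=H^{-s}$. The whole argument rests on introducing an auxiliary exponent $s'$ with $2<s'<s<3$ and on combining the compact embedding $H^{-s'}(\mathbb{T}^2)\hookrightarrow H^{-s}(\mathbb{T}^2)$ with the smoothing effect of $\Upsilon$ between these two spaces. First I would note that Propositions \ref{d10} and \ref{d11} hold verbatim with $s$ replaced by any exponent $>2$, since their proofs only use the summability of $\sum_{i,n_1,n_2}(\bigtriangledown\rho^{i,s'}_{n_1,n_2}(x))^2$ (finite for $s'>2$ by Lemma \ref{ap}) and the bound $\int_{\mathbb{T}^2}\mu^N_r(dx)\le1$. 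This gives $\sup_{N\ge1}\sup_{0\le t\le T}\mathbb{E}(\Arrowvert Z_t^N\Arrowvert_{H^{-s'}}^2)\le C(T)$.

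For $(T_1)$ I would fix $t$ and argue as in the proof of Proposition \ref{uoo}: the ball $B_R=\{A:\Arrowvert A\Arrowvert_{H^{-s'}}\le R\}$ is a compact subset of $H^{-s}$, so Markov's inequality yields $\mathbb{P}(Z_t^N\notin B_R)\le R^{-2}\mathbb{E}(\Arrowvert Z_t^N\Arrowvert_{H^{-s'}}^2)\le C(T)R^{-2}$ uniformly in $N$, whence $(Z_t^N)_N$ is tight in $H^{-s}$.

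For $(T_2)$ I would pass to the mild formulation $Z_t^N=\Upsilon(t)Z_0^N+\int_0^t\Upsilon(t-r)\,d\widetilde{\mathcal{H}}_r^N$ obtained from (\ref{d1}), which for a bounded stopping time $\tau^N\le T$ and $\theta\le\delta$ gives
\[
Z_{\tau^N+\theta}^N-Z_{\tau^N}^N=[\Upsilon(\theta)-I]Z_{\tau^N}^N+\int_{\tau^N}^{\tau^N+\theta}\Upsilon(\tau^N+\theta-r)\,d\widetilde{\mathcal{H}}_r^N.
\]
The stochastic term is handled directly by Proposition \ref{d10}, whose $L^2(H^{-s})$ bound $C\theta\le C\delta$ is precisely stated for stopping times. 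For the deterministic term I would expand $A$ in the basis $(f^i_{n_1,n_2})$ and use $|e^{-\lambda_{n_1,n_2}\theta}-1|\le(\lambda_{n_1,n_2}\theta)^{(s-s')/2}$ together with $\lambda^{s-s'}(1+\lambda)^{-s}\le(1+\lambda)^{-s'}$ to obtain the operator estimate $\Arrowvert[\Upsilon(\theta)-I]\Arrowvert_{\mathcal{L}(H^{-s'},H^{-s})}\le\theta^{(s-s')/2}$, so that $\mathbb{E}(\Arrowvert[\Upsilon(\theta)-I]Z_{\tau^N}^N\Arrowvert_{H^{-s}}^2)\le\theta^{\,s-s'}\,\mathbb{E}(\Arrowvert Z_{\tau^N}^N\Arrowvert_{H^{-s'}}^2)$. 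To control the factor at the stopping time I would repeat the computation of Proposition \ref{d10} with random upper limit $\tau^N$ in place of $\theta$, the final estimate surviving because it only invokes $\int_{\mathbb{T}^2}\mu^N_r(dx)\le1$ integrated over $[0,\tau^N]\subset[0,T]$; this gives $\mathbb{E}(\Arrowvert Z_{\tau^N}^N\Arrowvert_{H^{-s'}}^2)\le C(T)$. Summing the two bounds and applying Markov's inequality, then choosing $\delta$ small, yields $(T_2)$.

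The hard part is the deterministic term $[\Upsilon(\theta)-I]Z_{\tau^N}^N$: strong continuity of $\Upsilon$ is not uniform, so one cannot make this small on its own. The essential point is to trade the regularity gap $s-s'$ for the H\"older factor $\theta^{(s-s')/2}$ via the smoothing estimate above, which forces controlling $Z^N$ at the stopping time in the finer norm $H^{-s'}$ rather than in the pointwise-in-$t$ form of Proposition \ref{d11}. Once this interpolation between the two Sobolev scales is set up and the stopping-time bound in $H^{-s'}$ is secured, the remaining steps are routine.
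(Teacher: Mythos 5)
Your proof is correct and follows the same skeleton as the paper's: the Aldous--Rebolledo criterion of Proposition \ref{d7}, the identical argument for $(T_1)$ via the compact embedding $H^{-s'}\hookrightarrow H^{-s}$ and Proposition \ref{d11}, and the same decomposition of $Z^N_{\tau^N+\theta}-Z^N_{\tau^N}$ with the stochastic convolution controlled by Proposition \ref{d10}. Where you genuinely diverge is the deterministic term $(\Upsilon(\theta)-I_d)Z^N_{\tau^N}$. The paper splits $Z^N_{\tau^N}$ into its projection onto the finite-dimensional span $F_{m_1,m_2}$ of low-frequency eigenfunctions and a high-frequency remainder: the low modes are killed by choosing $\delta$ small (via $(e^{-\delta\lambda_{m_1,m_2}}-1)^2$), the high modes by first choosing $m_1,m_2$ large using the gain $\lambda_{m_1,m_2}^{-\sigma}\lVert Z^N_t\lVert^2_{H^{-s+\sigma}}$ -- a two-parameter argument in which the two smallness mechanisms are decoupled. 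You replace this with the single operator estimate $\lVert\Upsilon(\theta)-I_d\lVert_{\mathcal{L}(H^{-s'},H^{-s})}\leq\theta^{(s-s')/2}$, which is valid (the elementary bound $\lvert e^{-x}-1\lvert\leq\min(1,x)\leq x^{(s-s')/2}$ works since $0<s-s'<1$, and $\lambda^{s-s'}\leq(1+\lambda)^{s-s'}$) and is cleaner: it yields a quantitative H\"older modulus in $\theta$ rather than an $\varepsilon$--$\delta$ argument, at the price of needing the uniform second-moment bound in the finer norm $H^{-s'}$ at the stopping time. On that last point your argument is the only place needing care: the stochastic convolution $\int_0^{\tau^N}\Upsilon(\tau^N-r)d\widetilde{\mathcal{H}}^N_r$ is not a martingale in its upper limit (the integrand anticipates $\tau^N$ through the semigroup), so "repeating Proposition \ref{d10} with a random upper limit" is not a verbatim application of the It\^o isometry; one needs either a maximal inequality for the stochastic convolution or a conditioning on $\tau^N$. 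This is, however, exactly the same degree of informality as the paper's own step, which silently bounds probabilities involving $Z^N_{\tau^N}$ by $\sup_{0\leq t\leq T}\mathbb{E}(\lVert Z^N_t\lVert^2)$ over deterministic times, so I regard your route as an acceptable and arguably more transparent alternative rather than as containing a gap the paper avoids.
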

                                                          \begin{proof}
                                                         
                                                       We  prove that $ Z^{N}$  satisfies the conditions of  Proposition \ref{d7} with H=$H^{-s}$.\vspace*{0.2cm}\\  
  $-$ Proof of (T1).  It suffices to show that\vspace*{0.15cm}\\\hspace*{1cm}$\forall t\geq 0 $, $\forall \varepsilon >0$ there exists a compact subset $\mathcal{K}$ of $H^{-s}$ such that $\mathbb{P}(Z_{t}^{N}\notin \mathcal{K})<\varepsilon$.\vspace*{0.12cm}\\It follows from Proposition \ref{d11} that for each  2<s'<s, there exists $C$ such that, \\\hspace*{5cm} $\underset{N\geq 1}{\sup}\hspace*{0.07cm}\underset{0\leq t \leq T}{\sup}\mathbb{E}(\lVert Z_{t}^{N} \lVert_{H^{-s'}}^{2} )\leq C. $ \\  Thus since $\forall \hspace*{0.1cm}2<s'<s$ the embedding $H^{-s'}(\mathbb{T}^{2})\hookrightarrow H^{-s}(\mathbb{T}^{2})$ is compact (see theorem 1.69 page 47 of [\ref{ec}]),  $B_{H^{-s'}}=\{ \mu \in H^{-s'} ; \lVert \mu \lVert_{H^{-s'}}\leq R\}$ is a compact subset of $H^{-s}$. But \vspace*{0.2cm}\\\hspace*{3cm} $\mathbb{P}(Z_{t}^{N}\notin B_{H^{-s'}})=\mathbb{P}(\lVert Z_{t}^{N}\lVert_{H^{-s'}}> R)\leq \frac{1}{R^{2}}\mathbb{E}(\lVert Z_{t}^{N} \lVert_{H^{-s'}}^{2})\leq \frac{C}{R^{2}}.$ \\ So by  choosing R large enough we get the result. 
   \vspace*{0.2cm}\\
    - Proof of (T2). Let T>0, $\varepsilon_{1}, \varepsilon_{2}$ >0, $(\tau^{N})_{N}$ a family of stopping times such that $\tau^{N}\leq T$. \\ By noticing that  $ \forall\hspace*{0.1cm} 0\leq u\leq t $, 
     $Z^{N}_{t}=\displaystyle\Upsilon(t-u)Z^{N}_{u}+\int_{u}^{t}\Upsilon(t-r)d\widetilde{\mathcal{H}}_{r}^{N} $, we have\\
$Z^{N}_{\tau^{N}+\theta}-Z^{N}_{\tau^{N}}=(\Upsilon(\theta)-I_{d})Z^{N}_{\tau^{N}}+
\displaystyle\int_{\tau^{N}}^{\tau^{N}+\theta}\Upsilon(\tau^{N}+\theta-r)d\widetilde{\mathcal{H}}_{r}^{N}.$
\\We want to find $\delta>0$ and $N_{0}\geq 1$ such  that \vspace*{-0.23cm} \[\tag{6.5}\hspace*{-2.5cm}\underset{N\geq N_{0}}{\sup}\underset{\delta\geq \theta}{\sup}\mathbb{P}(\lVert (\Upsilon(\theta)-I_{d})Z^{N}_{\tau^{N}} \lVert_{H^{-s}}\geq\varepsilon_{1})\leq\varepsilon_{2},\label{d12}\]\vspace*{-0.23cm}\[\tag{6.6}\underset{N\geq N_{0}}{\sup}\underset{\delta\geq \theta}{\sup}\mathbb{P}\left(\left\lVert \displaystyle\int_{\tau^{N}}^{\tau^{N}+\theta}\Upsilon(\tau^{N}+\theta-r)d\widetilde{\mathcal{H}}_{r}^{N} \right\lVert_{H^{-s}}\geq\varepsilon_{1}\right)\leq\varepsilon_{2}.\label{d13}\]Proof of $(6.5)$.\hspace*{0.1cm} Recall that $(\lambda_{n_{1},n_{2}})_{n_{1},n_{2}}$ denotes the family of eigenvalues of the operator $-\gamma\bigtriangleup$. Let $m_{1},m_{2}\in\mathbb{N}^{*}$, such that\\\hspace*{3cm}  $\displaystyle\left( \frac{32\underset{N\geq 1}{\sup}\hspace*{0.07cm}\underset{0\leq t\leq T}{\sup}\mathbb{E}(\lVert Z^{N}_{t} \lVert_{H^{-s+\sigma}}^{2})}{\varepsilon_{1}^{2}\varepsilon_{2}}\right)^{\frac{1}{\sigma}}<\lambda_{m_{1},m_{2}}$, for some $0<\sigma<s-2.\hspace*{1.5cm}(6.7)$\\Note that we can choose $m_{1}$ and $m_{2}$ such that $(6.7)$ is satisfied since  \\ $\underset{N\geq1}{\sup}\hspace*{0.06cm}\underset{0\leq t\leq T}{\sup}\mathbb{E}(\lVert Z^{N}_{t} \lVert_{H^{-s+\sigma}}^{2})<\infty$  for $0<\sigma<s-2 $ (see Proposition \ref{d11}) and  $(\lambda_{m_{1},m_{2}})_{m_{1},m_{2}}$ is a non-decreasing sequence which converges to $+\infty$ as $m_{1}\longrightarrow \infty$ or $m_{2}\longrightarrow \infty$ (see Proposition \ref{b2}).\vspace*{0.2cm}\\ Let $F_{m_{1},m_{2}}$ denotes the  sub-space of $H^{s}$ generated by  \\ $\{\rho^{0},(\rho^{i,s}_{\kappa_{1},0},i\in\{5,6\}),(\rho^{i,s}_{0,\kappa_{2}},i\in\{7,8\}),(\rho^{i,s}_{\kappa_{1},\kappa_{2}}, i\in [\mid 1,4\mid]),\kappa_{1},\kappa_{2}\hspace*{0.1cm} even \hspace{0.1cm}and\hspace*{0.1cm} \kappa_{1}\leq m_{1},\kappa_{2}\leq m_{2}\}.$\vspace*{0.12cm}\\Let 
   $Z_{t\mid F_{m_{1},m_{2}}}^{N}$ be  the orthogonal projection of $Z_{t}^{N}$ on the dual space of $F_{m_{1},m_{2}}.$\vspace*{0.2cm}\\ We have  $\mathbb{P}(\lVert (\Upsilon(\theta)-I_{d})Z^{N}_{\tau^{N}} \lVert_{H^{-s}}\geq\varepsilon_{1})\leq \mathbb{P}(\lVert (\Upsilon(\theta)-I_{d})Z_{\tau^{N}\mid F_{m_{1},m_{2}}}^{N} \lVert_{H^{-s}}\geq\frac{\varepsilon_{1}}{2})\\\hspace*{7cm}+\mathbb{P}(\lVert (\Upsilon(\theta)-I_{d})(Z^{N}_{\tau^{N}}-Z_{\tau^{N}\mid F_{m_{1},m_{2}}}^{N}) \lVert_{H^{-s}}\geq\frac{\varepsilon_{1}}{2}). $\vspace*{0.2cm} \\Let us bound each of the two terms of the above right hand side.\vspace*{0.25cm}\\
 $-\quad \mathbb{P}(\lVert (\Upsilon(\theta)-I_{d})Z_{\tau^{N}\mid F_{m_{1},m_{2}}}^{N} \lVert_{H^{-s}}\geq\frac{\varepsilon_{1}}{2})\leq \frac{4}{\varepsilon_{1}^{2}}\underset{0\leq t\leq T}{\sup}\mathbb{E}(\lVert(\Upsilon(\theta)-I_{d})Z_{t\mid F_{m_{1},m_{2}}}^{N} \lVert_{H^{-s}}^{2}).$\vspace*{0.12cm} \\ Furthemore from (\ref{pp}) in Proposition \ref{b5} $\Upsilon(t)f^{i}_{\kappa_{1},\kappa_{2}}=e^{-\lambda_{\kappa_{1},\kappa_{2}}t}f^{i}_{\kappa_{1},\kappa_{2}},$ thus\\\hspace*{0.8cm} $\lVert(\Upsilon(\theta)-I_{d})Z_{t\mid F_{m_{1},m_{2}}}^{N} \lVert_{H^{-s}}^{2}=\sum\limits_{i,\kappa{1},\kappa_{2}}^{m_{1},m{2}}(1+\gamma\pi^{2}(\kappa_{1}+\kappa_{2}))^{-s}((\Upsilon(\theta)-I_{d})Z_{t}^{N},f^{i}_{\kappa_{1},\kappa_{2}})^{2}\\\hspace*{5.43cm}=\sum\limits_{i,\kappa{1},\kappa_{2}}^{m_{1},m{2}}(1+\gamma\pi^{2}(\kappa_{1}+\kappa_{2}))^{-s}(Z_{t}^{N},(\Upsilon(\theta)-I_{d})f^{i}_{\kappa_{1},\kappa_{2}})^{2} \\\hspace*{5.43cm}=\sum\limits_{i,\kappa{1},\kappa_{2}}^{m_{1},m{2}}(e^{-\theta\lambda_{\kappa_{1},\kappa_{2}}}-1)^{2}(1+\gamma\pi^{2}(\kappa_{1}+\kappa_{2}))^{-s}(Z_{t}^{N},f^{i}_{\kappa_{1},\kappa_{2}})^{2}\\\hspace*{5.43cm}\leq(e^{-\delta\lambda_{m_{1},m_{2}}}-1)^{2}\lVert Z_{t}^{N}\lVert_{H^{-s}}^{2},$ \vspace*{-0.25cm} \[\tag{6.8}\textrm{hence}\quad\mathbb{P}(\lVert (\Upsilon(\theta)-I_{d})Z_{\tau^{N}\mid F_{m_{1},m_{2}}}^{N} \lVert_{H^{-s}}\geq\frac{\varepsilon_{1}}{2})\leq\frac{4(e^{-\delta\lambda_{m_{1},m_{2}}}-1)^{2}}{\varepsilon_{1}^{2}}\underset{N\geq 1}{\sup}\hspace*{0.06cm}\underset{0\leq t \leq T}{\sup}\mathbb{E}(\lVert Z_{t}^{N}\lVert_{H^{-s}}^{2}).\label{d14}\] $-$ Since $\lVert\Upsilon(t)Z^{N}_{t}\lVert_{H^{-s}}\leq\lVert Z^{N}_{t}\lVert_{H^{-s}}$,\vspace*{0.2cm}\\ $\hspace*{1cm}\mathbb{P}(\lVert (\Upsilon(\theta)-I_{d})(Z^{N}_{\tau^{N}}-Z_{\tau^{N}\mid F_{m_{1},m_{2}}}^{N}) \lVert_{H^{-s}}\geq\frac{\varepsilon_{1}}{2})\leq\mathbb{P}(\lVert (Z^{N}_{\tau^{N}}-Z_{\tau^{N}\mid F_{m_{1},m_{2}}}^{N}) \lVert_{H^{-s}}\geq\frac{\varepsilon_{1}}{4})\\\hspace*{8.84cm}\leq\frac{16}{\varepsilon_{1}^{2}}\underset{0\leq t \leq T}{\sup}\mathbb{E}(\lVert (Z^{N}_{t}-Z_{t\mid F_{m_{1},m_{2}}}^{N}) \lVert_{H^{-s}}^{2}).$ \\ On the other hand since $( \lambda_{\kappa_{1},\kappa_{2}})_{\kappa_{1},\kappa_{2}}$ is a non-decreasing sequence,   for any $0<\sigma<s-2$, one  has\vspace*{0.2cm}\\$\lVert Z_{t}^{N}-Z_{t\mid F_{m_{1},m_{2}}}^{N}\lVert_{H^{-s}}^{2}=\lVert (I_{d}-\gamma\bigtriangleup)^{\frac{-\sigma}{2}}(I_{d}-\gamma\bigtriangleup)^{\frac{\sigma}{2}}(Z_{t}^{N}-Z_{t\mid F_{m_{1},m_{2}}}^{N})\lVert_{H^{-s}}^{2}\\\hspace*{3.6cm}=\sum\limits_{i,\kappa_{1},\kappa_{2}}\lambda_{\kappa_{1},\kappa_{2}}^{-s}((I_{d}-\gamma\bigtriangleup)^{\frac{-\sigma}{2}} (I_{d}-\gamma\bigtriangleup)^{\frac{\sigma}{2}}(Z^{N}_{t}-Z_{t\mid F_{m_{1},m_{2}}}^{N}),f^{i}_{\kappa_{1},\kappa_{2}})^{2}\\\hspace*{3.6cm}=\sum\limits_{i,\kappa_{1},\kappa_{2}}\lambda_{\kappa_{1},\kappa_{2}}^{-s}(Z^{N}_{t}-Z_{t\mid F_{m_{1},m_{2}}}^{N},(I_{d}-\gamma\bigtriangleup)^{\frac{\sigma}{2}} (I_{d}-\gamma\bigtriangleup)^{\frac{-\sigma}{2}}f^{i}_{\kappa_{1},\kappa_{2}})^{2}\\\hspace*{3.6cm}=\sum\limits_{i,\kappa_{1},\kappa_{2}}\lambda_{\kappa_{1},\kappa_{2}}^{-s+\sigma}\lambda_{\kappa_{1},\kappa_{2}}^{-\sigma}(Z^{N}_{t}-Z_{t\mid F_{m_{1},m_{2}}}^{N},f^{i}_{\kappa_{1},\kappa_{2}})^{2}\\\hspace*{3.6cm}\leq\lambda_{m_{1},m_{2}}^{-\sigma}\sum\limits_{i,\kappa_{1}=m_{1}+1,\kappa_{2}=m_{2}+1}^{\infty}\lambda_{\kappa_{1},\kappa_{2}}^{-s+\sigma}(Z^{N}_{t},f^{i}_{\kappa_{1},\kappa_{2}})^{2}\\\hspace*{3.6cm}\leq\lambda_{m_{1},m_{2}}^{-\sigma}\lVert Z^{N}_{t} \lVert_{H^{-s+\sigma}}^{2} .$\\Thus\vspace*{-0.28cm}\[\tag{6.9}\mathbb{P}(\lVert (\Upsilon(\theta)-I_{d})(Z^{N}_{\tau^{N}}-Z_{\tau^{N}\mid F_{m_{1},m_{2}}}^{N}) \lVert_{H^{-s}}\geq\frac{\varepsilon_{1}}{2})\leq \frac{16\lambda_{m_{1},m_{2}}^{-\sigma}}{\varepsilon_{1}^{2}}\underset{N\geq 1}{\sup}\underset{0\leq t\leq T}{\sup}\mathbb{E}(\lVert Z^{N}_{t} \lVert_{H^{-s+\sigma}}^{2}).
 \label{d15}\vspace*{-0.1cm}\]So  from $(6.7)$, $(\ref{d14})$ and $(\ref{d15})$ we  deduce $(\ref{d12})$.\vspace*{0.3cm}\\Proof of $(\ref{d13}).$ From Proposition \ref{d10}, we have $\\\hspace*{1cm}\mathbb{P}\left(\Big\lVert \displaystyle\int_{\tau^{N}}^{\tau^{N}+\theta}\Upsilon(\tau^{N}+\theta-r)d\widetilde{\mathcal{H}}_{r}^{N} \Big\lVert_{H^{-s}}\geq\varepsilon_{1}\right)\leq \frac{1}{\varepsilon_{1}^{2}}\mathbb{E}\left(\Big\lVert \displaystyle\int_{\tau^{N}}^{\tau^{N}+\theta}\Upsilon(\tau^{N}+\theta-r)d\widetilde{\mathcal{H}}_{r}^{N} \Big\lVert_{H^{-s}}^{2}\right)$\\   
$\hspace*{8.9cm}\leq\frac{ C}{\varepsilon_{1}^{2}} \delta. $\\
   $(\ref{d13})$ follows.
(T1), (T2) are proved, hence   $(Z^{N})_{N}$ is tight in $ C(\mathbb{R}_{+},H^{-s}). $ 
                                                          \end{proof}
  We end the proof of Theorem \ref{d6} by showing the next Proposition.
  \begin{prp}
  Every limit point Z of $Z^{N}$ is solution of\vspace*{0.3cm}\\\hspace*{5.2cm} $\displaystyle Z_{t}=\Upsilon(t)Z_{0}+\int_{0}^{t}\Upsilon(t-r)d\widetilde{\mathcal{H}}_{r}. $
  \end{prp}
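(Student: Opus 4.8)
The plan is to pass to the limit in the mild (variation-of-constants) representation of $Z^{N}$ already recorded in the proof of Proposition \ref{d11}, namely
\[
Z_{t}^{N}=\Upsilon(t)Z_{0}^{N}+\int_{0}^{t}\Upsilon(t-r)\,d\widetilde{\mathcal{H}}_{r}^{N},
\]
after pairing it with a smooth test function. First I would observe that, by Proposition \ref{ff4} together with Proposition \ref{d9}, the pair $(Z^{N},\widetilde{\mathcal{H}}^{N})$ is tight in $(C(\mathbb{R}_{+},H^{-s}))^{2}$, so along the given subsequence it converges in law to a pair $(Z,\widetilde{\mathcal{H}})$; invoking the Skorokhod representation theorem I may assume this convergence is almost sure on a common probability space. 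Since evaluation at $0$ is continuous on $C(\mathbb{R}_{+},H^{-s})$, the initial data also converge, $Z_{0}^{N}\to Z_{0}$, the law of $Z_{0}$ being the one identified in Theorem \ref{lo}. Fixing $t>0$ and a test function $\varphi$ in the dense subspace of $H^{s}$ spanned by finitely many eigenfunctions $f^{i}_{n_{1},n_{2}}$ (so that $r\mapsto\Upsilon(t-r)\varphi$ is smooth in space and time), I pair the mild form with $\varphi$ and use the self-adjointness of $\Upsilon(t)$ (Proposition \ref{b5}):
\[
(Z_{t}^{N},\varphi)=(Z_{0}^{N},\Upsilon(t)\varphi)+\Big(\int_{0}^{t}\Upsilon(t-r)\,d\widetilde{\mathcal{H}}_{r}^{N},\varphi\Big).
\]
The left-hand side converges a.s.\ to $(Z_{t},\varphi)$, and since $\Upsilon(t)\varphi\in H^{s}$ (Lemma \ref{b6}) the first term on the right converges to $(Z_{0},\Upsilon(t)\varphi)=(\Upsilon(t)Z_{0},\varphi)$.

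The crux is the stochastic convolution, which is \emph{not} a continuous functional of $\widetilde{\mathcal{H}}^{N}$ for the uniform topology. To get around this I would integrate by parts in $r$: since $r\mapsto\Upsilon(t-r)\varphi$ is differentiable in $H^{s}$ with derivative $-\gamma\triangle\Upsilon(t-r)\varphi$, and $\widetilde{\mathcal{H}}_{0}^{N}=0$, the elementary integration-by-parts formula for a stochastic integral against a deterministic $C^{1}$ integrand gives
\[
\Big(\int_{0}^{t}\Upsilon(t-r)\,d\widetilde{\mathcal{H}}_{r}^{N},\varphi\Big)=(\widetilde{\mathcal{H}}_{t}^{N},\varphi)+\gamma\int_{0}^{t}(\widetilde{\mathcal{H}}_{r}^{N},\triangle\Upsilon(t-r)\varphi)\,dr.
\]
The right-hand side is now a continuous functional of the path $\widetilde{\mathcal{H}}^{N}\in C(\mathbb{R}_{+},H^{-s})$, because for such $\varphi$ the map $r\mapsto\triangle\Upsilon(t-r)\varphi$ is continuous with values in $H^{s}$; hence it converges almost surely to the same expression with $\widetilde{\mathcal{H}}$ in place of $\widetilde{\mathcal{H}}^{N}$, which by the reverse integration by parts is exactly $\big(\int_{0}^{t}\Upsilon(t-r)\,d\widetilde{\mathcal{H}}_{r},\varphi\big)$.

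Passing to the limit in the paired identity then yields $(Z_{t},\varphi)=\big(\Upsilon(t)Z_{0}+\int_{0}^{t}\Upsilon(t-r)\,d\widetilde{\mathcal{H}}_{r},\varphi\big)$ for every $\varphi$ in the dense set; by density in $H^{s}$ (Proposition \ref{b2}) and continuity of both sides as elements of $H^{-s}$, the announced identity $Z_{t}=\Upsilon(t)Z_{0}+\int_{0}^{t}\Upsilon(t-r)\,d\widetilde{\mathcal{H}}_{r}$ follows for each $t$, and then for all $t$ by continuity. I expect the main obstacle to be precisely the convergence of the stochastic convolution: one must justify the integration-by-parts step rigorously in the Hilbert-space setting (equivalently, that the $H^{-s}$-valued stochastic integral against the deterministic smooth integrand $\Upsilon(t-r)\varphi$ agrees with the pathwise expression), and use the uniform bound of Proposition \ref{d10} to control the convolution term in $N$ so that its limit is well defined as a continuous $H^{-s}$-valued Gaussian martingale.
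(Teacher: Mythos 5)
Your proposal follows the same route as the paper: both pass to the limit in the mild form $Z_{t}^{N}=\Upsilon(t)Z_{0}^{N}+\int_{0}^{t}\Upsilon(t-r)\,d\widetilde{\mathcal{H}}_{r}^{N}$, with the initial term handled by Theorem \ref{lo} and the convolution term by the convergence of $\widetilde{\mathcal{H}}^{N}$ from Proposition \ref{d9}. The difference is that the paper simply asserts $\int_{0}^{t}\Upsilon(t-r)\,d\widetilde{\mathcal{H}}_{r}^{N}\xrightarrow{L}\int_{0}^{t}\Upsilon(t-r)\,d\widetilde{\mathcal{H}}_{r}$ as a consequence of Proposition \ref{d9}, whereas you correctly observe that the stochastic convolution is not a continuous functional of the path $\widetilde{\mathcal{H}}^{N}$ in the uniform topology, and you supply the standard repair: pair with a finite linear combination of eigenfunctions, integrate by parts against the deterministic $C^{1}$ integrand $r\mapsto\Upsilon(t-r)\varphi$ to rewrite the convolution as $(\widetilde{\mathcal{H}}_{t}^{N},\varphi)+\gamma\int_{0}^{t}(\widetilde{\mathcal{H}}_{r}^{N},\triangle\Upsilon(t-r)\varphi)\,dr$, which \emph{is} continuous in the path, then pass to the limit (via Skorokhod representation and joint tightness of the pair) and extend by density using the uniform bound of Proposition \ref{d10}. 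Your argument is correct and in fact more complete than the paper's on precisely the step that needs justification; the only point to keep in mind is that joint convergence of the pair $(Z^{N},\widetilde{\mathcal{H}}^{N})$ along a further subsequence is what licenses identifying the limit of the convolution with the convolution against the limiting martingale, which you do address.
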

  \begin{proof}
  We have   $Z_{t}^{N}=\Upsilon(t)Z_{0}^{N}+\displaystyle\int_{0}^{t}\Upsilon(t-r)d\widetilde{\mathcal{H}}_{r}^{N}.$  Furthemore \\\hspace*{1cm}- according to Proposition \ref{d9}  \hspace*{0.1cm}  $\displaystyle\int_{0}^{t}\Upsilon(t-r)d\widetilde{\mathcal{H}}_{r}^{N}\xrightarrow{L} $  $\displaystyle\int_{0}^{t}\Upsilon(t-r)d\widetilde{\mathcal{H}}_{r}; $ \\\hspace*{1cm}- according to Theorem \ref{lo} \hspace*{0.1cm} $\Upsilon(t)Z_{0}^{N}\xrightarrow{L}\Upsilon(t)Z_{0}.$  \vspace*{0.12cm}\\Hence the result. \vspace*{-0.2cm} 
  \end{proof}
               \subsection{Convergence of $ (U^{N},V^{N})_{N\geq 1} $ } 
               We recall that $(U^{N},V^{N})$ satisfies the equations  (\ref{d2}) and (\ref{d3}).\\ 
               The following Lemma is proved by the same argument as used in the proof of Lemma \ref{d5}.
                \begin{lem}
        For all $N\geq 1$, the processes $U^{N}$ and $V^{N}$ belong to $D(\mathbb{R}_{+},H^{-s})$.
                                          \end{lem}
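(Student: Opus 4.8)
The plan is to reproduce the argument of Lemma \ref{d5} almost verbatim, the only new feature being that the sample paths are now merely c\`{a}dl\`{a}g rather than continuous. First I would check that for every fixed $t\geq 0$ and $N\geq 1$ the elements $U^N_t$ and $V^N_t$ belong to $H^{-s}$. Since $\mu^{S,N}_t$ and $\mu^S_t$ are sub-probability measures, for any $\varphi\in H^s$ one has
\[
\lvert(U^N_t,\varphi)\rvert=\sqrt{N}\,\lvert(\mu^{S,N}_t,\varphi)-(\mu^S_t,\varphi)\rvert\leq 2\sqrt{N}\,\lVert\varphi\rVert_\infty .
\]
As $s>2>1$, the Sobolev injection of Proposition \ref{b4} gives $\lVert\varphi\rVert_\infty\leq C\lVert\varphi\rVert_{H^s}$, so $\lvert(U^N_t,\varphi)\rvert\leq 2C\sqrt{N}\,\lVert\varphi\rVert_{H^s}$; taking the supremum over $\varphi\neq 0$ in the dual-norm definition of Proposition \ref{b3} yields the deterministic bound $\lVert U^N_t\rVert_{H^{-s}}\leq 2C\sqrt{N}<\infty$, uniformly in $t$. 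The identical computation with $(\mu^{I,N},\mu^I)$ in place of $(\mu^{S,N},\mu^S)$ gives $\lVert V^N_t\rVert_{H^{-s}}\leq 2C\sqrt{N}$.

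It then remains to verify that $t\mapsto U^N_t$ and $t\mapsto V^N_t$ are c\`{a}dl\`{a}g for the strong $H^{-s}$ topology, which I would argue atom by atom. Because $s>2$, Proposition \ref{b4} also gives $H^s\hookrightarrow C^1(\mathbb{T}^2)$, so for $x,y\in\mathbb{T}^2$ the estimate $\lvert(\delta_x-\delta_y,\varphi)\rvert=\lvert\varphi(x)-\varphi(y)\rvert\leq C\lVert\varphi\rVert_{H^s}\,d_{\mathbb{T}^2}(x,y)$ shows that $x\mapsto\delta_x$ is Lipschitz from $\mathbb{T}^2$ into $H^{-s}$. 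Since each trajectory $t\mapsto X^i_t$ is continuous, every $t\mapsto\delta_{X^i_t}$ is continuous in $H^{-s}$, whereas the weights $1_{\{E^i_t=S\}}$ (resp.\ $1_{\{E^i_t=I\}}$) are c\`{a}dl\`{a}g step functions with only finitely many jumps on any bounded interval; hence $\mu^{S,N}=\frac1N\sum_i 1_{\{E^i_t=S\}}\delta_{X^i_t}$ and $\mu^{I,N}$ are c\`{a}dl\`{a}g in $H^{-s}$, with jumps of size at most $C/N$ at the infection/recovery times. Finally $\mu^S,\mu^I\in C(\mathbb{R}_+,\mathcal{M}_F(\mathbb{T}^2))$ by Theorem \ref{eeee} and have densities bounded by $\delta_2$ by Proposition \ref{e8}; combined with the compact embedding $H^s(\mathbb{T}^2)\hookrightarrow C(\mathbb{T}^2)$ (valid for $s>1$), their weak continuity and uniformly bounded mass upgrade to continuity in $H^{-s}$. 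Subtracting and rescaling by $\sqrt{N}$ preserves these regularities, so $U^N,V^N\in D(\mathbb{R}_+,H^{-s})$.

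The membership bound is immediate from the Sobolev embedding, exactly as in Lemma \ref{d5}; I therefore expect the only genuinely new point, and hence the main (though mild) obstacle, to be the promotion of the c\`{a}dl\`{a}g property from the weak topology on $\mathcal{M}_F(\mathbb{T}^2)$ to the strong $H^{-s}$ norm topology. The decisive ingredient there is that $s>2$ forces $H^s\hookrightarrow C^1$, which converts the uniform Lipschitz control of the test functions into Lipschitz continuity of $x\mapsto\delta_x$ in $H^{-s}$, so that the finitely many jumps of the indicator weights account for all the discontinuities of the paths while the continuous motion of the atoms and of the limit densities supplies continuity in between.
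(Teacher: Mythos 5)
Your argument is correct and takes essentially the same approach as the paper, which simply invokes the computation of Lemma \ref{d5}: the deterministic bound $\lvert(U^{N}_{t},\varphi)\rvert\leq 2\sqrt{N}\lVert\varphi\rVert_{\infty}\leq C\sqrt{N}\lVert\varphi\rVert_{H^{s}}$ (and its analogue for $V^{N}$) gives membership in $H^{-s}$ uniformly in $t$. Your additional verification of the c\`{a}dl\`{a}g regularity of the paths, via the Lipschitz continuity of $x\mapsto\delta_{x}$ in $H^{-s}$ for $s>2$ and the finitely many jumps of the indicator weights, is sound but is left implicit in the paper.
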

                                         
             We now state the main result of this section.                       
                                   \begin{thm}
       Under  (H2), the sequence of processes $(U^{N},V^{N})_{N\geq1}$  converges in law   in $ (D(\mathbb{R}_{+},H^{-s}))^{2}$ to the pair of processes  $(U,V)$ which belongs  to $(C(\mathbb{R}_{+},H^{-s}))^{2}$ 
            and satisfies\vspace*{-0.2cm} 
         \[\tag{6.10} U_{t}=\displaystyle U_{0} + \gamma\int_{0}^{t} \bigtriangleup U_{r}dr+\beta \int_{0}^{t}(G_{r}^{S,I})^{*}Z_{r}dr -\beta \int_{0}^{t} (G_{r}^{I})^{*}U_{r} dr  -\beta \int_{0}^{t} (G_{r}^{S})^{*}V_{r}dr  + W_{t}^{1}, \vspace*{-0.14cm}\label{df1}\]
           \[\tag{6.11}V_{t}=\displaystyle V_{0} + \gamma\int_{0}^{t}\bigtriangleup V_{r} dr-\beta \int_{0}^{t}(G_{r}^{S,I})^{*}Z_{r}dr + \beta \int_{0}^{t} (G_{r}^{I})^{*}U_{r} dr +  \int_{0}^{t} (\beta(G_{r}^{S})^{*}-\alpha I_{d}) V_{r}dr  + W_{t}^{2},\vspace*{-0.14cm}\label{df2}\]
                    where $\forall \varphi,\psi \in H^{s},( (W^{1},\varphi) ,(W^{2},\psi)) $ is a centered continuous Gaussian martingale   verifying\vspace*{0.12cm} \\
                                $\hspace*{2cm}< (W^{1},\varphi)>_{t}=\displaystyle\beta\int_{0}^{t}\Big(\mu_{r}^{S},\varphi^{2}(\mu_{r}^{I},\frac{K}{(\mu_{r},K)})\Big)dr+2\gamma\int_{0}^{t}(\mu_{r}^{S},(\bigtriangledown\varphi)^{2})dr,$\vspace*{0.15cm}\\
                         $\hspace*{2cm}< (W^{2},\psi)>_{t}=\displaystyle\beta\int_{0}^{t}\Big(\mu_{r}^{S},\psi^{2}(\mu_{r}^{I},\frac{K}{(\mu_{r},K)})\Big)dr+2\gamma\int_{0}^{t}(\mu_{r}^{I},(\bigtriangledown\psi)^{2})dr+\alpha \int_{0}^{t}(\mu_{r}^{I},\psi^{2})dr,$\vspace*{0.17cm}\\
                                                                $\hspace*{2cm}< (W^{1},\varphi),(W^{2},\psi)>_{t}=\displaystyle-\beta\int_{0}^{t}\left(\mu_{r}^{S},\varphi\psi(\mu_{r}^{I},\frac{K}{(\mu_{r},K)})\right)dr.$ \label{ff1}
                                                      \end{thm}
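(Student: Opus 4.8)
The plan is to follow the three-step scheme already used for $Z^N$ in Theorem \ref{d6}: establish tightness of $(U^N,V^N)_{N\geq1}$ in $(D(\mathbb{R}_+,H^{-s}))^2$, extract a convergent subsequence by Prokhorov's theorem, and identify the limit as the unique solution of the linear system (\ref{df1})--(\ref{df2}). Everything is organised around the mild form of the evolution equations (\ref{d2})--(\ref{d3}): using the semigroup $\Upsilon(t)=e^{\gamma t\Delta}$ together with its contraction property $\lvert\Upsilon(t)\rvert_{\mathcal{L}(H^{-s})}\leq1$ (Proposition \ref{b5}), one rewrites (\ref{d2}) as
\begin{align*}
U_t^N &= \Upsilon(t)U_0^N+\beta\int_0^t\Upsilon(t-r)(G_r^{S,I,N})^{*}Z_r^N\,dr-\beta\int_0^t\Upsilon(t-r)(G_r^{I,N})^{*}U_r^N\,dr\\
&\quad-\beta\int_0^t\Upsilon(t-r)(G_r^S)^{*}V_r^N\,dr+\int_0^t\Upsilon(t-r)\,d\widetilde{M}_r^{N},
\end{align*}
and (\ref{d3}) likewise, with the extra term $-\alpha\int_0^t\Upsilon(t-r)V_r^N\,dr$ and driving martingale $\widetilde{L}^N$.

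First I would prove the uniform moment bound $\sup_N\sup_{0\leq t\leq T}\mathbb{E}(\lVert U_t^N\rVert_{H^{-s}}^2)<\infty$ and its analog for $V^N$. Inserting the mild formula into the $H^{-s}$ norm, the operators $(G_r^{S,I,N})^{*},(G_r^{I,N})^{*},(G_r^S)^{*}$ are bounded on $H^{-s}$ uniformly in $N$ and $r$ (thanks to Lemma \ref{ff3}, the lower bound $(\mu_r,K(\cdot,y))\geq C$ from Lemma \ref{int1}, and the Sobolev-algebra estimates of Proposition \ref{b4}); combining this with the bound on $Z^N$ from Proposition \ref{d11}, the martingale estimate of Corollary \ref{d4}, and Gronwall's lemma closes the estimate. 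The same bound with $s$ replaced by any $s'\in(2,s)$ yields, through the compact embedding $H^{-s'}\hookrightarrow H^{-s}$, the fixed-time tightness $(T_1)$ of Proposition \ref{d7}. For the increment condition $(T_2)$ I would reproduce the argument of Proposition \ref{ff4} almost verbatim: split $U_{\tau^N+\theta}^N-U_{\tau^N}^N$ into $(\Upsilon(\theta)-I_d)U_{\tau^N}^N$, controlled by a finite-dimensional spectral projection as in (6.7)--(\ref{d15}), and the stochastic-convolution increment $\int_{\tau^N}^{\tau^N+\theta}\Upsilon(\tau^N+\theta-r)\,d\widetilde{M}_r^N$, controlled by the $O(\theta)$ bound that is the analog of Proposition \ref{d10} for $\widetilde{M}^N$ (itself a consequence of Corollary \ref{d4}). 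This gives the tightness of $U^N$, and the same for $V^N$, hence of the pair.

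The decisive new ingredient is the joint convergence in law of the driving terms $(Z^N,\widetilde{M}^N,\widetilde{L}^N)$ and the identification of the limiting Gaussian martingale $(W^1,W^2)$. Here I would compute the predictable quadratic and cross variations of $(\widetilde{M}^{N,\varphi},\widetilde{L}^{N,\psi})$ and pass to the limit by Theorem \ref{eeee}. The infection jump is shared, with opposite signs, by $\widetilde{M}^N$ and $\widetilde{L}^N$: this produces the common term $\beta\int_0^t(\mu_r^S,\varphi^2(\mu_r^I,\frac{K}{(\mu_r,K)}))\,dr$ in both $\langle(W^1,\varphi)\rangle$ and $\langle(W^2,\psi)\rangle$ and the negative cross-variation $-\beta\int_0^t(\mu_r^S,\varphi\psi(\mu_r^I,\frac{K}{(\mu_r,K)}))\,dr$. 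The cure jump contributes the $\alpha\int_0^t(\mu_r^I,\psi^2)\,dr$ term to $W^2$ alone, while the two diffusion parts are orthogonal because $1_{\{E_r^i=S\}}1_{\{E_r^i=I\}}=0$, accounting for the separate terms $2\gamma\int_0^t(\mu_r^S,(\bigtriangledown\varphi)^2)\,dr$ and $2\gamma\int_0^t(\mu_r^I,(\bigtriangledown\psi)^2)\,dr$. The tightness of $\widetilde{M}^N,\widetilde{L}^N$ and the centered-continuous-martingale character of their limits are furnished by Proposition \ref{f1} and Lemma \ref{f3} below; since the limiting variations are deterministic, $(W^1,W^2)$ must be the Gaussian martingale of the statement, jointly with the limit $Z$ of $Z^N$ from Theorem \ref{d6}.

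It then remains to pass to the limit in the mild equations and to prove uniqueness. The continuity $(G^{S,I,N})^{*}\to(G^{S,I})^{*}$ and $(G^{I,N})^{*}\to(G^{I})^{*}$ (via Lemmas \ref{e5}--\ref{e7} and the convergence $\mu^{S,N},\mu^{I,N},\mu^N\to\mu^S,\mu^I,\mu$ of Theorem \ref{eeee}), together with the convergence of the stochastic convolutions, shows that every limit point $(U,V)$ solves (\ref{df1})--(\ref{df2}). I expect uniqueness to be the main obstacle, because the driving noise lives only in $H^{-s}$ with $s>2$ and the system has to be read in mild form. The argument is to subtract two solutions sharing the same data, exploit $\lvert\Upsilon(t)\rvert_{\mathcal{L}(H^{-s})}\leq1$ and the uniform $H^{-s}$-boundedness of the three operators, and close by Gronwall's lemma; alternatively one invokes the abstract well-posedness of Proposition \ref{b8} with $F=H^{-s+1}$ and $H=H^{-s}$. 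Uniqueness upgrades the subsequential convergence to convergence of the whole sequence. Finally, $(U,V)$ belongs to $(C(\mathbb{R}_+,H^{-s}))^2$ because the jumps of $U^N,V^N$ are of size $O(1/\sqrt{N})$ and vanish in the limit, exactly as in the continuity argument of the previous section.
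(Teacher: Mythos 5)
Your proposal follows essentially the same route as the paper: mild formulation via the semigroup $\Upsilon(t)$, uniform $H^{-s}$ moment bounds closed by Gronwall, Aldous--Rebolledo tightness, identification of $(W^{1},W^{2})$ through the deterministic quadratic and cross variations of $(\widetilde{M}^{N},\widetilde{L}^{N})$ (with the shared infection jump producing the negative cross-variation and the orthogonal Brownian parts), passage to the limit using the convergence of $G^{S,I,N},G^{I,N}$ towards $G^{S,I},G^{I}$, and uniqueness by Gronwall with $\lvert\Upsilon(t)\rvert_{\mathcal{L}(H^{-s})}\leq 1$. The only detail your sketch of $(T_{2})$ omits is the drift increment $\beta\int_{\tau^{N}}^{\tau^{N}+\theta}\Upsilon(\tau^{N}+\theta-r)\big[(G_{r}^{S,I,N})^{*}Z_{r}^{N}-(G_{r}^{I,N})^{*}U_{r}^{N}-(G_{r}^{S})^{*}V_{r}^{N}\big]dr$, which is $O(\theta)$ by the same operator bounds and moment estimates you already invoke, so this is not a real gap.
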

     The proof of this Theorem is the aim of this subsection, however  let us first prove the following preliminary results.
                 
                 \begin{prp}
                Both sequences $(\widetilde{M}^{N})_{N\geq 1}$ and $(\widetilde{L}^{N})_{N\geq 1} $ are  tight in  $ D(\mathbb{R}_{+},H^{-s}).$ \label{f1}
                 \end{prp}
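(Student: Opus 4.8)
The plan is to verify the two Aldous--Rebolledo conditions $(T_1)$ and $(T_2)$ of Proposition \ref{d7} with $H=H^{-s}$, for each of the martingale sequences $\widetilde{M}^N$ and $\widetilde{L}^N$; I treat $\widetilde{M}^N$ in detail and then indicate the single modification needed for $\widetilde{L}^N$. Throughout I use that $(\rho^{i,s}_{n_1,n_2})_{i,n_1,n_2}$ is an orthonormal basis of $H^s$, so that by the Parseval identity of Proposition \ref{b3} one has $\lVert A\lVert_{H^{-s}}^2=\sum_{i,n_1,n_2}(A,\rho^{i,s}_{n_1,n_2})^2$, and that each coordinate process $(\widetilde{M}^N,\rho^{i,s}_{n_1,n_2})$ is a real, square-integrable martingale with the explicit predictable quadratic variation recorded just before Corollary \ref{d4}.

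For $(T_1)$ I would fix $t$ and, given $\varepsilon>0$, exhibit a compact $\mathcal{K}\subset H^{-s}$ with $\sup_N\mathbb{P}(\widetilde{M}^N_t\notin\mathcal{K})<\varepsilon$. Choosing an intermediate exponent $2<s'<s$, Corollary \ref{d4} (which is valid for every exponent larger than $2$) gives $\sup_N\mathbb{E}(\sup_{0\le t\le T}\lVert\widetilde{M}^N_t\lVert_{H^{-s'}}^2)\le C$; since the embedding $H^{-s'}\hookrightarrow H^{-s}$ is compact (Theorem 1.69 of [\ref{ec}]), the ball $B_{H^{-s'}}(0,R)$ is compact in $H^{-s}$, and Chebyshev's inequality yields $\mathbb{P}(\widetilde{M}^N_t\notin B_{H^{-s'}}(0,R))\le C/R^2$, made $<\varepsilon$ by taking $R$ large. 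This is exactly the scheme already used for $Z^N$ in the proof of Proposition \ref{ff4}.

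The substantive condition is $(T_2)$, and it is here that the martingale structure is essential. For a stopping time $\tau^N\le T$ and $\theta\le\delta$, optional sampling applied coordinatewise to the square-integrable martingales $(\widetilde{M}^N,\rho^{i,s}_{n_1,n_2})$ gives $\mathbb{E}((\widetilde{M}^N_{\tau^N+\theta}-\widetilde{M}^N_{\tau^N},\rho^{i,s}_{n_1,n_2})^2)=\mathbb{E}(<\widetilde{M}^{N,\rho^{i,s}_{n_1,n_2}}>_{\tau^N+\theta}-<\widetilde{M}^{N,\rho^{i,s}_{n_1,n_2}}>_{\tau^N})$; summing over $i,n_1,n_2$ and interchanging the sum with the time integral by Tonelli yields
\[
\mathbb{E}(\lVert\widetilde{M}^N_{\tau^N+\theta}-\widetilde{M}^N_{\tau^N}\lVert_{H^{-s}}^2)=\mathbb{E}\left(\int_{\tau^N}^{\tau^N+\theta}\Big[\beta\int_{\mathbb{T}^2}\int_{\mathbb{T}^2}\Sigma(x)\frac{K(x,y)}{(\mu_r^N,K(\cdot,y))}\mu_r^{I,N}(dy)\mu_r^{S,N}(dx)+2\gamma\int_{\mathbb{T}^2}\Sigma'(x)\mu_r^{S,N}(dx)\Big]dr\right),
\]
where $\Sigma(x)=\sum_{i,n_1,n_2}(\rho^{i,s}_{n_1,n_2})^2(x)$ and $\Sigma'(x)=\sum_{i,n_1,n_2}(\bigtriangledown\rho^{i,s}_{n_1,n_2}(x))^2$. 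Lemma \ref{ap} of the Appendix supplies the uniform bounds $\Sigma(x)\le C$ and, crucially, $\Sigma'(x)\le C$, the latter being exactly the summability that forces $s>2$. Combining this with $\mu^{S,N}_r\le\mu^N_r$ (so the inner ratio is $\le 1$) and $(\mu^{I,N}_r,1_{\mathbb{T}^2})\le 1$, $(\mu^{S,N}_r,1_{\mathbb{T}^2})\le 1$, the bracket is bounded by $C(\beta+2\gamma)$, whence $\mathbb{E}(\lVert\widetilde{M}^N_{\tau^N+\theta}-\widetilde{M}^N_{\tau^N}\lVert_{H^{-s}}^2)\le C(\beta+2\gamma)\theta\le C\delta$. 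Chebyshev's inequality then bounds $\mathbb{P}(\lVert\widetilde{M}^N_{\tau^N+\theta}-\widetilde{M}^N_{\tau^N}\lVert_{H^{-s}}>\varepsilon_1)$ by $C\delta/\varepsilon_1^2$ uniformly in $N$, $\tau^N$ and $\theta\le\delta$, so choosing $\delta$ small gives $(T_2)$.

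For $\widetilde{L}^N$ the argument is identical: its predictable quadratic variation has the same first (infection) term, the diffusion term $2\gamma\int_0^t(\mu_r^{I,N},(\bigtriangledown\varphi)^2)dr$ with $\mu^{S,N}$ replaced by $\mu^{I,N}$, and an additional cure term $\alpha\int_0^t(\mu_r^{I,N},\varphi^2)dr$; after the basis sum the last two contribute at most $2\gamma\,\mathbb{E}(\int_{\tau^N}^{\tau^N+\theta}(\mu_r^{I,N},\Sigma')dr)+\alpha\,\mathbb{E}(\int_{\tau^N}^{\tau^N+\theta}(\mu_r^{I,N},\Sigma)dr)\le C\theta$, again of order $\delta$. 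Hence $(T_1)$ and $(T_2)$ hold for $\widetilde{L}^N$ as well, and both sequences are tight in $D(\mathbb{R}_+,H^{-s})$. The only genuine difficulty is the $(T_2)$ estimate, which hinges entirely on the summability of $\Sigma'=\sum_{i,n_1,n_2}(\bigtriangledown\rho^{i,s}_{n_1,n_2})^2$; this holds precisely because $s>2$, the same mechanism that fixed the convergence exponent in Proposition \ref{add}.
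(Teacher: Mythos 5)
Your proof is correct, and its substance coincides with the paper's: condition $(T_1)$ via the uniform second-moment bound of Corollary \ref{d4} together with the compact embedding $H^{-s'}\hookrightarrow H^{-s}$, and condition $(T_2)$ via the estimate that the increment of the trace quadratic variation over $[\tau^N,\tau^N+\theta]$ is at most $C\theta$, which rests exactly on the summability facts $\sum_{i,n_1,n_2}(\rho^{i,s}_{n_1,n_2})^2\le C$ and $\sum_{i,n_1,n_2}(\bigtriangledown\rho^{i,s}_{n_1,n_2})^2\le C$ for $s>2$ from Lemma \ref{ap}. The only genuine divergence is in how that bracket estimate is turned into tightness. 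The paper invokes Theorem 2.3.2 of [\ref{mc}], which for a sequence of $H^{-s}$-valued right-continuous martingales reduces $(T_2)$ to an Aldous-type condition on the real, increasing trace process $\langle\widetilde{M}^N\rangle$; the paper's bound $\lvert\langle\widetilde{M}^N\rangle_{\tau^N+\theta}-\langle\widetilde{M}^N\rangle_{\tau^N}\rvert\le C\delta$ is in fact pathwise, so the final Markov step is almost superfluous there. You instead verify $(T_2)$ of Proposition \ref{d7} directly on the process, converting the bracket bound into $\mathbb{E}\bigl(\lVert\widetilde{M}^N_{\tau^N+\theta}-\widetilde{M}^N_{\tau^N}\rVert_{H^{-s}}^2\bigr)\le C\theta$ by coordinatewise optional sampling and Tonelli, then applying Chebyshev. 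This is legitimate — the coordinates $(\widetilde{M}^N,\rho^{i,s}_{n_1,n_2})$ are square-integrable martingales and $\tau^N$, $\tau^N+\theta$ are bounded stopping times, so $\mathbb{E}\bigl((\widetilde{M}^N_{\tau^N+\theta}-\widetilde{M}^N_{\tau^N},\rho^{i,s}_{n_1,n_2})^2\bigr)=\mathbb{E}\bigl(\langle\widetilde{M}^{N,\rho^{i,s}_{n_1,n_2}}\rangle_{\tau^N+\theta}-\langle\widetilde{M}^{N,\rho^{i,s}_{n_1,n_2}}\rangle_{\tau^N}\bigr)$ holds — and it buys you a more self-contained argument that does not appeal to the martingale tightness theorem of [\ref{mc}], at the modest cost of justifying the optional sampling identity and the interchange of the basis sum with the expectation. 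Your reduction of $\widetilde{L}^N$ to the same computation (identical infection term, diffusion term integrated against $\mu^{I,N}$, plus the recovery term controlled by $\alpha(\mu^{I,N}_r,\Sigma)\le C$) matches the paper's closing remark that the same argument applies.
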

                 \begin{proof}
                 $-$ Tightness of  $ (\widetilde{M}^{N})_{N\geq1}.$\vspace*{0.1cm}\\
                                We prove that $(\widetilde{M}^{N} )_{N\geq1}$ satisfies the conditions  (T1) and  (T2) of the Proposition \ref{d7}.\vspace*{0.1cm}\\$-$ Based on Corollary \ref{d4}, we deduce $(T1)$  by the same argument as used in  the proof of $(T1)$ in Theorem \ref{d6}.\vspace*{0.1cm} \\         
         $-$ Proof of (T2). First note  that  $ <\widetilde{M}^{N,\varphi}>_{t}=\displaystyle\int_{0}^{t}A_{r}^{N}(\varphi)dr $, where \\\hspace*{3cm} $A_{r}^{N}(\varphi)=$ $\beta\left(\mu_{r}^{S,N},\varphi^{2}(\mu_{r}^{I,N},\frac{K}{(\mu_{r}^{N},K)})\right)+2\gamma(\mu_{r}^{S,N},(\bigtriangledown\varphi)^{2}).$\vspace*{0.16cm}\\According to Theorem 2.3.2 in [\ref{mc}] it is enough to prove that\vspace*{0.13cm} \\
  $\forall T>0 \quad \forall \varepsilon_{1},\varepsilon_{2}>0\quad \exists\delta>0 , N_{0}\geq 1$ such that for any stopping times $\tau^{N} \leq T,\vspace*{0.2cm}$\\\hspace*{3.7cm}$\underset{N\geq N_{0}}{\sup}\underset{\theta\leq\delta}{\sup}\mathbb{P}(\lvert<\widetilde{M}^{N}>_{(\tau^{N}+\theta) }-<\widetilde{M}^{N}>_{\tau^{N}}\lvert>\varepsilon_{1})<\varepsilon_{2}, $ \vspace*{0.2cm}\\where  $<\widetilde{M}>$ is the increasing, continuous process such that, $\lVert \widetilde{M}_{t}\lVert_{H^{-s}}^{2}-< \widetilde{M}>_{t}$ is a martingale. It is  the trace  of  $\ll \widetilde{M}\gg$ which is  the continuous increasing   operator valued  process such that $\{\widetilde{M}_{t}\otimes \widetilde{M}_{t}-\ll \widetilde{M}\gg_{t}, t\geq 0\}$ is a  martingale.
   \vspace*{0.2cm}\\Let $ T>0 , \varepsilon_{1}, \varepsilon_{2}>0 $, from Lemma \ref{ap} below, we have\vspace*{0.12cm}\\
          $\lvert<\widetilde{M}^{N}>_{(\tau^{N}+\theta)}-<\widetilde{M}^{N}>_{\tau^{N}}\lvert=\lvert\sum_{i,n_{1},n_{2}}\{<\widetilde{M}^{N,\rho_{n_{1},n_{2}}^{i,s}}>_{(\tau^{N}+\theta)}-<\widetilde{M}^{N,\rho_{n_{1},n_{2}}^{i,s}}>_{\tau^{N}}\}\lvert $ \\\\
                    \hspace*{5.4cm}=$\Big\lvert\sum_{i,n_{1},n_{2}}\displaystyle\int_{\tau^{N}}^{(\tau^{N}+\theta)}A_{r}^{N}(\rho_{n_{1},n_{2}}^{i,s})dr\Big\lvert$\\\\
        \hspace*{4.9cm}=$\Big\lvert\sum_{i,n_{1},n_{2}}\displaystyle\int_{0}^{\theta}A_{(\tau^{N}+r)}^{N}(\rho_{n_{1},n_{2}}^{i,s})dr\Big \lvert $\vspace*{0.12cm}\\\hspace*{4.9cm}$\leq C\displaystyle\int_{0}^{\theta}\Big\{\int_{\mathbb{T}^{2}}\frac{\int_{\mathbb{T}^{2}}K(x,y)\mu_{\tau^{N}+r}^{S,N}(dx)}{\int_{\mathbb{T}^{2}}K(x',y)\mu_{\tau^{N}+r}^{N}(dx')}\mu_{\tau^{N}+r}^{I,N}(dy)+\int_{\mathbb{T}^{2}}\mu_{\tau^{N}+r}^{S,N}(dx)\Big\}dr $ 
        $\hspace*{4.9cm} \leq\delta C.$\vspace*{0.1cm}\\
                                                              Hence it follows from the  Markov inequality that\vspace*{0.1cm} \\
                       $\mathbb{P}(\lvert<\widetilde{M}^{N}>_{(\tau^{N}+\theta)}-<\widetilde{M}^{N}>_{\tau^{N}}\lvert>\varepsilon_{1})\leq\displaystyle\frac{\mathbb{E}(\lvert<\widetilde{M}^{N}>_{(\tau^{N}+\theta)}-<\widetilde{M}^{N}>_{\tau^{N}}\lvert)}{\varepsilon_{1}}$\\
             \hspace*{7.10cm}$\leq\displaystyle\frac{C\delta}{\varepsilon_{1}}.$ \\
                                                (T2) follows.\vspace*{0.2cm}\\
          (T1) and (T2) proved, we conclude that $ (\widetilde{M}^{N})_{N\geq 1}$ is tight in $D(\mathbb{R}_{+},H^{-s})$. The same argument yields the tightness of  $ (\widetilde{L}^{N})_{N\geq 1}$ in $D(\mathbb{R}_{+},H^{-s})$.
             \end{proof}
             
             \begin{prp}
            Every limit point $(W^{1},W^{2})$ of the sequence $(\widetilde{M}^{N}, \widetilde{L}^{N})_{N\geq 1} $ is an  element of $ C(\mathbb{R}_{+},H^{-s})\times C(\mathbb{R}_{+},H^{-s}). $\label{f2}
             \end{prp}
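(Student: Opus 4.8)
The plan is to exploit the fact that, although $\widetilde{M}^{N}$ and $\widetilde{L}^{N}$ are genuinely c\`adl\`ag (discontinuous) processes, their jumps are uniformly of order $N^{-1/2}$ and hence disappear in the limit; continuity of every limit point then follows from a standard criterion. Since $(W^{1},W^{2})$ is a limit point, Proposition \ref{f1} together with Prokhorov's theorem furnishes a subsequence, still denoted $(\widetilde{M}^{N},\widetilde{L}^{N})$, converging in law in $(D(\mathbb{R}_{+},H^{-s}))^{2}$ towards $(W^{1},W^{2})$. I would then invoke the continuity criterion already used in this paper (Proposition 3.26 page 315 of [\ref{lc}]): it suffices to prove that for every $T>0$, $\sup_{0\le t\le T}\lVert \widetilde{M}_{t}^{N}-\widetilde{M}_{t^{-}}^{N}\rVert_{H^{-s}} \xrightarrow{P} 0$, and likewise for $\widetilde{L}^{N}$.

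To control the jumps, first note that the Brownian integrals appearing in $\widetilde{M}^{N,\varphi}$ and $\widetilde{L}^{N,\varphi}$ are continuous, so all jumps are produced by the compensated Poisson measures $\overline{M}^{i}$ and $\overline{Q}^{i}$. Because these PRMs are mutually independent and atomless, no two jumps occur simultaneously, so at a jump time $t$ exactly one individual $i$ changes state; viewed in $H^{-s}$, the corresponding jump of $\widetilde{M}^{N}$ (resp. $\widetilde{L}^{N}$) equals $\pm N^{-1/2}\delta_{X_{t}^{i}}$, since the coefficient in each compensated-Poisson integral is $N^{-1/2}$ and the jump tests $\varphi$ as $\varphi(X_{t}^{i})$. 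Consequently $\lVert \widetilde{M}_{t}^{N}-\widetilde{M}_{t^{-}}^{N}\rVert_{H^{-s}} \le N^{-1/2}\sup_{x\in\mathbb{T}^{2}}\lVert \delta_{x}\rVert_{H^{-s}}$, and the same bound holds for $\widetilde{L}^{N}$.

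It then remains to bound $\sup_{x}\lVert\delta_{x}\rVert_{H^{-s}}$. Since $s>2>1$, the Sobolev injection of Proposition \ref{b4} gives $H^{s}(\mathbb{T}^{2})\hookrightarrow C(\mathbb{T}^{2})$ with $\lVert\varphi\rVert_{\infty}\le C\lVert\varphi\rVert_{H^{s}}$, whence, by the dual characterisation of Proposition \ref{b3}, $\lVert\delta_{x}\rVert_{H^{-s}}=\sup_{\varphi\neq 0}\lvert\varphi(x)\rvert/\lVert\varphi\rVert_{H^{s}}\le C$ uniformly in $x$ (equivalently $\sum_{i,n_{1},n_{2}}(\rho_{n_{1},n_{2}}^{i,s})^{2}(x)\le C$ by Lemma \ref{ap}). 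Therefore $\sup_{0\le t\le T}\lVert \widetilde{M}_{t}^{N}-\widetilde{M}_{t^{-}}^{N}\rVert_{H^{-s}}\le C N^{-1/2}\to 0$ surely, and identically for $\widetilde{L}^{N}$, so the maximal-jump hypothesis of the continuity criterion is satisfied and one concludes that $W^{1},W^{2}\in C(\mathbb{R}_{+},H^{-s})$ almost surely.

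The only delicate point is the uniform-in-$x$ estimate $\sup_{x}\lVert\delta_{x}\rVert_{H^{-s}}<\infty$, which is precisely where the standing requirement $s>1$ (here $s>2$) is essential: for $s\le 1$ the Dirac masses would fail to lie in $H^{-s}$ and the whole argument would collapse. The remaining steps — attributing every jump to a single Dirac mass and recalling that the diffusive parts are continuous — are routine, and the passage from the deterministic bound $CN^{-1/2}$ to convergence in probability is immediate.
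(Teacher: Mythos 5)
Your proof is correct and follows essentially the same route as the paper: both reduce the claim via the continuity criterion of Proposition 3.26 p.~315 of [\ref{lc}] to controlling the maximal jump, and both bound each jump by $CN^{-1/2}$ because a jump corresponds to a single individual changing state and Dirac masses are uniformly bounded in $H^{-s}$ for $s>1$. Your version merely spells out more explicitly that the jump is $\pm N^{-1/2}\delta_{X_t^i}$ and that the deterministic bound gives sure (not just in-probability) convergence of the maximal jump to zero, which is a harmless strengthening of the paper's Markov-inequality step.
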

             \begin{proof}
            We prove that the processes  $ W^{1}$ and $ W^{2}$ are continuous. \\
            Since   Proposition 3.26 page 315 in [\ref{lc}] concerning the continuity of the limit in law of $\mathbb{R}^{n}$-valued c$\grave{a}$dl$\grave{a}$g processes  can be adapded   to   c$\grave{a}$dl$\grave{a}$g processes that take values in an arbitrary Hilbert space, to prove that $ W^{1}$ is continuous, it is enough to show that \vspace*{0.2cm}\\
                                                  $ \hspace*{3cm}\forall T> 0 \quad 
         \forall \varepsilon > 0 \quad \lim\limits_{N\longrightarrow \infty}\mathbb{P}(\underset{0\leq t \leq T}{\sup} \lVert\widetilde{M}_{t}^{N}-\widetilde{M}_{t^{-}}^{N}\lVert_{H^{-s}}>\varepsilon)=0.$\vspace*{0.1cm}\\
         Let $ T>0$,  $\varphi \in H^{s+1}$, since  the jump of $M^{N,\varphi} $ and $(\mu^{S,N},\varphi)$ happen at the same time,  we have \vspace*{0.2cm}\\\hspace*{5cm}$\mid\widetilde{M}_{t}^{N,\varphi}-\widetilde{M}_{t^{-}}^{N,\varphi}\mid \leq\frac{C \Arrowvert \varphi \Arrowvert_{H^{s}}}{\sqrt{N}}.$ \\
                                                    Thus 
           $\mathbb{P}(\underset{0\leq t \leq T}{\sup} \lVert\widetilde{M}_{t}^{N}-\widetilde{M}_{t^{-}}^{N}\lVert_{H^{-s}}>\varepsilon)\leq \frac{1}{\varepsilon}\mathbb{E}(\underset{0\leq t\leq T}{\sup}\lVert\widetilde{M}_{t}^{N}-\widetilde{M}_{t^{-}}^{N}\lVert_{H^{-s}})\leq\frac{C  }{\varepsilon\sqrt{N}},$ \vspace*{0.1cm}\\
            \hspace*{4cm} so $\lim\limits_{N\longrightarrow \infty}\mathbb{P}(\underset{0\leq t \leq T}{\sup} \lVert\widetilde{M}_{t}^{N}-\widetilde{M}_{t^{-}}^{N}\lVert_{H^{-s}}>\varepsilon)=0.$   \\ The same argument yields the fact  that  $W^{2}$ is continuous. 
              \end{proof}
              \begin{lem}
              Every limit point $(W^{1},W^{2})$ of the sequence $(\widetilde{M}^{N}, \widetilde{L}^{N})_{N\geq 1} $ is such that for any $\varphi,\psi\in H^{s+1}$, $((W^{1},\varphi),(W^{2},\psi))$ is a  martingale.\label{f3}
              \end{lem}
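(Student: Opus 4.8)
The plan is to pass the martingale property through the weak limit, using the standard test-function characterization together with the uniform $L^2$-bounds already established. Fix $\varphi,\psi\in H^{s+1}$. Since $s>2$, the Sobolev injection (Proposition \ref{b4}) gives $H^{s+1}\subset C^{2}(\mathbb{T}^{2})$, so that the evolution equations of Section \ref{se6} and the martingale structure of $\widetilde{M}^{N,\varphi}$ and $\widetilde{L}^{N,\psi}$ are available; moreover, because $H^{s+1}\subset H^{s}$, the duality maps $A\mapsto(A,\varphi)$ and $A\mapsto(A,\psi)$ are continuous from $H^{-s}$ to $\mathbb{R}$.

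First I would record that, for each $N$, the $\mathbb{R}^{2}$-valued process $((\widetilde{M}^{N,\varphi}_{t},\widetilde{L}^{N,\psi}_{t}))_{t\geq 0}$ is a square-integrable martingale, each coordinate being $\sqrt{N}$ times a stochastic integral against compensated PRMs and Brownian motions, with $\mathbb{E}(\langle\widetilde{M}^{N,\varphi}\rangle_{t})\leq C_{\varphi}\,t$ uniformly in $N$ (as computed in the proof of Proposition \ref{add}). Next, along the subsequence for which $(\widetilde{M}^{N},\widetilde{L}^{N})\to(W^{1},W^{2})$ in law in $(D(\mathbb{R}_{+},H^{-s}))^{2}$, the continuous mapping theorem applied to the pair of duality projections yields $(\widetilde{M}^{N,\varphi},\widetilde{L}^{N,\psi})\to((W^{1},\varphi),(W^{2},\psi))$ in law in $(D(\mathbb{R}_{+},\mathbb{R}))^{2}$. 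Since by Proposition \ref{f2} the limits $W^{1},W^{2}$ are continuous, every time $t$ is a.s.\ a continuity point of the limit, and therefore the finite-dimensional distributions converge at all times.

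The martingale identity is then obtained by the usual route. For any $k\geq 1$, any times $0\leq s_{1}<\cdots<s_{k}\leq s<t$, and any bounded continuous $g:\mathbb{R}^{2k}\to\mathbb{R}$, the martingale property of $\widetilde{M}^{N,\varphi}$ gives
\[
\mathbb{E}\Big[(\widetilde{M}^{N,\varphi}_{t}-\widetilde{M}^{N,\varphi}_{s})\,g\big((\widetilde{M}^{N,\varphi}_{s_{j}},\widetilde{L}^{N,\psi}_{s_{j}})_{1\le j\le k}\big)\Big]=0,
\]
and likewise with $\widetilde{L}^{N,\psi}$ carrying the leading increment. I would pass to the limit $N\to\infty$ using the convergence of finite-dimensional distributions together with the uniform integrability of the integrands. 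The latter follows from Corollary \ref{d4}, which provides $\sup_{N}\mathbb{E}(\sup_{0\le t\le T}\lVert\widetilde{M}^{N}_{t}\rVert_{H^{-s}}^{2})<\infty$ (and the analogue for $\widetilde{L}^{N}$): hence $\sup_{N}\mathbb{E}(|\widetilde{M}^{N,\varphi}_{t}-\widetilde{M}^{N,\varphi}_{s}|^{2})<\infty$, so, $g$ being bounded, the family of integrands is bounded in $L^{2}$, thus uniformly integrable, and the expectation passes to the limit. This yields
\[
\mathbb{E}\Big[((W^{1},\varphi)_{t}-(W^{1},\varphi)_{s})\,g\big(((W^{1},\varphi)_{s_{j}},(W^{2},\psi)_{s_{j}})_{1\le j\le k}\big)\Big]=0,
\]
and the analogous identity for $(W^{2},\psi)$. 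As $k$, the times $s_{j}$, and $g$ are arbitrary, this is exactly the statement that $((W^{1},\varphi),(W^{2},\psi))$ is a martingale with respect to the filtration generated by the limit pair.

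The main obstacle is the interchange of limit and expectation, that is, the uniform integrability, which I would control precisely through the $L^{2}$-bounds of Corollary \ref{d4}. A secondary point requiring care is that evaluation at fixed times is not Skorokhod-continuous in general, so one must invoke the continuity of the limit (Proposition \ref{f2}) to guarantee that the projection maps are a.s.\ continuous at the chosen times and that the finite-dimensional distributions indeed converge.
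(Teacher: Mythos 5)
Your proposal is correct and follows essentially the same route as the paper: the test-function characterization of the martingale property, convergence in law of the projected processes, and uniform integrability from the second-moment bounds to pass the expectation identity to the limit. Your explicit treatment of the fact that fixed-time evaluation is not Skorokhod-continuous (resolved via the continuity of the limit from Proposition \ref{f2}) is a point the paper leaves implicit, but it does not change the argument.
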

              \begin{proof}
             $-$  Martingale property of $ ( W^{1},\varphi)$ \vspace*{0.2cm}\\  
                         A sufficient condition for  $ ( W^{1},\varphi)$ to be  a martingale, is that, for all $k\in \mathbb{N}^{*}$, $ \Phi_{k}\in C_{b}(\mathbb{R}^{k})$, $\varphi_{1},\varphi_{2},....\varphi_{k}\in H^{s}$, and $0\leq s_{0}<s_{1}<s_{2}<s_{3}<.....<s_{k}\leq s<t$,\vspace*{0.12cm} \\\hspace*{6.5cm} $\mathbb{ E}(\phi((W^{1},\varphi)))=0,$\\
                                   where $\phi((W^{1},\varphi))=\Phi_{k}((W_{s_{1}}^{1},\varphi_{1}),(W_{s_{2}}^{1},\varphi_{2}),............,(W_{s_{k}}^{1},\varphi_{k}))((W_{t}^{1},\varphi)-(W_{s}^{1},\varphi)).$\vspace*{0.2cm}\\
                             However given that $ \widetilde{M}^{N,\varphi}$ is a martingale, $ \mathbb{E}(\phi(\widetilde{M}^{N,\varphi}))=0,$ moreover $\phi $ is continuous  thus,\\ 
                                \hspace*{4cm}$ \phi(\widetilde{M}^{N,\varphi})\xrightarrow{L} \phi((W^{1},\varphi)), $ as $ N\rightarrow \infty. $ \\
                               On the other hand  $\phi(\widetilde{M}^{N,\varphi})$ is uniformly integrable since   $\mathbb{E}[(\phi(\widetilde{M}^{N,\varphi})^{2}]\leq C, $ \\hence 
                              $\mathbb{E}(\phi((W^{1},\varphi)))$= $\lim\limits_{N\rightarrow \infty}\mathbb{E}[\phi(\widetilde{M}^{N,\varphi})]$=0.\\
                            So we conclude that  $(W^{1},\varphi)$ is a martingale. A similar argument shows that  $(W^{2},\psi)$ is a martingale.
              \end{proof}
              \newpage
             \begin{prp}
                 The  sequence $ (\widetilde{M}^{N},\widetilde{L}^{N})_{N\geq1} $ converges in law  in  $ (D(\mathbb{R}_{+},H^{-s}))^{2}$ towards  the processes $(W^{1},W^{2})\in C(\mathbb{R}_{+},H^{-s})\times C(\mathbb{R}_{+},H^{-s}) $ where  $ \forall \varphi,\psi \in H^{s}, ((W^{1},\varphi),(W^{2},\psi))$ is a centered Gaussian martingale having the same law as:\vspace*{0.12cm}\\
                 $(W^{1}_{t},\varphi)=\displaystyle-\displaystyle\int_{0}^{t}\int_{\mathbb{T}^{2}}\sqrt{\beta f_{S}(r,x)\int_{\mathbb{T}^{2}}\frac{K(x,y)}{\int_{\mathbb{T}^{2}}K(x',y)f(r,x')dx'}f_{I}(r,y)dy}\varphi(x)\mathcal{W}_{1}(dr,dx)\\\hspace*{2.3cm}+\int_{0}^{t}\int_{\mathbb{T}^{2}}\frac{\partial \varphi}{\partial x_{1}}(x)\sqrt{2\gamma f_{S}(r,x)}\mathcal{W}_{2}(dr,dx)+\int_{0}^{t}\int_{\mathbb{T}^{2}}\frac{\partial \varphi}{\partial x_{2}}(x)\sqrt{2\gamma f_{S}(r,x)}\mathcal{W}_{3}(dr,dx),\\\hspace*{15.97cm}(6.12)
                 $\\$(W^{2}_{t},\psi)=\displaystyle\displaystyle\int_{0}^{t}\int_{\mathbb{T}^{2}}\sqrt{\beta f_{S}(r,x)\int_{\mathbb{T}^{2}}\frac{K(x,y)}{\int_{\mathbb{T}^{2}}K(x',y)f(r,x')dx'}f_{I}(r,y)dy}\psi(x)\mathcal{W}_{1}(dr,dx)\\\hspace*{2.3cm}
                 +\displaystyle\int_{0}^{t}\int_{\mathbb{T}^{2}}\frac{\partial \psi}{\partial x_{1}}(x)\sqrt{2\gamma f_{I}(r,x)}\mathcal{W}_{4}(dr,dx)+\int_{0}^{t}\int_{\mathbb{T}^{2}}\frac{\partial \psi}{\partial x_{2}}(x)\sqrt{2\gamma f_{I}(r,x)}\mathcal{W}_{5}(dr,dx)\\\hspace*{2.3cm}+\int_{0}^{t}\int_{\mathbb{T}^{2}}\psi(x)\sqrt{\alpha f_{I}(r,x)}\mathcal{W}_{6}(dr,dx), \hspace*{7.17cm}(6.13)$\vspace*{0.1cm}\\ where
                    $\mathcal{W}_{1},\mathcal{W}_{2},\mathcal{W}_{3},\mathcal{W}_{4},\mathcal{W}_{5},\mathcal{W}_{6}$ are independent spatio-temporal white noises defined in Proposition \ref{d9}\label{f4}.
                                                                       \end{prp}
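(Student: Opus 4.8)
The plan is to combine the tightness already established in Proposition \ref{f1} with an identification of all subsequential limits through their predictable (co)variations. By Proposition \ref{f1} the pair $(\widetilde{M}^{N},\widetilde{L}^{N})_{N\geq 1}$ is tight in $(D(\mathbb{R}_{+},H^{-s}))^{2}$, so by Prokhorov's theorem it suffices to show that every limit point has the law described by $(6.12)$--$(6.13)$; convergence of the whole sequence then follows from the uniqueness of this law. Fix a limit point $(W^{1},W^{2})$ obtained along some subsequence. Proposition \ref{f2} guarantees that $(W^{1},W^{2})\in (C(\mathbb{R}_{+},H^{-s}))^{2}$, and Lemma \ref{f3} that for every $\varphi,\psi\in H^{s+1}$ the real processes $(W^{1},\varphi)$ and $(W^{2},\psi)$ are continuous martingales.

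First I would compute the brackets of $\widetilde{M}^{N,\varphi}$, $\widetilde{L}^{N,\psi}$ from the jump structure of the evolution equations (\ref{e1}) and (\ref{e2}). Since the infection terms of $M^{N,\varphi}$ and $L^{N,\psi}$ coincide up to sign, the diffusion parts are driven by the disjoint indicators $1_{\{E=S\}}$, $1_{\{E=I\}}$, and the cure part enters only $\widetilde{L}$, one obtains $<\widetilde{M}^{N,\varphi}>_{t}=\beta\int_{0}^{t}(\mu_{r}^{S,N},\varphi^{2}(\mu_{r}^{I,N},\frac{K}{(\mu^{N}_{r},K)}))dr+2\gamma\int_{0}^{t}(\mu_{r}^{S,N},(\bigtriangledown\varphi)^{2})dr$, the analogous expression for $<\widetilde{L}^{N,\psi}>_{t}$ carrying an extra term $\alpha\int_{0}^{t}(\mu_{r}^{I,N},\psi^{2})dr$, and $<\widetilde{M}^{N,\varphi},\widetilde{L}^{N,\psi}>_{t}=-\beta\int_{0}^{t}(\mu_{r}^{S,N},\varphi\psi(\mu_{r}^{I,N},\frac{K}{(\mu^{N}_{r},K)}))dr$. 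Passing to the limit in these expressions is carried out exactly as in the law-of-large-numbers computation for the system (\ref{e4})--(\ref{eec}): using Theorem \ref{eeee}, the strictly positive lower bound on $(\mu_{r},K(\cdot,y))$ from Lemma \ref{int1}, and the continuity estimates of Lemmas \ref{e5}--\ref{e7} together with dominated convergence, each bracket converges in probability to its deterministic analogue obtained by replacing $(\mu^{S,N},\mu^{I,N},\mu^{N})$ by $(\mu^{S},\mu^{I},\mu)$.

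Because these limiting brackets are deterministic, each $(W^{1},\varphi)$ and $(W^{2},\psi)$ is a continuous martingale with deterministic quadratic variation, hence a Gaussian martingale; the same holds for arbitrary linear combinations, so every finite family $((W^{1},\varphi_{j}),(W^{2},\psi_{k}))$ is centered and jointly Gaussian, with covariance read off from the three limiting brackets by polarization. This determines the law of $(W^{1},W^{2})$ as a pair of $H^{-s}$-valued continuous Gaussian processes and fixes the unique limit point. It then remains to verify that the stochastic-integral expressions $(6.12)$--$(6.13)$ realise precisely this covariance: writing the quadratic variation of a white-noise integral $\int_{0}^{t}\int_{\mathbb{T}^{2}}h(r,x)\mathcal{W}(dr,dx)$ as $\int_{0}^{t}\int_{\mathbb{T}^{2}}h^{2}(r,x)dx\,dr$ and using $\mu_{r}^{S}(dx)=f_{S}(r,x)dx$, $\mu_{r}^{I}(dx)=f_{I}(r,x)dx$, the three brackets of the right-hand sides reproduce the limiting variances and covariance; in particular the common noise $\mathcal{W}_{1}$, entering with opposite signs in $(W^{1},\varphi)$ and $(W^{2},\psi)$, produces exactly the negative covariation, while $\mathcal{W}_{2},\dots,\mathcal{W}_{6}$ are mutually independent and contribute only to the diagonal terms. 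Since a centered Gaussian law is determined by its covariance, $(6.12)$--$(6.13)$ have the announced law.

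The main obstacle is the passage to the limit in the brackets involving the singular ratio $K/(\mu^{N},K)$: this nonlinear functional must be controlled uniformly, which rests on the positive lower bound for $(\mu_{r},K(\cdot,y))=\int_{\mathbb{T}^{2}}K(x',y)f(r,x')dx'$ furnished by Lemma \ref{int1} and on the Lipschitz and continuity bounds for $K$ of Lemmas \ref{e5}--\ref{e7}, precisely as in the proof of the system (\ref{e4})--(\ref{eec}). A secondary technical point is that the martingale property and the bracket identification are first available only for smooth test functions $\varphi,\psi\in H^{s+1}$, where the jumps of $\widetilde{M}^{N,\varphi}$ are of order $N^{-1/2}\lVert\varphi\rVert_{H^{s}}$; the Gaussian identification for all $\varphi\in H^{s}$ then follows by density together with the uniform $L^{2}$-bounds of Corollary \ref{d4}.
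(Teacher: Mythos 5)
Your proposal is correct and follows essentially the same route as the paper: tightness from Proposition \ref{f1}, continuity and the martingale property from Proposition \ref{f2} and Lemma \ref{f3}, convergence in probability of the predictable brackets to deterministic limits (hence Gaussianity), and identification of the covariance with that of the white-noise integrals $(6.12)$--$(6.13)$. The only cosmetic difference is that the paper works with the single combined martingale $\widetilde{M}^{N,\varphi}+\widetilde{L}^{N,\psi}$, decomposed into four pairwise orthogonal elementary martingales, and then recovers $(6.12)$ and $(6.13)$ by taking $\psi\equiv 0$ or $\varphi\equiv 0$, whereas you compute the three brackets separately and polarize --- these are equivalent.
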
 
 \begin{proof}
          From Proposition \ref{f1} $(\widetilde{M}^{N}, \widetilde{L}^{N})_{N\geq1 }$ is  tight in $D(\mathbb{R}_{+},H^{-s})\times D(\mathbb{R}_{+},H^{-s})$, \\hence according to   Prokhorov's theorem there exists a subsequence still denoted $(\widetilde{M}^{N}, \widetilde{L}^{N})_{N\geq1 }$ which converges in law in $ (D(\mathbb{R}_{+},H^{-s}))^{2}$ towards $(W^{1},W^{2})$. By  Proposition \ref{f2} and  Lemma \ref{f3}, $ \forall \varphi,\psi \in H^{s}$,  $((W^{1},\varphi),(W^{2},\psi))$ is a  continuous martingale, thus we end the proof of proposition \ref{f4}   by showing that 
         the centered, continuous martingale $ ((W^{1},\varphi),(W^{2},\psi))$  is Gaussian  and satisfies  (6.12) and (6.13).\vspace*{0.1cm}\\
                                                        We have \\ 
       $\widetilde{M_{t}}^{N,\varphi}= - \displaystyle\frac{1}{\sqrt{N}} \sum_{i=1}^{N} \int_{0}^{t} \int_{0}^{\infty}1_{\{E_{r^{-}}^{i}=S\}}\varphi(X_{r}^{i})1_{\{u\leq \frac{\beta}{N} \sum_{j=1}^{N}\frac{K(X_{r}^{i},X_{r}^{j})}{\mu_{r}^{N}(K(.,X_{r}^{j}))} 1_{\{E_{r}^{j}=I\}}\}} \overline{M}^{i}(dr,du)  \\\hspace*{2cm}\displaystyle+\sqrt{\frac{2\gamma}{N}} \sum_{i=1}^{N} \int_{0}^{t}1_{\{E_{r}^{i}=S\}}\bigtriangledown\varphi(X_{r}^{i})dB_{r}^{i}$ \vspace*{0.1cm}\\
                                                   \hspace*{1.2cm}=$-M_{t}^{1,N,\varphi}+ M_{t}^{2,N,\varphi},$\\
       $ \widetilde{L_{t}}^{N,\varphi} =\displaystyle\frac{1}{\sqrt{N}} \sum_{i=1}^{N} \int_{0}^{t} \int_{0}^{\infty}1_{\{E_{r^{-}}^{i}=S\}}\varphi(X_{r}^{i})1_{\{u\leq \frac{\beta}{N} \sum_{j=1}^{N}\frac{K(X_{r}^{i},X_{r}^{j})}{\mu_{r}^{N}(K(.,X_{r}^{j}))} 1_{\{E_{r}^{j}=I\}}\}} \overline{M}^{i}(dr,du))  + \\ \hspace*{1cm}\displaystyle+\sqrt{\frac{2\gamma}{N}} \sum_{i=1}^{N} \int_{0}^{t}1_{\{E_{r}^{i}=I\}}\bigtriangledown\varphi(X_{r}^{i})dB_{r}^{i} $ $   -\displaystyle \frac{1}{\sqrt{N}} \sum_{i=1}^{N} \int_{0}^{t} \int_{0}^{\alpha}1_{\{E_{r^{-}}^{i}=I\}}\varphi(X_{r}^{i})\overline{Q}^{i}(dr,du) $\\
                                                        \hspace*{1.1cm}=$ M_{t}^{1,N,\varphi}+ M_{t}^{3,N,\varphi}+M_{t}^{4,N,\varphi}.$\vspace*{0.2cm}\\                   
                    Consider for   
                                   $\varphi,\psi\in H^{s+1}(\mathbb{T}^{2})$ the following sequence of martingales \vspace*{0.1cm}\\\hspace*{1.5cm} $\widetilde{M_{t}}^{N,\varphi}+\widetilde{L_{t}}^{N,\psi}=-M_{t}^{1,N,\varphi}+ M_{t}^{2,N,\varphi}+M_{t}^{1,N,\psi}+M_{t}^{3,N,\psi}+M_{t}^{4,N,\psi}$.
                                                  \vspace*{0.2cm}\\
                                                       The martingales  $M_{t}^{1,N,\varphi},M_{t}^{2,N,\varphi},M_{t}^{3,N,\psi},M_{t}^{4,N,\psi}$ being two by two orthogonal, \vspace*{0.2cm} \\
                                                        $<\widetilde{M}^{N,\varphi}+\widetilde{L}^{N,\psi}>_{t}=<M^{1,N,\varphi}>_{t}+<M^{2,N,\varphi}>_{t}+<M^{1,N,\psi}>_{t}+<M^{3,N,\psi}>_{t}+<M^{4,N,\psi}>_{t}\\\hspace*{3.5cm}-2<M^{1,N,\varphi},M^{1,N,\psi}>_{t}.$\\
                                                       In addition we have the following convergences in probability \vspace*{0.1cm}\\
            $\hspace*{1cm}<M^{1,N,\varphi}>_{t}\xrightarrow{P}\displaystyle\beta\int_{0}^{t} \left(\mu_{r}^{S},\varphi^{2}(\mu_{r}^{I},\frac{K}{(\mu_{r},K)})\right)dr$,
                            $\hspace*{0.1cm}<M^{2,N,\varphi}>_{t}\xrightarrow{P}\displaystyle 2\gamma\int_{0}^{t} (\mu_{r}^{S},(\bigtriangledown\varphi)^{2})dr$\vspace*{0.1cm}\\
                                           $\hspace*{1cm} <M^{3,N,\psi}>_{t}\xrightarrow{P}\displaystyle2\gamma\int_{0}^{t} (\mu_{r}^{I},(\bigtriangledown\psi)^{2})dr,$\hspace*{0.18cm}$<M^{4,N,\psi}>_{t}\xrightarrow{P}\displaystyle \alpha \int_{0}^{t} (\mu_{r}^{I},\psi^{2})dr.$\\
                                                      \\On the other hand:\vspace*{0.1cm}\\ -  $\widetilde{M}^{N,\varphi}+\widetilde{L}^{N,\psi}\xrightarrow{L}(W^{1},\varphi)+(W^{2},\psi)$ along a subsequence  since \\\hspace*{0.1cm}  $( \widetilde{M}^{N,\varphi},\widetilde{L}^{N,\psi})\xrightarrow{L}((W^{1},\varphi),(W^{2},\psi)).$\\ -   $(W^{1},\varphi)+(W^{2},\psi)$
                                                        is a continuous martingale since $(W^{1},\varphi)$ and $(W^{2},\psi)$ have this  property.\vspace*{0.12cm}\\
                                                       Thus  $ (W^{1},\varphi)+(W^{2},\psi)$ is a time changed Brownian motion.\\  The quadratic variation\vspace*{0.1cm} \\ $<(W^{1},\varphi)+(W^{2},\psi)>_{t}\vspace*{0.1cm}\\\hspace*{1cm}=\displaystyle\int_{0}^{t}\beta\Big[ \left(\mu_{r}^{S},\varphi^{2}(\mu_{r}^{I},\frac{K}{(\mu_{r},K)})\right)+\left(\mu_{r}^{S},\psi^{2}(\mu_{r}^{I},\frac{K}{(\mu_{r},K)})\right)-2\left(\mu_{r}^{S},\varphi\psi(\mu_{r}^{I},\frac{K}{(\mu_{r},K)})\right)\Big]dr\vspace*{0.1cm}\\\hspace*{1.3cm}+\displaystyle\int_{0}^{t}\{2\gamma(\mu_{r}^{S},(\bigtriangledown\varphi)^{2})+ 2\gamma(\mu_{r}^{I},(\bigtriangledown\psi)^{2})+\alpha(\mu_{r}^{I},\psi^{2})\}dr$\vspace*{0.3cm}\\ of $ (W^{1},\varphi)+(W^{2},\psi) $ being deterministic then 
                                                           we conclude that $ (W^{1},\varphi)+(W^{2},\psi) $ is a Gaussian martingale having the same law as \vspace*{0.2cm}\\
    $
 (W^{1}_{t},\varphi)+(W^{2}_{t},\psi)=\\\hspace*{2cm}\displaystyle-\int_{0}^{t}\int_{\mathbb{T}^{2}}\sqrt{\beta f_{S}(r,x)\int_{\mathbb{T}^{2}}\frac{K(x,y)}{\int_{\mathbb{T}^{2}}K(x',y)f(r,x')dx'}f_{I}(r,y)dy}(\varphi(x)-\psi(x))\mathcal{W}_{1}(dr,dx)\vspace*{0.11cm}\\\hspace*{2cm}+\int_{0}^{t}\int_{\mathbb{T}^{2}}\frac{\partial \varphi}{\partial x_{1}}(x)\sqrt{2\gamma f_{S}(r,x)}\mathcal{W}_{2}(dr,dx)+\int_{0}^{t}\int_{\mathbb{T}^{2}}\frac{\partial \varphi}{\partial x_{2}}(x)\sqrt{2\gamma f_{S}(r,x)}\mathcal{W}_{3}(dr,dx)\vspace*{0.11cm}\\\hspace*{2cm}
+\displaystyle\int_{0}^{t}\int_{\mathbb{T}^{2}}\frac{\partial \psi}{\partial x_{1}}(x)\sqrt{2\gamma f_{I}(r,x)}\mathcal{W}_{4}(dr,dx)+\int_{0}^{t}\int_{\mathbb{T}^{2}}\frac{\partial \psi}{\partial x_{2}}(x)\sqrt{2\gamma f_{I}(r,x)}\mathcal{W}_{5}(dr,dx)\vspace*{0.11cm}\\\hspace*{2cm}+\int_{0}^{t}\int_{\mathbb{T}^{2}}\psi(x)\sqrt{\alpha f_{I}(r,x)}\mathcal{W}_{6}(dr,dx),$\vspace*{0.1cm}\\ where
   $\mathcal{W}_{1},\mathcal{W}_{2},\mathcal{W}_{3},\mathcal{W}_{4},\mathcal{W}_{5},\mathcal{W}_{6}$ are independent spatio-temporal white noises defined in Proposition \ref{d9}. \\So taking $\psi\equiv0$, $\varphi\equiv0$ respectively in the above equation we see that $(W^{1},\varphi)$ and $(W^{2},\psi)$ satisfy (6.12) and (6.13).
   \end{proof}
 \subsubsection{\textit{Proof of Theorem \ref{ff1}}}
 We first prove  that   $U^{N}$ and $V^{N}$ are  tight   in $D(\mathbb{R}_{+},H^{-s}) $ then we  show that  all converging subsequences of $(U^{N},V^{N})_{N\geq 1} $ have the same limit which we shall identify. 
           Let us recall the following embeddings which follow from Proposition \ref{b4}, Lemma \ref{b6} above, and from Theorem 1.69 page 47 of [\ref{ec}].\\ $-$ If $s'>1 $ then $ H^{s'}(\mathbb{T}^{2}) \subset C(\mathbb{T}^{2}) $ and for all $ \varphi\in H^{s'}$, $\Upsilon(t)\varphi \in H^{s'}$ and  $\lVert \Upsilon(t)\varphi\lVert_{H^{s'}}\leq \lVert \varphi \lVert_{H^{s'}}$\\\hspace*{0.3cm} furthemore if $ \phi,\psi\in H^{s'}$ there exists $C>0$ such that $\lVert \phi\psi \lVert_{H^{s'}}\leq C\lVert \phi  \lVert_{H^{s'}}\lVert  \psi \lVert_{H^{s'}}.$ \\
           $-$ If $s'>2$ then ,  $H^{s'+1}(\mathbb{T}^{2})\subset C^{2}(\mathbb{T}^{2}).$ \\
           $-$ $\forall s_{1}, s_{2} \in \mathbb{R} $ such that $ s_{1}>s_{2} $ the embedding $ H^{s_{1}}(\mathbb{T}^{2})\hookrightarrow H^{s_{2}}(\mathbb{T}^{2}) $ is compact.
           \begin{prp}
             There exists a constant $C>0$ such that for any s>0, $ \varphi\in H^{s},$ we have \vspace*{-0.15cm} \[\tag{6.14}\hspace*{-2.2cm}\lVert G_{r}^{S,I,N}\varphi\lVert_{H^{s}}\leq C  \lVert \varphi \lVert_{H^{s}} \underset{y}{\sup}\lVert K(.,y)  \lVert_{H^{s}},\label{f51} \vspace*{-0.4cm}\]\[\hspace*{-2.4cm}\tag{6.15}\lVert G_{r}^{I,N}\varphi\lVert_{H^{s}}\leq C\lVert \varphi \lVert_{H^{s}} \underset{y}{\sup}\Arrowvert  K(.,y) \Arrowvert_{H^{s}},\label{f52}\] \vspace*{-0.38cm}\[\hspace*{-0.41cm}\tag{6.16}\lVert G_{r}^{S}\varphi\lVert_{H^{s}}\leq C \lVert \varphi \lVert_{H^{s}} \underset{x}{\sup}\Big\Arrowvert \frac{K(x,.)}{\int_{\mathbb{T}^{2}}K(x',.)\mu_{r}(dx')}\Big \Arrowvert^{2}_{H^{s}}.\vspace*{-0.4cm}\label{f53}\]\label{f5}
           \end{prp}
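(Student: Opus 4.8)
The plan is to prove the three estimates (\ref{f51})--(\ref{f53}) by a single scheme: write each operator as an integral of a kernel against one of the finite sub-probability measures $\mu_r^{I,N}$, $\mu_r^{S,N}$ or $\mu_r^S$ (all of total mass $\le 1$), pull the $H^s$-norm inside this integral by the generalized Minkowski (Bochner) inequality, which is legitimate since $H^s$ is a Hilbert space, and then control the resulting kernel norms by three ingredients. The first is the Sobolev embedding $H^s\subset C(\mathbb{T}^2)$ for $s>1$ (Proposition \ref{b4}), giving $\|\varphi\|_\infty\le C\|\varphi\|_{H^s}$; the second is the Banach algebra property of $H^s$ for $s>1$ (Proposition \ref{b4}), used to treat products and reciprocals; the third is the uniform lower bound on the denominators. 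The latter is the reusable fact, already exploited in the proof of Theorem \ref{eeee}, that there is a constant $C_0>0$ with $\inf_y(\mu_r,K(\cdot,y))=\inf_y\int_{\mathbb{T}^2}K(x',y)f(r,x')\,dx'\ge C_0$, coming from $f(r,\cdot)\ge\delta_1$ (Lemma \ref{int1}) together with the fact that $\int_{\mathbb{T}^2}K(x,y)\,dx$ is a positive constant independent of $y$. I also use $\sup_y\|K(\cdot,y)\|_{H^s}<\infty$ (Lemma \ref{ff3}, via $K(x,y)=K(y,x)$), so every right-hand side is finite in the relevant range $s\le 3$ under (H2); note that the tools force $s>1$, which is exactly the regime ($2<s<3$) used in the sequel.

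For (\ref{f52}), write $G_r^{I,N}\varphi=\varphi\,h_r^N$ with $h_r^N=\int_{\mathbb{T}^2}\frac{K(\cdot,y)}{(\mu_r,K(\cdot,y))}\mu_r^{I,N}(dy)$. The Banach algebra property gives $\|G_r^{I,N}\varphi\|_{H^s}\le C\|\varphi\|_{H^s}\|h_r^N\|_{H^s}$; and since for fixed $y$ the quantity $(\mu_r,K(\cdot,y))\ge C_0$ is a positive scalar, Minkowski yields $\|h_r^N\|_{H^s}\le\int_{\mathbb{T}^2}\frac{\|K(\cdot,y)\|_{H^s}}{(\mu_r,K(\cdot,y))}\mu_r^{I,N}(dy)\le C_0^{-1}\sup_y\|K(\cdot,y)\|_{H^s}$, which is (\ref{f52}). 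For (\ref{f51}), observe that in $G_r^{S,I,N}\varphi(x')=\int_{\mathbb{T}^2}K(x',y)\,c_r^N(y)\,\mu_r^{I,N}(dy)$ the weight $c_r^N(y)=\frac{(\mu_r^{S,N},\varphi K(\cdot,y))}{(\mu_r^N,K(\cdot,y))(\mu_r,K(\cdot,y))}$ does not depend on the free variable $x'$. Hence Minkowski in $x'$ gives $\|G_r^{S,I,N}\varphi\|_{H^s}\le\sup_y\|K(\cdot,y)\|_{H^s}\int_{\mathbb{T}^2}|c_r^N(y)|\,\mu_r^{I,N}(dy)$, and I bound $|c_r^N(y)|$ via $|(\mu_r^{S,N},\varphi K(\cdot,y))|\le\|\varphi\|_\infty(\mu_r^{S,N},K(\cdot,y))$, the domination $\mu_r^{S,N}\le\mu_r^N$ (so $(\mu_r^{S,N},K(\cdot,y))/(\mu_r^N,K(\cdot,y))\le 1$, which conveniently removes the random denominator without a lower bound on $(\mu_r^N,K)$), the bound $(\mu_r,K(\cdot,y))\ge C_0$, and $\|\varphi\|_\infty\le C\|\varphi\|_{H^s}$; with $(\mu_r^{I,N},1)\le 1$ this gives (\ref{f51}).

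The estimate (\ref{f53}) follows by the same device applied in the variable $y$: writing $G_r^S\varphi=\int_{\mathbb{T}^2}\varphi(x)\,\frac{K(x,\cdot)}{\int_{\mathbb{T}^2}K(x',\cdot)\mu_r(dx')}\,\mu_r^S(dx)$ and using Minkowski, $\|\varphi\|_\infty\le C\|\varphi\|_{H^s}$ and $(\mu_r^S,1)\le 1$ produces the factor $\sup_x\big\|\frac{K(x,\cdot)}{\int_{\mathbb{T}^2}K(x',\cdot)\mu_r(dx')}\big\|_{H^s}$; the exponent $2$ in the statement is a harmless over-estimate, and indeed the same $H^s$ norm may alternatively be split by the Banach algebra property as $\|K(x,\cdot)\|_{H^s}\,\|(\mu_r,K(\cdot,\cdot))^{-1}\|_{H^s}$, both factors being uniformly bounded. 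The only genuinely delicate point is this finiteness and uniform boundedness of the kernel norms, i.e. that $y\mapsto(\mu_r,K(\cdot,y))$ stays bounded below and is smooth enough that its reciprocal lies in $H^s$; this is precisely where the lower bound $C_0$ (Lemma \ref{int1}) and Lemma \ref{ff3} enter, and it is what pins the usable range to $s\le 3$ under (H2). Everything else is the routine combination of Minkowski's inequality with the embedding and algebra properties of $H^s$.
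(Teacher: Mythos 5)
Your proof is correct and follows essentially the same strategy as the paper: bound the scalar weights using $\lvert(\mu_{r}^{S,N},\varphi K)\rvert\leq\lVert\varphi\lVert_{\infty}(\mu_{r}^{S,N},K)$, the domination $(\mu_{r}^{S,N},K)/(\mu_{r}^{N},K)\leq 1$, and the uniform lower bound on $(\mu_{r},K(\cdot,y))$, then pull the $H^{s}$-norm through the integral against a measure of total mass at most $1$. The only (harmless) technical variation is that you invoke Minkowski's inequality for Hilbert-space-valued integrals where the paper expands in the orthonormal basis $(\rho^{i,s}_{n_{1},n_{2}})$ and applies Cauchy--Schwarz termwise, and in (\ref{f53}) you pay $\lVert\varphi\lVert_{\infty}$ where the paper only needs $\lVert\varphi\lVert_{L^{2}}$ --- both observations being immaterial in the regime $2<s<3$ actually used.
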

           \begin{proof}  Proof of $(\ref{f51}).$ We first recall that  $\displaystyle \int_{\mathbb{T}^{2}} K(x,y)f(t,x)dx$ is lower bounded by a positive constant $C$ independently of  $y\in \mathbb{T}^{2}$ and $\displaystyle\Big\lvert \frac{\int_{\mathbb{T}^{2}}K(x,y)\varphi(x)\mu_{r}^{S,N}(dx)}{\int_{\mathbb{T}^{2}}K(u,y)\mu_{r}^{N}(du)}\Big\lvert\leq \lVert\varphi\lVert_{\infty}.$ \\ Now we have\\
  $\lVert G_{r}^{S,I,N}\varphi\lVert_{H^{s}}^{2}=\Big\lVert \displaystyle\Big(\mu_{r}^{I,N}, K\frac{(\mu_{r}^{S,N},K \varphi )}{(\mu^{N}_{r},K)(\mu_{r},K)}\Big) \Big\lVert_{H^{s}}^{2}$\\
              $=\sum\limits_{i,n_{1},n_{2}} (1+\gamma\pi^{2}(n_{1}^{2}+n_{2}^{2}))^{s}\displaystyle\Big(\int_{\mathbb{T}^{2}} \int_{\mathbb{T}^{2}}K(z,y)\frac{\int_{\mathbb{T}^{2}}K(x,y)\varphi(x)\mu_{r}^{S,N}(dx)}{\int_{\mathbb{T}^{2}}K(u,y)\mu_{r}^{N}(du)\int_{\mathbb{T}^{2}}K(u,y)\mu_{r}(du)}\\\hspace*{12cm}\times f_{n_{1},n_{2}}^{i}(z)\mu_{r}^{I,N}(dy)dz\Big)^{2} $\\
              $ \leq \sum\limits_{i,n_{1},n_{2}} (1+\gamma\pi^{2}(n_{1}^{2}+n_{2}^{2}))^{s}\displaystyle\int_{\mathbb{T}^{2}}\left(\frac{\int_{\mathbb{T}^{2}}K(x,y)\varphi(x)\mu_{r}^{S,N}(dx)}{\int_{\mathbb{T}^{2}}K(u,y)\mu_{r}^{N}(du)\int_{\mathbb{T}^{2}}K(u,y)\mu_{r}(du)}\right)^{2}\mu_{r}^{I,N}(dy)\vspace*{0.07cm}\\\hspace*{9cm} \times \int_{\mathbb{T}^{2}}\Big(\int_{\mathbb{T}^{2}}K(z,y)f_{n_{1},n_{2}}^{i}(z)dz\Big)^{2}\mu_{r}^{I,N}(dy) $\vspace*{0.07cm}\\$\leq C \lVert\varphi \lVert_{\infty}^{2} \displaystyle\int_{\mathbb{T}^{2}}\sum\limits_{i,n_{1},n_{2}} (1+\gamma\pi^{2}(n_{1}^{2}+n_{2}^{2}))^{s}\Big(\int_{\mathbb{T}^{2}}K(z,y)f_{n_{1},n_{2}}^{i}(z)dz\Big)^{2}\mu_{r}^{I,N}(dy) \vspace*{0.07cm}$\\$\leq C\lVert \varphi \lVert_{H^{s}}^{2} \underset{y}{\sup}\lVert K(.,y)  \lVert_{H^{s}}^{2}.$ \vspace*{0.2cm}\\ Proof of $(\ref{f52}).$\\
            $\lVert G_{r}^{I,N}\varphi\lVert_{H^{s}}=\Big\lVert \displaystyle\varphi\Big(\mu_{r}^{I,N},\frac{K}{(\mu_{r},K)}\Big)\Big\lVert_{H^{s}}\leq C\lVert \varphi\lVert_{H^{s}}\Big\lVert\displaystyle\Big(\mu_{r}^{I,N},\frac{K}{(\mu_{r},K)}\Big)\Big\lVert_{H^{s}}$
            and \vspace*{0.16cm}\\
            $ \lVert (\mu_{r}^{I,N},K)\lVert_{H^{s}}^{2}=
                            \sum\limits_{i,n_{1},n_{2}} (1+\gamma\pi^{2}(n_{1}^{2}+n_{2}^{2}))^{s}\displaystyle((\mu_{r}^{I,N},\frac{K}{(\mu_{r},K)}),f_{n_{1},n_{2}}^{i})_{L^{2}}^{2} $\\
                             $\hspace*{2.48cm}  =\sum\limits_{i,n_{1},n_{2}} (1+\gamma\pi^{2}(n_{1}^{2}+n_{2}^{2}))^{s}\Big(\displaystyle\int_{\mathbb{T}_{2}} \int_{\mathbb{T}^{2}}\frac{K(x,y)}{\int_{\mathbb{T}^{2}}K(u,y)\mu_{r}(du)}f_{n_{1},n_{2}}^{i}(x)\mu_{r}^{I,N}(dy)dx\Big)^{2} $\\
                              $ \hspace*{2.48cm} \leq \sum\limits_{i,n_{1},n_{2}} (1+\gamma\pi^{2}(n_{1}^{2}+n_{2}^{2}))^{s}\displaystyle\int_{\mathbb{T}^{2}}\frac{1}{\Big(\int_{\mathbb{T}^{2}}K(u,y)\mu_{r}(du)\Big)^{2}}\mu_{r}^{I,N}(dy)\\\hspace*{7cm}\times\int_{\mathbb{T}^{2}}\Big( \int_{\mathbb{T}^{2}}K(x,y)f_{n_{1},n_{2}}^{i}(x)dx\Big)^{2}\mu_{r}^{I,N}(dy) $\\
                              $ \hspace*{2.48cm} \leq C\displaystyle\int_{\mathbb{T}^{2}}\sum\limits_{i,n_{1},n_{2}} (1+\gamma\pi^{2}(n_{1}^{2}+n_{2}^{2}))^{s}\Big( \int_{\mathbb{T}^{2}}K(x,y)f_{n_{1},n_{2}}^{i}(x)dx\Big)^{2}\mu_{r}^{I,N}(dy)  $\\
                           $\hspace*{2.48cm}\leq C\underset{y}{\sup}\Arrowvert  K(.,y) \Arrowvert_{H^{s}}^{2}.$\vspace*{0.3cm}\\ Proof of $(\ref{f53}).$ Since $\int_{\mathbb{T}^{2}}\varphi^{2}(x)\mu_{r}^{S}(dx)\leq\int_{\mathbb{T}^{2}}\varphi^{2}(x)dx,$ we have\vspace*{0.2cm}\\
                           $\lVert G_{r}^{S}\varphi\lVert_{H^{s}}=$
$\Big\lVert\displaystyle \frac{( \mu_{r}^{S},K \varphi )}{( \mu_{r},K)} \Big\lVert_{H^{s}}^{2}
               =\sum\limits_{i,n_{1},n_{2}} (1+\gamma\pi^{2}(n_{1}^{2}+n_{2}^{2}))^{s}\Big(\displaystyle \frac{( \mu_{r}^{S},K \varphi )}{( \mu_{r},K  )},f_{n_{1},n_{2}}^{i}\Big)_{L^{2}}^{2} $\\
               $\hspace*{1.70cm}=\sum\limits_{i,n_{1},n_{2}} (1+\gamma\pi^{2}(n_{1}^{2}+n_{2}^{2}))^{s}\Big(\displaystyle\int_{\mathbb{T}^{2}} \frac{\int_{\mathbb{T}^{2}}K(x,y)\varphi(x)\mu_{r}^{S}(dx)}{\int_{\mathbb{T}^{2}}K(x',y)\mu_{r}(dx')}f_{n_{1},n_{2}}^{i}(y)dy\Big)^{2} $\\
               $\hspace*{1.70cm}\leq  \sum\limits_{i,n_{1},n_{2}} (1+\gamma\pi^{2}(n_{1}^{2}+n_{2}^{2}))^{s}\displaystyle\int_{\mathbb{T}^{2}}\varphi^{2}(x)\mu_{r}^{S}(dx)\\\hspace*{5cm}\times \int_{\mathbb{T}^{2}} \Big(\int_{\mathbb{T}^{2}}\frac{K(x,y)}{\int_{\mathbb{T}^{2}}K(x',y)\mu_{r}(dx')}f_{n_{1},n_{2}}^{i}(y))dy\Big)^{2}\mu_{r}^{S}(dx) $\\
               $ \hspace*{1.70cm} \leq C\lVert \varphi \lVert_{L^{2}}^{2}
                 \displaystyle\int_{\mathbb{T}^{2}}\sum\limits_{i,n_{1},n_{2}} (1+\gamma\pi^{2}(n_{1}^{2}+n_{2}^{2}))^{s} \Big(\int_{\mathbb{T}^{2}}\frac{K(x,y)}{\int_{\mathbb{T}^{2}}K(x',y)\mu_{r}(dx')}f_{n_{1},n_{2}}^{i}(y)dy\Big)^{2}\mu_{r}^{S}(dx)\\ 
              \hspace*{1.60cm}\leq C\lVert \varphi \lVert_{H^{s}}^{2} \underset{x}{\sup}\Big\Arrowvert \frac{K(x,.)}{\int_{\mathbb{T}^{2}}K(x',.)\mu_{r}(dx')}\Big \Arrowvert^{2}_{H^{s}}.$
                \end{proof}
                We have the following immediate consequence of Proposition \ref{f5}.\\
                \begin{cor}
                          There exists a constant $C>0$ such that for any $s>0$, $\forall$ $ \mathcal{V}\in H^{-s},$ we have \\\hspace*{4.07cm} $\lVert (G_{r}^{S,I,N})^{*}\mathcal{V}\lVert_{H^{-s}}\leq C   \underset{y}{\sup}\lVert K(.,y)  \lVert_{H^{s}}\lVert \mathcal{V}  \lVert_{H^{-s}},$ \\ \hspace*{4.2cm}$\lVert (G_{r}^{I,N})^{*}\mathcal{V}\lVert_{H^{-s}}\leq C \underset{y}{\sup}\Arrowvert  K(.,y) \Arrowvert_{H^{s}}\lVert \mathcal{V}  \lVert_{H^{-s}},$ \\\hspace*{4.2cm}$\lVert (G_{r}^{S})^{*}\mathcal{V}\lVert_{H^{-s}}\leq C  \underset{x}{\sup}\Big\Arrowvert \displaystyle \frac{K(x,.)}{\int_{\mathbb{T}^{2}}K(x',.)\mu_{r}(dx')}\Big \Arrowvert_{H^{s}}\lVert \mathcal{V}  \lVert_{H^{-s}}.$\label{ff2}
                           \end{cor}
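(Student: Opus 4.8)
The plan is to obtain all three bounds as a single routine duality argument, combining the boundedness of the operators $G_{r}^{S,I,N}$, $G_{r}^{I,N}$ and $G_{r}^{S}$ on $H^{s}$ established in Proposition \ref{f5} with the dual characterization of the $H^{-s}$ norm recalled in Proposition \ref{b3}.

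First I would fix $s>0$ and write $T$ for any one of the three operators $G_{r}^{S,I,N}$, $G_{r}^{I,N}$, $G_{r}^{S}$, which by Proposition \ref{f5} is a bounded linear map from $H^{s}$ into $H^{s}$; I denote its operator norm by $\lvert T\rvert_{\mathcal{L}(H^{s})}$. Since $H^{-s}$ is, by Proposition \ref{b3}, isometrically the dual of $H^{s}$, the Banach adjoint $T^{*}$ is the bounded linear map from $H^{-s}$ into $H^{-s}$ characterized by the duality relation $(T^{*}\mathcal{V},\varphi)=(\mathcal{V},T\varphi)$ for all $\mathcal{V}\in H^{-s}$ and $\varphi\in H^{s}$, where $(.,.)$ is the duality product.

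Next, for $\mathcal{V}\in H^{-s}$ I would use the Parseval identity of Proposition \ref{b3}, namely $\lVert T^{*}\mathcal{V}\rVert_{H^{-s}}=\sup_{\varphi\neq 0,\varphi\in H^{s}}\lvert (T^{*}\mathcal{V},\varphi)\rvert/\lVert\varphi\rVert_{H^{s}}$, and substitute the duality relation to get $\lVert T^{*}\mathcal{V}\rVert_{H^{-s}}=\sup_{\varphi\neq 0}\lvert(\mathcal{V},T\varphi)\rvert/\lVert\varphi\rVert_{H^{s}}$. Bounding the numerator by $\lvert(\mathcal{V},T\varphi)\rvert\leq\lVert\mathcal{V}\rVert_{H^{-s}}\lVert T\varphi\rVert_{H^{s}}$ (again Proposition \ref{b3}) and then invoking the appropriate estimate among $(\ref{f51})$, $(\ref{f52})$, $(\ref{f53})$ to control $\lVert T\varphi\rVert_{H^{s}}$ by $C\lVert\varphi\rVert_{H^{s}}$ times the relevant supremum of Sobolev norms of $K$, the factor $\lVert\varphi\rVert_{H^{s}}$ cancels in the supremum over $\varphi$, and the three displayed inequalities follow at once.

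There is essentially no genuine obstacle here, which is why the statement is announced as an immediate consequence of Proposition \ref{f5}: the only points that require a word of justification are the well-definedness of the adjoint as a map $H^{-s}\to H^{-s}$ (which rests on the identification of $H^{-s}$ with the dual of $H^{s}$) and the elementary fact that the operator norm of an adjoint equals that of the original operator. For the range $2<s<3$ chosen in this section, the finiteness of the geometric factors $\sup_{y}\lVert K(.,y)\rVert_{H^{s}}$ is guaranteed by Lemma \ref{ff3} together with the symmetry $K(x,y)=K(y,x)$ and the inclusion $H^{3}\subset H^{s}$, so the constants appearing on the right-hand sides are indeed finite.
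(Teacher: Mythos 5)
Your proposal is correct and is precisely the argument the paper has in mind: the paper gives no written proof, labelling the corollary an ``immediate consequence'' of Proposition \ref{f5}, and the content of that implicit step is exactly the duality/adjoint computation you spell out, $\lVert T^{*}\mathcal{V}\rVert_{H^{-s}}=\sup_{\varphi\neq 0}\lvert(\mathcal{V},T\varphi)\rvert/\lVert\varphi\rVert_{H^{s}}\leq \lvert T\rvert_{\mathcal{L}(H^{s})}\lVert\mathcal{V}\rVert_{H^{-s}}$ combined with the three bounds (\ref{f51})--(\ref{f53}).
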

                            Let us prove the  following results which will be useful to prove the tightness of $  (U^{N})_{N\geq 1}$ and $(V^{N})_{N\geq 1}$ in $ D(\mathbb{R}_{+},H^{-s}).$
                                                                          
                                                                            \begin{lem}
                                             The pair of processes   $ (U^{N},V^{N})$ satisfies $\forall \hspace*{0.1cm}0\leq u<t$,\vspace*{0.2cm}\\
                                          $\displaystyle U_{t}^{N}=\displaystyle  \Upsilon(t-u)U_{u}^{N} + \beta \int_{u}^{t}\Upsilon(t-r)(G_{r}^{S,I,N})^{*}Z_{r}^{N}dr -\beta\int_{u}^{t}\Upsilon(t-r)(G_{r}^{I,N})^{*}U_{r}^{N}dr$\vspace*{-0.35cm}  \[\tag{6.17} \hspace*{-6.2cm}- \beta\int_{u}^{t} \Upsilon(t-r)(G_{r}^{S})^{*}V_{r}^{N}dr+\int_{u}^{t}\Upsilon(t-r)d\widetilde{M}_{r}^{N},\label{f71} \vspace*{-0.2cm}\]
                                           $V_{t}^{N}=\displaystyle\Upsilon(t-u)V_{u}^{N} -  \beta \int_{u}^{t}\Upsilon(t-r)(G_{r}^{S,I,N})^{*}Z_{r}^{N}dr +\beta\int_{u}^{t}\Upsilon(t-r)(G_{r}^{I,N})^{*}U_{r}^{N}dr$ \vspace*{-0.35cm}\[\tag{6.18}\hspace*{-5.2cm}+ \int_{u}^{t} \Upsilon(t-r)[\beta(G_{r}^{I,N})^{*}-\alpha ]V_{r}^{N}dr +\int_{u}^{t}\Upsilon(t-r)d\widetilde{L}_{r}^{N}. \label{f72}\]
                                                                    \end{lem}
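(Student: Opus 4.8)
The plan is to derive both mild (variation-of-constants) formulas from the weak evolution equations (\ref{d2}) and (\ref{d3}) by testing against the time-dependent function $r\mapsto\Upsilon(t-r)\varphi$ and exploiting the cancellation of the diffusive terms. I treat only (\ref{f71}) in detail, since (\ref{f72}) is obtained by applying the identical argument to (\ref{d3}), carrying along the extra removal term $-\alpha\int V_r^N\,dr$.

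Fix $t>u\geq 0$ and a test function $\varphi\in H^{s+1}$; since $s>2$ we have $H^{s+1}\subset C^2(\mathbb{T}^2)$ (Proposition \ref{b4}), and because $\Upsilon(t-r)$ maps $H^{s+1}$ into itself with $\lVert\Upsilon(t-r)\varphi\lVert_{H^{s+1}}\leq\lVert\varphi\lVert_{H^{s+1}}$ (Lemma \ref{b6}), the function $\psi_r:=\Upsilon(t-r)\varphi$ remains in $C^2(\mathbb{T}^2)$ for every $r\in[u,t]$ and is thus an admissible test function in (\ref{d2}). First I would apply It\^o's formula to the real-valued semimartingale $r\mapsto(U_r^N,\psi_r)=(U_r^N,\Upsilon(t-r)\varphi)$, combining the jump-diffusion dynamics of $U^N$ encoded in (\ref{d2}) with the deterministic time dependence of $\psi_r$. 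Because $\tfrac{d}{dr}\Upsilon(t-r)\varphi=-\gamma\bigtriangleup\Upsilon(t-r)\varphi$, the contribution $-\gamma(U_r^N,\bigtriangleup\Upsilon(t-r)\varphi)\,dr$ arising from the time derivative of the test function exactly cancels the diffusive term $\gamma(U_r^N,\bigtriangleup\psi_r)\,dr$ produced by the generator in (\ref{d2}); this cancellation is the heart of the computation (and is visible already at the particle level, where It\^o's formula gives $d[\psi_r(X_r^i)]=\sqrt{2\gamma}\,\bigtriangledown\psi_r(X_r^i)\,dB_r^i$ with no drift).

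After the cancellation, evaluating at the endpoints $\psi_t=\varphi$ and $\psi_u=\Upsilon(t-u)\varphi$ leaves
\begin{align*}
(U_t^N,\varphi)=(U_u^N,\Upsilon(t-u)\varphi)&+\beta\int_u^t(Z_r^N,G_r^{S,I,N}\Upsilon(t-r)\varphi)\,dr-\beta\int_u^t(U_r^N,G_r^{I,N}\Upsilon(t-r)\varphi)\,dr\\
&-\beta\int_u^t(V_r^N,G_r^{S}\Upsilon(t-r)\varphi)\,dr+\int_u^t d\widetilde{M}_r^{N,\Upsilon(t-r)\varphi}.
\end{align*}
Next I would invoke the self-adjointness of $\Upsilon(t-r)$ (Proposition \ref{b5}, since $\gamma\bigtriangleup$ is self-adjoint) together with the definitions of the adjoint operators to move $\Upsilon(t-r)$ onto the measure argument: for instance $(Z_r^N,G_r^{S,I,N}\Upsilon(t-r)\varphi)=((G_r^{S,I,N})^{*}Z_r^N,\Upsilon(t-r)\varphi)=(\Upsilon(t-r)(G_r^{S,I,N})^{*}Z_r^N,\varphi)$, and likewise for the $U^N$ and $V^N$ brackets. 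Corollary \ref{ff2} guarantees that $\Upsilon(t-r)(G_r^{S,I,N})^{*}Z_r^N$ and the analogous integrands lie in $H^{-s}$, so the three time integrals are well-defined $H^{-s}$-valued Bochner integrals.

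The final and most delicate step is the martingale term: I must show that $\int_u^t d\widetilde{M}_r^{N,\Upsilon(t-r)\varphi}=\bigl(\int_u^t\Upsilon(t-r)\,d\widetilde{M}_r^N,\varphi\bigr)$, i.e. that the scalar stochastic integral against the time-dependent integrand $\Upsilon(t-r)\varphi$ coincides with testing the $H^{-s}$-valued stochastic convolution $\int_u^t\Upsilon(t-r)\,d\widetilde{M}_r^N$ against $\varphi$. This is a stochastic Fubini argument, which I expect to be the main obstacle; I would justify it by approximating $r\mapsto\Upsilon(t-r)\varphi$ in $H^{s}$ by piecewise-constant (in $r$) integrands, interchanging the finite sum with the duality pairing for each approximation, and passing to the $L^2$ limit using the $H^{-s}$-valued It\^o isometry together with the uniform bound $\sup_N\mathbb{E}(\sup_{0\le t\le T}\lVert\widetilde{M}_t^N\lVert_{H^{-s}}^2)<\infty$ from Corollary \ref{d4}. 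Once all four brackets are expressed against $\varphi$, the identity holds for every $\varphi$ in the dense subspace $H^{s+1}$ of $H^{s}$, hence as an equality in $H^{-s}$, which is precisely (\ref{f71}); replaying the argument on (\ref{d3}) yields (\ref{f72}).
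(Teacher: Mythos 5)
Your proposal is correct and follows essentially the same route as the paper: the paper also applies the Itô formula with a time-dependent test function $\phi\in C^{1,2}(\mathbb{R}_{+}\times\mathbb{T}^{2})$, then substitutes $\psi_{r}=\Upsilon(t-r)\varphi$ for $\varphi\in H^{s+1}$ so that the term $(U_{r}^{N},\partial_{r}\psi_{r})\,dr$ cancels the diffusive term $\gamma(U_{r}^{N},\bigtriangleup\psi_{r})\,dr$, exactly as you describe. Your extra care with the adjoint/duality step and the identification of $\int_{u}^{t}(\Upsilon(t-r)\varphi,d\widetilde{M}_{r}^{N})$ with the pairing of the stochastic convolution against $\varphi$ only makes explicit points the paper leaves implicit.
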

                                     \begin{proof}
                                           Let us consider a function $\phi$ belonging to $ C^{1,2}(\mathbb{R}_{+}\times\mathbb{T}^{2})$. By the Itô formula applied   to $\phi(t,X_{t}^{i})$  and  using a similar computation as in subsections \ref{se5} and \ref{se6}, we obtain for $0\leq u< t,$ \\
                                           $ \displaystyle (U_{t}^{N},\phi_{t})=\displaystyle  (U_{u}^{N},\phi_{u}) + \gamma \int_{u}^{t}(U_{r}^{N},\bigtriangleup \phi_{r}) dr + \int_{u}^{t}(U_{r}^{N},\frac{\partial\phi_{r}}{\partial r})dr\\\hspace*{1.6cm}+\beta \int_{u}^{t}\left(Z_{r}^{N} , \left(\mu_{r}^{I,N}, K\frac{(\mu_{r}^{S,N},\phi_{r} K)}{(\mu^{N}_{r},K)(\mu_{r},K)}\right) \right)dr -\beta\int_{u}^{t}\left(U_{r}^{N} ,\phi_{r}\big(\mu_{r}^{I,N},\frac{K}{(\mu_{r},K)}\big) \right)dr \\\hspace*{1.6cm} - \beta\int_{u}^{t}\left( V_{r}^{N},\frac{( \mu_{r}^{S},\phi_{r} K)}{(\mu_{r},K)} \right)dr + \int_{u}^{t}(\phi_{r},d\widetilde{M}_{r}^{N}). $ \vspace*{0.2cm}\\Let $\varphi \in H^{s+1}$ and $ 0\leq u <t $, consider for $ r\in [u,t] $ the mapping $\psi_{r}(x)
                                           =\Upsilon(t-r)\varphi(x) $.\\ We have that $\psi_{\cdot}(\cdot)\in C^{1,2}([u,t]\times\mathbb{T}^{2})$. Indeed: \vspace*{0.12cm}\\
                                          \hspace*{0.5cm} - For any $ r\in [u,t]$, $\psi_{r}(\cdot)\in H^{s+1}(\mathbb{T}^{2})\subset C^{2}(\mathbb{T}^{2}).$\\\hspace*{0.5cm} -  $\forall x\in \mathbb{T}^{2}$, the map  $r\in [u,t]\mapsto  \psi^{'}_{r}(x)=-\gamma\bigtriangleup( \Upsilon(t-r)\varphi(x))$ is continuous  since  $\Upsilon(t)$ is a strongly continuous semi-group and  $-\gamma\bigtriangleup( \Upsilon(t-r)\varphi(x))=\Upsilon(t-r)(-\gamma\bigtriangleup\varphi(x))$, (see Proposition \ref{b5}). Thus  replacing  $ \phi $ by $ \psi $ in the above equation we obtain, \vspace*{0.12cm}\\$ \displaystyle (U_{t}^{N},\varphi)=\displaystyle  (U_{u}^{N},\Upsilon(t-u)\varphi) +\beta \int_{u}^{t}\left(Z_{r}^{N} , \left(\mu_{r}^{I,N}, K\frac{(\mu_{r}^{S,N},\Upsilon(t-r)\varphi K)}{(\mu^{N}_{r},K)(\mu_{r},K)}\right) \right)dr \\\hspace*{1.6cm}-\beta\int_{u}^{t}\left(U_{r}^{N} ,\Upsilon(t-r)\varphi\big(\mu_{r}^{I,N},\frac{K}{(\mu_{r},K)}\big) \right)dr - \beta\int_{u}^{t}\left( V_{r}^{N},\frac{( \mu_{r}^{S},\Upsilon(t-r)\varphi K)}{(\mu_{r},K)} \right)dr\\\hspace*{1.6cm} + \int_{u}^{t}(\Upsilon(t-r)\varphi,d\widetilde{M}_{r}^{N}). $\vspace*{0.12cm}\\ We  obtain (\ref{f72}) by similar argument.
                                                                                                        \end{proof}
                  \begin{prp}
                    There exists $ C>0$ such that for  any stopping times $\overline{\tau}<\infty$ a.s and $\theta>0$, \vspace*{0.2cm}$\hspace*{4.6cm} \mathbb{E}\left(\Big\lVert \displaystyle\int_{\overline{\tau}}^{\overline{\tau}+\theta}\Upsilon(\overline{\tau}+\theta-r)d\widetilde{M}_{r}^{N} \Big\lVert_{H^{-s}}^{2}\right)\leq C\theta,$\hspace*{3.87cm}$(6.19)$\vspace*{0.1cm}\\$\hspace*{4.6cm} \mathbb{E}\left(\Big\lVert \displaystyle\int_{\overline{\tau}}^{\overline{\tau}+\theta}\Upsilon(\overline{\tau}+\theta-r)d\widetilde{L}_{r}^{N} \Big\lVert_{H^{-s}}^{2}\right)\leq C\theta.\hspace*{4.05cm}(6.20)\label{f6}$
                  \end{prp}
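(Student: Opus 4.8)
The plan is to prove $(6.19)$ and $(6.20)$ by transcribing, term by term, the argument of Proposition \ref{d10}, the only genuinely new ingredients being the jump (infection and cure) contributions to the predictable quadratic variations of $\widetilde{M}^N$ and $\widetilde{L}^N$. First I would fix $s$ with $2<s<3$, write $\tau'=\overline{\tau}+\theta$, and expand the $H^{-s}$-norm in the orthonormal basis $(\rho^{i,s}_{n_1,n_2})$ of $H^s$ via the Parseval identity of Proposition \ref{b3}. Since $\Upsilon(t)$ is self-adjoint, pairing the stochastic convolution against $\rho^{i,s}_{n_1,n_2}$ amounts to integrating $\widetilde{M}^N$ against the test function $\Upsilon(\tau'-r)\rho^{i,s}_{n_1,n_2}$, so that
\[ \mathbb{E}\Big(\Big\lVert \int_{\overline{\tau}}^{\tau'}\Upsilon(\tau'-r)d\widetilde{M}_{r}^{N}\Big\rVert_{H^{-s}}^2\Big)=\sum_{i,n_1,n_2}\mathbb{E}\Big(\Big(\int_{\overline{\tau}}^{\tau'}(\Upsilon(\tau'-r)\rho^{i,s}_{n_1,n_2},\,d\widetilde{M}^N_r)\Big)^2\Big). \]

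Then, using the explicit representation of $\widetilde{M}^{N,\varphi}$ as a sum of a compensated-Poisson integral and a Brownian integral driven by families independent across $i$, the It\^o isometry for both pieces gives, for each fixed test function, that the second moment of the inner stochastic integral equals $\mathbb{E}\int_{\overline{\tau}}^{\tau'}A^N_r(\Upsilon(\tau'-r)\rho^{i,s}_{n_1,n_2})\,dr$, where $A^N_r$ is the density of $<\widetilde{M}^{N,\cdot}>$ already introduced in the proof of Proposition \ref{f1}, namely $A^N_r(\varphi)=\beta(\mu_r^{S,N},\varphi^2(\mu_r^{I,N},\frac{K}{(\mu_r^N,K)}))+2\gamma(\mu_r^{S,N},(\nabla\varphi)^2)$. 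By Tonelli (all terms are non-negative) I interchange the basis sum with the time integral and the expectation, reducing the problem to bounding $\sum_{i,n_1,n_2}A^N_r(\Upsilon(\tau'-r)\rho^{i,s}_{n_1,n_2})$ uniformly in $r$ and $N$. The crucial point, exactly as in Proposition \ref{d10}, is the eigenfunction identity $\Upsilon(\tau'-r)\rho^{i,s}_{n_1,n_2}=e^{-\lambda_{n_1,n_2}(\tau'-r)}\rho^{i,s}_{n_1,n_2}$ together with $\nabla\Upsilon(\tau'-r)\rho^{i,s}_{n_1,n_2}=e^{-\lambda_{n_1,n_2}(\tau'-r)}\nabla\rho^{i,s}_{n_1,n_2}$: since the exponential factor is bounded by $1$ I may discard it, and then invoke Lemma \ref{ap} to get $\sum_{i,n_1,n_2}(\rho^{i,s}_{n_1,n_2})^2(x)\le C$ (valid for $s>1$) and $\sum_{i,n_1,n_2}(\nabla\rho^{i,s}_{n_1,n_2})^2(x)\le C$ (valid for $s>2$).

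Substituting these two summation bounds, the gradient part of $A^N_r$ is controlled by $2\gamma C\int_{\mathbb{T}^2}\mu_r^{S,N}\le 2\gamma C$ because $\mu_r^{S,N}$ has total mass $\le 1$, while the infection part is controlled by $\beta C\int_{\mathbb{T}^2}(\mu_r^{I,N},\frac{K}{(\mu_r^N,K)})(x)\,\mu_r^{S,N}(dx)\le\beta C$ by Lemma \ref{e3} applied with test function identically $1$. Hence $\sum_{i,n_1,n_2}A^N_r(\Upsilon(\tau'-r)\rho^{i,s}_{n_1,n_2})\le C(\beta+\gamma)$ uniformly, and integrating over $[\overline{\tau},\tau']$ yields $(6.19)$. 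For $(6.20)$ I proceed identically: the quadratic variation of $\widetilde{L}^{N,\varphi}$ carries the same infection term, the analogous Brownian term $2\gamma(\mu_r^{I,N},(\nabla\varphi)^2)$, and an additional cure term $\alpha(\mu_r^{I,N},\varphi^2)$, each of which is bounded by the same two summation estimates and the mass bound $\int_{\mathbb{T}^2}\mu_r^{I,N}\le 1$ (the cure term using only the $s>1$ sum).

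The main obstacle is not conceptual but bookkeeping: one must correctly identify the quadratic variation of the $H^{-s}$-valued stochastic convolution coordinate by coordinate, justifying that the compensated-Poisson and Brownian pieces contribute additively with vanishing cross terms (orthogonality of the independent PRMs and Brownian motions), and one must check that the summability thresholds line up — the gradient sum forces $s>2$, which is precisely why the running choice $2<s<3$ was made. Everything else is a direct transcription of Proposition \ref{d10}, and the uniformity in the stopping time $\overline{\tau}$ follows because all the bounds above hold pathwise in $r$, so no strong Markov argument is needed.
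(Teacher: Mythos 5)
Your proposal is correct and follows essentially the same route as the paper's proof of Proposition \ref{f6}: Parseval expansion of the $H^{-s}$-norm over the basis $(\rho^{i,s}_{n_1,n_2})$, identification of the second moment of each coordinate with the time integral of $A^N_r(\Upsilon(\cdot)\rho^{i,s}_{n_1,n_2})$, the eigenfunction identity to discard the factor $e^{-(\theta-r)\lambda_{n_1,n_2}}\le 1$, and then Lemma \ref{ap} together with the mass bounds to get a uniform constant. The only differences are that you spell out a few steps the paper leaves implicit (the vanishing cross terms between the PRM and Brownian pieces, and the use of Lemma \ref{e3} with $\varphi\equiv 1$), which is fine.
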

                  \begin{proof} Proof of $(6.19)$.\vspace*{0.1cm} Let us recall that $A_{r}^{N}(\varphi)=$ $\beta\left(\mu_{r}^{S,N},\varphi^{2}(\mu_{r}^{I,N},\frac{K}{(\mu_{r}^{N},K)})\right)+2\gamma(\mu_{r}^{S,N},(\bigtriangledown\varphi)^{2})$ and  $\displaystyle\int_{\tau^{N}}^{\tau^{N}+\theta}(\Upsilon(\tau^{N}+\theta-r)\varphi,d\widetilde{M}_{r}^{N})=\sqrt{\frac{2\gamma}{N}} \sum\limits_{i=1}^{N} \int_{\tau^{N}}^{\tau^{N}+\theta}1_{\{E_{r}^{i}=S\}}\bigtriangledown\Upsilon(\tau^{N}+\theta-r)\varphi(X_{r}^{i})dB_{r}^{i} \\\hspace*{1.5cm}- \displaystyle\sqrt{\frac{1}{N}} \sum_{i=1}^{N} \int_{\tau^{N}}^{\tau^{N}+\theta} \int_{0}^{\infty}1_{\{E_{r^{-}}^{i}=S\}}\Upsilon(\tau^{N}+\theta-r)\varphi(X_{r}^{i})1_{\{u\leq \beta \sum\limits_{j=1}^{N}\frac{K(X_{r}^{i},X_{r}^{j})}{\sum\limits_{l=1}^{N}K(X_{r}^{l},X_{r}^{j}))} 1_{\{E_{r}^{j}=I\}}\}} \overline{M}^{i}(dr,du).$\vspace*{0.1cm}\\Now from Lemma \ref{ap} below, we have\vspace*{0.2cm}\\
                                  $\displaystyle\mathbb{E}\left(\left\lVert \int_{\overline{\tau}}^{\overline{\tau}+\theta}\Upsilon(\overline{\tau}+\theta-r)d\widetilde{M}_{r}^{N} \right\lVert_{H^{-s}}^{2}\right)\leq \sum\limits_{i,n_{1},n_{2}}\mathbb{E}\left( \left(\displaystyle\int_{\overline{\tau}}^{\overline{\tau}+\theta}\Upsilon(\overline{\tau}+\theta-r)\rho^{i,s}_{n_{1},n_{2}},d\widetilde{M}_{r}^{N} \right)^{2}\right)\\\hspace*{6.395cm} =\sum\limits_{i,n_{1},n_{2}}\mathbb{E}\left( \displaystyle\int_{0}^{\theta} A_{r+\overline{\tau}}^{N}(\Upsilon(\theta-r)\rho^{i,s}_{n_{1},n_{2}})dr \right)\\\hspace*{6.395cm} =\sum\limits_{i,n_{1},n_{2}}\mathbb{E}\left( \displaystyle\int_{0}^{\theta}A_{r+\overline{\tau}}^{N}(e^{-(\theta-r)\lambda_{n_{1},n_{2}}}\rho^{i,s}_{n_{1},n_{2}})dr \right)
                                  \\\hspace*{6.395cm} \leq\mathbb{E}\left( \displaystyle\int_{0}^{\theta}A_{r+\overline{\tau}}^{N}(\sum\limits_{i,n_{1},n_{2}}\rho^{i,s}_{n_{1},n_{2}}) dr\right) \\\hspace*{6.395cm}\leq\theta C.\\
                                  $A similar argument yields $(6.20)$.
                  \end{proof}                               
            \begin{prp}
                   For all  $T>0$, \hspace*{0.12cm} \\\hspace*{5cm} $\displaystyle  \underset{N\geq 1}{\sup}\hspace*{0.04cm} \underset{0\leq t\leq T}{\sup} \mathbb{E}(\lVert U_{t}^{N} \lVert^{2}_{H^{-s}})<\infty, $ \vspace*{0.06cm}\\\hspace*{5cm}  $\displaystyle   \underset{N\geq 1}{\sup}\hspace*{0.04cm} \underset{0\leq t\leq T}{\sup} \mathbb{E}(\lVert V_{t}^{N} \lVert^{2}_{H^{-s}})<\infty. \hspace*{2.7cm}$\label{f8}.
                 \end{prp}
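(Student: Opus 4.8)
The plan is to work from the mild (semigroup) representation of the pair $(U^N,V^N)$ established in $(\ref{f71})$ and $(\ref{f72})$, taken with $u=0$, and to close a coupled Gronwall inequality for $\Phi^N(t)=\mathbb{E}(\lVert U_t^N\rVert_{H^{-s}}^2)+\mathbb{E}(\lVert V_t^N\rVert_{H^{-s}}^2)$ with constants that do \emph{not} depend on $N$. First I would take $H^{-s}$ norms in $(\ref{f71})$; since by Proposition \ref{b5} the semigroup $\Upsilon$ is a contraction on $H^{-s}$, every factor $\Upsilon(t-r)$ may be discarded, and the triangle inequality gives
\[\lVert U_t^N\rVert_{H^{-s}}\leq \lVert U_0^N\rVert_{H^{-s}}+\beta\int_0^t\big(\lVert (G_r^{S,I,N})^{*}Z_r^N\rVert_{H^{-s}}+\lVert (G_r^{I,N})^{*}U_r^N\rVert_{H^{-s}}+\lVert (G_r^{S})^{*}V_r^N\rVert_{H^{-s}}\big)dr+\Big\lVert\int_0^t\Upsilon(t-r)d\widetilde{M}_r^N\Big\rVert_{H^{-s}},\]
and the analogous bound for $V_t^N$ from $(\ref{f72})$, with an extra summand $\alpha\int_0^t\lVert V_r^N\rVert_{H^{-s}}dr$.

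Next I would control the three operator terms by Corollary \ref{ff2}, which yields $\lVert (G_r^{S,I,N})^{*}Z_r^N\rVert_{H^{-s}}\leq C\sup_y\lVert K(\cdot,y)\rVert_{H^s}\lVert Z_r^N\rVert_{H^{-s}}$, the same bound for $(G_r^{I,N})^{*}U_r^N$ in terms of $\lVert U_r^N\rVert_{H^{-s}}$, and $\lVert (G_r^{S})^{*}V_r^N\rVert_{H^{-s}}\leq C\sup_x\lVert K(x,\cdot)/\int_{\mathbb{T}^2}K(x',\cdot)\mu_r(dx')\rVert_{H^s}\lVert V_r^N\rVert_{H^{-s}}$. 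Here $\sup_y\lVert K(\cdot,y)\rVert_{H^s}$ is finite because $s<3$ and Lemma \ref{ff3} gives $\sup_y\lVert K(\cdot,y)\rVert_{H^3}<\infty$; the prefactor in the third bound is likewise finite uniformly in $r\in[0,T]$, since the denominator $\int_{\mathbb{T}^2}K(x',y)f(r,x')dx'$ is bounded below by a positive constant (Lemma \ref{int1}) and lies in $H^s$ with a bound uniform in $r$, so the claim is a routine consequence of Lemma \ref{ff3} and the Banach algebra property of $H^s$ (Proposition \ref{b4}). All these prefactors are absorbed into a single constant $C$.

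I would then square, apply the elementary inequality $(\sum_i a_i)^2\leq n\sum_i a_i^2$ together with Cauchy--Schwarz on the time integrals, and take expectations. The initial term is controlled by Proposition \ref{uo}, the term carrying $Z_r^N$ by $\sup_{0\leq r\leq T}\mathbb{E}(\lVert Z_r^N\rVert_{H^{-s}}^2)<\infty$ from Proposition \ref{d11}, and the stochastic integral by Proposition \ref{f6} (estimates $(6.19)$ and $(6.20)$ with $\overline{\tau}=0$ and $\theta=t\leq T$), giving $\mathbb{E}\big(\lVert\int_0^t\Upsilon(t-r)d\widetilde{M}_r^N\rVert_{H^{-s}}^2\big)\leq CT$. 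Adding the resulting inequalities for $U$ and $V$ produces, with constants $C_1(T)$ and $C_2$ independent of $N$,
\[\Phi^N(t)\leq C_1(T)+C_2\int_0^t\Phi^N(r)dr,\qquad 0\leq t\leq T.\]

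The main obstacle is that Gronwall's lemma cannot be invoked until $\Phi^N$ is known to be finite and locally bounded on $[0,T]$. This a priori finiteness is exactly what the Lemma preceding Theorem \ref{ff1} supplies: arguing as in the proof of Lemma \ref{d5}, one has $\mathbb{E}(\sup_{0\leq t\leq T}\lVert U_t^N\rVert_{H^{-s}}^2)\vee\mathbb{E}(\sup_{0\leq t\leq T}\lVert V_t^N\rVert_{H^{-s}}^2)\leq CN<\infty$ for each fixed $N$, so $\Phi^N$ is bounded on $[0,T]$. With this justification Gronwall gives $\Phi^N(t)\leq C_1(T)e^{C_2 t}\leq C_1(T)e^{C_2 T}$, and since the right-hand side does not involve $N$, taking the supremum over $t\in[0,T]$ and over $N\geq 1$ yields both asserted bounds. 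The delicate point to verify carefully is therefore the uniformity in $N$ of every constant entering the inequality, the $N$-dependent bound being used only to license the Gronwall step.
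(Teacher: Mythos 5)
Your proposal is correct and follows essentially the same route as the paper: the mild formulation with $u=0$, the contraction property of $\Upsilon$, the operator bounds from Corollary \ref{ff2} combined with Lemma \ref{ff3} and the appendix Lemma \ref{ape}, the stochastic-integral estimate of Proposition \ref{f6}, control of the initial data and of $Z^N$ via Propositions \ref{uo} and \ref{d11}, and a summed Gronwall inequality. Your explicit remark that the $N$-dependent a priori bound $\mathbb{E}(\sup_{t\leq T}\lVert U_t^N\rVert_{H^{-s}}^2)\leq CN$ is what licenses the Gronwall step is a point the paper leaves implicit, but it does not change the argument.
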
        
                 \begin{proof} Choosing $u=0$ in equation (\ref{f71}) and (\ref{f72}), we get the estimates \hspace*{0.12cm}\\
                $ \displaystyle \lVert U_{t}^{N} \lVert_{H^{-s}}^{2}\leq \displaystyle 5 \lVert \Upsilon(t)U_{0}^{N}\lVert_{H^{-s}}^{2} + 5\beta^{2}t \int_{0}^{t}\lVert \Upsilon(t-r)(G_{r}^{S,I,N})^{*}Z_{r}^{N}\lVert_{H^{-s}}^{2}dr\\\hspace*{2cm}+ 5\beta^{2}t\int_{0}^{t}\lVert\Upsilon(t-r)(G_{r}^{I,N})^{*}U_{r}^{N}\lVert_{H^{-s}}^{2}dr +5 \beta^{2}t\int_{0}^{t} \lVert\Upsilon(t-r)(G_{r}^{S})^{*}V_{r}^{N}\lVert_{H^{-s}}^{2}dr\vspace*{0.12cm}\\\hspace*{2cm}+5\Big\lVert\int_{0}^{t}\Upsilon(t-r)d\widetilde{M}_{r}^{N}\Big\lVert_{H^{-s}}^{2},$ \hspace*{0.12cm}\\
                                                            $\lVert V_{t}^{N} \lVert_{H^{-s}}^{2}\leq\displaystyle6\lVert\Upsilon(t)V_{0}^{N} \lVert_{H^{-s}}^{2} +6 \beta^{2}t \int_{0}^{t}\lVert \Upsilon(t-r)(G_{r}^{S,I,N})^{*}Z_{r}^{N} \lVert_{H^{-s}}^{2}dr\\\hspace*{2cm} +6\beta^{2}t\int_{0}^{t}\lVert\Upsilon(t-r)(G_{r}^{I,N})^{*}U_{r}^{N} \lVert_{H^{-s}}^{2}dr +6t\beta^{2}\int_{0}^{t} \lVert\Upsilon(t-r)G_{r}^{S}V_{r}^{N} \lVert_{H^{-s}}^{2}dr\vspace*{0.12cm}\\\hspace*{2cm}+6t\alpha^{2}\int_{0}^{t} \lVert V_{r}^{N} \lVert_{H^{-s}}^{2}dr +6 \Big\lVert\int_{0}^{t}\Upsilon(t-r)d\widetilde{L}_{r}^{N} \Big\lVert_{H^{-s}}^{2}.$\vspace*{0.17cm}\\
           From Corollary \ref{ff2}, we have\\
                              $\displaystyle \lVert U_{t}^{N}\lVert^{2}_{H^{-s}}\leq 5\lVert U_{0}^{N} \lVert^{2}_{H^{-s}}+5t\beta^{2} C\underset{x}{\sup}\lVert K(x,.)\lVert_{H^{s}}^{2} \int_{0}^{t} \lVert U_{r}^{N} \lVert_{H^{-s}}dr \\\hspace*{0.5cm}+5Ct\beta^{2}\underset{x}{\sup}\Big\Arrowvert \frac{K(x,.)}{\int_{\mathbb{T}^{2}}K(x',.)\mu_{r}(dx')}\Big \Arrowvert_{H^{s}}^{2} \int_{0}^{t}\lVert V_{r}^{N} \lVert^{2}_{H^{-s}} dr+5t\beta^{2} C \underset{x}{\sup}\lVert K(x,.)\lVert_{H^{s}}^{2}\int_{0}^{t}\lVert Z_{r}^{N}\lVert^{2}_{H^{-s}}dr$ \hspace*{0.17cm}\\$\hspace*{0.5cm}+\Big\lVert\int_{0}^{t}\Upsilon(t-r)d\widetilde{M}_{r}^{N} \Big\lVert_{H^{-s}}^{2},$\\\\\\\\
                              $\displaystyle \lVert V_{t}^{N}\lVert^{2}_{H^{-s}}\leq 6\lVert V_{0}^{N} \lVert^{2}_{H^{-s}}+6t\beta^{2} C \underset{x}{\sup}\lVert K(x,.)\lVert_{H^{s}}^{2} \int_{0}^{t} \lVert U_{r}^{N} \lVert_{H^{-s}}dr \\\hspace*{2cm}+6t\Big(\beta^{2}C \underset{x}{\sup}\Big\Arrowvert \frac{K(x,.)}{\int_{\mathbb{T}^{2}}K(x',.)\mu_{r}(dx')}\Big \Arrowvert_{H^{s}}^{2}+\alpha^{2}
                                   \Big)\int_{0}^{t}\lVert V_{r}^{N} \lVert^{2}_{H^{-s}} dr\\\hspace*{2cm}+6t\beta^{2}C \underset{x}{\sup}\lVert K(x,.)\lVert_{H^{s}}^{2} \int_{0}^{t}\lVert Z_{r}^{N}\lVert^{2}_{H^{-s}}dr+\Big\lVert\int_{0}^{t}\Upsilon(t-r)d\widetilde{L}_{r}^{N} \Big\lVert_{H^{-s}}^{2}$.\vspace*{0.1cm}\\Thus  from Lemmas \ref{ape} in the Appendix below, \ref{ff3} and  Proposition \ref{f6}, we have \vspace*{0.2cm}\\
                                   $\displaystyle \underset{0\leq t\leq T}{\sup} \mathbb{E}(\lVert U_{t}^{N}\lVert^{2}_{H^{-s}})\leq 5\mathbb{E}(\lVert U_{0}^{N} \lVert_{H^{-s}}^{2})+5T\beta^{2}C\int_{0}^{T} \Big\{\underset{0\leq r\leq t}{\sup} \mathbb{E}(\lVert U_{r}^{N} \lVert^{2}_{H^{-s}})+\underset{0\leq r\leq t}{\sup}\mathbb{E}(\lVert V_{r}^{N} \lVert^{2}_{H^{-s}})\Big\}dt \\\hspace*{3.4cm}+5T^{2}\beta^{2} C\underset{0\leq t\leq T}{\sup}\mathbb{E}(\lVert Z_{t}^{N}\lVert_{H^{-s}})+CT,\hspace*{6.501cm}(6.21)$\\
                            $\displaystyle \underset{0\leq t\leq T}{\sup}\mathbb{E}(\lVert V_{t}^{N}\lVert^{2}_{H^{-s}})\leq 6\mathbb{E}(\lVert V_{0}^{N} \lVert^{2}_{H^{-s}})+6T(\beta^{2} C+\alpha^{2}) \int_{0}^{T} \Big\{\underset{0\leq r\leq t}{\sup} \mathbb{E}(\lVert U_{r}^{N} \lVert^{2}_{H^{-s}})+\underset{0\leq r\leq t}{\sup}\mathbb{E}(\lVert V_{r}^{N} \lVert^{2}_{H^{-s}})\Big\}dt\\\hspace*{3.4cm}+6T^{2}\beta^{2} C  \underset{0\leq t\leq T}{\sup}\mathbb{E}(\lVert Z_{t}^{N}\lVert^{2}_{H^{-s}})+CT. \hspace*{6.501cm}
                            (6.22) $ \vspace*{0.1cm}\\Hence
                             summing $(6.21)$ and $ (6.22)$ and applying Gronwall's lemma we deduce  the result from Proposition \ref{uo} in section 4 and Proposition \ref{d11}.
                 \end{proof}
                 Now we can establish the tightness of the sequences $(U^{N})_{N}$ and $(V^{N})_{N}.$                                                                       \begin{prp}
                                           Both sequences of processes $U^{N}$ and $V^{N}$ are tight in $D(\mathbb{R}_{+},H^{-s})$. \label{f10} 
                                                                             \end{prp}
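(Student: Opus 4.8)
The plan is to verify the Aldous--Rebolledo conditions (T1) and (T2) of Proposition \ref{d7} with $H=H^{-s}$, for each of the two sequences. Since the argument for $V^{N}$ is identical to that for $U^{N}$ --- the extra drift term $-\alpha\int_{u}^{t}\Upsilon(t-r)V_{r}^{N}\,dr$ appearing in (\ref{f72}) is handled exactly like the $(G_{r}^{I,N})^{*}$-term --- I would treat $U^{N}$ in detail and indicate the changes for $V^{N}$ at the end. For (T1) I would reproduce the argument used for $Z^{N}$ in the proof of Theorem \ref{d6}: the proof of Proposition \ref{f8} in fact yields $\sup_{N\geq 1}\sup_{0\leq t\leq T}\mathbb{E}(\lVert U_{t}^{N}\lVert_{H^{-s'}}^{2})<\infty$ for every $s'\in(2,3)$, in particular for some $s'$ with $2<s'<s$. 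Since the embedding $H^{-s'}\hookrightarrow H^{-s}$ is compact (Theorem 1.69 of [\ref{ec}]), closed balls of $H^{-s'}$ are compact in $H^{-s}$, and Markov's inequality applied to this uniform bound makes $\mathbb{P}(\lVert U_{t}^{N}\lVert_{H^{-s'}}>R)$ arbitrarily small for $R$ large, giving tightness of $(U_{t}^{N})_{N}$ in $H^{-s}$ for each fixed $t$.

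For (T2) I would start from the mild formulation (\ref{f71}) with $u=\tau^{N}$, $t=\tau^{N}+\theta$, writing $U_{\tau^{N}+\theta}^{N}-U_{\tau^{N}}^{N}$ as the sum of five terms: $(\Upsilon(\theta)-I_{d})U_{\tau^{N}}^{N}$, the three drift convolutions against $(G_{r}^{S,I,N})^{*}Z_{r}^{N}$, $(G_{r}^{I,N})^{*}U_{r}^{N}$, $(G_{r}^{S})^{*}V_{r}^{N}$, and the stochastic convolution $\int_{\tau^{N}}^{\tau^{N}+\theta}\Upsilon(\tau^{N}+\theta-r)\,d\widetilde{M}_{r}^{N}$. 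I would then bound the probability that the whole increment exceeds $\varepsilon_{1}$ by the sum of the probabilities that each of the five terms exceeds $\varepsilon_{1}/5$.

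The stochastic convolution is controlled directly by Proposition \ref{f6}, which gives $\mathbb{E}(\lVert\cdot\lVert_{H^{-s}}^{2})\leq C\theta\leq C\delta$, so Markov's inequality disposes of it. For each drift convolution I would use the contraction $\lvert\Upsilon(t)\lvert_{\mathcal{L}(H^{-s})}\leq 1$ (Proposition \ref{b5}) together with the operator bounds of Corollary \ref{ff2} and the finiteness of $\sup_{y}\lVert K(\cdot,y)\lVert_{H^{s}}$ (Lemma \ref{ff3}, with the analogous bound for $K(x,\cdot)/\int_{\mathbb{T}^{2}}K(x',\cdot)\mu_{r}(dx')$) to dominate the integrand's $H^{-s}$-norm by a constant times $\lVert Z_{r}^{N}\lVert_{H^{-s}}$, $\lVert U_{r}^{N}\lVert_{H^{-s}}$ or $\lVert V_{r}^{N}\lVert_{H^{-s}}$ respectively. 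The integral over a window of length $\theta\leq\delta$, the uniform second-moment bounds of Propositions \ref{f8} and \ref{d11}, and Markov's inequality then make each of these contributions $O(\delta)$, hence small once $\delta$ is chosen small.

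The genuinely delicate term will be $(\Upsilon(\theta)-I_{d})U_{\tau^{N}}^{N}$, because $\Upsilon(\theta)-I_{d}$ does not tend to $0$ uniformly on $H^{-s}$ as $\theta\downarrow0$: it is small only on the low-frequency spectral subspaces. Here I would copy the two-scale device from the proof of Theorem \ref{d6}, projecting onto the finite-dimensional space $F_{m_{1},m_{2}}$ spanned by the eigenfunctions with eigenvalues at most $\lambda_{m_{1},m_{2}}$, bounding $\lVert(\Upsilon(\theta)-I_{d})U_{\tau^{N}\mid F_{m_{1},m_{2}}}^{N}\lVert_{H^{-s}}^{2}\leq(e^{-\delta\lambda_{m_{1},m_{2}}}-1)^{2}\lVert U_{\tau^{N}}^{N}\lVert_{H^{-s}}^{2}$, while on the orthogonal complement I would gain a factor $\lambda_{m_{1},m_{2}}^{-\sigma}$ at the cost of the stronger norm $H^{-s+\sigma}$ with $0<\sigma<s-2$, using $\sup_{N}\sup_{0\leq t\leq T}\mathbb{E}(\lVert U_{t}^{N}\lVert_{H^{-s+\sigma}}^{2})<\infty$ (note $s-\sigma>2$, so this is covered by the same proof as Proposition \ref{f8}). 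Choosing $m_{1},m_{2}$ large first to kill the high-frequency tail, then $\delta$ small to kill the low-frequency semigroup difference, and finally $N_{0}$ from the drift and martingale estimates, would make the total probability at most $\varepsilon_{2}$, which is (T2). The one subtle point throughout is that every second-moment estimate invoked is uniform in $N$ and is evaluated at the stopping time $\tau^{N}$; as in the corresponding step for $Z^{N}$, this is where I would be careful, controlling $\mathbb{E}$ of the increment at $\tau^{N}$ by the uniform sup-in-$t$ bounds. Having obtained (T1) and (T2), Proposition \ref{d7} gives the tightness of $U^{N}$, and the same reasoning applied to (\ref{f72}) gives that of $V^{N}$.
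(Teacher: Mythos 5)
Your proposal is correct and follows essentially the same route as the paper: (T1) via the uniform second-moment bounds of Proposition \ref{f8} and the compact embedding $H^{-s'}\hookrightarrow H^{-s}$, and (T2) via the mild formulation (\ref{f71}), Proposition \ref{f6} for the stochastic convolution, Corollary \ref{ff2} with Lemmas \ref{ape} and \ref{ff3} and Propositions \ref{f8} and \ref{d11} for the drift, and the two-scale spectral projection from Theorem \ref{d6} for $(\Upsilon(\theta)-I_{d})U_{\tau^{N}}^{N}$. The only cosmetic difference is that you treat the three drift convolutions separately whereas the paper groups them into a single term $J_{r}^{S,I,N}$.
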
 
                                         \begin{proof}
       We only establish the tighness of $U^{N}$ by showing that the conditions of the Proposition \ref{d7} are satisfied.\vspace*{0.1cm}\\
      $-$ (T1)  is obtained by using  the Proposition \ref{f8} and applying an  argument similar to that of the  proof of (T1) in Theorem \ref{d6}. \vspace*{0.2cm}\\
      $-$ Proof of (T2). Let T>0, $\varepsilon_{1},\varepsilon_{2}$ >0, $(\tau^{N})_{N}$ a family of stopping times with $\tau^{N}\leq T$. \\ From equation (\ref{f71}), we have\\ $\displaystyle U_{\tau^{N}+\theta}^{N}-U_{\tau^{N}}^{N}=\displaystyle  (\Upsilon(\theta)-I_{d})U_{\tau^{N}}^{N} + \beta \int_{\tau^{N}}^{\tau^{N}+\theta}\Upsilon(\tau^{N}+\theta-r)(G_{r}^{S,I,N})^{*}Z_{r}^{N}dr \\\hspace*{2.3cm}-\beta\int_{\tau^{N}}^{\tau^{N}+\theta}\Upsilon(\tau^{N}+\theta-r)(G_{r}^{I,N})^{*}U_{r}^{N}dr  - \beta\int_{\tau^{N}}^{\tau^{N}+\theta} \Upsilon(\tau^{N}+\theta-r)(G_{r}^{S})^{*}V_{r}^{N}dr\\\hspace*{2.3cm}+\int_{\tau^{N}}^{\tau^{N}+\theta}\Upsilon(\tau^{N}+\theta-r)d\widetilde{M}_{r}^{N}$
     \\\hspace*{2.2cm}= $(\Upsilon(\theta)-I_{d})U_{\tau^{N}}^{N} + \displaystyle\beta\displaystyle \int_{\tau^{N}}^{\tau^{N}+\theta}\Upsilon(\tau^{N}+\theta-r)J_{r}^{S,I,N}(Z^{N},U^{N},V^{N})dr\\\hspace*{2.3cm}+\int_{\tau^{N}}^{\tau^{N}+\theta}\Upsilon(\tau^{N}+\theta-r)d\widetilde{M}_{r}^{N},$ \vspace*{0.3cm}\\where $ J_{r}^{S,I,N}(Z^{N},U^{N},V^{N})=(G_{r}^{S,I,N})^{*}Z_{r}^{N}-(G_{r}^{I,N})^{*}U_{r}^{N}-(G_{r}^{S})^{*}V_{r}^{N}$. \vspace*{0.2cm}\\
      We find $\delta>0$ and $N_{0}\geq 1$ such that  \vspace*{0.3cm}\\\hspace*{1cm} 
                      $\underset{N\geq N_{0}}{\sup}\underset{\delta\geq\theta}{\sup}\mathbb{P}\left(\left\lVert (\Upsilon(\theta)-I_{d})U_{\tau^{N}}^{N} \right\lVert_{H^{-s}}\geq\varepsilon_{1}\right)\leq \varepsilon_{2},\hspace*{6.9cm}(6.23)$ \\\hspace*{1cm} $\underset{N\geq N_{0}}{\sup}\underset{\delta\geq\theta}{\sup}\mathbb{P}\left(\left\lVert \displaystyle\beta \int_{\tau^{N}}^{\tau^{N}+\theta}\Upsilon(\tau^{N}+\theta-r)J_{r}^{S,I,N}(Z^{N},U^{N},V^{N})dr \right\lVert_{H^{-s}}\geq\varepsilon_{1}\right)\leq \varepsilon_{2},\hspace*{1.08cm}(6.24)$ \\\hspace*{1cm} $ \underset{N\geq N_{0}}{\sup}\underset{\delta\geq\theta}{\sup}\mathbb{P}\left(\left\lVert\displaystyle \int_{\tau^{N}}^{\tau^{N}+\theta}\Upsilon(\tau^{N}+\theta-r)d\widetilde{M}_{r}^{N}\right\lVert_{H^{-s}}\geq\varepsilon_{1}\right)\leq \varepsilon_{2}.\hspace*{4.57cm}(6.25)$\\$- $  $(6.23)$ is proved by similar reasoning to that of the proof of $(6.5)$ in  Proposition \ref{ff4}.\vspace*{0.2cm} \\$-$  Proof of $(6.24)$.  Let $l\in \mathbb{R}_{+}\backslash[0,1]$, we find $\delta>0$ such that $\tau^{N}+\delta\leq lT$ and such that $(6.24)$ is satisfied. Since $\forall \varphi \in H^{s}$, $\lVert \Upsilon(\tau^{N}+\theta-r)\varphi \lVert_{H^{s}}\leq \lVert \varphi \lVert_{H^{s}}$, from  Corollary \ref{ff2}, Lemmas \ref{ape} below, \ref{ff3} and Propositions \ref{d11} and  \ref{f8}, we have \\
                     
                    $\mathbb{P}\left(\left\lVert \displaystyle\beta \int_{\tau^{N}}^{\tau^{N}+\theta}\Upsilon(\tau^{N}+\theta-r)J_{r}^{S,I,N}(Z^{N},U^{N},V^{N})dr \right\lVert_{H^{-s}}\geq\varepsilon_{1}\right)$\\ \vspace*{-0.2cm}\begin{align*}
                    \hspace*{2cm}&\leq \frac{\beta^{2}}{\varepsilon_{1}^{2}}\mathbb{E}\left(\left\lVert  \displaystyle\int_{\tau^{N}}^{\tau^{N}+\theta}\Upsilon(\tau^{N}+\theta-r)J_{r}^{S,I,N}(Z^{N},U^{N},V^{N})dr\right\lVert_{H^{-s}}^{2}\right)
                     \\&\leq \frac{\beta^{2}\theta}{\varepsilon_{1}^{2}}\displaystyle\mathbb{E}\left(\int_{\tau^{N}}^{\tau^{N}+\theta}\lVert  \Upsilon(\tau^{N}+\theta-r)J_{r}^{S,I,N}(Z^{N},U^{N},V^{N})\lVert_{H^{-s}}^{2}dr\right)\\&
                     \leq\frac{\beta^{2}\theta C}{\varepsilon_{1}^{2}} \underset{y}{\sup}\lVert K(.,y)\lVert_{H^{s}}\displaystyle\mathbb{E}\left(\int_{\tau^{N}}^{\tau^{N}+\theta}\{\lVert Z^{N}_{r} \lVert_{H^{-s}}^{2}+ \lVert U^{N}_{r} \lVert_{H^{-s}}^{2}\}dr\right) \\&
                                          +\frac{\beta^{2}\theta C}{\varepsilon_{1}^{2}} \underset{x}{\sup}\Big\Arrowvert \frac{K(x,.)}{\int_{\mathbb{T}^{2}}K(x',.)\mu_{r}(dx')}\Big \Arrowvert_{H^{s}}\displaystyle\mathbb{E}\left(\int_{\tau^{N}}^{\tau^{N}+\theta}\lVert V^{N}_{r} \lVert_{H^{-s}}^{2} dr\right) \\&\leq \frac{\beta^{2}\delta^{2} C}{\varepsilon_{1}^{2}} \underset{N\geq1}{\sup}\hspace*{0.05cm}\underset{0\leq t\leq lT}{\sup} \mathbb{E}(\{\lVert Z^{N}_{t}\lVert_{H^{-s}}^{2}+ \lVert U^{N}_{t} \lVert_{H^{-s}}^{2}+ \lVert V^{N}_{t} \lVert_{H^{-s}}^{2}\}) \\&\leq \frac{\beta^{2}\delta^{2} C}{\varepsilon_{1}^{2}}.                   
                       \end{align*}
                    So $(6.24)$ is proved. \vspace*{0.2cm}\\
                   $-$ Proof of $(6.25).$ From result (6.19) in Proposition \ref{f6} we have,\vspace*{0.2cm}\\
                   $\displaystyle\mathbb{P}\left(\left\lVert \int_{\tau^{N}}^{\tau^{N}+\theta}\Upsilon(\tau^{N}+\theta-r)d\widetilde{M}_{r}^{N}\right\lVert_{H^{-s}}\geq\varepsilon_{1}\right)$ 
                   $\leq \frac{1}{\varepsilon_{1}^{2}}\displaystyle\mathbb{E}\left(\left\lVert \int_{\tau^{N}}^{\tau^{N}+\theta}\Upsilon(\tau^{N}+\theta-r)d\widetilde{M}_{r}^{N} \right\lVert_{H^{-s}}^{2}\right) \\\hspace*{7.88cm}\leq \frac{1}{\varepsilon_{1}^{2}}\delta C,
                   $
                    \\hence $(6.25)$ is proved.
                                                                             \end{proof}
  To  show that all converging subsequences of $(U^{N},V^{N})_{N\geq 1} $ have the same limit we will need the next two Lemmas.
 \begin{lem}
  For any $ t\geq0$, $\varphi\in H^{3}(\mathbb{T}^{2})$,  as $N\longrightarrow \infty$, \\\hspace*{5cm} $ \displaystyle\int_{0}^{t}\mathbb{E}\Big(\lVert [G_{r}^{I,N}-G_{r}^{I}]\Upsilon(t-r)\varphi \lVert_{H^{s}}^{2}\Big)^{\frac{1}{2}}dt\longrightarrow 0.$\label{f11}
 \end{lem}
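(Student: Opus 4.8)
The plan is to exploit the smoothing effect of the kernel $K$: although the lemma demands convergence in the strong norm of $H^{s}$, the expression $[G_{r}^{I,N}-G_{r}^{I}]\Upsilon(t-r)\varphi$ depends on $\mu_{r}^{I,N}-\mu_{r}^{I}$ only through an integration against $K$, so the weak convergence $\mu_{r}^{I,N}\to\mu_{r}^{I}$ from Theorem \ref{eeee} already forces convergence in $H^{s}$. Writing $\psi=\Upsilon(t-r)\varphi$ and recalling that $G_{r}^{I}$ is obtained from $G_{r}^{I,N}$ by replacing $\mu_{r}^{I,N}$ with $\mu_{r}^{I}$, so that $G_{r}^{I}\psi(x)=\psi(x)\big(\mu_{r}^{I},\tfrac{K(x,\cdot)}{(\mu_{r},K)}\big)$, I would first factor
\[ [G_{r}^{I,N}-G_{r}^{I}]\psi(x)=\psi(x)\,h_{r}^{N}(x),\qquad h_{r}^{N}(x)=\int_{\mathbb{T}^{2}}\frac{K(x,y)}{(\mu_{r},K(\cdot,y))}\,(\mu_{r}^{I,N}-\mu_{r}^{I})(dy). \]
Since $s>2$, $H^{s}$ is a Banach algebra (Proposition \ref{b4}), hence $\lVert[G_{r}^{I,N}-G_{r}^{I}]\psi\rVert_{H^{s}}\le C\lVert\psi\rVert_{H^{s}}\lVert h_{r}^{N}\rVert_{H^{s}}\le C\lVert\varphi\rVert_{H^{3}}\lVert h_{r}^{N}\rVert_{H^{s}}$, using $\lVert\Upsilon(t-r)\varphi\rVert_{H^{s}}\le\lVert\varphi\rVert_{H^{s}}\le\lVert\varphi\rVert_{H^{3}}$ (Lemma \ref{b6}). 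Everything thus reduces to controlling $\lVert h_{r}^{N}\rVert_{H^{s}}$.

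For a deterministic majorant I would apply the intermediate estimate proved inside Proposition \ref{f5} to each of the two nonnegative subprobability measures $\mu_{r}^{I,N}$ and $\mu_{r}^{I}$ separately; since $(\mu_{r},K(\cdot,y))\ge C>0$ uniformly in $r,y$ (a consequence of Lemma \ref{int1}) and $\sup_{y}\lVert K(\cdot,y)\rVert_{H^{s}}<\infty$ (Lemma \ref{ff3}), this yields $\lVert h_{r}^{N}\rVert_{H^{s}}\le 2C\sup_{y}\lVert K(\cdot,y)\rVert_{H^{s}}=:\kappa$, a constant independent of $N$ and $r$. Consequently $\lVert[G_{r}^{I,N}-G_{r}^{I}]\psi\rVert_{H^{s}}\le C\kappa\lVert\varphi\rVert_{H^{3}}$ for all $N,r$, providing the integrable majorant needed at the end.

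The core of the argument is to show that $\lVert h_{r}^{N}\rVert_{H^{s}}\to 0$ in probability for each fixed $r$. Expanding in the orthonormal basis, $\lVert h_{r}^{N}\rVert_{H^{s}}^{2}=\sum_{i,n_{1},n_{2}}(1+\gamma\pi^{2}(n_{1}^{2}+n_{2}^{2}))^{s}\big[(\mu_{r}^{I,N}-\mu_{r}^{I},\Xi^{i}_{n_{1},n_{2}})\big]^{2}$, where $\Xi^{i}_{n_{1},n_{2}}(y)=(\mu_{r},K(\cdot,y))^{-1}(K(\cdot,y),f^{i}_{n_{1},n_{2}})_{L^{2}}$ is a fixed bounded continuous function. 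I would split the sum at a frequency threshold $M$. For the high-frequency tail, repeating the Cauchy--Schwarz computation of Proposition \ref{f5} with the exponent $3$ in place of $s$ gives $\sum_{i,n_{1},n_{2}}(1+\gamma\pi^{2}(n_{1}^{2}+n_{2}^{2}))^{3}(\mu_{r}^{I,N},\Xi^{i}_{n_{1},n_{2}})^{2}\le C\sup_{y}\lVert K(\cdot,y)\rVert_{H^{3}}^{2}<\infty$ uniformly in $N$ (and likewise for $\mu_{r}^{I}$); since $s<3$, factoring out $(1+\gamma\pi^{2}M^{2})^{-(3-s)}$ shows the tail is at most $C(1+\gamma\pi^{2}M^{2})^{-(3-s)}$, hence arbitrarily small uniformly in $N$. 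For the finitely many remaining low-frequency terms, each tends to $0$ in probability because $(\mu_{r}^{I,N},\Xi^{i}_{n_{1},n_{2}})\to(\mu_{r}^{I},\Xi^{i}_{n_{1},n_{2}})$ in probability, by the weak convergence $\mu_{r}^{I,N}\to\mu_{r}^{I}$ (Theorem \ref{eeee} with Lemma \ref{b7}, noting that $r$ is a continuity point of the continuous limit). Combining the two parts gives $\lVert h_{r}^{N}\rVert_{H^{s}}\to 0$ in probability.

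Finally I would assemble the pieces. The convergence in probability together with the uniform bound $\lVert[G_{r}^{I,N}-G_{r}^{I}]\psi\rVert_{H^{s}}^{2}\le (C\kappa\lVert\varphi\rVert_{H^{3}})^{2}$ gives, by bounded convergence, $\mathbb{E}(\lVert[G_{r}^{I,N}-G_{r}^{I}]\Upsilon(t-r)\varphi\rVert_{H^{s}}^{2})\to 0$ for every fixed $r\in[0,t]$. Since the integrand $\mathbb{E}(\cdots)^{1/2}$ is bounded by the constant $C\kappa\lVert\varphi\rVert_{H^{3}}$, a second application of dominated convergence, now in the variable $r$, yields $\int_{0}^{t}\mathbb{E}(\lVert[G_{r}^{I,N}-G_{r}^{I}]\Upsilon(t-r)\varphi\rVert_{H^{s}}^{2})^{1/2}\,dr\to 0$, which is the claim. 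The only delicate point is the uniform-in-$N$ control of the $H^{s}$ tail: it is what upgrades the weak convergence of the empirical measures into convergence in the Sobolev norm, and it is made possible precisely by the extra regularity $K(\cdot,y)\in H^{3}$ and the strict inequality $s<3$.
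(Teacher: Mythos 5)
Your proof is correct. Its outer skeleton coincides with the paper's: factor out $\Upsilon(t-r)\varphi$ via the Banach-algebra property, reduce everything to the smoothed difference $h_{r}^{N}=\big(\mu_{r}^{I,N}-\mu_{r}^{I},\tfrac{K}{(\mu_{r},K)}\big)$, use the uniform bound from Proposition \ref{f5} as the integrable majorant, and finish with two applications of dominated convergence. Where you genuinely diverge is in the core step that upgrades the weak convergence of Theorem \ref{eeee} into Sobolev-norm convergence of $h_{r}^{N}$. The paper dominates the $H^{s}$ norm by the integer-order $H^{3}$ norm (the ``$s>3$'' in its proof is a typo for $s<3$), writes $\lVert h_{r}^{N}\rVert_{H^{3}}^{2}$ as a sum over $\lvert\eta\rvert\le 3$ of $L^{2}(dx)$ norms, and for each fixed $x$ sends the inner integral to $0$ in probability by testing $\mu_{r}^{I,N}-\mu_{r}^{I}$ against $y\mapsto D^{\eta}K(x,y)/(\mu_{r},K(\cdot,y))$, which Remark \ref{f2f} makes admissible; dominated convergence in $x$ then yields convergence in the full $H^{3}$ norm, slightly more than the lemma requires. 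You instead stay in $H^{s}$ and run a spectral truncation: the finitely many low modes converge because each $\Xi^{i}_{n_{1},n_{2}}$ is a fixed, globally bounded Lipschitz test function (so Lemma \ref{b7} applies directly), while the high-frequency tail is controlled uniformly in $N$ by the factor $(1+\gamma\pi^{2}M^{2})^{-(3-s)}$ multiplying the uniform $H^{3}$ bound furnished by Lemma \ref{ff3}. Both arguments rest on the same two ingredients --- $\sup_{y}\lVert K(\cdot,y)\rVert_{H^{3}}<\infty$ and the law of large numbers --- but yours makes the role of the strict inequality $s<3$ explicit as a compactness margin and never needs to test the empirical measures against the differentiated kernel, whereas the paper's version is shorter and delivers the stronger $H^{3}$ convergence. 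The concluding double dominated-convergence step is identical in the two proofs.
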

\begin{proof}
Since $s>3$, $H^{3}\hookrightarrow H^{s}$,  thus\\
   \hspace*{2cm} $\displaystyle\int_{0}^{t}\mathbb{E}\Big(\lVert [G_{r}^{I,N}-G_{r}^{I}]\Upsilon(t-r)\varphi \lVert_{H^{s}}^{2}\Big)^{\frac{1}{2}}dt\leq C\displaystyle\int_{0}^{t}\mathbb{E}\Big(\lVert [G_{r}^{I,N}-G_{r}^{I}]\Upsilon(t-r)\varphi \lVert_{H^{3}}^{2}\Big)^{\frac{1}{2}}dt.$\\ Furtemore as $H^{3}$ is a Banach algebra  (see Proposition \ref{b4})  and  $\lVert \Upsilon(t)\varphi\lVert_{H^{3}}\leq C \lVert \varphi \lVert_{H^{3}}$, \vspace*{0.2cm} \\
     \hspace*{2cm}$\lVert [G_{r}^{I,N}-G_{r}^{I}]\Upsilon(t-r)\varphi \lVert_{H^{3}}=\Big\lVert \Upsilon(t-r)\varphi \displaystyle\Big(\mu_{r}^{I,N}-\mu_{r}^{I},\frac{K}{(\mu_{r},K)}\Big)\Big\lVert_{H^{3}}\\\hspace*{6.59cm}\leq C\lVert \varphi \lVert_{H^{3}}\Big\lVert \displaystyle \Big(\mu_{r}^{I,N}-\mu_{r}^{I},\frac{K}{(\mu_{r},K)}\Big) \Big\lVert_{H^{3}}.$ \vspace*{0.2cm}\\ If we let 
      support$\{K(x,\cdot)\}=A(x)$, we will have \\
      $\Big\lVert \displaystyle\Big(\mu_{r}^{I,N}-\mu_{r}^{I},\frac{K}{(\mu_{r},K)}\Big) \Big\lVert_{H^{3}}^{2}=\displaystyle\sum\limits_{\lvert\eta\lvert\leq 3}\int_{\mathbb{T}^{2}}\Big\lvert\int_{A(x)} \frac{D^{\eta}K(x,y)}{\int_{\mathbb{T}^{2}}K(x',y)\mu_{r}(dx')}(\mu_{r}^{I,N}-\mu_{r}^{I})(dy)\Big\lvert^{2}dx,$\vspace*{0.2cm}\\
      furthemore from Remark \ref{f2f}  the map $y\in A(x)\mapsto D^{\eta}K(x,y)$ is continuous and bounded by $C\underset{0\leq \lvert\eta\lvert\leq 3}{\max}\lVert k^{(\lvert \eta\lvert)}\lVert_{\infty}$. 
   Thus as the map $y\in \mathbb{T}^{2}\mapsto\displaystyle \int_{\mathbb{T}^{2}}K(x',y)\mu_{r}(dx')$ is also continuous and lower bounded by a positive constant, the map $y\in A(x)\mapsto \displaystyle \frac{D^{\eta}K(x,y)}{\int_{\mathbb{T}^{2}}K(x',y)\mu_{r}(dx')}$ is continuous and bounded by $C\underset{0\leq \lvert\eta\lvert\leq 3}{\max}\lVert k^{(\lvert \eta\lvert)}\lVert_{\infty}$. So we deduce from  Theorem \ref{eeee} that \vspace*{0.1cm}\\\hspace*{3.2cm} $\displaystyle\Big\lvert\int_{A(x)} \frac{D^{\eta}K(x,y)}{\int_{\mathbb{T}^{2}}K(x',y)\mu_{r}(dx')}(\mu_{r}^{I,N}-\mu_{r}^{I})(dy)\Big\lvert^{2}\xrightarrow{P}0$.\\ According to Lebesgue's dominated convergence theorem, $\mathbb{E}\Big(\Big\lVert \displaystyle\Big(\mu_{r}^{I,N}-\mu_{r}^{I},\frac{K}{(\mu_{r},K)}\Big) \Big\lVert_{H^{3}}^{2}\Big)\longrightarrow 0,$ as $N\longrightarrow\infty$. Hence\vspace*{0.2cm}\\ $\hspace*{2.5cm}\displaystyle\int_{0}^{t}\mathbb{E}\Big(\lVert [G_{r}^{I,N}-G_{r}^{I}]\Upsilon(t-r)\varphi \lVert_{H^{3}}^{2}\Big)^{\frac{1}{2}}dt\\\hspace*{5cm}\leq C\lVert  \varphi \lVert_{H^{3}}\displaystyle\int_{0}^{t}\mathbb{E}\Big(\Big\lVert \displaystyle\Big(\mu_{r}^{I,N}-\mu_{r}^{I},\frac{K}{(\mu_{r},K)}\Big) \Big\lVert_{H^{3}}^{2}\Big)^{\frac{1}{2}}dr\vspace*{0.1cm}\\\hspace*{5cm}\longrightarrow0,$ as $N\longrightarrow\infty.$\\
   Hence the result.
\end{proof}
\begin{lem}
  For any $ t\geq0$, $\varphi\in H^{s}(\mathbb{T}^{2})$,  as $N\longrightarrow \infty$, \\\hspace*{5cm} $ \displaystyle\int_{0}^{t}\mathbb{E}\Big(\lVert [G_{r}^{S,I,N}-G_{r}^{S,I}]\Upsilon(t-r)\varphi \lVert_{H^{s}}^{2}\Big)^\frac{1}{2}dt\longrightarrow 0.$\label{f12}
 \end{lem}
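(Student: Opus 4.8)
The plan is to follow the pattern of the proof of Lemma \ref{f11}, exploiting the fact that in $G_r^{S,I,N}$ the test function enters only through the scalar factor $(\mu_r^{S,N},\varphi K(\cdot,y))$ while the dependence on the outer variable $x'$ is carried entirely by the kernel $K(x',\cdot)$. Consequently I will \emph{not} need $\varphi\in H^3$: it suffices that $\psi:=\Upsilon(t-r)\varphi$ be bounded and Lipschitz uniformly in $r$, which holds because $s>2$ gives the embedding $H^s\subset C^1$ (Proposition \ref{b4}) and $\Upsilon(t-r)$ is an $H^s$-contraction (Lemma \ref{b6}), so that $\|\psi\|_\infty\vee\|\psi\|_L\le C\|\varphi\|_{H^s}$. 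First I would write the limiting operator explicitly as
\[
G_r^{S,I}\psi(x')=\int_{\mathbb{T}^2}K(x',y)\,\frac{(\mu_r^{S},\psi K(\cdot,y))}{(\mu_r,K(\cdot,y))^2}\,\mu_r^{I}(dy),
\]
obtained from $G_r^{S,I,N}$ by the replacements $\mu_r^{S,N}\to\mu_r^{S}$, $\mu_r^{N}\to\mu_r$, $\mu_r^{I,N}\to\mu_r^{I}$.

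The key step is a telescoping decomposition of $[G_r^{S,I,N}-G_r^{S,I}]\psi$ over these three measures, arranged so that the \emph{random} denominator $(\mu_r^{N},K)$ never divides a measure increment. Writing the inner scalar difference as
\begin{align*}
\frac{(\mu_r^{S,N},\psi K)}{(\mu_r^N,K)(\mu_r,K)}-\frac{(\mu_r^{S},\psi K)}{(\mu_r,K)^2}
&=\frac{1}{(\mu_r,K)}\Big[\frac{(\mu_r^{S,N},\psi K)}{(\mu_r^N,K)}\cdot\frac{(\mu_r-\mu_r^N,K)}{(\mu_r,K)}\\
&\qquad\qquad+\frac{(\mu_r^{S,N}-\mu_r^{S},\psi K)}{(\mu_r,K)}\Big],
\end{align*}
I would use that $(\mu_r^{S,N},\psi K(\cdot,y))/(\mu_r^N,K(\cdot,y))\le\|\psi\|_\infty$ (since $\mu_r^{S,N}\le\mu_r^{N}$, exactly as in Proposition \ref{f5}) and that $(\mu_r,K(\cdot,y))\ge C>0$ uniformly in $y$ by Lemma \ref{int1}. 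Together with the outer increment $(\mu_r^{I,N}-\mu_r^{I},\cdot)$, this produces three ``difference'' terms $T_1^N,T_2^N,T_3^N$ carrying respectively $(\mu_r-\mu_r^N,K(\cdot,y))$, $(\mu_r^{S,N}-\mu_r^{S},\psi K(\cdot,y))$ and $(\mu_r^{I,N}-\mu_r^{I},\cdot)$, every other factor remaining bounded.

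For each term I would estimate the $H^s$ norm by the computation of Proposition \ref{f5}: since the $x'$-dependence is only through $K(x',y)=K(y,x')$, a Cauchy--Schwarz step against the subprobability $\mu_r^{I,N}(dy)$ gives $\|T_i^N\|_{H^s}\le C\,\big(\sup_y\|K(\cdot,y)\|_{H^s}\big)\,\sup_y|a_i^N(y)|$, where $\sup_y\|K(\cdot,y)\|_{H^s}<\infty$ follows from Lemma \ref{ff3} together with $s<3$ (this is where (H2) enters) and $a_i^N$ is the corresponding scalar factor. Each $\sup_y|a_i^N(y)|$ is bounded by a deterministic constant (all surviving denominators are $\ge C$ and $d_F\le2$) and tends to $0$ in probability: using that $\psi K(\cdot,y)$ and $K(\cdot,y)$ are Lipschitz uniformly in $y$ (Lemma \ref{e5}), the Fortet bound of Lemma \ref{e6} yields $\sup_y|a_i^N(y)|\le C\,d_F(\cdot,\cdot)$, and $d_F(\mu_r^{S,N},\mu_r^{S})$, $d_F(\mu_r^{N},\mu_r)$, $d_F(\mu_r^{I,N},\mu_r^{I})\xrightarrow{P}0$ by Theorem \ref{eeee}, Proposition \ref{ee} and Lemma \ref{b7}.

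I would then conclude with two applications of dominated convergence. For fixed $r$, $\|T_i^N\|_{H^s}^2$ is bounded by a constant and converges to $0$ in probability, so $\mathbb{E}(\|T_i^N\|_{H^s}^2)\to0$ by bounded convergence; hence $\mathbb{E}(\|[G_r^{S,I,N}-G_r^{S,I}]\Upsilon(t-r)\varphi\|_{H^s}^2)^{1/2}$ is bounded uniformly in $(r,N)$ and tends to $0$ for each $r$, whence $\int_0^t(\cdots)^{1/2}\,dr\to0$ by dominated convergence on $[0,t]$. The main obstacle is the middle step: one must choose the telescoping decomposition carefully so that no measure increment is ever divided by the random quantity $(\mu_r^N,K)$ (keeping every factor uniformly bounded) while simultaneously isolating the kernel $K(x',\cdot)$ so that the uniform $H^s$-regularity of Lemma \ref{ff3} can be extracted; once this is done, the vanishing of the scalar factors via Theorem \ref{eeee} is routine.
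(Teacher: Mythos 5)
Your decomposition is the same telescoping the paper uses (the paper peels off the $\mu^{I}$-increment first and then splits the inner quotient, but the three resulting terms coincide with yours), and the rest of the argument --- control of $\sup_{y}\lVert K(\cdot,y)\rVert_{H^{s}}$ through the $H^{3}$ norm via Lemma \ref{ff3} and $s<3$, vanishing of the scalar increments via Lemma \ref{e6} and Theorem \ref{eeee}, then dominated convergence in $\omega$ and in $r$ --- matches the paper's proof. The one place where your stated mechanism does not apply as written is the term whose \emph{outer} measure is the signed increment $\mu_{r}^{I,N}-\mu_{r}^{I}$: for that term there is no inner scalar factor $a_{i}^{N}(y)$ tending to $0$, the Cauchy--Schwarz step against the subprobability $\mu_{r}^{I,N}(dy)$ is unavailable, and the Fortet bound of Lemma \ref{e6} cannot be invoked at derivative order $3$, since under (H2) the maps $y\mapsto D^{\eta}K(x,y)$ are only continuous and bounded, not Lipschitz, when $\lvert\eta\rvert=3$ (Remark \ref{f2f}). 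The paper treats exactly this term differently: for each fixed $x$ and each $\lvert\eta\rvert\le 3$ it tests the weak convergence $\mu_{r}^{I,N}\to\mu_{r}^{I}$ against the continuous bounded integrand $y\mapsto D^{\eta}K(x,y)\,(\mu_{r}^{S},\psi K(\cdot,y))/(\mu_{r},K(\cdot,y))^{2}$, obtains convergence to $0$ in probability pointwise in $x$, and then integrates over $x$ and $\omega$ by dominated convergence --- a fix entirely within the toolkit you already use, but one you need to make explicit.
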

\begin{proof}
We  have \vspace*{0.15cm}\\ $G_{r}^{S,I,N}(\Upsilon(t-r)\varphi)(x)-G_{r}^{S,I}(\Upsilon(t-r)\varphi)(x)$\vspace*{0.2cm}\\\hspace*{1.3cm}=$\displaystyle \left(\mu_{r}^{I,N}, K(x,.)\frac{(\mu_{r}^{S,N},K \Upsilon(t-r)\varphi )}{(\mu^{N}_{r},K)(\mu_{r},K)}\right)-\left(\mu_{r}^{I}, K(x,.)\frac{(\mu_{r}^{S},K \Upsilon(t-r)\varphi )}{(\mu_{r},K)(\mu_{r},K)} \right)\vspace*{0.1cm}\\\hspace*{1.3cm}=\displaystyle \left(\mu_{r}^{I,N}-\mu_{r}^{I}, K(x,.)\frac{(\mu_{r}^{S},K \Upsilon(t-r)\varphi )}{(\mu_{r},K)^{2}} \right)+\displaystyle \left(\mu_{r}^{I,N}, K(x,.)\frac{(\mu_{r}^{S,N}-\mu_{r}^{S},K \Upsilon(t-r)\varphi)}{(\mu_{r},K)^{2}} \right)\vspace*{0.1cm}\\\hspace*{1.3cm}- \displaystyle \left(\mu_{r}^{I,N}, K(x,.)\frac{(\mu_{r}^{S,N},K \Upsilon(t-r)\varphi )}{(\mu^{N}_{r},K)(\mu_{r},K)^{2}}(\mu^{N}_{r}-\mu_{r},K) \right). $\vspace*{0.2cm}\\
Furthemore:\vspace*{0.2cm}\\
 a) Since :\\  $-$ $\Big\lvert \displaystyle\frac{\int_{\mathbb{T}^{2}}K(x',y)\Upsilon(t-r)\varphi(x')\mu_{r}^{S}(dx')}{\int_{\mathbb{T}^{2}}K(x'',y)\mu_{r}(dx'')} \Big\lvert \leq \lVert \varphi \lVert_{\infty}$\\ $-$   The map $y\in \mathbb{T}^{2}\mapsto\displaystyle \int_{\mathbb{T}^{2}}K(x',y)\mu_{r}(dx')$ is  continuous and lower bounded by a positive \\\hspace*{0.5cm} constant.  \\ $-$ Under (H2), The map $y\in \mathbb{T}^{2}\mapsto\displaystyle\int_{\mathbb{T}^{2}}K(x',y)\Upsilon(t-r)\varphi(x')\mu_{r}^{S}(dx')$ is continuous \\ $-$ From Remark \ref{f2f}, for any $ x\in \mathbb{T}^{2}$, $ \lvert \eta\lvert\leq 3,$   the map   $y\in A(x)\mapsto \displaystyle D^{\eta}K(x,y)$ is continuous \hspace*{0.5cm} and bounded by $C\underset{0\leq \lvert\eta\lvert\leq 3}{\max}\lVert k^{(\lvert \eta\lvert)}\lVert_{\infty}$, \\the map $y\in A(x)\mapsto \displaystyle D^{\eta}K(x,y)\times\displaystyle\frac{\int_{\mathbb{T}^{2}}K(x',y)\Upsilon(t-r)\varphi(x')\mu_{r}^{S}(dx')}{\Big(\int_{\mathbb{T}^{2}}K(x'',y)\mu_{r}(dx'')\Big)^{2}}$ is continous and bounded  by $C\lVert \varphi\lVert_{\infty}\underset{0\leq \lvert\eta\lvert\leq 3}{\max}\lVert k^{(\lvert \eta\lvert)}\lVert_{\infty}$, hence\\\hspace*{2cm}$\Big \lvert\displaystyle\int_{A(x)}D^{\eta}K(x,y)\times\displaystyle\frac{\int_{\mathbb{T}^{2}}K(x',y)\Upsilon(t-r)\varphi(x')\mu_{r}^{S}(dx')}{\Big(\int_{\mathbb{T}^{2}}K(x'',y)\mu_{r}(dx'')\Big)^{2}}(\mu_{r}^{I,N}-\mu_{r}^{I})(dy)\Big\lvert^{2}\xrightarrow{P}0.$ \\ Furthemore, according to Lebesgue's  dominated convergence theorem, we have \vspace*{0.1cm}\\
   $\hspace*{1cm}\displaystyle\int_{0}^{t}\mathbb{E}\Big(\Big\lVert \left(\mu_{r}^{I,N}-\mu_{r}^{I}, K\frac{(\mu_{r}^{S},K \Upsilon(t-r)\varphi )}{(\mu_{r},K)^{2}} \right) \Big\lVert_{H^{s}}^{2}\Big)^\frac{1}{2}dr\\\hspace*{2cm}\leq C \displaystyle \int_{0}^{t} \mathbb{E}\Big(\Big\lVert \left(\mu_{r}^{I,N}-\mu_{r}^{I}, K\frac{(\mu_{r}^{S},K \Upsilon(t-r)\varphi )}{(\mu_{r},K)^{2}} \right) \Big\lVert_{H^{3}}^{2}\Big)^{\frac{1}{2}}dr\\\hspace*{2cm}\leq C \displaystyle\int_{0}^{t}\Big\{\sum\limits_{\lvert\eta\lvert\leq3}\int_{\mathbb{T}^{2}}\mathbb{E}\Big(\Big\lvert\int_{A(x)}D^{\eta}K(x,y)\times\\\hspace*{5cm}\times\displaystyle\frac{\int_{\mathbb{T}^{2}}K(x',y)\Upsilon(t-r)\varphi(x')\mu_{r}^{S}(dx')}{\Big(\int_{\mathbb{T}^{2}}K(x'',y)\mu_{r}(dx'')\Big)^{2}}(\mu_{r}^{I,N}-\mu_{r}^{I})(dy)\Big\lvert^{2}\Big)dx\Big\}^{\frac{1}{2}}dr\vspace*{0.1cm}\\\hspace*{2cm}\longrightarrow0,$ as $N\longrightarrow\infty.$ \vspace*{0.17cm}\\b) Let us prove that $\lim\limits_{N\longrightarrow \infty}\displaystyle\int_{0}^{t}\mathbb{E}\Big(\Big\lVert \displaystyle \left(\mu_{r}^{I,N}, K\frac{(\mu_{r}^{S,N}-\mu_{r}^{S},K \Upsilon(t-r)\varphi)}{(\mu_{r},K)^{2}}\right) \Big\lVert_{H^{s}}^{2}\Big)^{\frac{1}{2}}dt=0.$
                     \\Under (H2):\vspace*{0.1cm}\\\hspace*{0.3cm}$-$ $\forall y\in \mathbb{T}^{2}$,  the map $x\in \mathbb{T}^{2} \mapsto \displaystyle K(x,y)\Upsilon(t-r)\varphi(x)$ is Lipschitz and bounded and its $\lVert \cdot\lVert_{\infty}$ \\\hspace*{0.7cm} and $\lVert  \cdot \lVert_{L}$ norms are bounded by $C\lVert k \lVert_{\infty}\lVert \varphi \lVert_{\infty} $ and $C(\lVert k \lVert_{\infty}+\lVert \varphi \lVert_{\infty} )$ respectively. \\\hspace*{0.3cm}$-$  For any $ x\in \mathbb{T}^{2}$, $ \lvert \eta\lvert\leq 3,$   the map   $y\in A(x)\mapsto \displaystyle D^{\eta}K(x,y)$ is continuous  and bounded by \\\hspace*{0.7cm} $C\underset{0\leq \lvert\eta\lvert\leq 3}{\max}\lVert k^{(\lvert \eta\lvert)}\lVert_{\infty}$. \\\hspace*{0.3cm}$-$ $\forall y \in \mathbb{T}^{2},$ $\displaystyle\frac{1}{(\mu_{r},K(.,y))}$ is bounded by a positive contant independent of $y$. \vspace*{0.2cm} \\Thus\\
           $ \Big\lVert \displaystyle \left(\mu_{r}^{I,N}, K\frac{(\mu_{r}^{S,N}-\mu_{r}^{S},K \Upsilon(t-r)\varphi)}{(\mu_{r},K)^{2}} \right) \Big\lVert_{H^{s}}^{2}\leq C\Big\lVert \displaystyle \left(\mu_{r}^{I,N}, K\frac{(\mu_{r}^{S,N}-\mu_{r}^{S},K \Upsilon(t-r)\varphi)}{(\mu_{r},K)^{2}}\right) \Big\lVert_{H^{3}}^{2}$\vspace*{0.2cm}\\\hspace*{1cm} $=C \displaystyle\sum\limits_{\lvert\eta\lvert\leq 3}\int_{\mathbb{T}^{2}}\Big\lvert\int_{A(x)}D^{\eta}K(x,y)\times\displaystyle\frac{\int_{\mathbb{T}^{2}}K(x',y)\Upsilon(t-r)\varphi(x')(\mu_{r}^{S,N}-\mu_{r}^{S})(dx')}{\Big(\int_{\mathbb{T}^{2}}K(x'',y)\mu_{r}(dx'')\Big)^{2}}\mu^{I,N}_{r}(dy)\Big\lvert^{2}dx \vspace*{0.2cm}\\\hspace*{1cm} \leq C \displaystyle\sum\limits_{\lvert\eta\lvert\leq 3}\int_{\mathbb{T}^{2}}\int_{A(x)}\Big\lvert D^{\eta}K(x,y)\times\int_{\mathbb{T}^{2}}K(x',y)\Upsilon(t-r)\varphi(x') (\mu_{r}^{S,N}-\mu_{r}^{S})(dx')\Big\lvert^{2}\mu^{I,N}_{r}(dy)dx \vspace*{0.2cm}\\\hspace*{1cm}\leq C(
           \lVert k \lVert_{\infty},\lVert \varphi \lVert_{\infty},\underset{0\leq \lvert\eta\lvert\leq 3}{\max}\lVert k^{(\lvert \eta\lvert)}\lVert_{\infty})d_{F}^{2}(\mu_{r}^{S,N},\mu_{r}^{S}).$\vspace*{0.12cm}\\ Hence since $d_{F}^{2}(\mu_{r}^{S,N},\mu_{r}^{S})\xrightarrow{P}0$, and  $d_{F}^{2}(\mu_{r}^{S,N},\mu_{r}^{S})\leq 4$,  according to Lebesgue's  dominated convergence theorem, we have \vspace*{0.13cm}\\\hspace*{0.5cm}$\displaystyle\int_{0}^{t} \mathbb{E}\Big(\Big\lVert \displaystyle \left(\mu_{r}^{I,N}, K\frac{(\mu_{r}^{S,N}-\mu_{r}^{S},K \Upsilon(t-r)\varphi)}{(\mu_{r},K)^{2}}\right) \Big\lVert_{H^{s}}^{2}\Big)^{\frac{1}{2}}dt\\\hspace*{2.5cm}\leq C(\lVert \varphi \lVert_{\infty},\underset{0\leq \lvert\eta\lvert\leq 3}{\max}\lVert k^{(\lvert \eta\lvert)}\lVert_{\infty})\displaystyle\int_{0}^{t}\mathbb{E}(d_{F}^{2}(\mu_{r}^{S,N},\mu_{r}^{S}))^{\frac{1}{2}}dr\vspace*{0.1cm}\\\hspace*{2.5cm}\longrightarrow0,$ as $N\longrightarrow\infty.$\vspace*{0.18cm}\\
        c) Finaly we show that $\lim\limits_{N\longrightarrow\infty}\displaystyle\int_{0}^{t}\mathbb{E}\Big(\Big\lVert \displaystyle \left(\mu_{r}^{I,N}, K\frac{(\mu_{r}^{S,N},K \Upsilon(t-r)\varphi )}{(\mu^{N}_{r},K)(\mu_{r},K)^{2}}(\mu^{N}_{r}-\mu_{r},K) \right) \Big\lVert_{H^{s}}^{2}\Big)^{\frac{1}{2}}dt=0.$
       \vspace*{0.05cm} \\Since for all $ x\in \mathbb{T}^{2}$, $K(x,.)$ is Lipschitz and bounded by $\lVert k\lVert_{\infty}$  and  For any $ x\in \mathbb{T}^{2}$, $ \lvert \eta\lvert\leq 3,$   the map   $y\in A(x)\mapsto \displaystyle D^{\eta}K(x,y)$ is bounded  by  $C\underset{0\leq \lvert\eta\lvert\leq 3}{\max}\lVert k^{(\lvert \eta\lvert)}\lVert_{\infty}$, we have \\
       $ \Big\lVert \displaystyle \left(\mu_{r}^{I,N}, K\frac{(\mu_{r}^{S,N},K \Upsilon(t-r)\varphi )}{(\mu^{N}_{r},K)(\mu_{r},K)^{2}}(\mu^{N}_{r}-\mu_{r},K) \right) \Big\lVert_{H^{s}}^{2}\vspace*{0.13cm}$\\ $\hspace*{1cm}\leq C\Big\lVert \displaystyle \left(\mu_{r}^{I,N}, K\frac{(\mu_{r}^{S,N},K \Upsilon(t-r)\varphi )}{(\mu^{N}_{r},K)(\mu_{r},K)^{2}}(\mu^{N}_{r}-\mu_{r},K) \right)\Big \lVert_{H^{3}}^{2}$\vspace*{0.18cm}\\ $\hspace*{1cm}\leq C \displaystyle\sum\limits_{\lvert\eta\lvert\leq 3}\int_{\mathbb{T}^{2}}\Big(\int_{A(x)}\lvert D^{\eta}K(x,y)\vert\times\left\lvert\frac{\int_{\mathbb{T}^{2}}\Upsilon(t-r)\varphi(x')K(x',y)\mu_{r}^{S,N}(dx')}{\int_{\mathbb{T}^{2}}K(u,y)\mu_{r}^{N}(du)\Big(\int_{\mathbb{T}^{2}}K(u,y)\mu_{r}(du)\Big)^{2}}\right\lvert\vspace*{0.13cm}\\\hspace*{9cm} \times \left\lvert \int_{\mathbb{T}^{2}}K(u,y)(\mu_{r}^{N}-\mu_{r})(du)\right \lvert \mu_{r}^{I,N}(dy)\Big)^{2}dx \vspace*{0.13cm}\\\hspace*{1cm}\leq C\lVert \varphi \lVert_{\infty}^{2}  \displaystyle\sum\limits_{\lvert\eta\lvert\leq 3}\int_{\mathbb{T}^{2}}\Big(\int_{A(x)}\lvert D^{\eta}K(x,y)\lvert\times\left\lvert \int_{\mathbb{T}^{2}}K(u,y)(\mu_{r}^{N}-\mu_{r})(du)\right \lvert \mu_{r}^{I,N}(dy)\Big)^{2}dx \vspace*{0.13cm}\\\hspace*{1cm}\leq  C(\lVert k\lVert_{\infty},\lVert \varphi \lVert_{\infty},\underset{0\leq \lvert\eta\lvert\leq 3}{\max}\lVert k^{(\lvert \eta\lvert)}\lVert_{\infty})d_{F}^{2}(\mu_{r}^{N},\mu_{r}).$\\Thus\vspace*{0.1cm}\\\hspace*{1cm}$\displaystyle\int_{0}^{t}\mathbb{E}\Big(\Big\lVert \displaystyle \left(\mu_{r}^{I,N}, K\frac{(\mu_{r}^{S,N},K \Upsilon(t-r)\varphi )}{(\mu^{N}_{r},K)(\mu_{r},K)^{2}}(\mu^{N}_{r}-\mu_{r},K) \right) \Big\lVert_{H^{s-1}}^{2}\Big)^{\frac{1}{2}}dt\vspace*{0.13cm}\\\hspace*{4cm}\leq C(
        \lVert k\lVert_{\infty},\lVert \varphi \lVert_{\infty},\underset{0\leq \lvert\eta\lvert\leq 3}{\max}\lVert k^{(\lvert \eta\lvert)}\lVert_{\infty})\displaystyle\int_{0}^{t}\mathbb{E}(d_{F}^{2}(\mu_{r}^{N},\mu_{r}))^{\frac{1}{2}}dr\vspace*{0.1cm}\\\hspace*{4cm}\longrightarrow0,$ as $N\longrightarrow\infty.$
   \end{proof}
 The next Proposition establish the evolution equations of all limit points  $ (U,V)$  of the sequence $ (U^{N},V^{N})$.
      \begin{prp}
    Any limit point  $ (U,V)$  of the sequence $ (U^{N},V^{N})$  satisfies \vspace*{0.2cm}\\
           $\displaystyle U_{t}=\displaystyle  \Upsilon(t)U_{0} + \beta \int_{0}^{t}\Upsilon(t-r)(G_{r}^{S,I})^{*}Z_{r}dr -\beta\int_{0}^{t}\Upsilon(t-r)(G_{r}^{I})^{*}U_{r}dr - \beta\int_{0}^{t}\Upsilon(t-r)(G_{r}^{S})^{*} V_{r}dr\\\hspace*{1cm} + \int_{0}^{t}\Upsilon(t-r)W_{r}^{1},\hspace*{11.8cm}(6.26)$ \vspace*{0.1cm}\\ $ \displaystyle V_{t}=\displaystyle  \Upsilon(t)V_{0} -\beta \int_{0}^{t}\Upsilon(t-r)(G_{r}^{S,I})^{*}Z_{r}dr +\beta\int_{0}^{t}\Upsilon(t-r)(G_{r}^{I})^{*}U_{r} dr + \beta\int_{0}^{t}\Upsilon(t-r)(G_{r}^{S})^{*}V_{r}dr \\\hspace*{1cm}-\alpha\int_{0}^{t}\Upsilon(t-r)V_{r}dr+\int_{0}^{t}\Upsilon(t-r)dW_{r}^{2}. \hspace*{7.8cm}(6.27)$ \label{llln}
                                                          \end{prp}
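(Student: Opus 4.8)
The plan is to pass to the limit in the mild equations (\ref{f71}) and (\ref{f72}) (taken with $u=0$) along a convergent subsequence. First I would upgrade the marginal tightness statements into a joint one: since each of $(U^N)$, $(V^N)$ (Proposition \ref{f10}), $(Z^N)$ (Proposition \ref{ff4}) and $(\widetilde M^N,\widetilde L^N)$ (Proposition \ref{f1}) is tight, the quintuple $(U^N,V^N,Z^N,\widetilde M^N,\widetilde L^N)$ is tight in the product space, so by Prokhorov's theorem some subsequence converges in law to $(U,V,Z,W^1,W^2)$, whose joint law I must respect. By the Skorokhod representation theorem I may assume all these convergences hold almost surely on a single probability space. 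Because the jumps of $\mu^{S,N}$ and $\mu^{I,N}$ have size $1/N$, those of $U^N,V^N$ are bounded by $C/\sqrt N$, so exactly as in Proposition \ref{f2} the limits $U,V$ are a.s. continuous; Skorokhod convergence to a continuous limit then gives $\sup_{0\le r\le t}\lVert U^N_r-U_r\rVert_{H^{-s}}\to0$ and likewise for $V^N,Z^N$. This local uniform convergence is what will make the time integrals below converge.

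I would then fix $\varphi$ in the dense set $H^{s+2}(\mathbb T^2)$ and rewrite (\ref{f71}) in its tested form, as in the derivation leading to (\ref{f71}): $(U^N_t,\varphi)$ equals $(U^N_0,\Upsilon(t)\varphi)$, plus the drift integrals $\beta\int_0^t(Z^N_r,G^{S,I,N}_r\Upsilon(t-r)\varphi)\,dr$, $-\beta\int_0^t(U^N_r,G^{I,N}_r\Upsilon(t-r)\varphi)\,dr$ and $-\beta\int_0^t(V^N_r,G^S_r\Upsilon(t-r)\varphi)\,dr$, plus the stochastic convolution $\int_0^t(\Upsilon(t-r)\varphi,d\widetilde M^N_r)$. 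The initial term converges to $(U_0,\Upsilon(t)\varphi)$ by Theorem \ref{lo}. The $G^S$ term is the easiest, since $G^S_r$ does not depend on $N$: by Corollary \ref{ff2} applied with the deterministic measures $\mu_r,\mu^S_r$, the curve $r\mapsto G^S_r\Upsilon(t-r)\varphi$ is fixed, continuous and uniformly $H^s$-bounded on $[0,t]$, so $\int_0^t(V^N_r,G^S_r\Upsilon(t-r)\varphi)\,dr\to\int_0^t(V_r,G^S_r\Upsilon(t-r)\varphi)\,dr$ by the local uniform convergence of $V^N$.

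The crux is the two remaining drift terms, in which both the operator and the integrand depend on $N$. For the $G^{I,N}$ term I would split $G^{I,N}_r\Upsilon(t-r)\varphi=(G^{I,N}_r-G^I_r)\Upsilon(t-r)\varphi+G^I_r\Upsilon(t-r)\varphi$. The first piece is dominated by $\int_0^t\lVert U^N_r\rVert_{H^{-s}}\lVert(G^{I,N}_r-G^I_r)\Upsilon(t-r)\varphi\rVert_{H^s}\,dr$, whose expectation tends to $0$ by Cauchy--Schwarz together with the uniform second-moment bound of Proposition \ref{f8} and the convergence in Lemma \ref{f11}; the second piece converges because $G^I_r\Upsilon(t-r)\varphi$ is again a fixed uniformly bounded continuous $H^s$-curve. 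The $G^{S,I,N}$ term is handled identically, now invoking Lemma \ref{f12}, the moment bound of Proposition \ref{d11} for $Z^N$, and the continuity of the limit $Z$. For the stochastic convolution I would avoid the martingale limit directly and integrate by parts: since $\Upsilon(t-r)\varphi$ is $C^1$ in $r$ with $\partial_r\Upsilon(t-r)\varphi=-\gamma\Upsilon(t-r)\Delta\varphi\in H^s$ uniformly bounded for $\varphi\in H^{s+2}$, one has $\int_0^t(\Upsilon(t-r)\varphi,d\widetilde M^N_r)=(\varphi,\widetilde M^N_t)+\gamma\int_0^t(\Upsilon(t-r)\Delta\varphi,\widetilde M^N_r)\,dr$, a continuous functional of $\widetilde M^N$; by the a.s. convergence $\widetilde M^N\to W^1$ (Proposition \ref{f4}) it converges to $(\varphi,W^1_t)+\gamma\int_0^t(\Upsilon(t-r)\Delta\varphi,W^1_r)\,dr=\int_0^t(\Upsilon(t-r)\varphi,dW^1_r)$.

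Collecting these limits yields the weak form of (6.26) tested against every $\varphi\in H^{s+2}$; since $H^{s+2}$ is dense in $H^s$ and all terms are continuous in $\varphi$ for the $H^s$-norm (using Corollary \ref{ff2} and the operator bound $\lvert\Upsilon(t-r)\rvert_{\mathcal L(H^{-s})}\le1$ of Proposition \ref{b5}), the identity extends to all test functions and hence to the $H^{-s}$-valued mild equation (6.26); the analogous passage in (\ref{f72}) gives (6.27). I expect the genuine difficulty to be precisely the two $N$-dependent drift terms, where one must simultaneously control the operator discrepancy $G^{\cdot,N}_r-G^\cdot_r$ and the fluctuation process, which is exactly why the splitting argument, the uniform moment bounds of Propositions \ref{f8} and \ref{d11}, and the dominated-convergence estimates of Lemmas \ref{f11} and \ref{f12} are indispensable; a secondary subtlety is identifying the stochastic-convolution limit correctly, which the integration-by-parts reformulation resolves cleanly.
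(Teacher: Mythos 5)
Your proposal is correct and follows essentially the same route as the paper: pass to the limit in the mild equations (\ref{f71}) and (\ref{f72}) with $u=0$, split the $N$-dependent drift operators as $G^{\cdot,N}_{r}=G^{\cdot}_{r}+(G^{\cdot,N}_{r}-G^{\cdot}_{r})$, and control the discrepancy terms via Lemmas \ref{f11} and \ref{f12} combined with the uniform moment bounds of Propositions \ref{f8} and \ref{d11}, while the $G^{S}$, initial and martingale terms pass to the limit directly. The additional scaffolding you supply (joint tightness plus a Skorokhod representation, the integration-by-parts identification of the stochastic convolution, and the closing density argument in $\varphi$) only makes explicit steps the paper treats tersely, and does not change the underlying argument.
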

                                                          \begin{proof}
   We prove this Proposition by taking the weak limit   in the equations  (\ref{f71}) and (\ref{f72}). \\Note first that from Propositions \ref{ff4} and \ref{f10} there exists a subsequence along which the sequences $(U^{N},V^{N})_{N}$,    and $(Z^{N})_{N}$ converge in law towards  $(U,V)$ and Z respectively. \\
   1- We first prove that $\displaystyle\int_{0}^{t}(U_{r}^{N},G_{r}^{I,N}\Upsilon(t-r)\varphi )dr\xrightarrow{L}\int_{0}^{t}(U_{r},G_{r}^{I}\Upsilon(t-r)\varphi )dr.$\\We have $\displaystyle\int_{0}^{t}\left(U_{r}^{N} ,G_{r}^{I,N}\Upsilon(t-r)\varphi \right)dr=\displaystyle\int_{0}^{t}\left(U_{r}^{N} ,G_{r}^{I}\Upsilon(t-r)\varphi \right)dr\\\hspace*{7cm}+\displaystyle\int_{0}^{t}\left(U_{r}^{N} ,[G_{r}^{I,N}-G_{r}^{I}]\Upsilon(t-r)\varphi \right)dr.$ \\$-$ Since  $ G_{r}^{I}\Upsilon(t-r)\varphi\in H^{s} $, \\\hspace*{4cm} $\displaystyle\int_{0}^{t}\left(U_{r}^{N} ,G_{r}^{I}\Upsilon(t-r)\varphi \right)dr\xrightarrow{L}\int_{0}^{t}\left(U_{r} ,G_{r}^{I}\Upsilon(t-r)\varphi \right)dr.$\\$-$ From Lemma \ref{f11} and  Proposition \ref{f8} $\displaystyle\int_{0}^{t}\left(U_{r}^{N} ,[G_{r}^{I,N}-G_{r}^{I}]\Upsilon(t-r)\varphi \right)dr\longrightarrow 0$ in $L^{1}(\mathbb{P})$. Indeed, $\mathbb{E}\left(\Big \lvert \displaystyle\int_{0}^{t}\left(U_{r}^{N} ,[G_{r}^{I,N}-G_{r}^{I}]\Upsilon(t-r)\varphi \right)dr \Big \lvert\right)\leq\\\hspace*{5cm}\leq \underset{N\geq 1}{\sup}\hspace*{0.04cm}\underset{0\leq r \leq t}{\sup}\mathbb{E}(\lVert U_{r}^{N}\lVert_{H^{-s}}^{2})^{\frac{1}{2}}\displaystyle\int_{0}^{t}\mathbb{E}(\lVert [G_{r}^{I,N}-G_{r}^{I}]\Upsilon(t-r)\varphi\lVert_{H^{s}}^{2})^{\frac{1}{2}}dr.
   $\vspace*{0.12cm}\\
2- Since $V^{N}$  converges in law 
                          in $D(\mathbb{R}_{+},H^{-s})$ towards $V,$ \\\hspace*{3cm}
                          $\displaystyle\int_{0}^{t}(\Upsilon(t-r)(G_{r}^{S})^{*}V_{r}^{N},\varphi )dr\xrightarrow{L}\int_{0}^{t}(\Upsilon(t-r)(G_{r}^{S})^{*}V_{r},\varphi )dr. $ \vspace*{0.2cm}\\
                          3- The convergence $\displaystyle\int_{0}^{t}(Z_{r}^{N},G_{r}^{S,I,N}\Upsilon(t-r)\varphi )dr\xrightarrow{L}\int_{0}^{t}(Z_{r},G_{r}^{S,I}\Upsilon(t-r)\varphi )dr $ follows from the same argument as the one in 1-, using this time Lemma \ref{f12} and Proposition \ref{d11}.\vspace*{0.2cm}\\
4- Since for any $\varphi\in H^{s+1}$, $\Upsilon(t)\varphi\in H^{s+1}$, 
 from  Theorem \ref{lo} and Proposition \ref{f4}, we have \\\hspace*{2cm} $(U_{0}^{N},\Upsilon(t)\varphi)\xrightarrow{L}(U_{0},\Upsilon(t)\varphi)$
 and $\displaystyle\int_{0}^{t}(\Upsilon(t-r)\varphi, d\widetilde{M}_{r}^{N})\xrightarrow{L}\displaystyle\int_{0}^{t}(\Upsilon(t-r)\varphi, dW^{1}_{r}).
$
\end{proof}
From Proposition \ref{f2} we deduce that  all limit points $(U,V)$ of $ (U^{N},V^{N})_{N\geq1} $ are elements of  $( C(\mathbb{R}_{+},H^{-s}))^{2},$ thus since it is so easy to see that  equations (6.26) and (6.27) are the same as equations (\ref{df1}) and (\ref{df2}) repectively, we end the proof of Theorem \ref{ff1} by showing that   the system formed by the equations (6.26) and (6.27) admits a unique solution $(U,V)\in( C(\mathbb{R}_{+},H^{-s}))^{2}$.

       \begin{prp}
       Suppose that $(U^{1},V^{1})$  and $ (U^{2},V^{2})$  both belong to $(C(\mathbb{R}_{+},H^{-s}))^{2}$ and  are solutions to equations (6.26) and (6.27) with $ (U_{0}^{1},V_{0}^{1})=(U_{0}^{2},V_{0}^{2})$  then  $(U^{1},V^{1})$ = $ (U^{2},V^{2})$ \label{rt1}
       \end{prp}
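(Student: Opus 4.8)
The plan is to exploit the linearity of the system together with the contraction property of the semigroup and to reduce everything to a Gronwall inequality in $H^{-s}$. Since the driving data $Z$, $W^{1}$, $W^{2}$ and the initial values $U_{0}$, $V_{0}$ are common to the two solutions, the differences $\bar{U}=U^{1}-U^{2}$ and $\bar{V}=V^{1}-V^{2}$ satisfy the \emph{homogeneous} version of (6.26)--(6.27): subtracting the equations, the terms $\Upsilon(t)U_{0}$, $\Upsilon(t)V_{0}$, the $Z_{r}$-integrals and the stochastic integrals against $W^{1}$, $W^{2}$ all cancel, leaving
\begin{align*}
\bar{U}_{t} &= -\beta\int_{0}^{t}\Upsilon(t-r)(G_{r}^{I})^{*}\bar{U}_{r}\,dr-\beta\int_{0}^{t}\Upsilon(t-r)(G_{r}^{S})^{*}\bar{V}_{r}\,dr,\\
\bar{V}_{t} &= \beta\int_{0}^{t}\Upsilon(t-r)(G_{r}^{I})^{*}\bar{U}_{r}\,dr+\int_{0}^{t}\Upsilon(t-r)\big[\beta(G_{r}^{S})^{*}-\alpha I_{d}\big]\bar{V}_{r}\,dr,
\end{align*}
with $\bar{U},\bar{V}\in C(\mathbb{R}_{+},H^{-s})$ and $\bar{U}_{0}=\bar{V}_{0}=0$.

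Next I would take $H^{-s}$ norms and use $\lvert\Upsilon(t)\rvert_{\mathcal{L}(H^{-s})}\leq 1$ (Proposition \ref{b5}) together with the operator-norm bounds. The estimates of Corollary \ref{ff2} are stated for the $N$-dependent operators, but the identical computation of Proposition \ref{f5}, with $\mu_{r}$ in place of $\mu_{r}^{N}$, yields the same bounds for the limiting operators $(G_{r}^{I})^{*}$ and $(G_{r}^{S})^{*}$. Here $\sup_{y}\lVert K(\cdot,y)\rVert_{H^{s}}<\infty$ by Lemma \ref{ff3} (for $2<s<3$ via the embedding $H^{3}\hookrightarrow H^{s}$), and $\sup_{x}\lVert K(x,\cdot)/\int_{\mathbb{T}^{2}}K(x',\cdot)\mu_{r}(dx')\rVert_{H^{s}}$ is finite uniformly in $r$ because the denominator $\int_{\mathbb{T}^{2}}K(x',\cdot)\mu_{r}(dx')$ is bounded below by a positive constant (Lemma \ref{int1}, since $f(r,\cdot)\geq\delta_{1}$). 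This produces constants $C_{1},C_{2}>0$, independent of $t$, with
\begin{align*}
\lVert\bar{U}_{t}\rVert_{H^{-s}} &\leq C_{1}\int_{0}^{t}\big(\lVert\bar{U}_{r}\rVert_{H^{-s}}+\lVert\bar{V}_{r}\rVert_{H^{-s}}\big)\,dr,\\
\lVert\bar{V}_{t}\rVert_{H^{-s}} &\leq C_{2}\int_{0}^{t}\big(\lVert\bar{U}_{r}\rVert_{H^{-s}}+\lVert\bar{V}_{r}\rVert_{H^{-s}}\big)\,dr.
\end{align*}

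Finally, setting $\Phi(t)=\lVert\bar{U}_{t}\rVert_{H^{-s}}+\lVert\bar{V}_{t}\rVert_{H^{-s}}$, which is finite and continuous on every interval $[0,T]$ because $(\bar{U},\bar{V})\in(C(\mathbb{R}_{+},H^{-s}))^{2}$, I would add the two inequalities to get $\Phi(t)\leq(C_{1}+C_{2})\int_{0}^{t}\Phi(r)\,dr$ with $\Phi(0)=0$. Gronwall's lemma then forces $\Phi\equiv 0$ on $[0,T]$, and, $T$ being arbitrary, on all of $\mathbb{R}_{+}$; hence $\bar{U}_{t}=\bar{V}_{t}=0$ in $H^{-s}$ for every $t$, i.e. $(U^{1},V^{1})=(U^{2},V^{2})$. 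This mirrors the uniqueness argument already carried out for the pair of densities $(f_{S},f_{I})$ in Section~5. The only step that genuinely requires care --- and the one I expect to be the main obstacle --- is the uniform-in-$(r,x)$ control of the $H^{s}$ operator norms attached to $G_{r}^{S}$ and $G_{r}^{I}$, that is, checking that the analogue of Corollary \ref{ff2} holds for the limiting operators with constants independent of $r$; once this is secured, the problem is purely linear and the Gronwall conclusion is routine.
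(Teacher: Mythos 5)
Your argument is correct and is essentially the proof given in the paper: subtract the two equations so that the initial data, the $Z$-integrals and the martingale terms cancel, bound the remaining terms using $\lvert\Upsilon(t)\rvert_{\mathcal{L}(H^{-s})}\leq 1$ together with the operator estimates of Corollary \ref{ff2} (valid for the limiting operators thanks to Lemmas \ref{ff3} and \ref{ape}), and conclude by Gronwall. The only cosmetic difference is that the paper handles $\bar{V}=V^{1}-V^{2}$ via the identity $\bar{V}_{t}=-\bar{U}_{t}-\alpha\int_{0}^{t}\Upsilon(t-r)\bar{V}_{r}\,dr$ (the $\beta$-terms cancel upon adding the two equations), whereas you estimate the homogeneous equation for $\bar{V}$ directly; both yield the same Gronwall inequality.
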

       \begin{proof}
      We have \\
                   $\hspace*{0.5cm}U_{t}^{1}-U_{t}^{2}=\displaystyle-\beta\displaystyle \int_{0}^{t} \Upsilon(t-r)(G_{r}^{I})^{*}(U_{r}^{1}-U_{r}^{2})dr-\beta\int_{0}^{t}\Upsilon(t-r)(G_{r}^{S})^{*}(V_{r}^{1}-V_{r}^{2})dr,$\\thus\\$\hspace*{0.5cm}\lVert U_{t}^{1}-U_{t}^{2}\lVert_{H^{-s}}\leq\displaystyle\beta\displaystyle \int_{0}^{t} \lVert\Upsilon(t-r)(G_{r}^{I})^{*}(U_{r}^{1}-U_{r}^{2})\lVert_{H^{-s}}dr+\beta\int_{0}^{t}\lVert\Upsilon(t-r)(G_{r}^{S})^{*}(V_{r}^{1}-V_{r}^{2})\lVert_{H^{-s}}dr.$\vspace*{0.1cm}\\ Moreover from Corollary \ref{ff2}, we deduce that \\
                                                 $\lVert U_{t}^{1}-U_{t}^{2}\lVert_{H^{-s}}\leq\beta C \underset{y}{\sup}\lVert K(.,y)\lVert_{H^{s}}\displaystyle \int_{0}^{t}\lVert U_{r}^{1}-U_{r}^{2}\lVert_{H^{-s}}  dr+\\\hspace*{7cm}\beta C\underset{x}{\sup}\Big\lVert \displaystyle\frac{K(x,.)}{\int_{\mathbb{T}^{2}}K(x',.)\mu_{r}(dx')}\Big\lVert_{H^{s}}\int_{0}^{t} \lVert V_{r}^{1}-V_{r}^{2} \lVert_{H^{-s}}dr,$ \vspace*{0.16cm}\\hence using  Lemmas \ref{ape} below and \ref{ff3},  we obtain  \vspace*{0.16cm}\\$\hspace*{3cm}\lVert U_{t}^{1}-U_{t}^{2}\lVert_{H^{-s}}\leq\beta C \displaystyle \int_{0}^{t}\Big\{\lVert U_{r}^{1}-U_{r}^{2}\lVert_{H^{-s}}  + \lVert V_{r}^{1}-V_{r}^{2} \lVert_{H^{-s}}\Big\}dr.$ \hspace*{2.5cm} ($6.28$)\\
                                                 On the  other hand  \\\hspace*{3cm}
                                                 $V_{t}^{1}-V_{t}^{2}=-(\displaystyle U_{t}^{1}-U_{t}^{2})-\alpha\int_{0}^{t}\Upsilon(t-r) (V_{r}^{1}-V_{r}^{2})dr, $\\
                                                 hence \\\hspace*{3cm}
                                                 $\lVert V_{t}^{1}-V_{t}^{2}\lVert_{H^{-s}} \leq\lVert U_{t}^{1}-U_{t}^{2}\lVert_{H^{-s}}+\alpha\displaystyle\int_{0}^{t}\lVert V_{r}^{1}-V_{r}^{2}\lVert_{H^{-s}}dr. \hspace*{3.1cm}  (6.29 $)\\
                                                 Summing $(6.28)$ and  $ (6.29) $ and applying Gronwall's lemma we obtain  $(U^{1},V^{1})=(U^{2},V^{2})$.
                  \end{proof} 
                    The next two Lemmas will be  useful to show an additional result about the regularity of the pair of Processes $(U,V).$
                  \begin{lem}
                 For any $x\in \mathbb{T}^{2},$ let us set $\mathcal{K}_{t}^{I}(x)=\displaystyle \Big(\mu_{t}^{I},\frac{K(x,.)}{(\mu_{t},K)}\Big).$ For any $t,t_{0}\in \mathbb{R}_{+}$, there exists $C>0$ such that \vspace*{0.1cm}$\\\hspace*{3.6cm}\lVert \mathcal{K}_{t}^{I}(.)-\mathcal{K}_{t_{0}}^{I}(.)\lVert_{H^{2}}\leq C\big( d_{F}(\mu_{t}, \mu_{t_{0}})+d_{F}(\mu_{t}^{I}, \mu_{t_{0}}^{I})\big).$ \label{ad}
                  \end{lem}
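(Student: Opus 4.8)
The plan is to prove the estimate by expanding the $H^{2}$ norm (in the variable $x$) as a finite sum of $L^{2}$ norms of the derivatives $D_{x}^{\eta}$, $|\eta|\le 2$, which is equivalent to the Parseval expression of Proposition \ref{b3} for the integer exponent $2$, and then to control each $x$-derivative of the difference $\mathcal{K}_{t}^{I}-\mathcal{K}_{t_{0}}^{I}$ by the two Fortet distances. First I would write, for $|\eta|\le 2$,
\[
D_{x}^{\eta}\mathcal{K}_{t}^{I}(x)=\int_{\mathbb{T}^{2}}\frac{D_{x}^{\eta}K(x,y)}{(\mu_{t},K(\cdot,y))}\,\mu_{t}^{I}(dy),
\]
differentiation under the integral being legitimate since, by Remark \ref{f2f} and Lemma \ref{ff3} (which rest on (H2) and the symmetry of $K$), the maps $y\mapsto D_{x}^{\eta}K(x,y)$ are continuous and bounded uniformly in $x$ for $|\eta|\le 3$. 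I would then split the difference of derivatives as
\[
D_{x}^{\eta}[\mathcal{K}_{t}^{I}-\mathcal{K}_{t_{0}}^{I}](x)=\int_{\mathbb{T}^{2}}\frac{D_{x}^{\eta}K(x,y)}{(\mu_{t},K(\cdot,y))}(\mu_{t}^{I}-\mu_{t_{0}}^{I})(dy)+\int_{\mathbb{T}^{2}}D_{x}^{\eta}K(x,y)\Big[\tfrac{1}{(\mu_{t},K(\cdot,y))}-\tfrac{1}{(\mu_{t_{0}},K(\cdot,y))}\Big]\mu_{t_{0}}^{I}(dy),
\]
so that the first integral carries $\mu_{t}^{I}-\mu_{t_{0}}^{I}$ and the second will carry $\mu_{t}-\mu_{t_{0}}$.

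For the first integral the key observation is that the integrand, \emph{viewed as a function of $y$}, is bounded and Lipschitz with constants independent of $x$ (and of $t$). Indeed the denominator $(\mu_{t},K(\cdot,y))=\int_{\mathbb{T}^{2}}K(x',y)f(t,x')dx'$ is bounded below by a positive constant uniformly in $y$ and $t$ (Lemma \ref{int1} gives $f(t,\cdot)\ge\delta_{1}$, and $\int_{\mathbb{T}^{2}}K(x',y)dx'$ is a positive constant independent of $y$), and it is Lipschitz in $y$ because $K$ is; the numerator $D_{x}^{\eta}K(\cdot,y)$ is bounded and Lipschitz in $y$ uniformly in $x$ by Remark \ref{f2f}, using the symmetry of $K$ to transfer the statement from the second slot to the first. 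A quotient of a bounded Lipschitz function by a Lipschitz function bounded below is again bounded and Lipschitz, so the definition of $d_{F}$ bounds the first integral by $C\,d_{F}(\mu_{t}^{I},\mu_{t_{0}}^{I})$, uniformly in $x$.

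For the second integral I would rewrite the bracket as
\[
\frac{1}{(\mu_{t},K(\cdot,y))}-\frac{1}{(\mu_{t_{0}},K(\cdot,y))}=\frac{(\mu_{t_{0}}-\mu_{t},K(\cdot,y))}{(\mu_{t},K(\cdot,y))(\mu_{t_{0}},K(\cdot,y))},
\]
and bound $|(\mu_{t_{0}}-\mu_{t},K(\cdot,y))|\le(\|k\|_{\infty}+2\sqrt{2}\,C_{k})\,d_{F}(\mu_{t},\mu_{t_{0}})$ uniformly in $y$ by Lemma \ref{e6}; the two denominators are again bounded below, $D_{x}^{\eta}K$ is bounded, and $\mu_{t_{0}}^{I}$ has total mass $\le 1$, so this integral is bounded by $C\,d_{F}(\mu_{t},\mu_{t_{0}})$. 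Taking $L^{2}$ norms in $x$ of these pointwise-in-$x$ bounds (the torus having finite measure), squaring, summing over the finitely many $|\eta|\le 2$ and taking the square root yields $\|\mathcal{K}_{t}^{I}-\mathcal{K}_{t_{0}}^{I}\|_{H^{2}}\le C\big(d_{F}(\mu_{t},\mu_{t_{0}})+d_{F}(\mu_{t}^{I},\mu_{t_{0}}^{I})\big)$. The delicate point, which I expect to require the most care, is the uniformity of every constant: I must verify that the lower bound on $(\mu_{t},K(\cdot,y))$ and the boundedness/Lipschitz constants of $D_{x}^{\eta}K$ are independent of $x$, $y$, $t$ and $t_{0}$, which is precisely what Lemma \ref{int1}, Remark \ref{f2f} and the symmetry of $K$ supply.
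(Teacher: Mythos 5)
Your proposal is correct and follows essentially the same route as the paper: the same splitting of $\mathcal{K}_{t}^{I}-\mathcal{K}_{t_{0}}^{I}$ into a term carrying $\mu_{t}^{I}-\mu_{t_{0}}^{I}$ and a term carrying $(\mu_{t}-\mu_{t_{0}},K)$ via the difference of reciprocals, the same expansion of the $H^{2}$ norm over $D^{\eta}_{x}$ with $\lvert\eta\lvert\le 2$, and the same ingredients (Remark \ref{f2f} for uniform bounds and Lipschitz constants of $D^{\eta}K$, the uniform lower bound on $(\mu_{t},K(\cdot,y))$ from Lemma \ref{int1}, the definition of $d_{F}$ for the first term and Lemma \ref{e6} for the second). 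The uniformity checks you flag as the delicate point are exactly the ones the paper carries out.
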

                  \begin{proof}
                 Let $t,t_{0}\in \mathbb{R}_{+},$ we have \\\hspace*{1.5cm} $\mathcal{K}_{t}^{I}(x)-\mathcal{K}_{t_{0}}^{I}(x)=\displaystyle \Big(\mu_{t}^{I}-\mu_{t_{0}}^{I},\frac{K(x,.)}{(\mu_{t},K)}\Big)+\Big(\mu_{t}^{I},\frac{K(x,.)\big(\mu_{t}-\mu_{t_{0}},K)}{(\mu_{t},K)(\mu_{t_{0}},K)}\Big).$\\Furthemore: \\ 1- Since the map $y\in A(x)\mapsto \displaystyle \int_{\mathbb{T}^{2}}K(x',y)\mu_{r}(dx')$ is Lipschitz (with the Lipschitz constant $2\sqrt{2}C_{k}\delta_{2}$ (see Lemma \ref{e5})) and  lower bounded by a positive constant and  from Remark \ref{f2f}, $\forall \lvert \eta\lvert\leq 2 $ the map \\ $y\in A(x)\mapsto D^{\eta}K(x,y)$ is Lipschitz (with the Lipschitz constant independent of $x$) and bounded by  $C \underset{0\leq  \lvert\eta\lvert\leq 2}{\max}\lVert k^{(\lvert \eta\lvert)}\lVert_{\infty},$ the map $y\in A(x)\mapsto \displaystyle \frac{D^{\eta}K(x,y)}{\int_{\mathbb{T}^{2}}K(x',y)\mu_{r}(dx')}$ is Lipschitz (with the constant Lipschitz independent of $x$) and bounded by $C \underset{\lvert\eta\lvert\leq 2}{\max}\lVert k^{(\lvert \eta\lvert)}\lVert_{\infty}.$\\ Hence  $ \Big\lVert \displaystyle\Big(\mu_{t}^{I}-\mu_{t_{0}}^{I},\frac{K(x,.)}{(\mu_{t},K)}\Big) \Big\lVert_{H^{2}}^{2}=\displaystyle\sum\limits_{\lvert\eta\lvert\leq 2}\int_{\mathbb{T}^{2}}\Big\lvert\int_{A(x)} \frac{D^{\eta}K(x,y)}{\int_{\mathbb{T}^{2}}K(x',y)\mu_{t}(dx')}(\mu_{t}^{I}-\mu_{t_{0}}^{I})(dy)\Big\lvert^{2}dx\\\hspace*{5.68cm}\leq Cd_{F}(\mu_{t}^{I}, \mu_{t_{0}}^{I}).$\vspace*{0.1cm}\\2-  Since  $K(x,.)$  is Lipschitz (with the Lipschit constant $2\sqrt{2}C_{k}$ (see Lemma \ref{e5})) and bounded by $\lVert k\lVert_{\infty}$ and  the map $y\in A(x)\mapsto \displaystyle \int_{\mathbb{T}^{2}}K(x',y)\mu_{r}(dx')$ is   lower bounded and $\forall \lvert \eta\lvert\leq 2 $ the map $y\in A(x)\mapsto D^{\eta}K(x,y)$ is bounded, \\$\hspace*{1.5cm} \Big\lVert \displaystyle\Big(\mu_{t}^{I},\frac{K(x,.)\big(\mu_{t}-\mu_{t_{0}},K)}{(\mu_{t},K)(\mu_{t_{0}},K)}\Big) \Big\lVert_{H^{2}}^{2}\\\hspace*{4.5cm}=\displaystyle\sum\limits_{\lvert\eta\lvert\leq 2}\int_{\mathbb{T}^{2}}\Big\lvert\int_{A(x)} \frac{D^{\eta}K(x,y)\big(\mu_{t}-\mu_{t_{0}},K(.,y))}{\int_{\mathbb{T}^{2}}K(x',y)\mu_{t}(dx')\int_{\mathbb{T}^{2}}K(x',y)\mu_{t_{0}}(dx')}\mu_{t_{0}}^{I}(dy)\Big\lvert^{2}dx,\\\hspace*{4.5cm}\leq C\displaystyle\sum\limits_{\lvert\eta\lvert\leq 2}\int_{\mathbb{T}^{2}}\int_{A(x)} \Big\lvert \frac{D^{\eta}K(x,y)\big(\mu_{t}-\mu_{t_{0}},K(.,y))}{\int_{\mathbb{T}^{2}}K(x',y)\mu_{t}(dx')\int_{\mathbb{T}^{2}}K(x',y)\mu_{t_{0}}(dx')}\Big\lvert^{2}\mu_{t_{0}}^{I}(dy)dx,\\\hspace*{4.5cm}\leq Cd^{2}_{F}(\mu_{t}, \mu_{t_{0}}).$\vspace*{0.13cm}\\So the result follows from 1- and  2-.
                  \end{proof}
                  \begin{lem}
                                    Let  $\varphi \in H^{2}(\mathbb{T}^{2})$, for any $x\in \mathbb{T}^{2},$ let us set  $\mathcal{K}_{t}^{S}(x,\varphi)=\displaystyle\frac{(\mu_{t}^{S},\varphi K(x,.))}{(\mu_{t},K(x,.))}.$\\ For any $t,t_{0}\in \mathbb{R}_{+}$, $\vspace*{0.1cm}\\\hspace*{2cm}\lVert \mathcal{K}_{t}^{S}(.,\varphi)-\mathcal{K}_{t_{0}}^{S}(.,\varphi)\lVert_{H^{2}}\leq C(\lVert  \varphi \lVert_{H^{2}})\big( d_{F}(\mu_{t}, \mu_{t_{0}})+d_{F}(\mu_{t}^{S}, \mu_{t_{0}}^{S})\big).$ \label{ad1}
                                    \end{lem}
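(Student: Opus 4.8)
The plan is to follow the blueprint of the proof of Lemma~\ref{ad} almost verbatim, the only structural novelty being that here the normalising denominator $(\mu_t,K(x,\cdot))=\int_{\mathbb{T}^2}K(x,y)\mu_t(dy)$ depends on the free variable $x$ rather than on the integration variable. First I would form the difference $\mathcal{K}_t^S(x,\varphi)-\mathcal{K}_{t_0}^S(x,\varphi)$ and split off the two measure increments by inserting the mixed quotient $\frac{(\mu_{t_0}^S,\varphi K(x,\cdot))}{(\mu_t,K(x,\cdot))}$, which gives
\[
\mathcal{K}_t^S(x,\varphi)-\mathcal{K}_{t_0}^S(x,\varphi)
=\frac{(\mu_t^S-\mu_{t_0}^S,\varphi K(x,\cdot))}{(\mu_t,K(x,\cdot))}
+\frac{(\mu_{t_0}^S,\varphi K(x,\cdot))\,(\mu_{t_0}-\mu_t,K(x,\cdot))}{(\mu_t,K(x,\cdot))(\mu_{t_0},K(x,\cdot))}.
\]
The first summand will be controlled by $d_F(\mu_t^S,\mu_{t_0}^S)$ and the second by $d_F(\mu_t,\mu_{t_0})$, which together yield the claimed bound.

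Before estimating I would record the uniform lower bound $(\mu_t,K(x,\cdot))=\int K(x,y)f(t,y)\,dy\ge\delta_1\int K(x,y)\,dy\ge c_0>0$, valid uniformly in $x$ and $t$: this uses Lemma~\ref{int1} (that $f(t,\cdot)\ge\delta_1$) together with the fact that, by \eqref{p1}, $\int_{\mathbb{T}^2}K(x,y)\,dy$ is a strictly positive constant independent of $x$. Consequently $x\mapsto D_t(x):=(\mu_t,K(x,\cdot))$ is, under (H2), a $C^2$ function bounded below with all $x$-derivatives up to order $2$ bounded (differentiate under the integral and invoke Remark~\ref{f2f}), so $1/D_t\in H^2$ with norm bounded uniformly in $t$, and likewise for $1/D_{t_0}$. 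To bound the first summand in $H^2$ I would write it as $N(x)/D_t(x)$ with $N(x)=\int\varphi(y)K(x,y)(\mu_t^S-\mu_{t_0}^S)(dy)$, use the Banach-algebra property of $H^2$ (Proposition~\ref{b4}) to reduce to estimating $\|N\|_{H^2}$, and then, for each multi-index $|\eta|\le2$, apply the Fortet distance to $D_x^\eta N(x)=\int \varphi(y)D_x^\eta K(x,y)(\mu_t^S-\mu_{t_0}^S)(dy)$: by Remark~\ref{f2f} the map $y\mapsto D_x^\eta K(x,y)$ is bounded and Lipschitz uniformly in $x$, so the test function $y\mapsto\varphi(y)D_x^\eta K(x,y)$ is bounded and Lipschitz with $\|\cdot\|_\infty$ and $\|\cdot\|_L$ norms controlled by $C(\|\varphi\|_{H^2})$ (using $H^2\hookrightarrow C$ from Proposition~\ref{b4}). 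This gives $|D_x^\eta N(x)|\le C(\|\varphi\|_{H^2})\,d_F(\mu_t^S,\mu_{t_0}^S)$ uniformly in $x$, and integrating over $x$ yields $\|N\|_{H^2}\le C(\|\varphi\|_{H^2})\,d_F(\mu_t^S,\mu_{t_0}^S)$.

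For the second summand I would treat it as a product of three $H^2$ functions of $x$ — namely $x\mapsto(\mu_{t_0}^S,\varphi K(x,\cdot))$, the crucial middle factor $x\mapsto(\mu_{t_0}-\mu_t,K(x,\cdot))$, and the smooth bounded-below factor $1/(D_tD_{t_0})$ — and invoke Proposition~\ref{b4} once more. The first factor is bounded in $H^2$ by $C(\|\varphi\|_{H^2})$ (same differentiation-under-the-integral argument, $\mu_{t_0}^S$ being a finite measure of mass at most $1$), the third by a constant, and the middle factor is bounded in $H^2$ by $C\,d_F(\mu_t,\mu_{t_0})$: this is the $H^2$ refinement of Lemma~\ref{e6}, obtained by differentiating in $x$ and applying the Fortet distance with the bounded Lipschitz test functions $y\mapsto D_x^\eta K(x,y)$ ($|\eta|\le2$, Remark~\ref{f2f}). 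Summing the two contributions gives the assertion.

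I expect the main obstacle to be twofold. First, the $x$-dependent denominator: unlike in Lemma~\ref{ad}, every $x$-derivative must be distributed across the quotient, so one must verify that the derivatives $D_x^\eta D_t(x)$ are bounded and that $D_t$ stays bounded below uniformly in $(t,x)$, and then repackage the resulting products through the Banach-algebra estimate of Proposition~\ref{b4} while keeping all Lipschitz-in-$y$ bounds on the kernel derivatives uniform in $x$, so that the Fortet distance can be applied pointwise before integrating over $x$. Second, and genuinely delicate, is securing the bound $\|\varphi K(x,\cdot)\|_L\le C(\|\varphi\|_{H^2})$ needed in the Fortet step, i.e. that multiplication by $\varphi$ does not destroy the Lipschitz control of the test functions; here one must exploit the $C^3$ smoothness of $K$ from (H2) and Remark~\ref{f2f} to absorb the borderline regularity carried by $\varphi\in H^2$, and this is the step I would write out most carefully.
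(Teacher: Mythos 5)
Your proposal follows essentially the same route as the paper: the identical add-and-subtract decomposition of the quotient (up to the immaterial choice of whether $(\mu_t,K(x,\cdot))$ or $(\mu_{t_0},K(x,\cdot))$ sits under the first increment), the $H^2$ Banach-algebra property to peel off the factor $1/(\mu_\cdot,K(x,\cdot))$ (whose $H^2$ bound the paper also gets by adapting Lemma \ref{ape}), and the pointwise-in-$x$ application of the Fortet distance to the test functions $y\mapsto\varphi(y)D_x^{\eta}K(x,y)$ and $y\mapsto D_x^{\eta}K(x,y)$ via Remark \ref{f2f}. The Lipschitz-in-$y$ control of $\varphi(y)D_x^{\eta}K(x,y)$ that you single out as the delicate step is simply asserted in the paper's proof, so you are not missing anything the paper supplies.
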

                                    \begin{proof}
                                   Let $t,t_{0}\in \mathbb{R}_{+},$ we have \\ $\mathcal{K}_{t}^{S}(x,\varphi)-\mathcal{K}_{t_{0}}^{S}(x,\varphi)=\displaystyle \frac{(\mu_{t}^{S}-\mu_{t_{0}}^{S},\varphi K(x,.))}{(\mu_{t_{0}},K(x,.))}+\frac{(\mu_{t}^{S},\varphi K(x,.))}{(\mu_{t},K(x,.))(\mu_{t_{0}},K(x,.))}(\mu_{t}-\mu_{t_{0}},K(x,.))$.\\Furthemore: \\ 1- Since from Remark \ref{f2f}, $\forall \lvert \eta\lvert\leq 2,$ the map  $y\in A(x)\mapsto \varphi(y) D^{\eta} K(x,y)$ is Lipschitz ( with the Lipschitz constant independent of x) and bounded by $C\lVert \varphi \lVert_{\infty} \underset{0\leq  \lvert\eta\lvert\leq 2}{\max}\lVert k^{(\lvert \eta\lvert)}\lVert_{\infty}$ and $H^{2}$ is  Banach algebra and    an easy adaptation of the proof of Lemma \ref{ape} below yields that $\Big\lVert\displaystyle\frac{1}{(\mu_{t_{0}},K)}\Big\lVert_{H^{2}}<C$,\\$\hspace*{1.4cm} \Big\lVert \displaystyle\frac{(\mu_{t}^{S}-\mu_{t_{0}}^{S},\varphi K)}{(\mu_{t_{0}},K)}\Big\lVert_{H^{2}}^{2}\leq C \Big\lVert\frac{1}{(\mu_{t_{0}},K)}\Big\lVert_{H^{2}}^{2}\Big\lVert \displaystyle(\mu_{t}^{S}-\mu_{t_{0}}^{S},\varphi K)\Big\lVert_{H^{2}}^{2}\\\hspace*{5cm}=C\displaystyle\sum\limits_{\lvert\eta\lvert\leq 2}\int_{\mathbb{T}^{2}}\Big\lvert\int_{A(x)}\varphi(y) D^{\eta}K(x,y)(\mu_{t}^{S}-\mu_{t_{0}}^{S})(dy)\Big\lvert^{2}dx\\\hspace*{5cm}\leq C(\lVert \varphi \lVert_{\infty})d^{2}_{F}(\mu_{t}^{S}, \mu_{t_{0}}^{S}).$\\\\2- Using   an argument similar to    the proof of (6.14) in proposition \ref{f5}  and  an easy adaptation of Lemma \ref{ape} below,  we have\\ $\hspace*{2cm}\Big\lVert \displaystyle\frac{(\mu_{t}^{S},\varphi K)}{(\mu_{t},K)(\mu_{t_{0}},K)} \Big\lVert_{H^{2}}\leq C\lVert \varphi \lVert_{H^{2}} \underset{x}{\sup}\Big\lVert \displaystyle\frac{ K(x,.)}{\int_{\mathbb{T}^{2}}K(.,x'')\mu_{t}(dx'')\int_{\mathbb{T}^{2}K(.,x')}\mu_{t_{0}}(dx')} \Big\lVert_{H^{2}}\leq C.$  \vspace*{0.1cm}\\Now since from Remark \ref{f2f}, $\forall \lvert \eta\lvert\leq 2,$ the map  $y\in A(x)\mapsto D^{\eta} K(x,y)$ is Lipschitz  (with the Lipschitz constant independent of x) and bounded by $C \underset{0\leq  \lvert\eta\lvert\leq 2}{\max}\lVert k^{(\lvert \eta\lvert)}\lVert_{\infty}$ and   $H^{2}$ is  Banach algebra, one has\vspace*{0.1cm}\\ $\hspace*{0.5cm}\Big\lVert \displaystyle\frac{(\mu_{t}^{S},\varphi K)}{(\mu_{t},K)(\mu_{t_{0}},K)}(\mu_{t}-\mu_{t_{0}},K) \Big\lVert_{H^{2}}^{2}\leq C \Big\lVert \displaystyle\frac{(\mu_{t}^{S},\varphi K)}{(\mu_{t},K)(\mu_{t_{0}},K)} \Big\lVert_{H^{2}}^{2} \Big\lVert \displaystyle(\mu_{t}-\mu_{t_{0}}, K)\Big\lVert_{H^{2}}^{2}\\\hspace*{6.4cm}=C(\lVert \varphi \lVert_{H^{2}})\displaystyle\sum\limits_{\lvert\eta\lvert\leq 2}\int_{\mathbb{T}^{2}}\Big\lvert\int_{A(x)} D^{\eta}K(x,y)(\mu_{t}-\mu_{t_{0}})(dy)\Big\lvert^{2}dx\\\hspace*{6.4cm}\leq C(\lVert \varphi \lVert_{H^{2}})d^{2}_{F}(\mu_{t}, \mu_{t_{0}}).$\vspace*{0.13cm}\\So the result follows from 1- and 2-.
                                    \end{proof}
                                    Now we  state  an additional result about the regularity of the pair of processes $(U,V).$
                                     \begin{prp} The pair of processes $(U,V)$ belongs to $L^{2}_{loc}(\mathbb{R}_{+},(H^{1-s})^{2}).$
                                                       \label{ar}
                                                        \end{prp}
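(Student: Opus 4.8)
The plan is to argue directly from the mild formulations (6.26) and (6.27), showing that each of the four contributions building $U_t$ (and, symmetrically, $V_t$) already lies in $L^2_{loc}(\mathbb{R}_+,H^{1-s})$, in expectation. Two ingredients would be used throughout. First, the a priori bounds $\sup_{0\le t\le T}\mathbb{E}(\|U_t\|_{H^{-s}}^2)$, $\sup_{0\le t\le T}\mathbb{E}(\|V_t\|_{H^{-s}}^2)$ and $\sup_{0\le t\le T}\mathbb{E}(\|Z_t\|_{H^{-s}}^2)$ are finite, which follows from Propositions \ref{f8} and \ref{d11} together with the lower semicontinuity of the $H^{-s}$-norm under weak convergence. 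Second, writing $k$ for the multi-index $(i,n_1,n_2)$, $\mu_k=1+\gamma\pi^2(n_1^2+n_2^2)$ and $\lambda_k=\mu_k-1$, the spectral representation (\ref{pp}) and the Parseval identity of Proposition \ref{b3} give the parabolic smoothing estimate $\|\Upsilon(\tau)g\|_{H^{a+1}}\le C\tau^{-1/2}\|g\|_{H^{a}}$ for $\tau\in(0,T]$ and any $a\in\mathbb{R}$, since $\sup_k\mu_k\,e^{-2\lambda_k\tau}\le C\tau^{-1}$ (the finite low-frequency part being harmless on $(0,T]$).

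For the initial term the same spectral computation yields $\int_0^T\|\Upsilon(t)U_0\|_{H^{1-s}}^2\,dt\le C(T)\|U_0\|_{H^{-s}}^2$, the factor $\int_0^Te^{-2\lambda_k t}dt\le C/\lambda_k$ absorbing the extra power $\mu_k$, and $\mathbb{E}(\|U_0\|_{H^{-s}}^2)<\infty$ by Theorem \ref{lo}. For the drift terms I would combine the smoothing estimate with the operator bounds of Corollary \ref{ff2} and Lemma \ref{ff3}; for instance $\|(G_r^{S,I})^{*}Z_r\|_{H^{-s}}\le C\sup_y\|K(.,y)\|_{H^{s}}\|Z_r\|_{H^{-s}}$, so that
\[
\Big\|\int_0^t\Upsilon(t-r)(G_r^{S,I})^{*}Z_r\,dr\Big\|_{H^{1-s}}\le C\int_0^t(t-r)^{-1/2}\|Z_r\|_{H^{-s}}\,dr .
\]
Since $\tau\mapsto\tau^{-1/2}$ lies in $L^1(0,T)$ and $r\mapsto\|Z_r\|_{H^{-s}}$ lies in $L^2(0,T)$ by the a priori bound, Young's convolution inequality $L^1*L^2\subset L^2$ gives an $L^2(0,T;H^{1-s})$ estimate in expectation. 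The terms in $(G_r^{I})^{*}U_r$, $(G_r^{S})^{*}V_r$ and the zero-order term $\alpha\int_0^t\Upsilon(t-r)V_r\,dr$ are treated identically.

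The hard part will be the stochastic convolution $\int_0^t\Upsilon(t-r)dW_r^1$, and this is exactly where $s>2$ enters. Writing $H^{1-s}=H^{-(s-1)}$ and applying Proposition \ref{b3} with the orthonormal basis $(\rho_{n_1,n_2}^{i,s-1})$ of $H^{s-1}$, together with the self-adjointness of $\Upsilon$ (Proposition \ref{b5}), one obtains
\[
\mathbb{E}\Big(\Big\|\int_0^t\Upsilon(t-r)dW_r^1\Big\|_{H^{1-s}}^2\Big)=\sum_{i,n_1,n_2}\mathbb{E}\big(\langle(W^1,\Upsilon(t-\cdot)\rho_{n_1,n_2}^{i,s-1})\rangle_t\big).
\]
Inserting the explicit quadratic variation of $(W^1,\varphi)$ from Theorem \ref{ff1} with $\varphi_r=\Upsilon(t-r)\rho_{n_1,n_2}^{i,s-1}=e^{-\lambda_k(t-r)}\rho_{n_1,n_2}^{i,s-1}$, the dominant contribution is the gradient term $2\gamma\sum_k\mathbb{E}\int_0^t(\mu_r^S,(\bigtriangledown\varphi_r)^2)\,dr$. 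Using $\int_0^te^{-2\lambda_k(t-r)}dr\le C/\lambda_k\le C/\mu_k$ and $(\mu_r^S,1_{\mathbb{T}^2})\le1$, the time integration lowers the spectral weight by exactly one power of $\mu_k$, turning $(\bigtriangledown\rho_{n_1,n_2}^{i,s-1})^2/\mu_k$ into $(\bigtriangledown\rho_{n_1,n_2}^{i,s})^2$, so the sum reduces to $\sum_{i,n_1,n_2}(\bigtriangledown\rho_{n_1,n_2}^{i,s})^2$, which is uniformly bounded by Lemma \ref{ap} precisely because $s>2$; the lower-order $\beta$-term is controlled the same way and is even more summable. This mirrors the computation of Proposition \ref{add} and gives $\mathbb{E}(\|\int_0^t\Upsilon(t-r)dW_r^1\|_{H^{1-s}}^2)\le Ct$, which integrates to a finite quantity on $[0,T]$. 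Collecting the four estimates for $U$, and the analogous ones for $V$ (with $W^2$ and the extra $\alpha$-term in place of the $W^1$ contributions), proves that $(U,V)\in L^2_{loc}(\mathbb{R}_+,(H^{1-s})^2)$.
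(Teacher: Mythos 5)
Your argument is correct, but it follows a genuinely different route from the paper's. The paper never works with the mild formulation here: it conjugates the system by $(I_{d}-\gamma\bigtriangleup)^{-\frac{s}{2}}$ so that $(U^{s},V^{s})$ takes values in $(L^{2}(\mathbb{T}^{2}))^{2}$, rewrites (6.26)--(6.27) as a variational SPDE $d(U^{s}_{t},V^{s}_{t})'+A(t,(U^{s}_{t},V^{s}_{t}))dt=J(t)dt+(dW^{1,s}_{t},dW^{2,s}_{t})'$ in the Gelfand triple $(H^{1})^{2}\subset (L^{2})^{2}\subset (H^{-1})^{2}$, and verifies the four hypotheses of Proposition \ref{b8} (hemicontinuity, boundedness, coercivity with $\sigma_{1}=\gamma$, and $t$-measurability, the last requiring the continuity Lemmas \ref{ad} and \ref{ad1}); existence-uniqueness in $L^{2}(\Omega\times ]0,T[,(H^{1})^{2})$ combined with the uniqueness of Proposition \ref{rt1} then identifies $(U^{s},V^{s})$ with the variational solution. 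You instead read the regularity off the mild form directly: the smoothing bound $\lVert \Upsilon(\tau)\rVert_{\mathcal{L}(H^{a},H^{a+1})}\leq C\tau^{-1/2}$ handles the initial and drift terms, and the factor $\int_{0}^{t}e^{-2\lambda_{n_{1},n_{2}}(t-r)}dr\leq C(1+\lambda_{n_{1},n_{2}})^{-1}$ gains the one derivative needed to reduce the stochastic convolution to the summability $\sum_{i,n_{1},n_{2}}(\bigtriangledown\rho^{i,s}_{n_{1},n_{2}})^{2}<\infty$ of Lemma \ref{ap} --- the same place where $s>2$ enters the paper's proof, there disguised as the fact that $W^{1,s},W^{2,s}$ are genuine $L^{2}$-valued martingales (Proposition \ref{add}). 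The paper's approach costs the coercivity and measurability verifications but buys the full variational solution theory (in particular the It\^o formula in the triple, which the Remark following Proposition \ref{ar} relies on to discuss boundedness of $(Z^{N})_{N}$ in $L^{2}_{loc}(\mathbb{R}_{+},H^{1-s})$); yours is more elementary and self-contained but yields only the stated $L^{2}_{loc}$ regularity. If you write your version up, make explicit the two points you currently gloss over: the transfer of the uniform bounds of Propositions \ref{d11} and \ref{f8} to the limit processes (Skorokhod representation plus Fatou), and the fact that the quadratic variation formula of Theorem \ref{ff1} is being applied to the time-dependent test functions $\Upsilon(t-\cdot)\rho^{i,s-1}_{n_{1},n_{2}}$, which is legitimate thanks to the white-noise representation (6.12).
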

                                                        \begin{proof}

                                                         Let us set \\ \hspace*{2.5cm}$ U_{t}^{s}=(I_{d}-\gamma\bigtriangleup)^{\frac{-s}{2}} U_{t}$,\hspace*{0.3cm} $ V_{t}^{s}=(I_{d}-\gamma\bigtriangleup)^{\frac{-s}{2}} V_{t}$, \hspace*{0.3cm}$ Z_{t}^{s}=(I_{d}-\gamma\bigtriangleup)^{\frac{-s}{2}} Z_{t},$\vspace*{0.06cm}\\\hspace*{2.3cm} $ W_{t}^{1,s}=(I_{d}-\gamma\bigtriangleup)^{\frac{-s}{2}} W_{t}^{1}$,\hspace*{0.3cm} $ W_{t}^{2,s}=(I_{d}-\gamma\bigtriangleup)^{\frac{-s}{2}} W_{t}^{2}.$\vspace*{0.1cm}\\Given that $U_{t}\in H^{-s}$,  $U_{t}^{s}\in L^{2}(\mathbb{T}^{2}).$ The same conclusion remains true for $V_{t}^{s}$, $Z_{t}^{s}$, $W_{t}^{1,s}$ and $W_{t}^{2,s}$. Now
                                                                                                                               by noticing that $\bigtriangleup(I_{d}-\gamma\bigtriangleup)^{\frac{-s}{2}}$= $(I_{d}-\gamma\bigtriangleup)^{\frac{-s}{2}}\bigtriangleup,$ we deduce from the evolution equations (\ref{df1}) and (\ref{df2}) of $(U,V)$ given in Theorem \ref{ff1}  that \vspace*{0.16cm}\\
                      $\displaystyle U_{t}^{s} =U^{s}_{0} + \gamma\int_{0}^{t}\bigtriangleup U_{r}^{s}dr  + \displaystyle\beta \int_{0}^{t} (I_{d}-\bigtriangleup)^{\frac{-s}{2}}(G_{t}^{S,I})^{*}(I_{d}-\gamma\bigtriangleup)^{\frac{s}{2}}Z_{r}^{s}dr\\\hspace*{1.3cm}-\beta\int_{0}^{t} (I_{d}-\gamma\bigtriangleup)^{\frac{-s}{2}}(G_{r}^{I})^{*}(I_{d}-\gamma\bigtriangleup)^{\frac{s}{2}}U_{r}^{s}dr-\beta\int_{0}^{t} (I_{d}-\gamma\bigtriangleup)^{\frac{-s}{2}}(G_{r}^{S})^{*}(I_{d}-\gamma\bigtriangleup)^{\frac{s}{2}}V_{r}^{s}dr+ W_{t}^{1,s},$\\
                      $\displaystyle V_{t}^{s} =V^{s}_{0} + \gamma\int_{0}^{t}\bigtriangleup V_{r}^{s}dr   \displaystyle -\beta \int_{0}^{t} (I_{d}-\gamma\bigtriangleup)^{\frac{-s}{2}}(G_{r}^{S,I})^{*}(I_{d}-\gamma\bigtriangleup)^{\frac{s}{2}}Z_{r}^{s}dr\\\hspace*{1.3cm}+\beta\int_{0}^{t}(I_{d}-\gamma\bigtriangleup)^{\frac{-s}{2}} (G_{r}^{I})^{*}(I_{d}-\gamma\bigtriangleup)^{\frac{s}{2}}U_{r}^{s}dr+\beta\int_{0}^{t} (I_{d}-\gamma\bigtriangleup)^{\frac{-s}{2}}(G_{r}^{S})^{*}(I_{d}-\gamma\bigtriangleup)^{\frac{s}{2}}V_{r}^{s}dr\\\hspace*{1.3cm}-\alpha\int_{0}^{t}V_{r}^{s}dr+ W_{t}^{2,s}.$ \vspace*{0.12cm}\\Moreover the above system can be rewritten as one equation as follows\vspace*{0.2cm}\\ $\hspace*{3cm}(dU_{t}^{s},dV_{t}^{s})'+ A(t,(U_{t}^{s},V_{t}^{s}))dt=J(t)dt+(dW_{t}^{1,s},dW_{t}^{2,s})',$\\where\\
                                                                                                                                       $A(t,(U_{t}^{s},V_{t}^{s}))$=\vspace*{0.1cm}\\\begin{tabular}{cc}                                                                                                     $\begin{pmatrix}
                     -\gamma \bigtriangleup +\beta(I_{d}-\gamma\bigtriangleup)^{\frac{-s}{2}} (G_{t}^{I})^{*}(I_{d}-\gamma\bigtriangleup)^{\frac{s}{2}} & \beta(I_{d}-\gamma\bigtriangleup)^{\frac{-s}{2}} (G_{t}^{S})^{*}(I_{d}-\gamma\bigtriangleup)^{\frac{s}{2}} \\
                     -\beta(I_{d}-\gamma\bigtriangleup)^{\frac{-s}{2}} (G_{t}^{I})^{*}(I_{d}-\gamma\bigtriangleup)^{\frac{s}{2}} & -\gamma \bigtriangleup - \beta(I_{d}-\gamma\bigtriangleup)^{\frac{-s}{2}} (G_{t}^{S})^{*}(I_{d}-\gamma\bigtriangleup)^{\frac{s}{2}}+\alpha
                                                                                                                              \end{pmatrix}$\hspace*{-0.5cm}& $\begin{pmatrix}
                     U_{t}^{s}\\ V_{t}^{s}                                                                                                          \end{pmatrix}$ 
                        \vspace*{0.12cm}                                                                                                      \end{tabular}                                                                                         $J(t)=((I_{d}-\gamma\bigtriangleup)^{\frac{-s}{2}}(G_{t}^{S,I})^{*}(I_{d}-\gamma\bigtriangleup)^{\frac{s}{2}}Z_{t}^{s},\hspace*{0.12cm}-(I_{d}-\gamma\bigtriangleup)^{\frac{-s}{2}}(G_{t}^{S,I})^{*}(I_{d}-\gamma\bigtriangleup)^{\frac{s}{2}}Z_{t}^{s})'$\\ and  $(u,v)'$ denotes the transpose of the pair $(u,v)$.\vspace*{0.23cm}\\
      On the other hand given that the dual spaces of $(L^{2}(\mathbb{T}^{2}))^{2}$ and $(H^{1})^{2}$ are isomorphic to $(L^{2}(\mathbb{T}^{2}))'\times (L^{2}(\mathbb{T}^{2}))'$ and $H^{-1}\times H^{-1}$ respectively, by identifying $L^{2}(\mathbb{T}^{2})$ to its dual space $(L^{2}(\mathbb{T}^{2}))'$, we have $(H^{1})^{2}\subset(L^{2}(\mathbb{T}^{2}))^{2}\subset (H^{-1})^{2}.$\vspace*{0.17cm}\\
         Let us now prove that the family of operators $A(t,.)$  satisfies the assuptions given in Proposition \ref{b8} in section \ref{pr}, with  $H=(L^{2}(\mathbb{T}^{2}))^{2},$  $F=(H^{1})^{2}$ and  $ F'=(H^{-1})^{2}.$\\
        We  first Recall  that the family of eigenfunctions  $(f^{i,s}_{n_{1},n_{2}})_{i,n_{1},n_{2}}$ (as defined in Proposition \ref{b2}) associate to the family of eigenvalues  $(\lambda_{n_{1},n_{2}})_{n_{1},n_{2}}$ of the operator $-\gamma\bigtriangleup$ is an orthonormal basis of $L^{2}(\mathbb{T}^{2}).$\vspace*{0.1cm}\\ Now  we start by showing that $\forall (\mathcal{U},\mathcal{V})\in H^{1}\times H^{1},$ $A(t,(\mathcal{U},\mathcal{V}))\in H^{-1}\times H^{-1}.$ 
        We have \\ $A(t,(\mathcal{U},\mathcal{V}))=\vspace*{0.16cm}\\\begin{pmatrix}
                             -\gamma \bigtriangleup \mathcal{U} +\beta(I_{d}-\gamma\bigtriangleup)^{\frac{-s}{2}} (G_{t}^{I})^{*}(I_{d}-\gamma\bigtriangleup)^{\frac{s}{2}}\mathcal{U} +\beta(I_{d}-\gamma\bigtriangleup)^{\frac{-s}{2}} (G_{t}^{S})^{*}(I_{d}-\gamma\bigtriangleup)^{\frac{s}{2}}\mathcal{V} \\
                             -\beta(I_{d}-\gamma\bigtriangleup)^{\frac{-s}{2}} (G_{t}^{I})^{*}(I_{d}-\gamma\bigtriangleup)^{\frac{s}{2}}\mathcal{U} -\gamma \bigtriangleup \mathcal{V} - \beta(I_{d}-\gamma\bigtriangleup)^{\frac{-s}{2}} (G_{t}^{S})^{*}(I_{d}-\gamma\bigtriangleup)^{\frac{s}{2}} \mathcal{V}+\alpha \mathcal{V}\\
                                                                                                                                      \end{pmatrix}.$ \vspace*{0.1cm}\\ 
                                                                                                                                      Moreover: \vspace*{0.1cm}\\$(i).$ $ \lVert (I_{d}-\gamma\bigtriangleup)^{\frac{-s}{2}} (G_{t}^{I})^{*}(I_{d}-\gamma\bigtriangleup)^{\frac{s}{2}}\mathcal{U}\lVert_{H^{-1}}\leq C \lVert (I_{d}-\gamma\bigtriangleup)^{\frac{-s}{2}} (G_{t}^{I})^{*}(I_{d}-\gamma\bigtriangleup)^{\frac{s}{2}}\mathcal{U}\lVert_{L^{2}} \\\hspace*{7.2cm}=C\lVert  (G_{t}^{I})^{*}(I_{d}-\gamma\bigtriangleup)^{\frac{s}{2}}\mathcal{U}\lVert_{H^{-s}}\vspace*{0.1cm}\\\hspace*{7.2cm}\leq C\lVert  (I_{d}-\gamma\bigtriangleup)^{\frac{s}{2}}\mathcal{U}\lVert_{H^{-s}}\vspace*{0.1cm}\\\hspace*{7.2cm}=C\lVert  \mathcal{U}\lVert_{L^{2}},$ \\ where the second inequality follows from Corollary \ref{ff2}.\vspace*{0.1cm}\\$(ii).$ Similarly form Corollary \ref{ff2}, we have\vspace*{0.21cm}\\ $\lVert \beta(I_{d}-\gamma\bigtriangleup)^{\frac{-s}{2}} (G_{t}^{S})^{*}(I_{d}-\gamma\bigtriangleup)^{\frac{s}{2}}\mathcal{V} \lVert_{H^{-1}}\leq C \lVert (I_{d}-\gamma\bigtriangleup)^{\frac{-s}{2}} (G_{t}^{S})^{*}(I_{d}-\gamma\bigtriangleup)^{\frac{s}{2}}\mathcal{V}\lVert_{L^{2}} \vspace*{0.1cm}\\\hspace*{6.8cm}=C\lVert  (G_{t}^{S})^{*}(I_{d}-\gamma\bigtriangleup)^{\frac{s}{2}}\mathcal{V}\lVert_{H^{-s}}\vspace*{0.1cm}\\\hspace*{6.8cm}\leq C\lVert  (I_{d}-\gamma\bigtriangleup)^{\frac{s}{2}}\mathcal{V}\lVert_{H^{-s}}\\\hspace*{6.8cm}=C\lVert  \mathcal{V}\lVert_{L^{2}}.$ \vspace*{0.2cm}\\$(iii).$ $\lVert -\gamma\bigtriangleup \mathcal{U}\lVert_{H^{-1}}^{2}= \sum\limits_{i,n_{1},n_{2}}(1+\gamma\pi^{2}(n_{1}^{2}+n_{2}^{2}))^{-1}(-\gamma\bigtriangleup \mathcal{U},f_{n_{1},n_{2}}^{i})_{L^{2}}^{2}\\\hspace*{3.36cm}= \sum\limits_{i\neq 0,n_{1},n_{2}}\gamma\pi^{2}(n_{1}^{2}+n_{2}^{2})(1+\gamma\pi^{2}(n_{1}^{2}+n_{2}^{2}))^{-1}( \mathcal{U},f_{n_{1},n_{2}}^{i})_{L^{2}}^{2}\\\hspace*{3.36cm}\leq \sum\limits_{i,n_{1},n_{2}}(1+\gamma\pi^{2}(n_{1}^{2}+n_{2}^{2}))( \mathcal{U},f_{n_{1},n_{2}}^{i})_{L^{2}}^{2}=\lVert \mathcal{U}\lVert_{H^{1}}^{2}.$\vspace*{0.12cm}\\So from $(i),$ $(ii),$ $(iii)$  we conclude that $A(t,(\mathcal{U},\mathcal{V}))\in H^{-1}\times H^{-1}.$\vspace*{0.15cm}\\ Let us now prove that the four assuptions in Proposition \ref{b8} are satisfied. In the rest of this proof we will note by $ <.,.>$ the duality product between  $H^{-1}\times H^{-1}$ and $H^{1}\times H^{1}.$\vspace*{0.2cm}\\(1). Given that $A(t,.)$ is linear, to prove the first point, it is enough to show that $\forall (\mathcal{U},\mathcal{V})$, $(\mathcal{U}_{1},\mathcal{V}_{1})\in H^{1}\times H^{1}$ the map $\theta\mapsto <A(t,\theta(\mathcal{U},\mathcal{V})), (\mathcal{U}_{1},\mathcal{V}_{1})>$ is continuous on $\mathbb{R},$  which in turn is so easy.\vspace*{0.12cm}\\(2). The fact that there exists  $\delta>0$ such that $\lVert A(t,(\mathcal{U},\mathcal{V})) \lVert_{H^{-1}\times H^{-1}}\leq \delta \lVert (\mathcal{U},\mathcal{V}) \lVert_{H^{1}\times H^{1}}  $ follows from $(i),$ $(ii),$ and  $(iii).$\vspace*{0.12cm}\\(3). Let us prove that there  exists $\sigma_{1}>0$, $\sigma_{2}\in \mathbb{R},$ such that \vspace*{0.15cm}\\\hspace*{3cm} $<A(t,(\mathcal{U},\mathcal{V})),(\mathcal{U},\hspace*{0.1cm}\mathcal{V})>+ \sigma_{2}\lVert (\mathcal{U},\mathcal{V})\lVert_{L^{2}\times L^{2}}^{2}\geq \sigma_{1}\lVert (\mathcal{U},\mathcal{V})\lVert_{H^{1}\times H^{1}}^{2}$.\vspace*{0.13cm}\\We have $<A(t,(\mathcal{U},\mathcal{V})),(\mathcal{U},\mathcal{V})>\vspace*{0.12cm}\\\hspace*{0.5cm}=< -\gamma \bigtriangleup \mathcal{U} +\beta(I_{d}-\gamma\bigtriangleup)^{\frac{-s}{2}} (G_{t}^{I})^{*}(I_{d}-\gamma\bigtriangleup)^{\frac{s}{2}}\mathcal{U} +\beta(I_{d}-\gamma\bigtriangleup)^{\frac{-s}{2}} (G_{t}^{S})^{*}(I_{d}-\gamma\bigtriangleup)^{\frac{s}{2}}\mathcal{V},\hspace*{0.1cm}\mathcal{U}> \vspace*{0.12cm}\\\hspace*{0.5cm}+                                                                                      <-\beta(I_{d}-\gamma\bigtriangleup)^{\frac{-s}{2}} (G_{t}^{I})^{*}(I_{d}-\gamma\bigtriangleup)^{\frac{s}{2}}\mathcal{U} -\gamma \bigtriangleup \mathcal{V} - \beta(I_{d}-\gamma\bigtriangleup)^{\frac{-s}{2}} (G_{t}^{S})^{*}(I_{d}-\gamma\bigtriangleup)^{\frac{s}{2}} \mathcal{V}+\alpha \mathcal{V},\hspace*{0.12cm}\mathcal{V}>.$ \vspace*{0.01cm}\\Furthemore: \vspace*{0.1cm}\\(3.1). $< -\gamma \bigtriangleup \mathcal{U},\hspace*{0.12cm}\mathcal{U}>= \gamma <\bigtriangledown\mathcal{U},\hspace*{0.1cm}\bigtriangledown\mathcal{U}>=\gamma\lVert \bigtriangledown\mathcal{U}\lVert_{L^{2}}^{2}.$\vspace*{0.13cm}\\(3.2).  From $(i)$ and $(ii),$ we note that \vspace*{0.1cm}\\\hspace*{2cm}$\beta(I_{d}-\gamma\bigtriangleup)^{\frac{-s}{2}} (G_{t}^{I})^{*}(I_{d}-\gamma\bigtriangleup)^{\frac{s}{2}}\mathcal{U} +\beta(I_{d}-\gamma\bigtriangleup)^{\frac{-s}{2}} (G_{t}^{S})^{*}(I_{d}-\gamma\bigtriangleup)^{\frac{s}{2}}\mathcal{V}\in L^{2}(\mathbb{T}^{2}),$\vspace*{0.1cm}\\ thus as $H^{1}\subset L^{2}(\mathbb{T}^{2}),$ from $(i)$ and $(ii)$ and from Young's inequality we have \\ $\lvert<\beta(I_{d}-\gamma\bigtriangleup)^{\frac{-s}{2}} (G_{t}^{I})^{*}(I_{d}-\gamma\bigtriangleup)^{\frac{s}{2}}\mathcal{U} +\beta(I_{d}-\gamma\bigtriangleup)^{\frac{-s}{2}} (G_{t}^{S})^{*}(I_{d}-\gamma\bigtriangleup)^{\frac{s}{2}}\mathcal{V},\hspace*{0.12cm}\mathcal{U}>\lvert\vspace*{0.1cm}\\\leq \lVert \beta(I_{d}-\gamma\bigtriangleup)^{\frac{-s}{2}} (G_{t}^{I})^{*}(I_{d}-\gamma\bigtriangleup)^{\frac{s}{2}}\mathcal{U}\lVert_{L^{2}} \lVert  \mathcal{U}\lVert_{L^{2}}+\lVert \beta(I_{d}-\gamma\bigtriangleup)^{\frac{-s}{2}} (G_{t}^{S})^{*}(I_{d}-\gamma\bigtriangleup)^{\frac{s}{2}}\mathcal{V}\lVert_{L^{2}} \lVert  \mathcal{U}\lVert_{L^{2}}\vspace*{0.19cm}\\\leq C\beta(\lVert\mathcal{U}\lVert_{L^{2}}^{2} +\lVert \mathcal{V}\lVert_{L^{2}} \lVert  \mathcal{U}\lVert_{L^{2}})\leq C_{1}\beta(\lVert\mathcal{U}\lVert_{L^{2}}^{2} +\lVert \mathcal{V}\lVert_{L^{2}}^{2} ).$\vspace*{0.1cm} \\Hence\\ $<\beta(I_{d}-\gamma\bigtriangleup)^{\frac{-s}{2}} (G_{t}^{I})^{*}(I_{d}-\gamma\bigtriangleup)^{\frac{s}{2}}\mathcal{U} +\beta(I_{d}-\gamma\bigtriangleup)^{\frac{-s}{2}} (G_{t}^{S})^{*}(I_{d}-\gamma\bigtriangleup)^{\frac{s}{2}}\mathcal{V},\hspace*{0.12cm}\mathcal{U}>\vspace*{0.12cm}\\+<-\beta(I_{d}-\gamma\bigtriangleup)^{\frac{-s}{2}} (G_{t}^{I})^{*}(I_{d}-\gamma\bigtriangleup)^{\frac{s}{2}}\mathcal{U} -\beta(I_{d}-\gamma\bigtriangleup)^{\frac{-s}{2}} (G_{t}^{S})^{*}(I_{d}-\gamma\bigtriangleup)^{\frac{s}{2}}\mathcal{V},\hspace*{0.12cm}\mathcal{V}>$ belongs in the \vspace*{0.17cm}\\interval  $\big[-C_{2}\beta(\lVert\mathcal{U}\lVert_{L^{2}}^{2} +\lVert \mathcal{V}\lVert_{L^{2}}^{2} ),\hspace*{0.12cm}C_{2}\beta(\lVert\mathcal{U}\lVert_{L^{2}}^{2} +\lVert \mathcal{V}\lVert_{L^{2}}^{2} )\big].$\vspace*{0.1cm}\\(3.3). $<\mathcal{V},\mathcal{V}>=\lVert\mathcal{V}\lVert_{L^{2}}^{2}$.\vspace*{0.12cm}\\So   from (3.1), (3.2) and (3.3) we see that it is enough to choose $\sigma_{2}=C_{2}\beta+\gamma$  and $\sigma_{1}=\gamma.$\\(4). We finish by showing that for any $(\mathcal{U},\mathcal{V})\in H^{1}\times H^{1}$ the map $t\mapsto A(t, (\mathcal{U},\mathcal{V}))$ is Lebesgue measurable from $]0,T[$ to $ H^{-1}\times H^{-1}$.  However since $H^{-1}$ is a separable Hilbert space then according to the Pettis theorem (see Theorem 1.1 page 2 in [\ref{tc}]) it is enough to prove that $\forall (\mathcal{U}_{1},\mathcal{V}_{1})\in H^{1}\times H^{1},$ the map $t\mapsto < A(t, (\mathcal{U},\mathcal{V})), (\mathcal{U}_{1},\mathcal{V}_{1})>$ is Lebesgue measurable from $]0,T[$ to $\mathbb{R}.$\\Let us prove that the map $t\mapsto < A(t, (\mathcal{U},\mathcal{V})), (\mathcal{U}_{1},\mathcal{V}_{1})>$ is continuous from $]0,T[$ to $\mathbb{R}.$\\ Let $t, t_{0}\in ]0,T[ $, by noticing that $\forall x \in \mathbb{T}^{2},$ $ G_{t}^{I}\varphi(x)=\varphi(x)\displaystyle \Big(\mu_{t}^{I},\frac{K(x,.)}{(\mu_{t},K)}\Big)=\varphi(x)\mathcal{K}_{t}^{I}(x)$ and  $H^{s-1}$ is a Banach algebra,  from Lemma \ref{ad}, we have\vspace*{0.12cm}\\
                                         $\lvert<(I_{d}-\gamma\bigtriangleup)^{\frac{-s}{2}} \big((G_{t}^{I})^{*}-(G_{t_{0}}^{I})^{*}\big)(I_{d}-\gamma\bigtriangleup)^{\frac{s}{2}}\mathcal{U} ,\hspace*{0.12cm}\mathcal{U}_{1}>\lvert\vspace*{0.12cm}\\\hspace*{2.5cm}=\lvert< (I_{d}-\gamma\bigtriangleup)^{\frac{s}{2}}\mathcal{U} ,\hspace*{0.12cm}\big(
                                         G_{t}^{I}-G_{t_{0}}^{I}\big)(I_{d}-\gamma\bigtriangleup)^{\frac{-s}{2}}\mathcal{U}_{1}>\lvert\vspace*{0.12cm}\\\hspace*{2.5cm}\leq\lVert (I_{d}-\gamma\bigtriangleup)^{\frac{s}{2}}\mathcal{U}\lVert_{H^{1-s}}\lVert \big(
                        G_{t}^{I}-G_{t_{0}}^{I}\big)(I_{d}-\gamma\bigtriangleup)^{\frac{-s}{2}}\mathcal{U}_{1}\lVert_{H^{s-1}}\vspace*{0.12cm}\\\hspace*{2.5cm}\leq\lVert (I_{d}-\gamma\bigtriangleup)^{\frac{s}{2}}\mathcal{U}\lVert_{H^{1-s}}\lVert (I_{d}-\gamma\bigtriangleup)^{\frac{-s}{2}}\mathcal{U}_{1}\big( \mathcal{K}_{t}^{I}(.)-\mathcal{K}_{t_{0}}^{I}(.)\big)\lVert_{H^{s-1}}\vspace*{0.12cm}\\\hspace*{2.5cm}\leq C\lVert \mathcal{U}\lVert_{H^{1}}\lVert (I_{d}-\gamma\bigtriangleup)^{\frac{-s}{2}}\mathcal{U}_{1}\lVert_{H^{s-1}}\lVert \mathcal{K}_{t}^{I}(.)-\mathcal{K}_{t_{0}}^{I}(.)\lVert_{H^{s-1}}\vspace*{0.12cm}\\\hspace*{2.5cm}\leq C\lVert \mathcal{U}\lVert_{H^{1}}\lVert \mathcal{U}_{1}\lVert_{H^{-1}}\lVert  \mathcal{K}_{t}^{I}(.)-\mathcal{K}_{t_{0}}^{I}(.)\lVert_{H^{2}}\vspace*{0.12cm}\\\hspace*{2.5cm}\leq C\lVert \mathcal{U}\lVert_{H^{1}}\lVert \mathcal{U}_{1}\lVert_{H^{1}}\big(d_{F}(\mu_{t}^{I}, \mu_{t_{0}}^{I})+d_{F}(\mu_{t}^{I}, \mu_{t_{0}}^{I})\big).$  \vspace*{0.12cm}\\Similarly by noticing that $\forall \mathcal{U}_{1}\in H^{1},$ $(I_{d}-\gamma\bigtriangleup)^{\frac{-s}{2}}\mathcal{U}_{1}\in H^{1+s}\subset H^{2}$ and \vspace*{0.12cm}\\ $ G_{t}^{S}(I_{d}-\gamma\bigtriangleup)^{\frac{-s}{2}}\mathcal{U}_{1}(x)=\displaystyle \frac{\big(\mu_{t}^{S},(I_{d}-\gamma\bigtriangleup)^{\frac{-s}{2}}\mathcal{U}_{1}K(x,.)\big)}{(\mu_{t},K)},$  from Lemma \ref{ad1}, we have \vspace*{0.12cm}\\$\lvert<(I_{d}-\gamma\bigtriangleup)^{\frac{-s}{2}} \big((G_{t}^{S})^{*}-(G_{t_{0}}^{S})^{*}\big)(I_{d}-\gamma\bigtriangleup)^{\frac{s}{2}}\mathcal{V} ,\hspace*{0.12cm}\mathcal{U}_{1}>\lvert\vspace*{0.12cm}\\\hspace*{2.5cm}=\lvert< (I_{d}-\gamma\bigtriangleup)^{\frac{s}{2}}\mathcal{V} ,\hspace*{0.12cm}\big(
                                                                 G_{t}^{S}-G_{t_{0}}^{S}\big)(I_{d}-\gamma\bigtriangleup)^{\frac{-s}{2}}\mathcal{U}_{1}>\lvert\vspace*{0.12cm}\\\hspace*{2.5cm}\leq\lVert (I_{d}-\gamma\bigtriangleup)^{\frac{s}{2}}\mathcal{V}\lVert_{H^{1-s}}\lVert \big(
                                                G_{t}^{S}-G_{t_{0}}^{S}\big)(I_{d}-\gamma\bigtriangleup)^{\frac{-s}{2}}\mathcal{U}_{1}\lVert_{H^{s-1}}\vspace*{0.12cm}\\\hspace*{2.5cm}\leq\lVert \mathcal{V}\lVert_{H^{1}}\lVert \lVert \mathcal{K}_{t}^{S}(.,(I_{d}-\gamma\bigtriangleup)^{\frac{-s}{2}}\mathcal{U}_{1})-\mathcal{K}_{t_{0}}^{S}(.,(I_{d}-\gamma\bigtriangleup)^{\frac{-s}{2}}\mathcal{U}_{1})\lVert_{H^{2}}\vspace*{0.12cm}\vspace*{0.12cm}\\\hspace*{2.5cm}\leq C\big(\lVert \mathcal{V}\lVert_{H^{1}},\lVert \mathcal{U}_{1}\lVert_{H^{1}}\big)\big(d_{F}(\mu_{t}^{S}, \mu_{t_{0}}^{S})+ d_{F}(\mu_{t}, \mu_{t_{0}})\big).$\vspace*{0.16cm}\\ So \\$\lvert  < A(t, (\mathcal{U},\mathcal{V})), (\mathcal{U}_{1},\mathcal{V}_{1})> - < A(t_{t_{0}}, (\mathcal{U},\mathcal{V})), (\mathcal{U}_{1},\mathcal{V}_{1})>\lvert\vspace*{0.15cm}\\\hspace*{1.5cm}\leq C\big(\lVert \mathcal{V}\lVert_{H^{1}},\lVert \mathcal{U}_{1}\lVert_{H^{1}},\lVert \mathcal{U}\lVert_{H^{1}},\lVert \mathcal{V}_{1}\lVert_{H^{1}}\big)\big(d_{F}(\mu_{t}^{S}, \mu_{t_{0}}^{S})+d_{F}(\mu_{t}^{I}, \mu_{t_{0}}^{I})+ d_{F}(\mu_{t}, \mu_{t_{0}})\big),$ \vspace*{0.13cm}\\thus the result follows from the fact that $\mu^{S},$ $\mu^{I}$ and $\mu$ belongs to $C\big(\mathbb{R}_{+},(\mathcal{M}_{F}(\mathbb{T}^{2}),d_{F})\big).$\vspace*{0.12cm}\\ Now in addition to the fact that  the family of operators $A(t,.)$  satisfies the  four assumptions given in Proposition \ref{b8} in section \ref{pr}, with  $H=(L^{2}(\mathbb{T}^{2}))^{2},$  $F=(H^{1})^{2}$ and  $ F'=(H^{-1})^{2}$, we  note  that:   \vspace*{0.18cm}\\$-$ $J(t)=((I_{d}-\gamma\bigtriangleup)^{\frac{-s}{2}}(G_{t}^{S,I})^{*}(I_{d}-\gamma\bigtriangleup)^{\frac{s}{2}}Z_{t}^{s},\hspace*{0.13cm}-(I_{d}-\gamma\bigtriangleup)^{\frac{-s}{2}}(G_{t}^{S,I})^{*}(I_{d}-\gamma\bigtriangleup)^{\frac{s}{2}}Z_{t}^{s})'$ \vspace*{0.1cm}\\\hspace*{0.4cm} belongs to  $L^{2}_{loc}(\mathbb{R}_{+},(L^{2}(\mathbb{T}^{2}))^{2}).$ Indeed, \\\hspace*{0.5cm}$\lVert (I_{d}-\gamma\bigtriangleup)^{\frac{-s}{2}}(G_{r}^{S,I})^{*}(I_{d}-\gamma\bigtriangleup)^{\frac{s}{2}}Z_{r}^{s} \lVert_{L^{2}}= \lVert (G_{t}^{S,I})^{*}Z_{t} \lVert_{H^{-s}}\leq C\lVert Z_{t} \lVert_{H^{-s}}$  and $Z\in C(\mathbb{R}_{+},H^{-s})$.\vspace*{0.1cm} \\$-$ $(U_{0}^{s},V_{0}^{s})\in (L^{2}(\mathbb{T}^{2}))^{2}$. \vspace*{0.16cm}\\ Thus we deduce from Proposition \ref{b8} that for $(A(t,.))_{\{t\in]0,T[\}}$ being  the above family of operators, the stochastic differential equation \\
                                               $\hspace*{3cm}(d\mathcal{U}(t),d\mathcal{V}(t))'+ A(t,(\mathcal{U}(t),\mathcal{V}(t))dt=J(t)dt+(dW_{t}^{1,s},dW_{t}^{2,s})',$ \hspace*{0.1cm}\vspace*{-0.17cm}\[\tag{6.28}  (\mathcal{U}(0),\mathcal{V}(0))=(U_{0}^{s},V_{0}^{s}),\vspace*{-0.08cm}\label{ere}\]   admits a unique solution $(\mathcal{U},\mathcal{V})\in L^{2}_{loc}\big(\mathbb{R}_{+},(H^{1}(\mathbb{T}^{2}))^{2}\big)\cap  C(\mathbb{R}_{+},(L^{2}(\mathbb{T}^{2}))^{2})$ a.s.\vspace*{0.15cm}\\ On the other hand, it has been shwon in Proposition \ref{rt1}, that the pair $(U^{s},V^{s})$ is the unique solution of the equation (\ref{ere}) in   $C(\mathbb{R}_{+},(L^{2}(\mathbb{T}^{2}))^{2})$(since equation (\ref{ere}) is equivalent to the system formed by  equations (\ref{df1}) and (\ref{df2}), which in turn is the same as the system formed by equations (6.26) and (6.27)). Thus we conclude that the pair of  processes\\ $(U^{s},V^{s})=(\mathcal{U},\mathcal{V})$ which belongs to  $L^{2}_{loc}\big(\mathbb{R}_{+},(H^{1}(\mathbb{T}^{2}))^{2}\big) \cap  C(\mathbb{R}_{+},(L^{2}(\mathbb{T}^{2}))^{2}),$ consequently the pair of processes $(U,V)$ belongs to  $ L^{2}_{loc}\big(\mathbb{R}_{+},(H^{1-s})^{2}\big).$
                                              \end{proof}      
                                         \begin{rmq}
           Given that $dZ^{N}_{t}=\gamma\bigtriangleup Z^{N}_{t}dt + d\widetilde{\mathcal{H}}_{t}^{N}$ (see  equation (\ref{d1})), by proceeding as in the proof of Proposition \ref{ar}, it is easy to see that $\forall N\in \mathbb{N}^{*},$ $Z^{N}\in L^{2}_{loc}(\mathbb{R}_{+},H^{1-s})$. Furthemore by Using the Itô formula establish in [\ref{tc} page 62] we can see that the sequence $(Z^{N})_{N}$   is bounded  in $L^{2}_{loc}(\mathbb{R}_{+},H^{1-s})$ (although we do not establish this result in this paper). Consequently since in a  Hilbert space the bounded sets are relatively compact for the weak topology, the  convergence in law  of  $(Z^{N})_{N}$ in $L^{2}_{loc}(\mathbb{R}_{+},H^{1-s})$ equipped with its weak topology can be easily deduced.\\                    
        The convergence in law of the sequence  $(U^{N},V^{N})_{N}$ in $(L^{2}_{loc}(\mathbb{R}_{+},H^{1-s}))^{2}$ where \\ $L^{2}_{loc}(\mathbb{R}_{+},H^{1-s})$ is equipped with its weak topology follows also by the fact it is bounded in   $(L^{2}_{loc}(\mathbb{R}_{+},H^{1-s}))^{2}$, which in turn is not difficult to  prove by using the Proposition \ref{ar}. However we do not establish that result in this paper.\vspace*{-0.3cm}
                                                                             \end{rmq}
                                                                                                                          \section{Law of Large Numbers, Central Limit Theorem of the sequence $(\mu^{S,N},\mu^{I,N})_{N\geq 1}:$ the case $\gamma=0$}
\fancyhf{}
\lhead{\textit{7. LAW OF LARGE NUMBERS AND CENTRAL LIMIT THEOREM}}
\rhead{\thepage}
 \par In this section we consider the second model presented  in the Introduction ie when the diffusion coefficient $\gamma=0$. More precisely we consider a compartmental SIR stochastic epidemic model for a population distributed on the two dimentional torus such that:\vspace*{0.05cm}\\
\hspace*{0.2cm} $\bullet$ The position of an individual $i$ is independent of time and is represented by    $X^{i}$ defined as  \\\hspace*{0.7cm} in the Introduction. \\
\hspace*{0.2cm} $\bullet$ A susceptible $i$ become infected at time t at the rate $\beta 1_{\{E_{t}^{i}=S\}}\sum_{j=1}^{N}\frac{K(X^{i},X^{j})}{\sum_{l=1}^{N}K(X^{l},X^{j})}1_{\{E_{t}^{j}=I\}}, $ \\
 where  $\alpha,\beta $,  $E_{t}^{i}$ and the function K are defined as in the Introduction. \\
 The temporal evolution of $S(\cdot)$, $I(\cdot)$ and $R(\cdot)$ are defined   as in the Introduction.\\The assumptions made at time $t=0,$ are presented in the Introduction, in other words, the sequence of  empirical measures $(\mu_{0}^{S,N},\mu_{0}^{I,N},\mu_{0}^{N})_{N}$ is defined as in the Introduction. \vspace*{0.2cm}\\Thus  the renormalized processes $\mu^{S,N}$, $\mu^{I,N}$, $\mu^{R,N}$ and  $\mu^{N}$ are defined as follows. $\forall t>0,$\\ $\displaystyle  \hspace*{5cm}\mu_{t}^{S,N}=\frac{1}{N}\sum\limits_{i=1}^{N}1_{\{E_{t}^{i}=S\}}\delta_{X^{i}} $ \\\hspace*{5cm}
 $\displaystyle  \mu_{t}^{I,N}=\frac{1}{N}\sum\limits_{i=1}^{N}1_{\{E_{t}^{i}=I\}}\delta_{X^{i}}$ \\\hspace*{5cm} $\mu_{t}^{R,N}=\frac{1}{N}\sum\limits_{i=1}^{N}1_{\{E_{t}^{i}=R\}}\delta_{X^{i}}$\\\hspace*{5cm}  $ \mu^{N}=\mu_{t}^{S,N}+\mu_{t}^{I,N}+\mu_{t}^{R,N}=\frac{1}{N}\sum\limits_{i=1}^{N}\delta_{X^{i}}.$ \\We first recall that Assumption (H0) in section \ref{se8} remains true. Now since we have already shown in  section \ref{se8} and section \ref{se10} respectively that $ (\mu^{S,N}_{0},\mu^{I,N}_{0},\mu^{N})\xrightarrow{a.s}(\mu^{S}_{0},\mu^{I}_{0},\mu) $  and that $\big(U_{0}^{N}=\sqrt{N}(\mu^{S,N}_{0}-\mu^{S}_{0}),V_{0}^{N}=\sqrt{N}(\mu^{I,N}_{0}-\mu^{I}_{0}),Z^{N}=\sqrt{N}(\mu^{N}-\mu)\big)$ converges in law in $(H^{-s})^{3}$ (for any s>1) towards the Gaussian vector $(U_{0},V_{0},Z),$   then the  aim in this section  is to study the law of large numbers  and the central limit theorem of the sequence  $\{(\mu^{S,N}_{t},\mu^{I,N}_{t}),t\geq0, N\geq 1\}.$
\subsection{Law of Large Numbers}
The following  is assumed to hold throughout subsection 7.1.\vspace*{0.18cm}\\
 \textbf{Assumption (H3): \hspace*{0.2cm} k is Lipschitz.}
\subsubsection{System of evolution equations of the pair $(\mu^{S,N},\mu^{I,N})$}
For  $\varphi \in C(\mathbb{T}^{2})$, since $\gamma=0$, the evolution equations of $\mu^{S,N}$ and $\mu^{I,N}$ simplify as follows.\vspace*{0.13cm}\\
$(\mu_{t}^{S,N},\varphi)=\displaystyle(\mu_{0}^{S,N},\varphi)- \beta \int_{0}^{t}\left(\mu_{r}^{S,N}, \varphi (\mu_{r}^{I,N},\frac{K}{(\mu^{N},K)})\right) dr + M_{t}^{' N,\varphi},$\vspace*{0.15cm}\\ $(\mu_{t}^{I,N} ,\varphi)=\displaystyle (\mu_{0}^{I,N},\varphi) + \int_{0}^{t}\left(\mu_{r}^{S,N}, \varphi (\mu_{r}^{I,N},\frac{K}{(\mu^{N},K)})\right) dr-\alpha \int_{0}^{t}(\mu_{r}^{I,N},\varphi)dr  +L_{t}^{' N,\varphi},$\vspace*{0.1cm}\\
                  where\vspace*{0.1cm}\\
                          $ M_{t}^{' N,\varphi}=\displaystyle - \frac{1}{N} \sum_{i=1}^{N} \int_{0}^{t} \int_{0}^{\infty}1_{\{E_{r^{-}}^{i}=S\}}\varphi(X^{i})1_{\{u\leq \frac{\beta}{N} \sum_{j=1}^{N}\frac{K(X^{i},X^{j})}{\sum\limits_{l=1}^{N}K(X^{l},X^{j})} 1_{\{E_{r}^{j}=I\}}\}} \overline{M}^{i}(dr,du), $ \\
                            $ L_{t}^{' N,\varphi} = \displaystyle\frac{1}{N} \sum_{i=1}^{N} \int_{0}^{t} \int_{0}^{\infty}1_{\{E_{r^{-}}^{i}=S\}}\varphi(X^{i})1_{\{u\leq \frac{\beta}{N} \sum_{j=1}^{N}\frac{K(X^{i},X^{j})}{\sum\limits_{l=1}^{N}K(X^{l},X^{j})} 1_{\{E_{r}^{j}=I\}}\}} \overline{M}^{i}(dr,du)) \\\hspace*{1cm}- \frac{1}{N} \sum_{i=1}^{N} \int_{0}^{t} \int_{0}^{\alpha}1_{\{E_{r^{-}}^{i}=I\}}\varphi(X^{i})\overline{Q}^{i}(dr,du).$ \vspace*{0.16cm}\\
                            Let us state the main result of this subsection. 
                            \begin{thm}
                           The sequence $(\mu^{S,N},\mu^{I,N})_{N \geq 1}$  converges in probability  in $ (D(\mathbb{R}_{+},\mathcal{M}(\mathbb{T}^{2})))^{2} $ towards $ (\mu^{S},\mu^{I})$ $\in (C(\mathbb{R}_{+},\mathcal{M}(\mathbb{T}^{2})))^{2}$ where $\{(\mu_{t}^{S},\mu_{t}^{I}), t\geq 0 \}$ satisfies, $\forall \varphi \in C(\mathbb{T}^{2}),$\vspace*{0.2cm}\\
     $\hspace*{2cm}\displaystyle   (\mu_{t}^{S},\varphi)=(\mu_{0}^{S},\varphi) \displaystyle   - \beta\int_{0}^{t}\left(\mu_{r}^{S}, \varphi (\mu_{r}^{I},\frac{K}{(\mu,K)})\right) dr,\hspace*{5.5cm}(7.1) 
                                                                       \vspace*{0.17cm}\\
   \hspace*{2cm}\displaystyle  \displaystyle (\mu_{t}^{I},\varphi)=(\mu_{0}^{I},\varphi)  +\beta \int_{0}^{t}\left(\mu_{r}^{S}, \varphi (\mu_{r}^{I},\frac{K}{(\mu,K)})\right) dr -\alpha\int_{0}^{t}(\mu_{r}^{I},\varphi)dr.\hspace*{2.7cm}(7.2)$ \label{r1}
                
                                     \end{thm}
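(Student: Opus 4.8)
The plan is to follow closely the proof of Theorem \ref{eeee}, taking advantage of the two simplifications brought by $\gamma=0$: the evolution equations contain no Laplacian term, so that $C(\mathbb{T}^2)$ test functions suffice and no $C^2$ regularity is needed, and the total empirical measure $\mu^N=\frac1N\sum_{i=1}^N\delta_{X^i}$ is now independent of $t$, with $\mu^N\xrightarrow{a.s}\nu$ by Theorem \ref{kl}. As in Section 5 I would first establish the tightness of $(\mu^{S,N})_{N\ge1}$ and $(\mu^{I,N})_{N\ge1}$ in $D(\mathbb{R}_+,\mathcal{M}_F(\mathbb{T}^2))$, then show that every limit point is continuous, then identify the limit by passing to the limit in the two evolution equations, and finally prove that the limiting system (7.1)--(7.2) has a unique solution; convergence in probability then follows because the limit is deterministic.

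For tightness, as in Proposition \ref{eee} it is enough to show that for each $\varphi\in C(\mathbb{T}^2)$ the real process $((\mu_t^{S,N},\varphi))_{t\ge0}$ is tight in $D(\mathbb{R}_+,\mathbb{R})$. Writing $(\mu_t^{S,N},\varphi)=(\mu_0^{S,N},\varphi)+\int_0^t\omega_r^{N,\varphi}dr+M_t^{'N,\varphi}$ with $\omega_r^{N,\varphi}=-\beta(\mu_r^{S,N},\varphi(\mu_r^{I,N},K/(\mu^N,K)))$, Lemma \ref{e3} bounds the drift by $\beta\|\varphi\|_\infty$, while $<M^{'N,\varphi}>_t\le\beta\|\varphi^2\|_\infty t/N\to0$ (there being no Brownian contribution when $\gamma=0$); the two hypotheses of the criterion of Proposition 37 of [\ref{wc}] are thus satisfied, exactly as in Proposition \ref{eee}. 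The continuity of every limit point follows from the bound $|(\mu_t^{S,N},\varphi)-(\mu_{t^-}^{S,N},\varphi)|\le\|\varphi\|_\infty/N$, as in the $\gamma>0$ case, and similarly for $\mu^{I,N}$.

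By Prokhorov's theorem a subsequence converges in law towards some $(\mu^S,\mu^I)$, and it remains to identify it. The only delicate term is $\int_0^t(\mu_r^{S,N},\varphi(\mu_r^{I,N},K/(\mu^N,K)))dr$, which I would rewrite as $\int_0^t(\mu_r^{I,N},(\mu_r^{S,N},\varphi K)/(\mu^N,K))dr$. Since $(\nu,K(\cdot,y))=\int_{\mathbb{T}^2}K(x',y)g(x')dx'\ge\delta_1\int_{\mathbb{T}^2}K(x',y)dx'\ge C>0$ uniformly in $y$ by (H0) and the translation invariance of $K$, and since $d_F(\mu^N,\nu)\xrightarrow{a.s}0$ by Theorem \ref{kl} and Lemma \ref{b7}, I would replace $(\mu^N,K)$ by $(\nu,K)$ in the denominator at a cost controlled by Lemma \ref{e6}, and then invoke the continuity of the tensor-product map (Lemma \ref{e7}) to pass to the limit, precisely as in the identification step of the proof of Theorem \ref{eeee}. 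The martingale terms $M^{'N,\varphi}$ and $L^{'N,\varphi}$ converge to $0$ in $L^2$, hence in probability, since their brackets are $O(1/N)$; together with Theorem \ref{kl} this shows that $(\mu^S,\mu^I)$ satisfies (7.1)--(7.2).

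The main new difficulty, and the place where the argument genuinely departs from Section 5, is uniqueness, because the smoothing semigroup $\Upsilon(t)$ is no longer available when $\gamma=0$. Here I would first note that $\mu_t^{S,N}\le\mu^N$ and $\mu_t^{I,N}\le\mu^N$ as positive measures, so in the limit $\mu_t^S\le\nu$ and $\mu_t^I\le\nu$; since $\nu$ is absolutely continuous with density $g\le\delta_2$ by (H0), the limits $\mu_t^S$ and $\mu_t^I$ are absolutely continuous with respect to the Lebesgue measure, with densities $f_S(t,\cdot)$ and $f_I(t,\cdot)$ bounded by $\delta_2$, i.e.\ lying in $\Lambda$. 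Testing (7.1)--(7.2) against arbitrary $\varphi\in C(\mathbb{T}^2)$ then gives, for Lebesgue-a.e.\ $x$, the pointwise integral equation
\[
f_S(t,x)=f_S(0,x)-\beta\int_0^t f_S(r,x)\Big(\int_{\mathbb{T}^2}\frac{K(x,y)}{\int_{\mathbb{T}^2}K(x',y)g(x')dx'}f_I(r,y)\,dy\Big)dr,
\]
together with the analogous equation for $f_I$. Because the denominator is bounded below by $C>0$, $K$ is bounded and the densities remain in $\Lambda$, the right-hand sides are Lipschitz in $(f_S,f_I)$ uniformly in $x$; subtracting two solutions with the same initial data and applying Gronwall's lemma in the $\sup$ norm then yields uniqueness on $\Lambda$. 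Since every converging subsequence has this same deterministic limit, the whole sequence $(\mu^{S,N},\mu^{I,N})_{N\ge1}$ converges in probability, completing the proof of Theorem \ref{r1}.
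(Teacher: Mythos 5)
Your proposal is correct and follows essentially the same route as the paper, which itself only sketches the argument by saying that the proofs of Proposition 5.9 and Theorem 5.10 adapt to the case $\gamma=0$: tightness via the semimartingale criterion (now without the Laplacian and Brownian terms), continuity of limit points from the $O(1/N)$ jump bound, identification of the limit using the lower bound on $(\nu,K(\cdot,y))$ together with Lemmas 5.6--5.8, absolute continuity of the limit measures with densities in $\Lambda$, and uniqueness of the limiting integral system by a sup-norm Gronwall argument (which the paper states is easy, and which is indeed simpler here since no semigroup is needed). Your filling-in of the uniqueness step matches what the paper intends.
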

\begin{proof}
 We obtain  the tightness of the sequence $ (\mu^{S,N},\mu^{I,N})_{N \geq 1 }$ by an adaptation of the proof of Proposition \ref{eee}, thus  by Prokhorov's theorem we deduce the existence of a  subsequence which converges in law towards  $\{ (\mu_{t}^{S},\mu_{t}^{I}), t\geq 0  \}.$ Furthemore, adapting the proof of Theorem \ref{eeee} we prove that  $\{(\mu_{t}^{S},\mu_{t}^{I}), t\geq 0  \}$ is continuous  and verifies  the system formed by the equations (7.1) and (7.2), and  $\forall t\geq 0$,  $\mu_{t}^{S}$ and $\mu_{t}^{I}$ are absolutely continuous with respect to the Lebesgue measue with densities $f_{S}(t,.)$ and $f_{S}(t,.)$ bounded by $\delta_{2}$($\delta_{2}$ is defined in section \ref{se8}). \\On the  other hand   $(f_{S}(t,.),f_{I}(t,.))$ 
  satisfies the following system.\vspace*{0.18cm}\\
   $ \hspace*{2.5cm}\displaystyle  f_{S}(t) =f_{S}(0)-\beta \int_{0}^{t} f_{S}(r) \int_{\mathbb{T}_{2}}\frac{K(.,y) }{\int_{\mathbb{T}^{2}}K(x',y)g(x')dx'} f_{I}(r,y)dydr,
                      \vspace*{0.15cm}\\
   \hspace*{2.5cm}\displaystyle   f_{I}(t) =f_{I}(0)+\beta \int_{0}^{t} f_{S}(r) \int_{\mathbb{T}_{2}}\frac{K(.,y) }{\int_{\mathbb{T}^{2}}K(x',y)g(x')dx'} f_{I}(r,y)dydr-\alpha \int_{0}^{t}f_{I}(r)dr,
    $\\
                   where $g$ is the density of $\mu$.  Moreover it is  easy to prove that the above system admits a unique solution  in the set $\Lambda=\{(f_{1},f_{2})/0\leq f_{i}\leq\delta_{2},i\in\{1,2\}\}$. Thus we conclude that the sequence $ (\mu^{S,N},\mu^{I,N})_{N \geq 1 }$ weakly converges in  $(D(\mathbb{R}_{+},\mathcal{M}(\mathbb{T}^{2})))^{2}$ towards $(\mu^{S},\mu^{I}).$ However as the initial measures $\mu_{0}^{S}$ and $\mu_{0}^{I}$ are deterministic (see Theorem 3.1 in section \ref{se8}), the measures $\mu^{S}_{t}$ and $\mu^{I}_{t}$ also have this property, consequently  we have the convergence in probability.
\end{proof}

\subsection{Central Limit Theorem}
The aim here is to study the convergence of the  sequence  
                        $ (U^{N}=\sqrt{N}(\mu^{S,N}-\mu^{S}),V^{N}=\sqrt{N}(\mu^{I,N}-\mu^{I})), $ as $N\rightarrow \infty$ in  $ D(\mathbb{R}_{+},H^{-s})\times   D(\mathbb{R}_{+},H^{-s})$ with s>1 (the choice of such a 's' is justified as in subsection \ref{ss1}). To this end  we make the following assumptions.\vspace*{0.2cm}\\
\textbf{Assuptions (H4)}\vspace*{0.2cm}: $\hspace*{0.1cm}  k \in C^{2}(\mathbb{R}_{+}).$ \\
Following the same argument as that used in the proof of lemma \ref{ff3}, we note that:
\begin{rmq} Under the assumption $(H4)$,    $\underset{x}{\sup}\Arrowvert K(x,.) \Arrowvert_{H^{s}}<\infty$ \label{ape1}\vspace*{-0.18cm}
\end{rmq}
\subsubsection{Equations verified by the pair $ (U^{N},V^{N})$ } 
 Let $ \varphi \in C(\mathbb{T}^{2})$, by a similar reasoning as in subsection \ref{se6}, we see that \vspace*{-0.2cm} \[\tag{7.3}\displaystyle (U_{t}^{N},\varphi)=\displaystyle  (U_{0}^{N},\varphi)  + \beta \int_{0}^{t}(Z^{N} , G_{r}^{S,I,N}\varphi)dr -\beta\int_{0}^{t}(U_{r}^{N} ,G_{r}^{I,N}\varphi )dr - \beta\int_{0}^{t}( V_{r}^{N},G_{r}^{S}\varphi )dr + \widetilde{M}_{t}^{' N,\varphi},\label{rr1}\vspace*{-0.25cm}\]
 $\displaystyle (V_{t}^{N},\varphi)=(V_{0}^{N},\varphi)- \beta \int_{0}^{t}(Z^{N} , G_{r}^{S,I,N}\varphi)dr +\beta\int_{0}^{t}(U_{r}^{N} ,G_{r}^{I,N}\varphi )dr +\beta\int_{0}^{t}( V_{r}^{N},G_{r}^{S}\varphi )dr  $\vspace*{-0.3cm}\[\tag{7.4}\hspace*{-8.5cm}-\alpha \int_{0}^{t}(V_{r}^{N},\varphi)dr +\widetilde{L}_{t}^{' N,\varphi},\label{rr2}\vspace*{-0.2cm}\] where  \vspace*{0.1cm}\\\hspace*{0.6cm} - $ \widetilde{M}_{t}^{' N,\varphi}=\sqrt{N}M_{t}^{' N,\varphi}$ and  $ \widetilde{L}_{t}^{' N,\varphi}=\sqrt{N}L_{t}^{' N,\varphi},$\vspace*{0.17cm} \\\hspace*{0.6cm} - $\forall x,y,x' \in \mathbb{T}_{2}$,\vspace*{0.18cm} \\\hspace*{1cm} $G_{r}^{S,I,N}\varphi(x')=\displaystyle\left(\mu_{r}^{I,N}, K(x',.)\frac{(\mu_{r}^{S,N},\varphi K)}{(\mu^{N},K)(\mu,K)}\right)\vspace*{0.15cm}\\\hspace*{3.18cm}=\displaystyle\int_{\mathbb{T}^{2}}K(x',y)\frac{\int_{\mathbb{T}^{2}}\varphi(x)K(x,y)\mu_{r}^{S,N}(dx)}{\int_{\mathbb{T}^{2}} K(y',y)\mu^{N}(dy')\int_{\mathbb{T}^{2}} K(y',y)\mu(dy')}\mu_{r}^{I,N}(dy),$\vspace*{0.18cm}\\ \hspace*{1cm} $G_{r}^{I,N}\varphi(x)= \displaystyle\varphi(x)\Big(\mu_{r}^{I,N}, \frac{K(x,.)}{(\mu,K)}\Big)=\displaystyle\varphi(x) \int_{\mathbb{T}^{2}}\frac{K(x,y)}{\int_{\mathbb{T}^{2}} K(y',y)\mu(dy')}\mu_{r}^{I,N}(dy),$  \vspace*{0.19cm} \\\hspace*{1cm} $G_{r}^{S}\varphi(y)=\displaystyle \frac{(\mu_{r}^{S}, \varphi K(.,y))}{(\mu,K(.,y))}=\displaystyle\frac{\int_{\mathbb{T}^{2}}\varphi(x) K(x,y)\mu_{r}^{S}(dx)}{\int_{\mathbb{T}^{2}} K(y',y)\mu(dy')}.$\vspace*{0.35cm}\\\textbf{In the rest of this section we arbitrarily choose $1<s<2$,} and we equipped $D(\mathbb{R}_{+},H^{-s})$ with the Skorhokod topology. \newpage
  \begin{thm}
        Under  (H4),  the sequence  $(U^{N},V^{N})_{N\geq 1}$ converges in law   in $(D(\mathbb{R}_{+},H^{-s}))^{2}$ to the pair of process  $(U,V)$ which belongs  to $ (C(\mathbb{R}_{+},H^{-s}))^{2}$
             and  satisfies: \vspace*{0.12cm}\\
          $ U_{t}=\displaystyle U_{0} + \beta \int_{0}^{t} (G_{r}^{S,I})^{*}Zdr- \beta \int_{0}^{t} (G_{r}^{I})^{*}U_{r}dr  -\beta \int_{0}^{t} (G_{r}^{S})^{*} V_{r}dr + W_{t}^{' 1},\hspace*{3.8cm}(7.5) $\vspace*{0.1cm}\\
           $ V_{t}=\displaystyle V_{0}  - \beta \int_{0}^{t}(G_{r}^{S,I})^{*}Zdr+ \beta \int_{0}^{t} (G_{r}^{I})^{*}U_{r}dr  + \beta \int_{0}^{t} (G_{r}^{S})^{*}V_{r}dr -\alpha \int_{0}^{t}V_{r}dr+W_{t}^{' 2},\hspace*{1.8cm}(7.6)$\vspace*{0.1cm} \\
                     where $\forall \varphi,\psi \in H^{s}, ((W^{' 1},\varphi) ,(W^{' 2},\psi)) $ is a centered Gaussian martingale   verifying \\
                                 $\hspace*{2cm}< (W^{' 1},\varphi)>_{t}=\displaystyle\beta\int_{0}^{t}\Big(\mu_{r}^{S},\varphi^{2}(\mu_{r}^{I},\frac{K}{(\mu,K)})\Big)dr,$\vspace*{0.1cm}\\
                          $ \hspace*{2cm} < (W^{' 2},\psi)>_{t}=\displaystyle\beta\int_{0}^{t}\Big(\mu_{r}^{S},\psi^{2}(\mu_{r}^{I},\frac{K}{(\mu,K)})\Big)dr+\alpha \int_{0}^{t}(\mu_{r}^{I},\psi^{2})dr,$\vspace*{0.1cm}\\
                                                                 $\hspace*{2cm} < (W^{' 1},\varphi),(W^{' 2},\psi)>_{t}=-\displaystyle\beta\int_{0}^{t}\Big(\mu_{r}^{S},\varphi\psi(\mu_{r}^{I},\frac{K}{(\mu,K)})\Big)dr.$ \label{r2}
                                                       \end{thm}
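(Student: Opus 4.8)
The plan is to follow the architecture of the proof of Theorem \ref{ff1}, taking full advantage of the simplifications produced by $\gamma=0$. The decisive point is that, since individuals no longer move, $X_t^i=X^i$ and hence $\mu_t^N=\mu^N=\frac1N\sum_{i=1}^N\delta_{X^i}$ is constant in time; consequently $Z^N=\sqrt N(\mu^N-\mu)$ is a \emph{time-independent} random element of $H^{-s}$ whose convergence in law and uniform second moment are already supplied by Theorem \ref{lo} and Proposition \ref{uo} (valid for every $s>1$). This removes the need both for a process-level central limit theorem for $Z^N$ (the analogue of Theorem \ref{d6}) and for the semigroup machinery: equations (\ref{rr1})--(\ref{rr2}) express $U^N$ and $V^N$ directly as an initial value plus a finite-variation drift plus a pure-jump martingale, with no $\Upsilon(t-r)$ appearing. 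The threshold $1<s<2$ is dictated exactly as for the initial measures: the limiting noise $W^{'1}$ now carries only the jump (infection) contribution, so its predictable quadratic variation involves $\sum_{i,n_1,n_2}(\rho^{i,s}_{n_1,n_2})^2(x)$ but not $\sum_{i,n_1,n_2}(\bigtriangledown\rho^{i,s}_{n_1,n_2})^2$; by the computation of Lemma \ref{ap} the former sum converges iff $s>1$, which is the $\gamma=0$ analogue of Proposition \ref{add} and explains why the same regularity as in Theorem \ref{lo} suffices.

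First I would record the operator estimates at the reduced regularity. Since $1<s<2$, $H^s(\mathbb{T}^2)$ is a Banach algebra (Proposition \ref{b4}) and, under (H4), $\sup_x\|K(x,\cdot)\|_{H^s}<\infty$ (Remark \ref{ape1}); the proofs of Proposition \ref{f5} and Corollary \ref{ff2} then go through, yielding $\|(G_r^{I,N})^*\mathcal V\|_{H^{-s}}+\|(G_r^{S,I,N})^*\mathcal V\|_{H^{-s}}+\|(G_r^{S})^*\mathcal V\|_{H^{-s}}\le C\|\mathcal V\|_{H^{-s}}$ uniformly in $r,N$. With these bounds I would establish $\sup_N\sup_{0\le t\le T}\mathbb{E}\big(\|U_t^N\|_{H^{-s}}^2+\|V_t^N\|_{H^{-s}}^2\big)<\infty$ by squaring (\ref{rr1})--(\ref{rr2}), taking expectations, controlling the martingale increments through their quadratic variations (the $\gamma=0$ analogue of Corollary \ref{d4}), inserting the uniform bound on $\mathbb{E}\|Z^N\|_{H^{-s}}^2$, and closing a coupled Gronwall argument on the two unknowns simultaneously. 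This is the $\gamma=0$ counterpart of Proposition \ref{f8}, now stripped of the $\int_0^t\Upsilon(t-r)\,d\widetilde M_r^N$ terms.

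Next I would treat the noise and the tightness. Adapting Proposition \ref{f1} (tightness of the martingale parts via the Aldous--Rebolledo criterion of Proposition \ref{d7} applied to $\langle\widetilde M^{'N}\rangle$ and $\langle\widetilde L^{'N}\rangle$), Proposition \ref{f2} (continuity of the limit) and Proposition \ref{f4} (Gaussian character and white-noise representation), I would obtain $(\widetilde M^{'N},\widetilde L^{'N})\xrightarrow{L}(W^{'1},W^{'2})$ in $(D(\mathbb R_+,H^{-s}))^2$ with the stated quadratic variations; only the jump contributions survive, so the representation involves just the infection noise $\mathcal W_1$ and the cure noise $\mathcal W_6$, the gradient terms $\mathcal W_2,\dots,\mathcal W_5$ disappearing. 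Tightness of $(U^N,V^N)$ in $(D(\mathbb R_+,H^{-s}))^2$ then follows as in Proposition \ref{f10}: condition (T1) from the moment bound together with the compact embedding $H^{-s'}\hookrightarrow H^{-s}$ for $1<s'<s$, and condition (T2) by splitting $U_{\tau^N+\theta}^N-U_{\tau^N}^N$ into the drift integral over $[\tau^N,\tau^N+\theta]$, bounded in $L^2(H^{-s})$ by $C\theta$ through the operator estimates and the moment bounds, and the martingale increment, controlled by the increment of $\langle\widetilde M^{'N}\rangle$.

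Finally, to identify the limit I would pass to the limit in (\ref{rr1})--(\ref{rr2}) along a converging subsequence, in the spirit of Proposition \ref{llln}. For the terms $\int_0^t(U_r^N,G_r^{I,N}\varphi)\,dr$ and $\int_0^t(Z^N,G_r^{S,I,N}\varphi)\,dr$ I would split $G_r^{\bullet,N}\varphi=G_r^{\bullet}\varphi+(G_r^{\bullet,N}-G_r^{\bullet})\varphi$: the first piece converges by the joint convergence in law of $(U^N,V^N,Z^N)$ since $G_r^{\bullet}\varphi\in H^s$, and the second is negligible by the moment bounds and the $\gamma=0$ versions of Lemmas \ref{f11}--\ref{f12} (which are simpler here, as $\Upsilon(t-r)\varphi$ is replaced by $\varphi$, their proofs resting on Theorem \ref{r1}); the term $\int_0^t(V_r^N,G_r^S\varphi)\,dr$ passes to the limit directly since $G_r^S\varphi$ does not depend on $N$. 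This shows every limit point solves (7.5)--(7.6), and an adaptation of the Gronwall argument of Proposition \ref{rt1}, again trivialized by the absence of the semigroup since the $Z$ and $W$ terms cancel in the difference of two solutions, yields uniqueness in $(C(\mathbb R_+,H^{-s}))^2$ and hence convergence of the whole sequence. I expect the main obstacle to be the coupled moment/tightness step carried out at the reduced regularity: the absence of parabolic smoothing means the operator bounds of Corollary \ref{ff2} are the \emph{only} mechanism keeping the $H^{-s}$ norms of $U^N,V^N$ from blowing up in the Gronwall loop, so one must verify carefully that those bounds genuinely survive for $s\in(1,2)$ under the weaker hypothesis $k\in C^2$.
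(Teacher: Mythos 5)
Your proposal is correct and follows essentially the same route as the paper: exploit the time-independence of $\mu^N$ (hence of $Z^N$, already covered by Theorem \ref{lo} and Proposition \ref{uo}), verify that Proposition \ref{f5} and Corollary \ref{ff2} survive for $\mu_t=\mu$ under (H4) via Remark \ref{ape1}, run the coupled Gronwall moment bound (Proposition \ref{r3}), establish tightness and the martingale limit by adapting Propositions \ref{f1}, \ref{f2}, \ref{f4}, and identify and unicize the limit via the $\gamma=0$ versions of Lemmas \ref{f11}--\ref{f12} and Proposition \ref{rt1}. Your explicit observation that the disappearance of the gradient terms is exactly what lowers the regularity threshold from $s>2$ to $s>1$ is in fact slightly more careful than the paper, which merely refers back to subsection \ref{ss1} and cites Corollary \ref{d4} (stated there for $s>2$) without comment.
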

                                                 The proof of   the next Proposition, which will be useful for the proof of Theorem \ref{r2} is an easy adaptation of the proof of Proposition \ref{f4}.
                                                           \begin{prp}
      The  sequence $ \{(\widetilde{M}_{t}^{' N},\widetilde{L}_{t}^{' N}), t\geq0 \}_{N\geq1} $ converges in law  in $ (D(\mathbb{R}_{+},H^{-s}))^{2}$ towards  the process $(W^{' 1},W^{' 2})\in (C(\mathbb{R}_{+},H^{-s}))^{2} $  and    $ \forall \varphi,\psi \in H^{s}, ((W^{' 1},\varphi),(W^{' 2},\psi))$ is a centered Gaussian martingale of the form\vspace*{0.18cm}\\
                $ (W_{t}^{' 1},\varphi)=-\displaystyle\displaystyle\int_{0}^{t}\int_{\mathbb{T}^{2}}\sqrt{\beta f_{S}(r,x)\int_{\mathbb{T}^{2}}\frac{K(x,y)}{\int_{\mathbb{T}^{2}}K(x',y)g(x')dx'}f_{I}(r,y)dy}\varphi(x)\mathcal{W}_{1}(dr,dx)$\vspace*{0.18cm}\\
                $(W_{t}^{' 2},\psi)=\displaystyle\displaystyle\int_{0}^{t}\int_{\mathbb{T}^{2}}\sqrt{\beta f_{S}(r,x)\int_{\mathbb{T}^{2}}\frac{K(x,y)}{\int_{\mathbb{T}^{2}}K(x',y)g(x')dx'}f_{I}(r,y)dy}\psi(x)\mathcal{W}_{1}(dr,dx)\\\hspace*{2cm}-\int_{0}^{t}\int_{\mathbb{T}^{2}}\psi(x)\sqrt{\alpha f_{I}(r,x)}\mathcal{W}_{2}(dr,dx),$\vspace*{0.1cm}\\where $g$ is the density of $\mu$ and  $\mathcal{W}_{1},\mathcal{W}_{2}$ are independent spatio-temporal white noises.\vspace*{-0.25cm}\label{rrt}
                \end{prp}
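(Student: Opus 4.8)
The plan is to follow, mutatis mutandis, the scheme of Proposition \ref{f4}, the simplification being that with $\gamma=0$ the Brownian contributions to $\widetilde{M}^{'N}$ and $\widetilde{L}^{'N}$ vanish. First I would establish the tightness of $(\widetilde{M}^{'N},\widetilde{L}^{'N})_{N\geq 1}$ in $(D(\mathbb{R}_{+},H^{-s}))^{2}$ by verifying the Aldous--Rebolledo conditions (T1) and (T2) of Proposition \ref{d7} with $H=H^{-s}$. Here the predictable bracket is $<\widetilde{M}^{'N,\varphi}>_{t}=\int_{0}^{t}A_{r}^{'N}(\varphi)dr$ with $A_{r}^{'N}(\varphi)=\beta(\mu_{r}^{S,N},\varphi^{2}(\mu_{r}^{I,N},\frac{K}{(\mu^{N},K)}))$; note that the gradient term $2\gamma(\mu_{r}^{S,N},(\bigtriangledown\varphi)^{2})$ present in Section 6 has disappeared. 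Consequently the summation $\sum_{i,n_{1},n_{2}}A_{\tau^{N}+r}^{'N}(\rho_{n_{1},n_{2}}^{i,s})$ only involves $\sum_{i,n_{1},n_{2}}(\rho_{n_{1},n_{2}}^{i,s})^{2}$, which by Lemma \ref{ap} in the Appendix is finite already for $s>1$; this is precisely why the choice $1<s<2$ is admissible here, in contrast to the requirement $s>2$ of Section 6 which came from the gradient terms.

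Second, I would show that every limit point $(W^{'1},W^{'2})$ lies in $(C(\mathbb{R}_{+},H^{-s}))^{2}$ by adapting Proposition \ref{f2}: since the jumps of $\widetilde{M}^{'N,\varphi}$ coincide with those of $(\mu^{S,N},\varphi)$ and each such jump has size at most $C\lVert\varphi\lVert_{H^{s}}/\sqrt{N}$, one obtains $\mathbb{E}(\sup_{0\leq t\leq T}\lVert\widetilde{M}_{t}^{'N}-\widetilde{M}_{t^{-}}^{'N}\lVert_{H^{-s}})\leq C/\sqrt{N}\to 0$, so the limit is continuous, and likewise for $W^{'2}$. The martingale property of $((W^{'1},\varphi),(W^{'2},\psi))$ for $\varphi,\psi\in H^{s+1}$ then follows as in Lemma \ref{f3} from the uniform integrability of $\phi(\widetilde{M}^{'N,\varphi})$ together with passage to the limit in the defining identity $\mathbb{E}(\phi(\widetilde{M}^{'N,\varphi}))=0$.

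Third, for the identification of the law, I would decompose $\widetilde{M}_{t}^{'N,\varphi}=-M_{t}^{1,N,\varphi}$ and $\widetilde{L}_{t}^{'N,\psi}=M_{t}^{1,N,\psi}+M_{t}^{4,N,\psi}$, where $M^{1}$ is the jump martingale driven by the infection PRMs $\overline{M}^{i}$ and $M^{4}$ is driven by the cure PRMs $\overline{Q}^{i}$; the two families being independent, $M^{1}$ and $M^{4}$ are mutually orthogonal. For $\varphi,\psi\in H^{s+1}$ the sum $\widetilde{M}^{'N,\varphi}+\widetilde{L}^{'N,\psi}=-M^{1,N,\varphi}+M^{1,N,\psi}+M^{4,N,\psi}$ then has bracket $<M^{1,N,\varphi}>_{t}+<M^{1,N,\psi}>_{t}+<M^{4,N,\psi}>_{t}-2<M^{1,N,\varphi},M^{1,N,\psi}>_{t}$, and by Theorem \ref{r1} (the law of large numbers for $\gamma=0$) each piece converges in probability to the corresponding deterministic integral against $(\mu_{r}^{S},\mu_{r}^{I},\mu)$. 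Since the limiting bracket is deterministic, $(W^{'1},\varphi)+(W^{'2},\psi)$ is a centered Gaussian (time-changed Brownian) martingale; taking $\psi\equiv 0$ and then $\varphi\equiv 0$ isolates the two components and yields the stated white-noise representations with $\mathcal{W}_{1},\mathcal{W}_{2}$. Finally, since all subsequential limits share these deterministic brackets and hence the same law, Prokhorov's theorem upgrades the subsequential convergence to convergence of the full sequence.

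The main difficulty is bookkeeping rather than conceptual: I must verify that the quadratic-variation estimate underlying (T2) genuinely closes at $s>1$ once the gradient term is gone, i.e.\ that $\sum_{i,n_{1},n_{2}}(\rho_{n_{1},n_{2}}^{i,s})^{2}$ is uniformly summable and that the ratio $\int_{\mathbb{T}^{2}}K(x,y)\mu_{r}^{S,N}(dx)/\int_{\mathbb{T}^{2}}K(x',y)\mu_{r}^{N}(dx')$ remains bounded by $1$ (via the symmetry of $K$, exactly as in Lemma \ref{e3}), so that $A_{r}^{'N}(\rho_{n_{1},n_{2}}^{i,s})$ is controlled uniformly in $N$ and $r$. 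Everything else is a faithful transcription of the $\gamma>0$ arguments of Propositions \ref{f1}, \ref{f2} and \ref{f4} with the Brownian martingales $M^{2}$ and $M^{3}$ set to zero.
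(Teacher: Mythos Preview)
Your proposal is correct and follows essentially the same approach as the paper, which itself simply states that the result is ``an easy adaptation of the proof of Proposition \ref{f4}.'' You have in fact supplied more detail than the paper does, correctly isolating the key simplification (the disappearance of the Brownian martingales $M^{2},M^{3}$ when $\gamma=0$) and the reason why $s>1$ now suffices via Lemma \ref{ap}.
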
 
              \subsubsection{\textit{Proof of Theorem \ref{r2}}}
             We establish the tightness of the sequence $(U^{N},V^{N})$ first, then we  show that all converging subsequences of $(U^{N},V^{N})_{N\geq 1} $ have the same limit which we shall identify.\vspace*{0.1cm}\\The next proposition is useful to  prove the tightness of the sequence $(U^{N},V^{N})_{N\geq 1}.$
             \begin{prp}
              $\forall$ $ T>0$, \\\hspace*{4cm} $\underset{N\geq 1}{\sup}\mathbb{E}(\underset{0\leq t \leq T}{\sup}\lVert U_{t}^{N}\lVert_{H^{-s}}^{2})<\infty,  $ \\ \hspace*{4cm} $\underset{N\geq 1}{\sup}\mathbb{E}(\underset{0\leq t \leq T}{\sup}\lVert V_{t}^{N}\lVert_{H^{-s}}^{2})<\infty. $\label{r3} 
             \end{prp}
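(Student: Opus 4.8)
The plan is to follow the scheme of Proposition \ref{f8}, but the argument simplifies substantially because $\gamma=0$ removes the diffusion term: there is no longer a semigroup $\Upsilon(t)$, so the entire mild-formulation machinery (and in particular the stochastic-convolution estimate of Proposition \ref{f6}) is unnecessary. First I would recast the scalar equations (\ref{rr1}) and (\ref{rr2}) in their adjoint, $H^{-s}$-valued form, namely
\[ U_t^N = U_0^N + \beta\int_0^t (G_r^{S,I,N})^* Z^N\,dr - \beta\int_0^t (G_r^{I,N})^* U_r^N\,dr - \beta\int_0^t (G_r^S)^* V_r^N\,dr + \widetilde{M}_t^{' N}, \]
and the analogous equation for $V_t^N$ carrying the extra term $-\alpha\int_0^t V_r^N\,dr$. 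The key structural simplification is that $Z^N=\sqrt{N}(\mu^N-\mu)$ is \emph{constant in time}, since the positions $X^i$ do not move when $\gamma=0$; hence by Proposition \ref{uo} we already have $\sup_N\mathbb{E}(\lVert Z^N\rVert_{H^{-s}}^2)<\infty$ for every $s>1$, and the $Z$-term contributes only a harmless constant.

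Second, I would control the martingale parts, establishing the $\gamma=0$ counterpart of Corollary \ref{d4}: $\sup_N\mathbb{E}(\sup_{0\le t\le T}\lVert\widetilde{M}_t^{' N}\rVert_{H^{-s}}^2)\le C(T)$ and likewise for $\widetilde{L}^{' N}$. Since $\lVert\widetilde{M}_t^{' N}\rVert_{H^{-s}}$ is a submartingale, Doob's inequality reduces this to bounding $\mathbb{E}(\lVert\widetilde{M}_T^{' N}\rVert_{H^{-s}}^2)=\sum_{i,n_1,n_2}\mathbb{E}(\langle\widetilde{M}^{' N,\rho^{i,s}_{n_1,n_2}}\rangle_T)$ via Parseval. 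Interchanging sum and integral, using the appendix bound $\sum_{i,n_1,n_2}(\rho^{i,s}_{n_1,n_2})^2(x)\le C$ (Lemma \ref{ap}), valid precisely for $s>1$, together with $\int_{\mathbb{T}^2}K(x,y)\mu_r^{S,N}(dx)\le\int_{\mathbb{T}^2}K(x,y)\mu_r^{N}(dx)$ and $\mu_r^{I,N}(\mathbb{T}^2)\le 1$, produces a bound of order $\beta CT$. The decisive point is that the bracket here contains \emph{no} gradient contribution, since there is no Brownian part when $\gamma=0$; this is exactly why the threshold drops from $s>2$ to $s>1$ and why one may take $1<s<2$.

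Third, I would record the operator estimates, i.e.\ the $\gamma=0$ analog of Corollary \ref{ff2}: $\lVert (G_r^{S,I,N})^*\mathcal{V}\rVert_{H^{-s}}\le C\sup_y\lVert K(.,y)\rVert_{H^s}\lVert\mathcal{V}\rVert_{H^{-s}}$ and the corresponding bounds for $(G_r^{I,N})^*$ and $(G_r^S)^*$; these follow by the duality argument behind Proposition \ref{f5}, now invoking Remark \ref{ape1} for $\sup_y\lVert K(.,y)\rVert_{H^s}<\infty$ and the adaptation of Lemma \ref{ape} to control $\sup_x\lVert K(x,.)/\int_{\mathbb{T}^2}K(x',.)\mu_r(dx')\rVert_{H^s}$. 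Then I would assemble everything: squaring the $H^{-s}$ equation, using $(\sum_{k}a_k)^2\le m\sum_k a_k^2$, bounding each time integral by Cauchy--Schwarz (the integrands being nondecreasing in $t$, the running supremum is attained at $T$, so $\sup_{t\le T}(\int_0^t\,dr)^2\le T\int_0^T(\cdot)^2 dr$), and applying step three yields
\[ \mathbb{E}\big(\sup_{0\le t\le T}\lVert U_t^N\rVert_{H^{-s}}^2\big) \le C_0 + C_1\int_0^T\big\{\mathbb{E}(\sup_{0\le r'\le r}\lVert U_{r'}^N\rVert_{H^{-s}}^2)+\mathbb{E}(\sup_{0\le r'\le r}\lVert V_{r'}^N\rVert_{H^{-s}}^2)\big\}\,dr, \]
where $C_0$ collects the uniformly bounded contributions of $U_0^N$ (Proposition \ref{uo}), of $Z^N$ (a $T^2\sup_N\mathbb{E}(\lVert Z^N\rVert_{H^{-s}}^2)$ term), and of the martingale (step two); the companion inequality for $V_t^N$ is identical up to the $\alpha$-term. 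Summing the two and invoking Gronwall's lemma gives the claim, the a priori finiteness of the left-hand sides being guaranteed by the crude estimate $\mathbb{E}(\sup_t\lVert U_t^N\rVert_{H^{-s}}^2)\le CN$ obtained as in Lemma \ref{d5}.

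The main obstacle will be carefully re-deriving the operator and martingale estimates of steps two and three in the $\gamma=0$ Sobolev framework, since the weighted spectral apparatus of Propositions \ref{b2}--\ref{b3} was built for $\gamma>0$. One must verify that, upon replacing the weighted eigenbasis by the standard Sobolev basis of $H^s(\mathbb{T}^2)$, the key series $\sum_{i,n_1,n_2}(\rho^{i,s}_{n_1,n_2})^2$ remains summable for $s>1$ and that Remark \ref{ape1} indeed furnishes the uniform $H^s$-control of $K$ needed throughout; everything else is a routine Gronwall closure.
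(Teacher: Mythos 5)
Your proof follows essentially the same route as the paper's: the non-mild integral form of (7.3)--(7.4), the operator bounds of Corollary \ref{ff2} adapted to $\mu_{t}=\mu$, a Doob/Parseval bound on the martingale parts, the time-constancy of $Z^{N}$, and a Gronwall closure after summing the $U$ and $V$ inequalities. You are in fact more explicit than the paper on two points it glosses over --- that the martingale estimate of Corollary \ref{d4} must be re-derived for $\gamma=0$ (where the absence of the gradient term in the bracket lowers the admissible threshold to $s>1$), and that the a priori finiteness needed to apply Gronwall comes from the crude $O(N)$ bound of Lemma \ref{d5} --- but these are refinements of the same argument, not a different one.
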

             \begin{proof}
             From equations (\ref{rr1}) and (\ref{rr2}),  we have\vspace*{0.18cm} \\
            $ \lVert U_{t}^{N} \lVert_{H^{-s}}^{2}\leq\displaystyle  5\lVert U_{0}^{N} \lVert_{H^{-s}}^{2}  + 5\beta^{2} t \int_{0}^{t}\lVert (G_{r}^{S,I,N})^{*}Z^{N} \lVert_{H^{-s}}^{2}dr +5\beta^{2} t\displaystyle\int_{0}^{t} \lVert (G_{r}^{I,N})^{*}U_{r}^{N}\lVert_{H^{-s}}^{2}dr$ \\$\hspace*{2cm}+\displaystyle5\beta^{2} t\displaystyle\int_{0}^{t} \lVert (G_{r}^{S})^{*}V_{r}^{N}\lVert_{H^{-s}}^{2}dr  + 5\lVert\widetilde{M}_{t}^{' N,\varphi}\lVert_{H^{-s}}^{2},$\vspace*{0.15cm}\\
              $ \lVert V_{t}^{N} \lVert_{H^{-s}}^{2}\leq\displaystyle  6\lVert V_{0}^{N} \lVert_{H^{-s}}^{2}  + 6\beta^{2} t \int_{0}^{t}\lVert (G_{r}^{S,I,N})^{*}Z^{N} \lVert_{H^{-s}}^{2}dr +6\beta^{2} t\int_{0}^{t} \lVert (G_{r}^{I,N})^{*}U_{r}^{N}\lVert_{H^{-s}}^{2}dr \\\hspace*{2cm}+6\beta^{2} t\int_{0}^{t} \lVert (G_{r}^{S})^{*}V_{r}^{N}\lVert_{H^{-s}}^{2}dr +6\alpha^{2} t\int_{0}^{t}\lVert V_{r}^{N}\lVert_{H^{-s}}^{2}dr + 6\lVert\widetilde{L}_{t}^{' N,\varphi}\lVert_{H^{-s}}^{2}.$
             \\ So  since  Proposition \ref{f5} and Corollary \ref{ff2} remain true for $\mu_{t}=\mu$, we have \vspace*{0.1cm}\\
              $ \lVert U_{t}^{N} \lVert_{H^{-s}}^{2}\leq\displaystyle  5\lVert U_{0}^{N} \lVert_{H^{-s}}^{2}  + 5\beta^{2} t^{2}C \underset{y}{\sup}\lVert K(.,y)\lVert_{H^{s}}^{2}\lVert Z^{N} \lVert_{H^{-s}}^{2} +5\beta^{2} t C\underset{y}{\sup}\lVert K(.,y)\lVert_{H^{s}}^{2}\int_{0}^{t} \lVert U_{r}^{N}\lVert_{H^{-s}}^{2}dr \\\hspace*{2cm}+5\beta^{2} tC\underset{x}{\sup}\Big\Arrowvert \frac{K(x,.)}{\int_{\mathbb{T}^{2}}K(x',.)\mu(dx')}\Big \Arrowvert_{H^{s}}^{2}\int_{0}^{t} \lVert V_{r}^{N}\lVert_{H^{-s}}^{2}dr  + 5\lVert\widetilde{M}_{t}^{' N,\varphi}\lVert_{H^{-s}}^{2},$\\
                            $ \lVert V_{t}^{N} \lVert_{H^{-s}}^{2}\leq\displaystyle  6\lVert V_{0}^{N} \lVert_{H^{-s}}^{2}  + 6\beta^{2} t^{2}C\underset{y}{\sup}\lVert K(.,y)\lVert_{H^{s}}^{2} \lVert Z^{N} \lVert_{H^{-s}}^{2} +6\beta^{2} tC\underset{y}{\sup}\lVert K(.,y)\lVert_{H^{s}}^{2}\int_{0}^{t} \lVert U_{r}^{N}\lVert_{H^{-s}}^{2}dr \\\hspace*{1cm}+6t\Big(C\beta^{2} \underset{x}{\sup}\Big\Arrowvert \frac{K(x,.)}{\int_{\mathbb{T}^{2}}K(x',.)\mu(dx')}\Big \Arrowvert_{H^{s}}^{2}+\alpha^{2}\Big) \int_{0}^{t} \lVert V_{r}^{N}\lVert_{H^{-s}}^{2}dr  + 6\lVert\widetilde{L}_{t}^{' N,\varphi}\lVert_{H^{-s}}^{2}.$\vspace*{0.1cm}\\Thus  from Remark \ref{ape1} and  Lemma \ref{ape} below, one has\\
                             $\displaystyle  \mathbb{E}(\underset{0\leq l\leq t}{\sup}\lVert U_{l}^{N}\lVert^{2}_{H^{-s}})\leq 5\underset{N\geq 1}{\sup}\mathbb{E}(\lVert U_{0}^{N} \lVert_{H^{-s}})+5t\beta^{2} C\int_{0}^{t} \Big\{\mathbb{E}(\underset{0\leq u\leq r}{\sup}\lVert U_{u}^{N} \lVert^{2}_{H^{-s}})+  \mathbb{E}(\underset{0\leq u\leq r}{\sup}\lVert V_{u}^{N} \lVert^{2}_{H^{-s}})\Big\} dr\\\hspace*{3.9cm}+5t^{2}\beta^{2} C \underset{N\geq 1}{\sup}\mathbb{E}( \lVert Z^{N}\lVert_{H^{-s}})+\underset{N\geq 1}{\sup}\mathbb{E}(\underset{0\leq r\leq t}{\sup}\lVert \widetilde{M}^{'N}_{r} \lVert_{H^{-s}}),\hspace*{2.97cm}(7.7)$\vspace*{0.19cm}\\
                                         $\displaystyle \mathbb{E}(\underset{0\leq l\leq t}{\sup}\lVert V_{l}^{N}\lVert^{2}_{H^{-s}})\leq 6\underset{N\geq 1}{\sup}\mathbb{E}(\lVert V_{0}^{N} \lVert^{2}_{H^{-s}})\\\hspace*{4.9cm}+6t(\beta^{2} C+\alpha^{2}) \int_{0}^{t}  \Big\{\mathbb{E}(\underset{0\leq u\leq r}{\sup}\lVert U_{u}^{N} \lVert^{2}_{H^{-s}})+\mathbb{E}(\underset{0\leq u\leq r}{\sup}\lVert V_{u}^{N} \lVert^{2}_{H^{-s}})\Big\} dr\\\hspace*{4.9cm}+6t^{2}\beta^{2} C \underset{N\geq 1}{\sup} \mathbb{E}( \lVert Z^{N}\lVert^{2}_{H^{-s}})+ \underset{N\geq 1}{\sup}\mathbb{E}(\underset{0\leq r\leq t}{\sup}\lVert \widetilde{L}^{'N}_{r} \lVert_{H^{-s}}).\hspace*{2.2cm}
                                         (7.8) $ \\
                                         Hence summing (7.7) and (7.8) and applying Gronwall's lemma we obtain the result from Corollary \ref{d4} and Proposition \ref{uo} in section 4.
             \end{proof}
            Now we prove  the tightness of the sequence $(U^{N},V^{N})_{N\geq 1}$ in $(D(\mathbb{R}_{+},H^{-s}))^{2}$.
             \begin{prp}
         Both sequences $(U^{N})_{N\geq 1}$ and  $(V^{N})_{N\geq 1}$  are tight in  $D(\mathbb{R}_{+},H^{-s})$. \label{r4}
            \end{prp}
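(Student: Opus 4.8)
The plan is to verify, for $U^{N}$, the two conditions (T1) and (T2) of the Aldous--Rebolledo criterion stated in Proposition \ref{d7}, with the Hilbert space $H=H^{-s}$; the tightness of $V^{N}$ then follows by the very same argument applied to equation (\ref{rr2}) instead of (\ref{rr1}). Since $\gamma=0$, the evolution equation (\ref{rr1}) of $U^{N}$ already has the form ``initial value $+$ absolutely continuous drift $+$ martingale'', with no semigroup term, so I would work directly from it rather than from a mild formulation.

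For (T1) I would argue exactly as in the proof of (T1) in Theorem \ref{d6}. Because $1<s<2$, I fix an intermediate exponent $1<s'<s$ and apply Proposition \ref{r3} with $s'$ in place of $s$ (the proof of Proposition \ref{r3} is valid for every exponent $>1$) to obtain $\underset{N}{\sup}\,\underset{0\le t\le T}{\sup}\,\mathbb{E}(\lVert U^{N}_{t}\lVert_{H^{-s'}}^{2})<\infty$. Since the embedding $H^{-s'}(\mathbb{T}^{2})\hookrightarrow H^{-s}(\mathbb{T}^{2})$ is compact (Theorem 1.69 of [\ref{ec}]), the ball $\{\mu:\lVert\mu\lVert_{H^{-s'}}\le R\}$ is compact in $H^{-s}$, and Markov's inequality gives $\mathbb{P}(\lVert U^{N}_{t}\lVert_{H^{-s'}}>R)\le C/R^{2}$, so (T1) holds upon choosing $R$ large.

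For (T2), given stopping times $\tau^{N}\le T$ I would split the increment coming from (\ref{rr1}) as
\[ U^{N}_{\tau^{N}+\theta}-U^{N}_{\tau^{N}}=D^{N}_{\tau^{N},\theta}+\big(\widetilde{M}^{' N}_{\tau^{N}+\theta}-\widetilde{M}^{' N}_{\tau^{N}}\big), \]
where $D^{N}_{\tau^{N},\theta}$ gathers the three drift integrals over $[\tau^{N},\tau^{N}+\theta]$. For the drift I would use Cauchy--Schwarz in time together with the operator bounds of Corollary \ref{ff2} and Remark \ref{ape1} and the uniform estimates of Proposition \ref{r3} (for $U^{N},V^{N}$) and Proposition \ref{uo} (recall that, since the individuals do not move, $Z^{N}=\sqrt{N}(\mu^{N}-\mu)$ is time--independent), taken over the slightly larger horizon $T+\delta$, obtaining $\mathbb{E}(\lVert D^{N}_{\tau^{N},\theta}\lVert_{H^{-s}}^{2})\le C\theta^{2}$, so this term is negligible for small $\theta$ by Markov. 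For the martingale increment I would use that, for the $H^{-s}$--valued martingale $\widetilde{M}^{' N}$, $\mathbb{E}(\lVert\widetilde{M}^{' N}_{\tau^{N}+\theta}-\widetilde{M}^{' N}_{\tau^{N}}\lVert_{H^{-s}}^{2})=\mathbb{E}(<\widetilde{M}^{' N}>_{\tau^{N}+\theta}-<\widetilde{M}^{' N}>_{\tau^{N}})$, exactly as in the proof of (T2) in Proposition \ref{f1}; since here $<\widetilde{M}^{' N,\varphi}>_{t}=\beta\int_{0}^{t}(\mu^{S,N}_{r},\varphi^{2}(\mu^{I,N}_{r},\frac{K}{(\mu^{N},K)}))dr$, summing over the orthonormal basis $(\rho^{i,s}_{n_{1},n_{2}})$ and invoking Lemma \ref{ap} bounds this increment by $C\theta$. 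Combining the two estimates via Markov and choosing $\delta$ small yields (T2).

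The main point to watch, and the genuine difference from the moving case, is that with $\gamma=0$ there is no regularizing semigroup $\Upsilon(t)$, so the smoothing estimates of Propositions \ref{d10} and \ref{f6} are unavailable; the martingale oscillation must be controlled directly through its predictable quadratic variation in $H^{-s}$. This is precisely what makes the weaker requirement $s>1$ (rather than $s>2$) sufficient: because $\gamma=0$, the bracket $<\widetilde{M}^{' N,\varphi}>$ no longer contains the gradient contribution $2\gamma(\mu^{S,N}_{r},(\bigtriangledown\varphi)^{2})$, so the Parseval sum $\sum_{i,n_{1},n_{2}}(\rho^{i,s}_{n_{1},n_{2}})^{2}(x)$ that appears in Lemma \ref{ap} converges already for $s>1$ in dimension two, whereas the corresponding gradient sum would have demanded $s>2$.
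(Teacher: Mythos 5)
Your proof is correct and follows essentially the same route as the paper's: (T1) via the uniform moment bounds of Proposition \ref{r3} combined with the compact embedding $H^{-s'}\hookrightarrow H^{-s}$, and (T2) by splitting the increment over $[\tau^{N},\tau^{N}+\theta]$ into the drift part, bounded by $\theta$ times a supremum that is uniformly integrable thanks to Corollary \ref{ff2}, Remark \ref{ape1} and Propositions \ref{uo} and \ref{r3}, and the martingale part controlled through its predictable quadratic variation, whose increment is bounded by $C\theta$ via Lemma \ref{ap} (valid for $s>1$ precisely because the gradient term $2\gamma(\mu^{S,N}_{r},(\bigtriangledown\varphi)^{2})$ is absent when $\gamma=0$, as you correctly note). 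The only cosmetic difference is that you estimate $\mathbb{E}\bigl(\lVert\widetilde{M}^{'N}_{\tau^{N}+\theta}-\widetilde{M}^{'N}_{\tau^{N}}\rVert_{H^{-s}}^{2}\bigr)$ directly by optional stopping, whereas the paper reduces to the bracket increment via Theorem 2.3.2 of [\ref{mc}]; both rest on the same bound.
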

            \begin{proof} We only  establish  the tightness of 
             $(U^{N})_{N\geq 1}$  by showing that the conditions (T1) and (T2) of Proposition \ref{d7} are satisfied. \\ 
             The condition (T1) is obtained by using the  Proposition \ref{r3} and applying an argument similar to that of the proof of (T1)  in Theorem \ref{d6}.\vspace*{0.2cm}\\
                             - Proof of (T2).
                             we have \\
                          $\displaystyle U_{t}^{N}=\displaystyle  U_{0}^{N}  + \beta \int_{0}^{t}(G_{r}^{S,I,N})^{*}Z^{N} dr -\beta\int_{0}^{t} (G_{r}^{I,N})^{*}U_{r}^{N} dr - \beta\int_{0}^{t}(G_{r}^{S})^{*} V_{r}^{N}dr + \widetilde{M}_{t}^{' N,\varphi}\\\hspace*{0.6cm}=\displaystyle  U_{0}^{N} +\beta\int_{0}^{t}\Gamma_{r}^{N}dr+\widetilde{M}_{t}^{' N,\varphi}$ with  $\Gamma_{r}^{N}=(G_{r}^{S,I,N})^{*}Z^{N}-(G_{r}^{I,N})^{*}U_{r}^{N}-(G_{r}^{S})^{*} V_{r}^{N}.$ \\
                          We  want to prove that \\
        $  \forall T>0,\hspace*{0.1cm}\forall \varepsilon_{1},\varepsilon_{2}>0 ,\exists\delta>0, N_{0}\geq 1$ such that for any family of  stopping times $(\tau^{N})_{N}$ with  $\tau^{N} \leq T, \vspace*{0.3cm}\\\hspace*{4cm}  \underset{\underset{\delta\geq\theta}{N\geq N_{0}}}{\sup} $ $\mathbb{P}\left(\Big\lVert\displaystyle \int_{\tau^{N}}^{\tau^{N}+\theta}\Gamma_{r}^{N}dr\Big\lVert_{H^{-s}}>\varepsilon_{1}\right)<\varepsilon_{2},\hspace*{4.92cm}(7.9) $  \\\hspace*{4cm} $\underset{\underset{\delta\geq\theta}{N\geq N_{0}}}{\sup} $ $\mathbb{P}\left(\Big\lVert \widetilde{M}^{' N}_{(\tau^{N}+\theta)}-\widetilde{M}^{' N}_{\tau^{N}}\Big\lVert_{H^{-s}}>\varepsilon_{1}\right)<\varepsilon_{2}.\hspace*{4.3cm}(7.10)$\\$- (7.10)$ is proved by using an argument similar to that of the proof of (T2) in  Proposition \ref{f1}\vspace*{0.2cm} \\
        $- $ Proof of $(7.9).$ Let  $T>0$, $l\in \mathbb{R}_{+}\backslash[0,1]$,  $\varepsilon_{1},\varepsilon_{2}>0 $, we find $\delta>0, N_{0}\geq 1$ such that $\delta+\tau^{N}\leq lT$ and \\ $ \hspace*{3cm}\underset{\underset{\delta\geq\theta}{N\geq N_{0}}}{\sup} $ $\mathbb{P}\left(\Big\lVert \displaystyle\int_{\tau^{N}}^{\tau^{N}+\theta}\Gamma^{N}_{r}dr\Big\lVert_{H^{-s}}>\varepsilon_{1}\right)<\varepsilon_{2}. $ \\ We have  $\Big\lVert\displaystyle \int_{\tau^{N}}^{\tau^{N}+\theta}\Gamma^{N}_{r}dr\Big\lVert_{H^{-s}}\leq \displaystyle\int_{\tau^{N}}^{(\tau^{N}+\theta)}\lVert \Gamma^{N}_{r}\lVert_{H^{-s}} dr\leq \theta \underset{0\leq r\leq lT}{\sup}\lVert \Gamma^{N}_{r}\lVert_{H^{-s}}$.\\ Thus $(7.9)$ will follows from  $\displaystyle  \underset{N\geq1}{\sup} \mathbb{E}(\underset{0\leq r\leq lT}{\sup}\lVert \Gamma_{r}^{N} \lVert_{H^{-s}})<C$ in view of the last inegality, which in turn follows readilly form Lemma \ref{ape} below,  Remark \ref{ape1} and Propositions \ref{uo} and \ref{r3}, combined with the fact that Corollary \ref{ff2} remains true for $\mu_{t}=\mu$.
         So (T2) is established.  \end{proof} 
         An easy adaptation of the proof of Lemmas \ref{f11} and \ref{f12} yields respectively the next two Lemmas.
         \begin{lem}
           For any $ t\geq0$, $\varphi\in H^{2}(\mathbb{T}^{2})$,  as $N\longrightarrow \infty$, \\\hspace*{5cm} $ \displaystyle\int_{0}^{t}\mathbb{E}\Big(\lVert (G_{r}^{I,N}-G_{r}^{I})\varphi \lVert_{H^{s}}^{2}\Big)^{\frac{1}{2}}dt\longrightarrow 0.$\label{fmm}
          \end{lem}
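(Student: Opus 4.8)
The plan is to mirror the proof of Lemma \ref{f11}, which becomes simpler here because $\gamma=0$ removes the heat semigroup $\Upsilon(t-r)$, and where the regularity index is lowered from $3$ to $2$ to match assumption (H4). First I would use the Sobolev embedding of Proposition \ref{b4}: since $1<s<2$ one has $H^{2}(\mathbb{T}^{2})\hookrightarrow H^{s}(\mathbb{T}^{2})$, so
\[\|(G_r^{I,N}-G_r^I)\varphi\|_{H^{s}}\leq C\,\|(G_r^{I,N}-G_r^I)\varphi\|_{H^{2}}.\]
Recalling that $G_r^{I,N}\varphi(x)=\varphi(x)\big(\mu_r^{I,N},K(x,\cdot)/(\mu,K)\big)$, and likewise for $G_r^{I}$ with $\mu_r^{I}$ in place of $\mu_r^{I,N}$, the difference factorises as $\varphi(x)\,g_r^{N}(x)$ with $g_r^{N}(x):=\big(\mu_r^{I,N}-\mu_r^{I},K(x,\cdot)/(\mu,K)\big)$. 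Since $H^{2}$ is a Banach algebra (Proposition \ref{b4}, as $2>1$), this gives $\|(G_r^{I,N}-G_r^I)\varphi\|_{H^{2}}\leq C\|\varphi\|_{H^{2}}\,\|g_r^{N}\|_{H^{2}}$, so everything reduces to showing $\int_{0}^{t}\mathbb{E}(\|g_r^{N}\|_{H^{2}}^{2})^{1/2}dr\to 0$.

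Next I would expand the $H^{2}$ norm of $g_r^{N}$ over the weak derivatives, using that for each $x$ the kernel $K(x,\cdot)$ is supported on $A(x)=\mathrm{supp}\,K(x,\cdot)$:
\[\|g_r^{N}\|_{H^{2}}^{2}=\sum_{|\eta|\leq 2}\int_{\mathbb{T}^{2}}\Big|\int_{A(x)}\frac{D^{\eta}K(x,y)}{\int_{\mathbb{T}^{2}}K(x',y)\mu(dx')}\,(\mu_r^{I,N}-\mu_r^{I})(dy)\Big|^{2}dx.\]
Here I would invoke the $C^{2}$ analogue of Remark \ref{f2f} valid under (H4): for $|\eta|\leq 2$ the map $y\in A(x)\mapsto D^{\eta}K(x,y)$ is continuous and bounded uniformly in $x$ (this follows exactly as in Remark \ref{f2f} and Remark \ref{ape1}, by differentiating $k(\|x-y\|^{2})$ up to second order), while $y\mapsto\int_{\mathbb{T}^{2}}K(x',y)\mu(dx')$ is continuous and, by (H0), bounded below by a positive constant independent of $y$. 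Hence the integrand $y\mapsto D^{\eta}K(x,y)/(\mu,K(\cdot,y))$ is a continuous function bounded by a constant $C$ uniform in $x$.

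With this at hand, Theorem \ref{r1} (the law of large numbers in the case $\gamma=0$) gives $\mu_r^{I,N}\to\mu_r^{I}$ weakly in probability, equivalently $d_{F}(\mu_r^{I,N},\mu_r^{I})\xrightarrow{P}0$ by Lemma \ref{b7}; integrating the above continuous bounded function against the signed measure $\mu_r^{I,N}-\mu_r^{I}$ therefore forces the inner integral to $0$ in probability for each fixed $x$. Since $\mu_r^{I,N}$ and $\mu_r^{I}$ both carry total mass at most $1$, each inner integral is bounded in modulus by $2C$, so the bounded convergence theorem yields $\mathbb{E}(\|g_r^{N}\|_{H^{2}}^{2})\to 0$ for each fixed $r$ (the finitely many terms $|\eta|\leq 2$ and the integral over the compact $\mathbb{T}^{2}$ cause no difficulty), with a bound uniform in $N$ and $r$. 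Consequently $\mathbb{E}(\|(G_r^{I,N}-G_r^I)\varphi\|_{H^{s}}^{2})^{1/2}\to 0$ pointwise in $r$ and stays uniformly bounded, so a final application of dominated convergence on $[0,t]$ gives the claim.

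As in Lemmas \ref{f11} and \ref{f12}, there is no serious obstacle; the one point needing care is the $C^{2}$ analogue of Remark \ref{f2f}, namely that under (H4) the derivatives $D^{\eta}K$ up to order two are continuous and uniformly bounded while the denominator $(\mu,K(\cdot,y))$ is uniformly bounded below. These two facts together supply both the uniform domination and the continuity required to pass the weak limit $\mu_r^{I,N}\to\mu_r^{I}$ inside the spatial integral.
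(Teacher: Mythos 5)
Your proposal is correct and follows essentially the same route as the paper: the paper proves this lemma only by asserting that it is ``an easy adaptation of the proof of Lemma \ref{f11}'', and what you have written is exactly that adaptation (Sobolev embedding $H^{2}\hookrightarrow H^{s}$, the Banach algebra property of $H^{2}$, the $C^{2}$ analogue of Remark \ref{f2f} under (H4), the law of large numbers of Theorem \ref{r1}, and dominated convergence), with the simplifications coming from $\gamma=0$ correctly identified. No gap.
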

          \begin{lem}
                     For any $ t\geq0$, $\varphi\in H^{s}(\mathbb{T}^{2})$,  as $N\longrightarrow \infty$, \\\hspace*{5cm} $ \displaystyle\int_{0}^{t}\mathbb{E}\Big(\lVert (G_{r}^{S,I,N}-G_{r}^{S,I})\varphi \lVert_{H^{s}}^{2}\Big)^{\frac{1}{2}}dt\longrightarrow 0.$\label{fmmm}
                    \end{lem}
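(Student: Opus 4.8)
The plan is to follow the proof of Lemma \ref{f12} line by line, the only genuine simplification being that for $\gamma=0$ the semigroup disappears (so $\Upsilon(t-r)\varphi$ is replaced by the fixed function $\varphi$) and the measures $\mu$ and $\mu^{N}$ no longer depend on $r$. First I would reduce the whole statement to an $H^{2}$ estimate: since here $1<s<2$, the embedding $H^{2}(\mathbb{T}^{2})\hookrightarrow H^{s}(\mathbb{T}^{2})$ gives $\lVert\cdot\rVert_{H^{s}}\le C\lVert\cdot\rVert_{H^{2}}$, so it is enough to show $\int_{0}^{t}\mathbb{E}\big(\lVert(G_{r}^{S,I,N}-G_{r}^{S,I})\varphi\rVert_{H^{2}}^{2}\big)^{1/2}dr\to 0$. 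This is the correct working space: under (H4) one only controls the derivatives $D^{\eta}K$ for $\lvert\eta\rvert\le 2$ (the $C^{2}$ counterpart of Remark \ref{f2f}, whose $H^{s}$ consequence is Remark \ref{ape1}), and $H^{2}$ is a Banach algebra by Proposition \ref{b4}. Thus $H^{2}$ plays here exactly the role that $H^{3}$ played in Lemma \ref{f12}.

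Next I would split $G_{r}^{S,I,N}\varphi-G_{r}^{S,I}\varphi$ into the same three pieces as in Lemma \ref{f12}, namely
\[
\Big(\mu_{r}^{I,N}-\mu_{r}^{I},\,K(x,\cdot)\frac{(\mu_{r}^{S},\varphi K)}{(\mu,K)^{2}}\Big)
\;+\;\Big(\mu_{r}^{I,N},\,K(x,\cdot)\frac{(\mu_{r}^{S,N}-\mu_{r}^{S},\varphi K)}{(\mu,K)^{2}}\Big)
\]
\[
-\,\Big(\mu_{r}^{I,N},\,K(x,\cdot)\frac{(\mu_{r}^{S,N},\varphi K)}{(\mu^{N},K)(\mu,K)^{2}}(\mu^{N}-\mu,K)\Big).
\]
For the first piece I would use that for $\lvert\eta\rvert\le 2$ the map $y\mapsto D^{\eta}K(x,y)\,\frac{(\mu_{r}^{S},\varphi K(\cdot,y))}{(\mu,K(\cdot,y))^{2}}$ is continuous and bounded uniformly in $x$ (the denominator being lower bounded by a positive constant), so the weak convergence $\mu_{r}^{I,N}\to\mu_{r}^{I}$ from Theorem \ref{r1}, together with the dominated convergence theorem applied inside $\sum_{\lvert\eta\rvert\le2}\int_{\mathbb{T}^{2}}(\cdots)\,dx$ and inside the $r$-integral, kills this term. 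For the second and third pieces I would instead read each inner integral against $\mu_{r}^{S,N}-\mu_{r}^{S}$ (resp. $\mu^{N}-\mu$) as a test function that is Lipschitz and bounded uniformly in $y$, with constants depending on $\lVert k\rVert_{\infty}$, $\lVert\varphi\rVert_{\infty}$ and $\max_{0\le\lvert\eta\rvert\le2}\lVert k^{(\lvert\eta\rvert)}\rVert_{\infty}$, and bound the resulting $H^{2}$ norm by $C\,d_{F}(\mu_{r}^{S,N},\mu_{r}^{S})$ and $C\,d_{F}(\mu^{N},\mu)$ respectively (using Lemma \ref{e6} for the factor $(\mu^{N}-\mu,K)$). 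Since $d_{F}(\mu_{r}^{S,N},\mu_{r}^{S})\xrightarrow{P}0$ (Theorem \ref{r1} and Lemma \ref{b7}) with $d_{F}\le 4$, and $d_{F}(\mu^{N},\mu)\to 0$ a.s. (Theorem \ref{kl} and Lemma \ref{b7}) with $d_{F}\le 2$, a further dominated convergence argument disposes of each remaining term.

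The main obstacle, and really the only point requiring care, is the verification that under the weakened assumption $k\in C^{2}(\mathbb{R}_{+})$ all the kernels entering the three pieces are genuinely continuous and uniformly bounded, so that dominated convergence may legitimately be invoked; this is precisely where the choice $1<s<2$ is needed, since it matches the two orders of differentiability available for $K$. Everything else is a verbatim repetition of the estimates in Lemma \ref{f12}, now with the denominators $(\mu_{r},K)$ replaced by the constant $(\mu,K)$ and with $\Upsilon(t-r)\varphi$ replaced by $\varphi$. The companion statement Lemma \ref{fmm} is then obtained by the same method applied to the simpler, single-term difference $G_{r}^{I,N}\varphi-G_{r}^{I}\varphi=\varphi\,\big(\mu_{r}^{I,N}-\mu_{r}^{I},\,K/(\mu,K)\big)$, again working in the Banach algebra $H^{2}$ and invoking Theorem \ref{r1} together with dominated convergence.
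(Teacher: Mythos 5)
Your proposal is correct and coincides with the paper's intent: the paper proves this lemma simply by declaring it ``an easy adaptation of the proof of Lemma \ref{f12}'', and your write-up is precisely that adaptation, with the same three-term decomposition, the reduction from $H^{s}$ ($1<s<2$) to the Banach algebra $H^{2}$ matching the two derivatives of $K$ available under (H4), and the same combination of Fortet-distance bounds and dominated convergence. The only cosmetic slip is the uniform bound on $d_{F}(\mu_{r}^{S,N},\mu_{r}^{S})$ (it is $\leq 2$, so $d_{F}^{2}\leq 4$), which does not affect the argument.
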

         
       From Proposition  \ref{r4}, we deduce that  the sequence $(U^{N},V^{N})_{N}$ is tight in $(D(\mathbb{R}_{+},H^{-s}))^{2},$ so there exists a subsequence still denoted $(U^{N},V^{N})_{N}$, which converges in law in $(D(\mathbb{R}_{+},H^{-s}))^{2}$ towards $(U,V)$. Moreover from Proposition \ref{rrt}, we deduce that $(U,V)\in$ $(C(\mathbb{R}_{+},H^{-s}))^{2}$, thus we end the proof of Theorem \ref{r2} as follows.  
     \begin{prp}
     The pair $(U,V)$ is the unique solution in $(C(\mathbb{R}_{+},H^{-s}))^{2}$ of  the system formed by equations (7.5) and (7.6).
     \end{prp}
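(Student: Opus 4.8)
The plan is to prove the two halves of the statement, existence and uniqueness, following closely the scheme already carried out for $\gamma>0$ in Propositions \ref{llln} and \ref{rt1}, the only structural simplification being that the semigroup $\Upsilon(t-r)$ is replaced by the identity, since here there is no spatial diffusion. Existence is obtained by identifying the limit equations satisfied by any limit point, and uniqueness then upgrades the subsequential convergence to convergence of the whole sequence, thereby finishing the proof of Theorem \ref{r2}.

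For existence, I would show that any limit point $(U,V)$ of the tight sequence $(U^{N},V^{N})_{N\ge1}$ (tightness being provided by Proposition \ref{r4}) satisfies $(7.5)$ and $(7.6)$. Testing the evolution equations $(\ref{rr1})$ and $(\ref{rr2})$ against an arbitrary $\varphi$ in a dense subclass of $H^{s}$ and letting $N\to\infty$ along the converging subsequence, the genuinely new terms are the bilinear ones $\int_{0}^{t}(Z^{N},G_{r}^{S,I,N}\varphi)\,dr$ and $\int_{0}^{t}(U_{r}^{N},G_{r}^{I,N}\varphi)\,dr$, in which both the test operator and its argument depend on $N$. I would treat each by a splitting such as $\int_{0}^{t}(U_{r}^{N},G_{r}^{I,N}\varphi)\,dr=\int_{0}^{t}(U_{r}^{N},G_{r}^{I}\varphi)\,dr+\int_{0}^{t}(U_{r}^{N},(G_{r}^{I,N}-G_{r}^{I})\varphi)\,dr$: the first summand converges in law because $G_{r}^{I}\varphi\in H^{s}$ (Proposition \ref{f5}) and $U^{N}\rightsquigarrow U$, while the second tends to $0$ in $L^{1}(\mathbb{P})$ upon combining the uniform bound $\sup_{N}\sup_{0\le r\le t}\mathbb{E}(\lVert U_{r}^{N}\rVert_{H^{-s}}^{2})<\infty$ of Proposition \ref{r3} with the operator convergence of Lemma \ref{fmm} and Cauchy--Schwarz. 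The $Z^{N}$ term is handled identically using Lemma \ref{fmmm} and the corresponding bound on $\sup_{N}\sup_{r}\mathbb{E}(\lVert Z_{r}^{N}\rVert_{H^{-s}}^{2})$; the $G_{r}^{S}$ term (which does not depend on $N$) and the martingale term $\widetilde{M}^{\prime N,\varphi}$ pass to the limit directly, the latter via Proposition \ref{rrt}, while $(U_{0}^{N},\varphi)\rightsquigarrow(U_{0},\varphi)$ follows from Theorem \ref{lo}. After writing $(Z,G_{r}^{S,I}\varphi)=((G_{r}^{S,I})^{*}Z,\varphi)$ and the analogous adjoint relations, the limit equations are precisely $(7.5)$ and $(7.6)$, and Proposition \ref{rrt} ensures $(U,V)\in(C(\mathbb{R}_{+},H^{-s}))^{2}$.

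For uniqueness, I would take two solutions $(U^{1},V^{1}),(U^{2},V^{2})\in(C(\mathbb{R}_{+},H^{-s}))^{2}$ sharing the same $(U_{0},V_{0},Z,W^{\prime1},W^{\prime2})$ and subtract. The initial data, the noises and the $(G^{S,I})^{*}Z$ terms cancel, so the difference obeys $U_{t}^{1}-U_{t}^{2}=-\beta\int_{0}^{t}(G_{r}^{I})^{*}(U_{r}^{1}-U_{r}^{2})\,dr-\beta\int_{0}^{t}(G_{r}^{S})^{*}(V_{r}^{1}-V_{r}^{2})\,dr$, and adding the two difference equations yields the clean relation $V_{t}^{1}-V_{t}^{2}=-(U_{t}^{1}-U_{t}^{2})-\alpha\int_{0}^{t}(V_{r}^{1}-V_{r}^{2})\,dr$ (it is the absence of $\Upsilon$ that makes this identity exact). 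Taking $H^{-s}$ norms and invoking the operator bounds of Corollary \ref{ff2} (valid here with $\mu_{t}\equiv\mu$) together with $\sup_{y}\lVert K(\cdot,y)\rVert_{H^{s}}<\infty$ (Remark \ref{ape1}) and the adaptation of Lemma \ref{ape} bounding $\sup_{x}\lVert K(x,\cdot)/(\mu,K(x,\cdot))\rVert_{H^{s}}$, I obtain two coupled integral inequalities for $\lVert U_{t}^{1}-U_{t}^{2}\rVert_{H^{-s}}$ and $\lVert V_{t}^{1}-V_{t}^{2}\rVert_{H^{-s}}$; summing them and applying Gronwall's lemma forces $(U^{1},V^{1})=(U^{2},V^{2})$. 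This pathwise uniqueness shows the law of any limit point is determined by that of $(U_{0},V_{0},Z,W^{\prime1},W^{\prime2})$, so all subsequential limits coincide in law and the full sequence converges.

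I expect the main obstacle to lie in the existence step rather than in uniqueness: controlling the two terms in which the operator $G_{r}^{\cdot,N}$ and its argument converge simultaneously. The uniform second-moment estimates of Proposition \ref{r3} and the $L^{1}$-operator-convergence Lemmas \ref{fmm}--\ref{fmmm} are engineered precisely for this, but one must keep the test function regular enough that $G_{r}^{I}\varphi$ and $G_{r}^{S,I}\varphi$ land in $H^{s}$ (via Proposition \ref{f5}), so that the pairings with the $H^{-s}$-valued processes are well defined, and one must check that the termwise-in-$\varphi$ convergences assemble into convergence of the full $H^{-s}$-valued integral equation, which is where the tightness of Proposition \ref{r4} and the continuity of the limiting noise from Proposition \ref{rrt} are used.
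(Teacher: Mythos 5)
Your proposal is correct and follows essentially the same route as the paper, whose own proof of this proposition consists precisely of adapting Proposition \ref{llln} (splitting the bilinear terms, using Lemmas \ref{fmm}--\ref{fmmm} together with the uniform second-moment bounds, and passing to the limit in the remaining terms) for existence, and Proposition \ref{rt1} (subtracting two solutions, using Corollary \ref{ff2} and the summed Gronwall inequality) for uniqueness. The only cosmetic difference is your aside that the relation $V_{t}^{1}-V_{t}^{2}=-(U_{t}^{1}-U_{t}^{2})-\alpha\int_{0}^{t}(V_{r}^{1}-V_{r}^{2})\,dr$ owes its exactness to the absence of $\Upsilon$; the analogous identity is already exact in the $\gamma>0$ case, so nothing hinges on this.
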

     \begin{proof}By adapting the proof of Proposition \ref{llln} we obtain the following results\vspace*{0.16cm}\\
    $\hspace*{3.8cm}\displaystyle\int_{0}^{t}(U_{r}^{N},G_{r}^{I,N}\varphi )dr\xrightarrow{L}\int_{0}^{t}(U_{r},G_{r}^{I}\varphi )dr,$\\
      $\hspace*{3.8cm}\displaystyle\int_{0}^{t}(Z_{r}^{N},G_{r}^{S,I,N}\varphi )dr\xrightarrow{L}\int_{0}^{t}(Z_{r},G_{r}^{S,I}\varphi )dr. $\vspace*{0.2cm}\\
Therefore, with the convergence of the other terms of equations (7.3) and (7.4), we see that the pair $ (U, V) $ satisfies the  equations (7.5) and (7.6). By adapting the proof of Proposition \ref{rt1} we also see that the system formed by  equations (7.5) and (7.6) admits a unique solution in $(C(\mathbb{R}_{+},H^{-s}))^{2}$.\vspace*{-0.3cm}
           \end{proof}  
           \section{Appendix}
            We first recall that for any $s>0$, the family  $(\rho^{i,s}_{n_{1},n_{2}})_{i,n_{1},n_{2}}$ (as defined in Proposition \ref{b2}) is an orthonormal basis of $H^{s}(\mathbb{T}^{2}).$\vspace*{0.1cm}\\ In this appendix we prove the next two Lemmas.
                   \begin{lem} \label{ap} $\forall x\in \mathbb{T}_{2}$, we have,\vspace*{0.2cm}\\\hspace*{5cm}
                                         $\displaystyle\sum\limits_{i,n_{1},n_{2}}(\rho_{n_{1},n_{2}}^{i,s}(x))^{2}<\infty$ iff s>1,\\
                                             $\hspace*{5cm}\displaystyle\sum\limits_{i,n_{1},n_{2}}(\bigtriangledown\rho_{n_{1},n_{2}}^{i,s}(x))^{2}<\infty$ iff s>2.          
                   \end{lem}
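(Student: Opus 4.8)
The plan is to substitute the explicit expressions for the basis functions from Proposition \ref{b2} and to exploit the fact that, after summing over the index $i$, the $x$-dependent trigonometric factors collapse to constants, leaving pure number series in $(n_1,n_2)$ whose convergence is classical. Writing $a=\pi n_1 x_1$ and $b=\pi n_2 x_2$, the four functions $f^1,f^2,f^3,f^4$ are $2\sin a\cos b$, $2\sin a\sin b$, $2\cos a\cos b$, $2\cos a\sin b$, so that
\[
\sum_{i=1}^{4}\big(f^{i}_{n_1,n_2}(x)\big)^2
=4\big(\sin^2 a+\cos^2 a\big)\big(\sin^2 b+\cos^2 b\big)=4,
\]
independently of $x$. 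The same computation for the boundary families gives $\sum_{i\in\{5,6\}}(f^{i}_{n_1,0}(x))^2=2$ and $\sum_{i\in\{7,8\}}(f^{i}_{0,n_2}(x))^2=2$, while $f^0\equiv1$.

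First I would treat $\sum_{i,n_1,n_2}(\rho^{i,s}_{n_1,n_2}(x))^2$. Dividing each $f^i$ by its normalising factor and using the identities above, the whole sum becomes the $x$-independent quantity
\[
1+2\!\!\sum_{n_1>0\,\mathrm{even}}\!\!\frac{1}{(1+\gamma\pi^2 n_1^2)^{s}}
 +2\!\!\sum_{n_2>0\,\mathrm{even}}\!\!\frac{1}{(1+\gamma\pi^2 n_2^2)^{s}}
 +4\!\!\sum_{n_1,n_2>0\,\mathrm{even}}\!\!\frac{1}{(1+\gamma\pi^2(n_1^2+n_2^2))^{s}}.
\]
The one-dimensional series converge as soon as $s>\tfrac12$, and the two-dimensional series, compared with the lattice sum $\sum_{n_1,n_2\geq1}(n_1^2+n_2^2)^{-s}$ (which converges if and only if $2s>2$), converges if and only if $s>1$; since the last term governs the threshold, the first assertion follows.

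For the gradient sum I would differentiate the $f^i$ termwise. A direct computation, again using $\sin^2+\cos^2=1$ only after summing over $i$, yields the $x$-independent identity
\[
\sum_{i=1}^{4}\big|\bigtriangledown f^{i}_{n_1,n_2}(x)\big|^2
=4\pi^2\big(n_1^2+n_2^2\big),
\]
with the analogous values $2\pi^2 n_1^2$ and $2\pi^2 n_2^2$ for the boundary families and $0$ for $f^0$. Hence $\sum_{i,n_1,n_2}|\bigtriangledown\rho^{i,s}_{n_1,n_2}(x)|^2$ reduces to a number series whose dominant two-dimensional part is comparable with $\sum_{n_1,n_2\geq1}(n_1^2+n_2^2)^{\,1-s}$; this converges if and only if $s-1>1$, i.e.\ $s>2$, while the one-dimensional pieces converge already for $s>\tfrac32$, giving the second assertion.

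The only genuine point to be careful about is the ``only if'' direction. For a fixed $x$, individual terms $(\rho^{i,s}_{n_1,n_2}(x))^2$ may vanish, so divergence when $s\leq1$ (resp.\ $s\leq2$) cannot be read off a single index $i$. This is precisely why I sum over $i\in\{1,2,3,4\}$ first: the collapse to the constants $4$ and $4\pi^2(n_1^2+n_2^2)$ provides a lower bound \emph{uniform} in $x$, and it is this lower bound, together with the divergence of the associated number series at the critical exponents, that forces divergence of the pointwise sums. The remaining check that the constant and boundary terms never shift these thresholds is routine.
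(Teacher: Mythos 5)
Your proof is correct and follows essentially the same route as the paper: reduce the pointwise sums to number series in $(n_1,n_2)$ and settle their convergence by integral comparison, with the thresholds $s>1$ and $s>2$ coming from the two-dimensional lattice sums $\sum (n_1^2+n_2^2)^{-s}$ and $\sum (n_1^2+n_2^2)^{1-s}$. The one genuine difference is that you sum over $i\in\{1,2,3,4\}$ \emph{exactly}, obtaining the $x$-independent identities $\sum_i (f^i_{n_1,n_2}(x))^2=4$ and $\sum_i|\bigtriangledown f^i_{n_1,n_2}(x)|^2=4\pi^2(n_1^2+n_2^2)$, whereas the paper only uses the one-sided bound $0\le (f^i_{n_1,n_2}(x))^2\le 4$ and writes upper estimates with $\le$. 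As a consequence the paper's displayed inequalities, taken literally, establish only the ``if'' direction of the lemma; your exact collapse supplies the matching lower bound, uniform in $x$, that is needed for the ``only if'' direction, so your argument is in this respect the more complete of the two. Everything else (the treatment of the boundary families $\mathcal{A}_5,\mathcal{A}_6$, the constant mode, and the comparison of the lattice sums with radial integrals) matches the paper.
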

                   \begin{proof}
                   As for any $x\in \mathbb{T}^{2}$, $i \in [\lvert 1,8\lvert]$,  $0\leq(f_{n_{1},n_{2}}^{i}(x))^{2}\leq 4,$ \vspace*{0.2cm}\\
                  $\displaystyle\sum\limits_{i,n_{1},n_{2}}(\rho_{n_{1},n_{2}}^{i,s}(x))^{2}$ 
                      $=\displaystyle
                      \sum\limits_{i,n_{1},n_{2}}\frac{(f_{n_{1},n_{2}}^{i})^{2}(x)}{(1+\gamma\pi^{2}(n_{1}^{2}+n_{2}^{2}))^{s}} $  and, \\ $ \displaystyle\sum\limits_{i,n_{1},n_{2}}(\bigtriangledown\rho_{n_{1},n_{2}}^{i,s}(x))^{2}=\displaystyle \pi^{2} \sum\limits_{i=1}^{4}\sum\limits_{n_{1}>0,n_{2}>0,even}\frac{n_{1}^{2}+n_{2}^{2}}{(1+\gamma\pi^{2}(n_{1}^{2}+n_{2}^{2}))^{s}}(f_{n_{1},n_{2}}^{i})^{2}(x)$\newpage $ \hspace*{2.5cm} +\sum\limits_{n_{1}>0,even}\frac{\pi^{2}n_{1}^{2}[(f_{n_{1},0}^{6})^{2}(x)+(f_{n_{1},0}^{5})^{2}(x)]}{(1+\gamma\pi^{2}n_{1}^{2})^{s}}+\sum\limits_{n_{2}>0,even}\frac{\pi^{2}n_{2}^{2}[(f_{n_{1},0}^{8})^{2}(x)+(f_{n_{1},0}^{7})^{2}(x)]}{(1+\gamma\pi^{2}n_{2}^{2})^{s}}$ \\So\\
                      $\displaystyle\sum\limits_{i,n_{1},n_{2}}\{(\rho_{n_{1},n_{2}}^{i,s}(x))^{2}\leq 1+ 16  \sum\limits_{n_{1}>0,n_{2}>0,even}\frac{1}{(1+\gamma\pi^{2}(n_{1}^{2}+n_{2}^{2}))^{s}}+8\sum\limits_{i=1}^{2}\sum\limits_{n_{i}>0,even}\frac{1}{(1+\gamma\pi^{2}n_{i}^{2})^{s}}$ and, $\\\displaystyle\sum\limits_{i,n_{1},n_{2}}(\bigtriangledown\rho_{n_{1},n_{2}}^{i,s}(x))^{2}\leq 16\pi^{2}\sum\limits_{n_{1}>0,n_{2}>0,even}\frac{n_{1}^{2}+n_{2}^{2}}{(1+\gamma\pi^{2}(n_{1}^{2}+n_{2}^{2}))^{s}}+8\pi^{2}\sum\limits_{i=1}^{2}\sum\limits_{n_{i}>0,even}\frac{n_{i}^{2}}{(1+\gamma\pi^{2}n_{i}^{2})^{s}}$   \\
                      \\Hence we see that:\vspace*{0.16cm}\\
                      $-$ $\displaystyle\sum\limits_{i,n_{1},n_{2}}(\rho_{n_{1},n_{2}}^{i,s}(x))^{2}<\infty$  provided the series $ \sum\limits_{n_{1}>0,n_{2}>0}\frac{1}{(1+\gamma\pi^{2}(n_{1}^{2}+n_{2}^{2}))^{s}}$; $ \sum\limits_{n_{1}>0}\frac{1}{(1+\gamma\pi^{2}n_{1}^{2})^{s}}$ \\and $ \sum\limits_{n_{2}>0}\frac{1}{(1+\gamma\pi^{2}n_{2}^{2})^{s}}$ converge.\vspace*{0.1cm}\\
                      $-$ $ \displaystyle\sum\limits_{i,n_{1},n_{2}}(\bigtriangledown\rho_{n_{1},n_{2}}^{i,s}(x))^{2}<\infty  $ provided the series $\sum\limits_{n_{1}>0,n_{2}>0}\frac{n_{1}^{2}+n_{2}^{2}}{(1+\gamma\pi^{2}(n_{1}^{2}+n_{2}^{2}))^{s}}$;  $\sum\limits_{n_{1}>0}\frac{n_{1}^{2}}{(1+\gamma\pi^{2}n_{1}^{2})^{s}}$\\ and $ \sum\limits_{n_{2}>0}\frac{n_{2}^{2}}{(1+\gamma\pi^{2}n_{2}^{2})^{s}}$  converge.\vspace*{0.3cm}\\
                     \textbf{- Convergence of the series }\\ 1) Convergence of  $\sum\limits_{n_{1}>0,n_{2}>0}\frac{n_{1}^{2}+n_{2}^{2}}{(1+\gamma\pi^{2}(n_{1}^{2}+n_{2}^{2}))^{s}}$ \\ It is so easy  to see that  $ \sum\limits_{n_{1}\geq1,n_{2}\geq 1}\frac{n_{1}^{2}+n_{2}^{2}}{(1+\gamma\pi^{2}(n_{1}^{2}+n_{2}^{2}))^{s}} $  and $\displaystyle\int_{1}^{+\infty}\int_{1}^{+\infty}\frac{x^{2}+y^{2}}{(1+\gamma\pi^{2}(x^{2}+y^{2}))^{s}}dxdy$ 
                                                                                     are of the same type (either convergent or divergent), 
                          and the latter is of the same type as\vspace*{0.12cm}\\\hspace*{1cm} $\displaystyle \int_{1}^{+\infty}\frac{r^{3}}{(1+\gamma\pi^{2}r^{2})^{s}}dr \leq \frac{1}{\gamma^{s}\pi^{2s}}\int_{1}^{+\infty}r^{3-2s}dr $
                                      and    $\int_{1}^{+\infty}r^{3-2s}dr<\infty$ iff s>2.\vspace*{0.1cm}\\
                           Thus  $\sum\limits_{n_{1}\geq1,n_{2}\geq1}\frac{n_{1}^{2}+n_{2}^{2}}{(1+\gamma\pi^{2}(n_{1}^{2}+n_{2}^{2}))^{s}} $ converges
                                 iff s>2.\\2) By the same argument as previously $ \sum\limits_{n_{1}>0,n_{2}>0}\frac{1}{(1+\gamma\pi^{2}(n_{1}^{2}+n_{2}^{2}))^{s}} $ converges for s>1.\\3) By the comparison criterion the series $\sum\limits_{n_{1}>0}\frac{1}{(1+\gamma\pi^{2}n_{1}^{2})^{s}}$ and $\sum\limits_{n_{2}>0}\frac{1}{(1+\gamma\pi^{2}n_{2}^{2})^{s}}$ converge for $s>\frac{1}{2}.$\\4)   By the comparison criterion the series  $\sum\limits_{n_{1}>0}\frac{n_{1}^{2}}{(1+\gamma\pi^{2}n_{1}^{2})^{s}}$ and $\sum\limits_{n_{2}>0}\frac{n_{2}^{2}}{(1+\gamma\pi^{2}n_{2}^{2})^{s}}$ converge for $s>\frac{3}{2}$
                                 \end{proof}
                                 \begin{lem}
                               Under the assumption (H2),  for any $t\geq0,$ we have\vspace*{0.1cm} \\\hspace*{5cm}$ \underset{x}{\sup}\Big\Arrowvert \displaystyle\frac{K(x,.)}{\int_{\mathbb{T}^{2}}K(x',.)\mu_{t}(dx')}\Big \Arrowvert^{2}_{H^{3}}<\infty$ \label{ape}
                                 \end{lem}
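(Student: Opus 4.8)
The plan is to exploit the Banach algebra structure of $H^{3}(\mathbb{T}^{2})$ (Proposition \ref{b4}, valid since $3>1$) together with the uniform bound $\sup_{x}\lVert K(x,\cdot)\rVert_{H^{3}}<\infty$ already obtained in Lemma \ref{ff3}. Write $g_{t}(y)=\int_{\mathbb{T}^{2}}K(x',y)\mu_{t}(dx')=\int_{\mathbb{T}^{2}}K(x',y)f(t,x')dx'$ for the denominator, which is a function of the free variable $y$ alone. First I would record that $g_{t}$ is bounded away from $0$: since $y\mapsto\int_{\mathbb{T}^{2}}K(x',y)dx'$ is a positive constant independent of $y$ (by translation invariance of $K$ on the torus) and $f(t,\cdot)\geq\delta_{1}$ by Lemma \ref{int1}, one has $g_{t}(y)\geq\delta_{1}\int_{\mathbb{T}^{2}}K(x',y)dx'=:C>0$; the same computation with $f(t,\cdot)\leq\delta_{2}$ gives an upper bound, so $g_{t}$ takes its values in a compact subinterval of $(0,\infty)$.

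Next I would check that $g_{t}\in H^{3}(\mathbb{T}^{2})$, in fact $g_{t}\in W^{3,\infty}(\mathbb{T}^{2})$. Differentiating under the integral sign, $D^{\eta}g_{t}(y)=\int_{\mathbb{T}^{2}}D^{\eta}_{y}K(x',y)f(t,x')dx'$ for $\lvert\eta\rvert\leq 3$; this is legitimate because, by Remark \ref{f2f} and the support property (\ref{p1}), the maps $y\mapsto D^{\eta}K(x',y)$ are continuous and bounded by $C\max_{0\leq\lvert\eta\rvert\leq 3}\lVert k^{(\lvert\eta\rvert)}\rVert_{\infty}$ for $\lvert\eta\rvert\leq 3$. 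Since $f(t,\cdot)\leq\delta_{2}$ and $\mathbb{T}^{2}$ has finite measure, each $D^{\eta}g_{t}$ is bounded, hence lies in $L^{2}(\mathbb{T}^{2})$, so $g_{t}\in H^{3}$.

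Then I would show $1/g_{t}\in H^{3}(\mathbb{T}^{2})$. By repeated use of the quotient and chain rules (Fa\`{a} di Bruno's formula), for $\lvert\eta\rvert\leq 3$ the derivative $D^{\eta}(1/g_{t})$ is a finite sum of terms of the form $g_{t}^{-(m+1)}\prod_{j}D^{\beta_{j}}g_{t}$, with $\sum_{j}\beta_{j}=\eta$ and $m$ the number of factors; since $g_{t}\geq C>0$ and every $D^{\beta_{j}}g_{t}$ is bounded by the previous step, each such term is bounded, whence $1/g_{t}\in W^{3,\infty}\subset H^{3}$. Finally, the Banach algebra inequality of Proposition \ref{b4} yields $\lVert K(x,\cdot)/g_{t}\rVert_{H^{3}}\leq C\lVert K(x,\cdot)\rVert_{H^{3}}\lVert 1/g_{t}\rVert_{H^{3}}$ for every $x$; taking the supremum over $x$ and invoking Lemma \ref{ff3} gives $\sup_{x}\lVert K(x,\cdot)/g_{t}\rVert_{H^{3}}^{2}\leq C\,(\sup_{x}\lVert K(x,\cdot)\rVert_{H^{3}}^{2})\,\lVert 1/g_{t}\rVert_{H^{3}}^{2}<\infty$.

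The main obstacle is the control of the reciprocal $1/g_{t}$ in $H^{3}$: one must combine the strictly positive lower bound on $g_{t}$ with the boundedness of all its derivatives up to order three, the lower bound being precisely what prevents the inverse and its derivatives from blowing up. Everything else is a routine consequence of the Banach algebra property and of the estimates already established in Remark \ref{f2f} and Lemmas \ref{ff3} and \ref{int1}; in particular, the whole argument goes through verbatim in the static case of section~7, where $\mu_{t}=\mu$ has density $g$ with $\delta_{1}\leq g\leq\delta_{2}$ by Assumption (H0).
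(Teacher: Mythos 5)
Your proof is correct, and it rests on exactly the same two pillars as the paper's: the denominator $g_{t}(y)=\int_{\mathbb{T}^{2}}K(x',y)f(t,x')dx'$ is bounded below by a positive constant (via Lemma \ref{int1} and the translation invariance of $\int K(\cdot,y)$), and all $y$-derivatives of $K(x,\cdot)$ up to order three are bounded uniformly in $x$ on the support $A(x)$ (via (\ref{p1}), Remark \ref{f2f} and assumption (H2)). Where you diverge is in the organization: the paper differentiates the full quotient $w_{t}(x,y)=K(x,y)/g_{t}(y)$ directly, writing out the first, second and third order derivatives term by term via the quotient rule and bounding each one, whereas you factor the problem through the Banach algebra property of $H^{3}$ (Proposition \ref{b4}), reducing everything to the single claim $1/g_{t}\in W^{3,\infty}\subset H^{3}$ together with the uniform bound $\sup_{x}\lVert K(x,\cdot)\rVert_{H^{3}}<\infty$ of Lemma \ref{ff3}. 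Your route buys a substantially shorter computation (the Fa\`{a} di Bruno expansion of $D^{\eta}(1/g_{t})$ is much lighter than the paper's page of explicit third-order quotient derivatives) and reuses Lemma \ref{ff3} rather than re-deriving the numerator estimates; the paper's brute-force route has the minor advantage of not needing the algebra property and of exhibiting the constants explicitly. Both arguments carry over verbatim to the static case $\mu_{t}=\mu$ of section 7, as you note.
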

                               \begin{proof}
                             We refer to Proposition \ref{b4} above.  $H^{3}(\mathbb{T}^{2}) \subset C^{1}(\mathbb{T}^{2}).$\vspace*{0.1cm}\\If we let support$\{K(x,\cdot)\}=A(x),$ we will have\vspace*{0.1cm}\\
                             $\Big\Arrowvert\displaystyle \frac{K(x,.)}{\int_{\mathbb{T}^{2}}K(x',.)\mu_{t}(dx')}\Big \Arrowvert^{2}_{H^{3}}=\displaystyle\sum\limits_{\lvert\eta \lvert\leq3}\int_{A(x)}\Big\vert D^{\eta} \frac{K(x,y)}{\int_{A(y)}K(x',y)\mu_{t}(dx')}\Big\lvert^{2} dy. $\vspace*{0.1cm}\\Now  letting $\displaystyle w_{t}(x,y)=\frac{K(x,y)}{\int_{A(y)}K(x',y)\mu_{t}(dx')},$ \\since from Lemma \ref{ff3}, for any $x\in \mathbb{T}^{2},  K(x,.)\in H^{3}\subset C^{1}(\mathbb{T}^{2}),$ one has\\\\$\displaystyle\frac{\partial w_{t}}{\partial{y_{1}}}(x,y)=\frac{\frac{\partial K}{\partial{y{1}}}(x,y)}{\int_{\mathbb{T}_{2}}K(u,y)\mu_{t}(du)}-\frac{K(x,y)\int_{A(y)}\frac{\partial K}{\partial{y_{1}}}(u,y)\mu_{t}(du)}{(\int_{A(y)}K(u,y)\mu_{t}(du))^{2}}.$\\ $\displaystyle\frac{\partial^{2} w_{t}}{\partial{y_{2}y_{1}}}(x,y)=\frac{\frac{\partial^{2} K}{\partial{y_{2}y_{1}}}(x,y)}{\int_{A(y)}K(u,y)\mu_{t}(du)}-\frac{\frac{\partial K}{\partial{y_{1}}}(x,y)\int_{A(y)}\frac{\partial K}{\partial{y_{2}}}(u,y)\mu_{t}(du)}{(\int_{A(y)}K(u,y)\mu_{t}(du))^{2}}\\\hspace*{2cm}-\displaystyle\frac{\frac{\partial K}{\partial{y_{2}}}(x,y)\int_{A(y)}\frac{\partial K}{\partial{y_{1}}}(u,y)\mu_{t}(du)+K(x,y)\int_{A(y)}\frac{\partial^{2} K}{\partial{y_{2}y_{1}}}(u,y)\mu_{t}(du)}{(\int_{A(y)}K(u,y)\mu_{t}(du))^{2}}\\\hspace*{2cm}+2\frac{K(x,y)\int_{A(y)}\frac{\partial K}{\partial{y_{1}}}(u,y)\mu_{t}(du)\int_{A(y)}\frac{\partial K}{\partial{y_{2}}}(u,y)\mu_{t}(du)}{(\int_{A(y)}K(u,y)\mu_{t}(du))^{3}}.$ \\\\$\displaystyle\frac{\partial^{3} w_{t}}{\partial{y_{1}y_{2}y_{1}}}(x,y)=\frac{\frac{\partial^{3} K}{\partial{y_{1}y_{2}y_{1}}}(x,y)}{\int_{A(y)}K(u,y)\mu_{t}(du)}-2\frac{\frac{\partial^{2} K}{\partial{y_{2}y_{1}}}(x,y)\int_{A(y)}\frac{\partial K}{\partial{y_{1}}}(u,y)\mu_{t}(du)}{(\int_{\mathbb{T}_{2}}K(u,y)\mu_{t}(du))^{2}}\\\hspace*{2cm}-\displaystyle\frac{\frac{\partial^{2} K}{\partial{y_{1}y_{1}}}(x,y)\int_{A(y)}\frac{\partial K}{\partial{y_{2}}}(u,y)\mu_{t}(du)+2\frac{\partial K(x,y)}{\partial  y_{1}}\int_{A(y)}\frac{\partial^{2} K}{\partial{y_{1}y_{2}}}(u,y)\mu_{t}(du)}{(\int_{A(y)}K(u,y)\mu_{t}(du))^{2}}\\\hspace*{2cm}+4\frac{\frac{\partial K}{\partial y_{1}}(x,y)\int_{A(y)}\frac{\partial K}{\partial{y_{2}}}(u,y)\mu_{t}(du)\int_{A(y)}\frac{\partial K}{\partial{y_{1}}}(z,y)\mu_{t}(du)}{(\int_{A(y)}K(u,y)\mu_{t}(du))^{3}}$\\$\hspace*{2cm}-\displaystyle\frac{\frac{\partial K}{\partial y_{2}}(x,y)\int_{A(y)}\frac{\partial^{2} K}{\partial{y_{2}y_{1}}}(u,y)\mu_{t}(du)+K(x,y)\int_{A(y)}\frac{\partial^{3} K}{\partial{y_{1}y_{2}y_{1}}}(u,y)\mu_{t}(du)}{(\int_{A(y)}K(u,y)\mu_{t}(du))^{2}}$\\\\$\hspace*{1.5cm}\displaystyle+2\frac{\int_{A(y)}\frac{\partial K}{\partial{y_{1}}}(u,y)\mu_{t}(du)\Big[\frac{\partial K}{\partial{y_{2}}}(x,y)\int_{\mathbb{T}_{2}}\frac{\partial K}{\partial{y_{1}}}(u,y)\mu_{t}(du)+2K(x,y)\int_{A(y)}\frac{\partial^{2} K}{\partial{y_{2}y_{1}}}(u,y)\mu_{t}(du)\Big]}{(\int_{A(y)}K(u,y)\mu_{t}(du))^{3}}\\\\\hspace*{1.5cm}+2\frac{K(x,y)\int_{A(y)}\frac{\partial^{2} K}{\partial{y_{1}y_{1}}}(u,y)\mu_{t}(du)\int_{A(y)}\frac{\partial K}{\partial{y_{2}}}(u,y)\mu_{t}(du)}{(\int_{A(y)}K(u,y)\mu_{t}(du))^{3}}\\\hspace*{1.5cm}+6\frac{K(x,y)\Big(\int_{A(y)}\frac{\partial K}{\partial{y_{1}}}(u,y)\mu_{t}(du)\Big)^{2}\int_{A(y)}\frac{\partial K}{\partial{y_{2}}}(u,y)\mu_{t}(du)}{(\int_{A(y)}K(u,y)\mu_{t}(du))^{4}}.$\\On the other hand if we let \hspace*{0.1cm}$\mathcal{C}_{1}=\{2k'(\lVert x-y\lVert^{2}),4k''(\lVert x-y\lVert^{2}), 8k^{(3)}(\lVert x-y\lVert^{2})\}$ and \\$\mathcal{C}_{2}=\{(x_{1}-y_{1})^{n}(x_{2}-y_{2})^{m},(n,m)\in\{0,1,2,3\}^{2}\},$
                                                    $\forall \lvert \eta\lvert\leq 3$, $D^{\eta}K(x,y)=D^{\eta}k(\lVert x-y\lVert^{2})$  is written as the sum of the products of elements of  $\mathcal{C}_{1}$ and  $\mathcal{C}_{2}$. Thus as from assumption (H2), 
                                                  $\forall \lvert \eta \lvert\leq 3$, $\lvert k^{(\lvert \eta \lvert)}\lvert$ is bounded in $\mathbb{R}_{+}$, $\forall \lvert \eta\lvert\leq 3,$ $x\in\mathbb{T}^{2},$  $D^{\eta}K(x,y)$ is bounded by a constant independent of $x$. Now since $\forall y\in \mathbb{T}^{2},$  $\int_{\mathbb{T}^{2}}K(u,y)\mu_{t}^{N}(du)=\int_{\mathbb{T}^{2}}K(u,y)f(t,u)(du)$ is lower  bounded  by a constant independent of y and $f(t,.)\leq \delta_{2}$ then we deduce from the above calculations that $\displaystyle\sum\limits_{\lvert\eta \lvert\leq3}\int_{A(x)}\Big\vert D^{\eta} \frac{K(x,y)}{\int_{A(y)}K(x',y)\mu_{r}(dx')}\Big\lvert^{2} dy$ is bounded by a constant independent of $x.$ Hence the result.\vspace*{0.1cm}
                               \end{proof}
                                 Funding: Alphonse Emakoua was supported by EMS Simons For Africa and an Eiffel scholarship program of excellence, and the two other authors by their respective university.
                             \fancyhf{}
                                                                                           \lhead{\textit{REFERENCES}}
                                                                                           \rhead{\thepage}
         \small{  }
                                                  
\end{document}